\numberwithin{equation}{section}
\theoremstyle{plain}
\newtheorem{Thm}[equation]{Theorem}
\newtheorem{Prop}[equation]{Proposition}
\newtheorem{Cor}[equation]{Corollary}
\newtheorem{Lem}[equation]{Lemma}
\newtheorem{Conj}[equation]{Conjecture}
\theoremstyle{definition}
\newtheorem{Def}[equation]{Definition}
\newtheorem{Exa}[equation]{Example}
\newenvironment{red}{\relax\color{red}}{\relax}
\newenvironment{blue}{\relax\color{blue}}{\hspace*{.5ex}\relax}
\newcommand{\ber}{\begin{red}}
\newcommand{\er}{\end{red}}
\newcommand{\beb}{\begin{blue}}
\newcommand{\eb}{\end{blue}}
\newcommand{\bx}{{\boldsymbol{x}}}
\newcommand{\bp}{{\boldsymbol{p}}}
\newcommand{\bw}{{\boldsymbol{w}}}
\newcommand{\bv}{{\boldsymbol{v}}}
\newcommand{\bu}{{\boldsymbol{u}}}
\newcommand{\up}{{\underline{\bp}}}
\DeclareMathOperator{\sgn}{sgn}
\begin{document}
\title[Mutations of reflections and existence of pseudo-acyclic orderings for type $A_n$]{Mutations of reflections  and \\ existence of pseudo-acyclic orderings for type $A_n$}
\author[T. J. Ervin]{Tucker J. Ervin}
\address{Department of Mathematics, University of Alabama,
Tuscaloosa, AL 35487, U.S.A.}
\email{tjervin@crimson.ua.edu}

\author[B. Jackson]{Blake Jackson}
\address{Department of Mathematics, University of Alabama,
Tuscaloosa, AL 35487, U.S.A.}
\email{bajackson9@crimson.ua.edu}

\author[K.-H. Lee]{Kyu-Hwan Lee$^{\star}$}
\thanks{$^{\star}$This work was partially supported by a grant from the Simons Foundation (\#712100).}
\address{Department of
Mathematics, University of Connecticut, Storrs, CT 06269, U.S.A.}
\email{khlee@math.uconn.edu}

\author[K. Lee]{Kyungyong Lee$^{\dagger}$}
\thanks{$^{\dagger}$This work was partially supported by NSF grant DMS 2042786, the University of Alabama, and Korea Institute for Advanced Study.}
\address{Department of Mathematics, University of Alabama,
Tuscaloosa, AL 35487, U.S.A. 
and Korea Institute for Advanced Study, Seoul 02455, Republic of Korea}
\email{klee94@ua.edu; klee1@kias.re.kr}


\begin{abstract}
In a recent paper by K.-H. Lee, K. Lee and M. Mills, a mutation of reflections in the universal Coxeter group is defined in association with a mutation of a quiver. A matrix representation of these reflections is determined by a linear ordering on the set of vertices of the quiver. It was conjectured that  there exists an ordering (called a {\em pseudo-acyclic ordering} in this paper) such that  whenever two mutation sequences of a quiver lead to the same labeled seed, the representations of the associated reflections also coincide. In this paper, we prove this conjecture for every quiver mutation-equivalent to an orientation of a type $A_n$ Dynkin diagram by decomposing a mutation sequence into a product of elementary swaps and checking relations studied by Barot and Marsh. 
\end{abstract}

\maketitle

\section{Introduction}

The mutation of a quiver $Q$ was defined by S. Fomin and A. Zelevinsky in their seminal paper \cite{FZ1} where they introduced cluster algebras. It also appeared in the context of Seiberg duality \cite{FHHU}. The $c$-vectors (and $C$-matrices) of $Q$ were defined through mutations in further developments of the theory of cluster algebras \cite{FZ4}, and together with their companions, $g$-vectors (and $G$-matrices), played  fundamental roles in the study of cluster algebras (for instance, see \cite{DWZ, GHKK, MG, NZ, Pl}). When $Q$ is acyclic, it turned out that positive $c$-vectors are actually real Schur roots, that is, the dimension vectors of indecomposable rigid modules over $Q$ \cite{HK, NC,ST}. Moreover, they appear as the denominator vectors of non-initial cluster variables of the cluster algebra associated to $Q$ \cite{CK}.

Due to the multifaceted appearance of $c$-vectors in important constructions, there have been various results related to description of $c$-vectors (or real Schur roots) of an acyclic quiver (\cite{BDSW,HK,IS,S,Se,ST}).
In \cite{LL}, K.-H. Lee and K. Lee conjectured  a correspondence   between real Schur roots of an acyclic quiver  and  non-self-crossing curves on a Riemann surface and hence proposed a new combinatorial/geometric description.
The conjecture is now proven by A. Felikson and P. Tumarkin \cite{FT} for acyclic quivers with multiple edges between every pair of vertices. Recently, S. D. Nguyen \cite{Ngu} informed us that he proved the conjecture for an arbitrary acyclic (valued) quiver.

When $Q$ is a non-acyclic quiver with the set of vertices $\mathcal I$, the situation is vastly different and little is known about description of $c$-vectors. In an attempt to understand the general case, K.-H. Lee, K. Lee and M. Mills \cite[Conjecture 1.9]{LLM} conjectured existence of a linear ordering $\prec$ on $\mathcal I$ that enables us to understand behaviors of $c$-vectors in comparison with acyclicity. For this purpose, they introduced a mutation of reflections in the universal Coxeter group for any quiver by adopting formulas from acyclic quivers and defined a  matrix representation of these reflections to be determined by a linear ordering $\prec$ on $\mathcal I$. The corresponding vectors to these reflections are called the {\em $l$-vectors}, and the matrix consisting of $l$-vectors the {\em $L$-matrix}. 

The conjecture of \cite{LLM} (Conjecture \ref{vague_conj-3} below) says that  there exists a linear ordering $\prec$ on $\mathcal I$ such that  whenever two mutation sequences from an initial labelled seed lead to the same labelled seed, the representations of the associated reflections also coincide, or equivalently, the $L$-matrices coincide up to signs of row vectors, where the representations and the $L$-matrices are determined by the ordering $\prec$. Such an ordering will be called a {\em pseudo-acyclic ordering} in this paper. If the conjecture is established, we would be able to study $c$-vectors by comparing them with $l$-vectors obtained from a pseudo-acyclic ordering. The deviation of $c$-vectors from $l$-vectors would serve as a measurement of non-acyclicity.

\medskip

In this paper, we prove Conjecture \ref{vague_conj-3} for type $A_n$ quivers. 
{\em By a quiver of type $A_n$, we mean any quiver that is mutation-equivalent to an orientation of a type $A_n$ Dynkin diagram.} 
Below is an example of the quivers considered in this paper:
\medskip
	\begin{center}
	\includegraphics[scale=0.5]{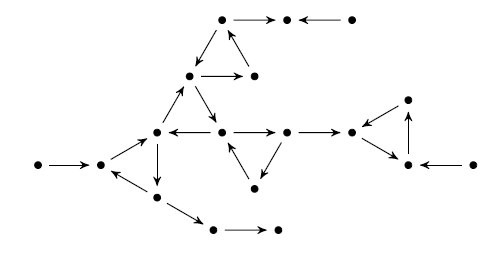}
	\end{center}
The key ingredients of our proof are decomposition of a mutation sequence into a product of elementary swaps and generalization of Coxeter relations studied by Barot and Marsh  \cite{BMa} (see also \cite{FT2,Se3}). 

For a transposition $(i,j)$ in the symmetric group $\mathfrak S_n$, an {\em elementary swap} for a labelled seed is a mutation sequence which switches labels $i$ and $j$ in the labelled seed. We first show, generalizing the result of unlabelled case in \cite{FST}, that an arbitrary mutation sequence yielding back the initial seed can be decomposed into a product of elementary swaps. Therefore, it is enough to consider elementary swaps only. Next, when an elementary swap is applied to reflections, we have the corresponding indices switched in the reflections, however,  with some extra factors emerging. Interestingly, the extra factors turn out to be nothing but the relations studied by Barot and Marsh in \cite{BM}. Hence the final step of the proof is to prove the existence of an ordering with which these relations hold for the reflections, making the extra factors equal the identity. 

Our proof involves dividing cases and applying inductions along with explicit computations. Some of the computations hold for a broader class of quivers. We expect that our approach would work to prove Conjecture \ref{vague_conj-3} for the quivers associated to unpunctured surfaces and the type $D_n$ quivers. In another direction, it will be interesting to investigate properties of $l$-vectors and mutation of reflections in comparison with behaviors of $c$-vectors as indicated above.

\subsection{Organization of the paper} 
In Section \ref{sec-prelim}, we fix notations to state the main conjecture and the main theorem. In the next section, we define elementary swaps and establish their properties. In Section \ref{sec-results}, we prove the main theorem deferring proofs of three lemmas to later sections. In Sections 5-8, the three lemmas are proven.


\section{Preliminaries} \label{sec-prelim}

In this section, we fix our notations and state the conjecture in \cite{LLM} which we will prove for type $A_n$ quivers.

\medskip

For a nonzero vector $c=( c_1 , \dots , c_n) \in \mathbb Z^n$, we define $c >0$ if all $c_i$ are non-negative, and  $c <0$ if all $c_i$ are non-positive. This induces a partial ordering $<$ on $\mathbb Z^n$. Define $|c|=( |c_1|, \dots,  |c_n| )$.
Assume that $M=[m_{ij}]$ is an $n \times 2n$ matrix of integers. Let $\mathcal I:= \{ 1, 2, \dots, n \}$ be the set of indices. For $\bw=[i_i, i_2, \dots , i_\ell]$, $i_j \in \mathcal I$, we define the matrix $M^\bw=[m_{ij}^\bw]$ inductively: the initial matrix is $M$ for $\bw=[\,]$, and assuming we have $M^\bw$, define the matrix $M^{\bw[k]}=[m_{ij}^{\bw[k]}]$ for $k \in \mathcal I$ with $\bw[k]:=[i_i, i_2, \dots , i_\ell, k]$
by  
\begin{equation} \label{eqn-mmuu} m_{ij}^{\bw[k]} = \begin{cases} -m_{ij}^\bw & \text{if  $i=k$ or $j=k$}, \\ m_{ij}^\bw + \mathrm{sgn}(m_{ik}^\bw) \, \max(m_{ik}^\bw m_{kj}^\bw,0) & \text{otherwise}, \end{cases}
\end{equation}
where  $\mathrm{sgn}(a) \in \{1, 0, -1\}$ is the signature of $a$. The matrix $M^{\bw[k]}$ is called the {\em mutation of $M^\bw$} at index (or label) $k$.

Let $B=[b_{ij}]$ be an $n\times n$ skew-symmetrizable matrix  and $D=\mathrm{diag}(d_1, \dots, d_n)$ be its symmetrizer such that $BD$ is symmetric, $d_i \in \mathbb Z_{>0}$ and $\gcd(d_1, \dots, d_n)=1$.  Consider the $n\times 2n$ matrix $\begin{bmatrix}B&I\end{bmatrix}$ and
 a mutation sequence $\bw=[i_1, \dots ,i_k]$, where $I$ is the $n\times n$ identity matrix. After the mutations at the indices $i_1, \dots , i_k$ consecutively, we obtain $\begin{bmatrix}B^\bw&C^\bw\end{bmatrix}$. The matrix $C^\bw$ is called the {\em $C$-matrix} and its row vectors the {\em $c$-vectors}. 
Write their entries as 
\begin{equation} \label{eqn-bcbc} B^\bw= \begin{bmatrix} b_{ij}^\bw \end{bmatrix}, \qquad
C^\bw= \begin{bmatrix} c_{ij}^\bw \end{bmatrix} = \begin{bmatrix} c_1^\bw \\ \vdots \\ c_n^\bw \end{bmatrix},\end{equation}
where $c_i^\bw$ are the $c$-vectors.
It is well-known that the $c$-vector $c_i^\bw$ is non-zero for each $i$, and either $c_i^\bw >0$ or $c_i^\bw <0$  due to the sign coherence of $c$-vectors  \cite{DWZ-1, GHKK}.

A {\em generalized intersection matrix} (GIM) is a square matrix ${A}=[a_{ij}]$ with integral entries such that
(1) for diagonal entries, $a_{ii}=2$;
(2) $a_{ij}>0$ if and only if $a_{ji}>0$;
(3) $a_{ij}<0$ if and only if $a_{ji}<0$.
Let $\mathcal A$ be the (unital) $\mathbb Z$-algebra generated by $s_i, e_i$, $i=1,2, \dots, n$, subject to the following relations:
$$ s_i^2=1, \quad \sum_{i=1}^n e_i =1, \quad s_ie_i = -e_i, \quad e_is_j=  \begin{cases} s_i+e_i-1 &\text{if } i =j, \\ e_i &\text{if } i \neq j, \end{cases}  \quad e_ie_j=  \begin{cases} e_i &\text{if } i =j, \\ 0 & \text{if } i \neq j. \end{cases}$$
Let $\mathcal W$ be the subgroup of the units of $\mathcal{A}$ generated by $s_i$, $i=1, \dots, n$. Note that $\mathcal W$ is (isomorphic to) the universal Coxeter group. Thus the algebra $\mathcal A$ can be considered as the algebra generated by the reflections and projections of the universal Coxeter group. 

Let ${A}=[a_{ij}]$ be an $n\times n$ symmetrizable GIM, and $D=\mathrm{diag}(d_1, \dots , d_n)$ be the symmetrizer, i.e. the diagonal matrix such that $d_i \in \mathbb Z_{>0}$, $\gcd(d_1, \dots, d_n)=1$ and ${A}D$ is symmetric. Let $\Gamma = \sum_{i=1}^n \mathbb{Z}\alpha_i$ be the lattice generated by the formal symbols $\alpha_1,\cdots,\alpha_n$. Define a representation $\pi:\mathcal A \rightarrow \mathrm{End}(\Gamma)$ by
\[ \pi(s_i)(\alpha_j) = \alpha_j - a_{ji} \alpha_i \quad \text{ and } \quad \pi(e_i)(\alpha_j) =\delta_{ij} \alpha_i , \quad\text{ for }i,j\in\{1, \dots ,n\}. \] We suppress $\pi$ when we write the action of an element of $\mathcal A$ on $\Gamma$.

Given a skew symmetrizable matrix $B$, we choose a linear ordering $\prec$ on $\{ 1,2, \dots , n\}$ and define the associated GIM ${A}=[a_{ij}]$ by
\begin{equation} 
a_{ij}= \begin{cases}  b_{ij}  & \text{ if } i \prec j , \\
2 & \text{ if } i =j, \\ -b_{ij} & \text{ if } i \succ j . \end{cases}\ 
\label{eqn-gim}
\end{equation}
An ordering $\prec$ provides a certain way for us to regard the skew-symmetrizable matrix $B$ as acyclic even when it is not.

\begin{Def} \label{def-r}
For each  mutation sequence $\bw$, define $r_i^\bw \in \mathcal W \subset \mathcal A$ inductively  with the initial elements $r_i = s_i$,  $i=1, \dots, n$, as follows: 
\begin{equation} \label{def-sx_i-1} r_i^{\bw [k]}= \begin{cases} r_k^\bw r_i^\bw  r_k^\bw & \text{ if } \ b_{ik}^\bw c_{k}^\bw>0, \\ r_i^\bw & \text{ otherwise.} \end{cases} \end{equation}
Clearly, each $r_i^\bw$ is written in the form
\[ r_i^\bw = g_i^\bw s_i {(g_i^\bw)}^{-1}, \quad g_i^\bw \in \mathcal W, \quad i=1, \dots , n.\]
\end{Def}

\begin{Def}\label{def-ell}
Choose an ordering $\prec$ on $\mathcal I$ to fix a GIM ${A}$, and define
\[   l_i^\bw = g_i^\bw (\alpha_i), \qquad i=1, \dots , n, \]
where we identify $\Gamma$ with $\mathbb Z^n$ and set $\alpha_1=(1,0,\dots , 0), \ \alpha_2=(0,1,0, \dots, 0), \dots, \ \alpha_n=(0, \dots, 0, 1)$. Then the {\em $L$-matrix} $L^{\bw}$ associated to ${A}$ is defined to be the $n \times n$ matrix whose $i^\text{th}$ row is $l_i^\bw$ for $i= 1, \dots, n$, i.e., \[ L^{\bw} =  \begin{bmatrix} l_1^{\bw}  \\ \vdots \\ l_n^{\bw} \end{bmatrix},\] and the vectors $l_i^{\bw}$ are called the {\em $l$-vectors of ${A}$}. Note that the $L$-matrix and $l$-vectors associated to a GIM ${A}$ implicitly depend the representation $\pi$ which is suppressed from the notation. 
\end{Def}

This paper is concerned with the following conjecture.
\begin{Conj}[\cite{LLM}] \label{vague_conj-3}
For any skew-symmetrizable matrix $B$, there exists a linear ordering $\prec$ on $\mathcal I$ such that if $\bw$ and $\bv$ are two mutation sequences with $C^\bw=C^\bv$ (hence $B^\bw=B^\bv$ as well) then
$\pi(r_i^\bw) =\pi(r_i^\bv), \, i=1,\dots,n,$ where $\pi$ is determined by $\prec$ and its associated GIM.
\end{Conj}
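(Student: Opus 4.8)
The plan is to recast the statement as an assertion about loops of mutations and then to control those loops by elementary swaps. I use the reformulation noted in the introduction: $\pi(r_i^\bw)=\pi(r_i^\bv)$ for all $i$ is equivalent to equality of the $L$-matrices $L^\bw$ and $L^\bv$ up to signs of their rows. The first step is to observe that both the matrix mutation (\ref{eqn-mmuu}) and the reflection recursion (\ref{def-sx_i-1}) are involutive: mutating $M^\bw$ twice at $k$ restores $M^\bw$, and since $b_{ik}^{\bw[k]}c_k^{\bw[k]}=b_{ik}^\bw c_k^\bw$ while $r_k^{\bw[k]}=r_k^\bw$ with $(r_k^\bw)^2=1$, applying (\ref{def-sx_i-1}) twice at $k$ restores $r_i^\bw$. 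Given $\bw,\bv$ with $C^\bw=C^\bv$, set $\bu=\bw\,\overline{\bv}$ with $\overline{\bv}$ the reverse of $\bv$, so that $B^\bu=B$ and $C^\bu=I$. Running $\overline{\bv}$ from the common seed visits the same indices and encounters the same signs in (\ref{def-sx_i-1}) whether one begins from the reflections $r_j^\bw$ or from $r_j^\bv$; hence the two runs apply an identical pattern of conjugations, and since $\pi$ respects conjugation, the equalities $\pi(r_j^\bw)=\pi(r_j^\bv)$ propagate step by step—reversibly, by involutivity—to the end of $\overline{\bv}$, where the $\bv$-run terminates at $s_j$ and the $\bw$-run at $r_j^\bu$. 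Thus the conjecture is equivalent to the single assertion that $\pi(r_i^\bu)=\pi(s_i)$ for every loop $\bu$ with $B^\bu=B$ and $C^\bu=I$.

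Next, generalizing the unlabelled result of \cite{FST}, I would decompose any such loop $\bu$ into a concatenation of elementary swaps whose induced label-permutations multiply to the identity, as $\bu$ returns the labelled seed. Because the recursion (\ref{def-sx_i-1}) is local to the current seed, it then suffices to analyze a single elementary swap for a transposition $(i,j)$: such a swap returns the same underlying seed with labels $i$ and $j$ exchanged and, on the reflections, interchanges $r_i$ and $r_j$ up to explicit extra factors.

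Applying the representation $\pi$ attached to an ordering $\prec$ and its GIM (\ref{eqn-gim}), each such extra factor becomes a word in the $\pi(s_i)$ that is trivial exactly when a relation of the kind studied by Barot and Marsh \cite{BM} holds among the relevant reflections. If $\prec$ can be chosen so that every relation forced by an elementary swap—at each reachable seed—holds in the image of $\pi$, then each elementary swap acts on the images $\pi(r_i)$ as the bare transposition $\pi(r_i)\leftrightarrow\pi(r_j)$; a product of swaps with trivial total permutation then fixes every $\pi(r_i)$, yielding $\pi(r_i^\bu)=\pi(s_i)$ and the conjecture.

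The main obstacle is precisely the existence of a pseudo-acyclic ordering $\prec$ for which all the required Barot–Marsh relations hold simultaneously in the image of $\pi$. These relations entangle the combinatorics of $B$ and its iterated mutations with the ordering in a global fashion, and no uniform choice is evident for an arbitrary skew-symmetrizable $B$. For type $A_n$ the triangulation model produces such an ordering through a case analysis and induction that pins down exactly which relations can occur; extending this to a general $B$ would demand both a classification of the extra factors produced by elementary swaps and a uniform construction of $\prec$, and it is here that the essential difficulty resides.
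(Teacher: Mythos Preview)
This statement is a conjecture that the paper does \emph{not} prove in general; only the type $A_n$ case is established (Theorem~\ref{recomposition-1}). Your proposal is accordingly a strategy outline rather than a proof, and you acknowledge as much in your final paragraph. The reduction to loops in your first paragraph is correct and clean: involutivity of (\ref{def-sx_i-1}) holds as you say, and the propagation argument is valid because the conjugation pattern depends only on the $B$- and $C$-matrices, which coincide along both runs of $\overline{\bv}$. The remaining steps you sketch---decompose the loop into elementary swaps, identify the extra factors as Barot--Marsh type words, and choose $\prec$ so that $\pi$ kills them---are exactly what the paper carries out for $A_n$ in Sections~\ref{swap-results}--\ref{proof-swap-indices-k}.

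You do, however, undercount the obstacles for general $B$. You name the existence of a suitable ordering as \emph{the} difficulty, but the decomposition of a closed walk into elementary swaps is itself unavailable in general: Proposition~\ref{decomposition} and Lemma~\ref{labelled-seed-bijection} rely on the surface model of \cite{FST}, where the fundamental group of the exchange graph is generated by $4$- and $5$-cycles. For an arbitrary skew-symmetrizable $B$ no such presentation of the loops with $C^\bu=I$ is known, so before one can even ask which relations an ordering must satisfy, one needs a generating set of elementary loops to analyze. Both obstacles---a loop decomposition and a compatible ordering---remain open beyond the surface case.
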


It is easy to see that Conjecture~\ref{vague_conj-3} is equivalent to saying that there exists a linear ordering $\prec$ on $\mathcal I$ such that if $\bw$ and $\bv$ are two mutation sequences with $C^\bw=C^\bv$ then $L^\bw$ and $L^\bv$ coincide up to signs of row vectors. 

\begin{Def}
A linear ordering $\prec$ on $\mathcal I$ with the property in Conjecture \ref{vague_conj-3} is called a {\em pseudo-acyclic ordering} for $B$.
\end{Def}

The elements $\pi(r_i^\bw)$ can be viewed as elements of $\pi(\mathcal W)$, and Conjecture~\ref{vague_conj-3} can be interpreted as a statement about relations in $\pi(\mathcal W).$ Relations for these groups have been explored for particular skew-symmetrizable matrices and  a restricted class of GIMs in \cite{BMa,FT2,Se3}. 

\medskip

A skew-symmetric matrix $B=[b_{ij}]$ is identified with a quiver $Q$ with vertices $\mathcal I$: when $b_{ij} >0$, the vertices $i$ and $j$ are connected with $b_{ij}$ arrows from $i$ to $j$. Recall our definition of type $A_n$ quivers from the introduction. Now we state the main theorem of this paper.
 
\begin{Thm} \label{recomposition-1}
	For any type $A_n$ quiver $Q$, there exists a pseudo-acyclic ordering $\prec$ on $\mathcal I$ for $Q$.
\end{Thm}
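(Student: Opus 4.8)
The plan is to follow the three-step strategy indicated in the introduction. \textbf{First,} I would reduce to the case of an elementary swap. If $\bw$ and $\bv$ are mutation sequences with $C^\bw=C^\bv$ (hence also $B^\bw=B^\bv$), then the sequence $\bu$ obtained by following $\bw$ with the entries of $\bv$ in reverse order returns the initial labelled seed, so $C^\bu=I$; unwinding the recursion \eqref{def-sx_i-1} then rewrites each $\pi(r_i^\bw)$ in terms of the $\pi(r_j^\bv)$ and the $\pi(r_j^\bu)$. This reduces the theorem to exhibiting a single ordering $\prec$ for which $\pi(r_i^\bu)=\pi(s_i)$ whenever $C^\bu=I$. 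For such a $\bu$ the induced label permutation is trivial, and — generalizing to labelled seeds the unlabelled statement of \cite{FST} for polygon triangulations — I would show that $\bu$ factors as a product of elementary swaps whose underlying transpositions multiply to the identity in $\mathfrak S_n$. It is then enough to control the effect of a single elementary swap on the tuple $(r_1,\dots,r_n)$.

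\textbf{Second,} I would compute that effect. Using \eqref{eqn-mmuu}--\eqref{def-sx_i-1}, an elementary swap realizing a transposition $(i\,j)$ sends $(r_1,\dots,r_n)$ to $(r_{(i\,j)(1)},\dots,r_{(i\,j)(n)})$ up to conjugation of each entry by a correction word $\epsilon_k\in\mathcal W$, and the claim to be proved is that every $\epsilon_k$ is a product of Coxeter relators of edges of $Q$ and Barot--Marsh cycle relators of oriented $3$-cycles of $Q$ — exactly the relations, beyond those of the universal Coxeter group, used by Barot and Marsh to present $W(A_n)$ (recall that for type $A_n$ every arrow is simple and every chordless cycle is an oriented $3$-cycle). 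One then checks that every Coxeter relator $(s_as_b)^{m_{ab}}$ maps to the identity under $\pi$ for \emph{any} ordering (if $a,b$ are not joined then $\pi(s_a)$ and $\pi(s_b)$ commute; if they are joined, the $2\times2$ GIM has diagonal $2$ and equal off-diagonal entries $\pm1$, and $\pi(s_as_b)$ has order $3$ in both cases), whereas the cycle relator of an oriented $3$-cycle $a\to b\to c\to a$ maps to the identity under $\pi$ \emph{exactly when} the restriction of $\prec$ to $\{a,b,c\}$ is not a cyclic rotation of $(a,b,c)$; call such a restriction anti-cyclic. Thus the only genuine constraint on $\prec$ comes from the oriented $3$-cycles.

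\textbf{Third,} it remains to build a linear order on $\mathcal I$ that is anti-cyclic on every oriented $3$-cycle of $Q$. I would argue by induction on $n$ in the model where $Q$ arises from a triangulated $(n+3)$-gon, the vertices of $Q$ being the diagonals and the oriented $3$-cycles being the internal triangles. Choose an ear of the polygon; its apex diagonal $d$ lies on at most one internal triangle $T'$, and contracting the ear produces a type $A_{n-1}$ quiver $Q'$ whose internal triangles are exactly those of $Q$ other than $T'$. By induction $Q'$ carries an anti-cyclic order $\prec'$. Since $d$ belongs to no $3$-cycle of $Q$ except possibly $T'=\{d,e,f\}$ with $e,f$ already ordered by $\prec'$, and since for any orientation of a triangle in which two of the three vertices have already been placed in the order at least one of the three positions of the third vertex makes the triple anti-cyclic, one may insert $d$ into $\prec'$ so as to make $T'$ anti-cyclic without disturbing any other cycle; the resulting order is the desired pseudo-acyclic ordering.

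\textbf{The main obstacle} is the second step. The combinatorial construction of the ordering is short once it is set up, and the labelled version of the \cite{FST} decomposition, though it needs care, is expected to go through as in the unlabelled case. The weight of the proof — the content of the three lemmas deferred to later sections — lies in computing the correction words $\epsilon_k$ explicitly and showing that only Coxeter and Barot--Marsh cycle relators occur in them: an elementary swap is itself a nontrivial mutation sequence whose shape depends on how the two diagonals being swapped sit inside the triangulation, so this computation breaks into several cases, and one must verify uniformly across all of them that no other relators appear.
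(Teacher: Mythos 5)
Your overall architecture coincides with the paper's: decompose a closed walk in the labelled exchange graph into elementary swaps via a labelled version of the Fomin--Shapiro--Thurston decomposition (Proposition \ref{decomposition}), compute the correction factors produced by a single swap (Lemmas \ref{swap-indices-i,j} and \ref{swap-indices-k}), and choose an ordering that is ``anti-cyclic'' on every oriented $3$-cycle (Lemma \ref{order-choice}); your ear-contraction construction of the ordering is an acceptable variant of the paper's greedy construction, and your reduction from $C^\bw=C^\bv$ to $C^\bu=I$ is the same one the paper uses implicitly.

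There is, however, a genuine gap in your second step. The correction words produced by an elementary swap $[i,j]_\bp$ are not relators of the initial quiver $Q$ in the initial generators $s_a$: they are expressions such as $\pi(r_i^\bp r_j^\bp)^3$ and $\pi(r_j^\bp r_i^\bp r_j^\bp r_k^\bp)^2$, i.e.\ relators of the \emph{mutated} quiver $B^\bp$ evaluated on the \emph{conjugated} reflections $r_a^\bp$. Your verification that $\pi$ kills the relators --- commutativity of $\pi(s_a)$ and $\pi(s_b)$ for non-adjacent vertices, order $3$ for adjacent ones, and the anti-cyclic condition for oriented $3$-cycles --- is carried out only at the initial seed, with the entries of the initial GIM. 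To conclude that $\pi$ kills the correction words one must propagate these identities to every seed reachable by mutation; this is the paper's Lemma \ref{induction}, and it is not automatic: the inductive step for $\pi(r_i^{\bv[l]}r_j^{\bv[l]})^2=\mathrm{id}$ in the case where mutating at $l$ creates or destroys the edge $ij$ inside an oriented triangle requires the cycle identity $\pi(r_j^{\bv} r_l^{\bv} r_j^{\bv} r_i^{\bv})^2=\mathrm{id}$ at the previous seed --- this is precisely where the ordering constraint enters and why an arbitrary ordering fails (Counterexample \ref{exa-counter}). You could close the gap either by the paper's direct induction or by invoking Barot--Marsh presentation invariance to exhibit the mutated relators as products of conjugates of initial ones, but some such argument must be supplied; as written, your check at the initial seed does not cover the relators that actually occur. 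A smaller omission: the FST decomposition also produces $4$-cycle stable walks $\bp[i,j,i,j]\bp^{-1}$, which do not permute labels but can conjugate the reflections by $(r_j^\bp r_i^\bp)^2$; these must be disposed of separately (Lemma \ref{4-cycles-good}), again using the propagated commutation relation.
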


The rest of this paper is devoted to a proof of Theorem \ref{recomposition-1}.

\section{Elementary Swaps} \label{swap-results}

Throughout this section, we work with the class of quivers associated with bordered surfaces with marked points on the boundary and no punctures. This class includes the type $A_n$ quivers and is much broader. Details about quivers associated with bordered surfaces with marked points can be found in \cite{FST}. The purpose of this section is to show that it is essentially enough to consider elementary swaps. Our result extends the result on unlabelled case in \cite{FST} to our setting.
  
\medskip

Denote by $(i,j) \in \mathfrak{S}_n$ the transposition of $i$ and $j$ in the symmetric group $\mathfrak{S}_n$. 
When $\bp=[k_1, k_2, \dots , k_s]$, define  $\bp^{-1} = [k_s, k_{s-1}, \dots, k_2, k_1]$, \[ \bp^{(i,j)}=[ (i,j)k_1, (i,j) k_2, \dots , (i,j)k_s] \ \text{ and } \ \up^{(i,j)}=[ (i,j)k_s, (i,j) k_{s-1}, \dots , (i,j)k_2, (i,j)k_1].\] 

\begin{Exa}
When $(3,5) \in \mathfrak S_5$ and $\bp=[4,3,2,3,5]$, we have
\[  \bp^{-1}=[5,3,2,3,4], \quad \bp^{(3,5)}=[4,5,2,5,3] \quad \text{ and } \quad \up^{(3,5)}=[3,5,2,5,4] . \]
\end{Exa}

Let $(\mathbb{S}, \mathsf M)$ be a bordered surface with marked points and no punctures. 
The {\em flip graph} $\mathcal F$ of $(\mathbb S, \mathsf M)$ has a vertex for every triangulation of $(\mathbb S, \mathsf M)$ and an edge when two triangulations differ by a flip. For a triangulation $T$ of $(\mathbb S, \mathsf M)$, a {\em labelled triangulation} $\widetilde T$ of $T$ is obtained by labelling the arcs of $T$ by $1$ through $n$.   
The {\em labelled flip graph} $\widetilde{\mathcal F}$ of $(\mathbb S, \mathsf M)$ has a vertex for every labelled triangulation of $(\mathbb S, \mathsf M)$ and an edge when two labelled triangulations differ by a flip. We regard $\widetilde{\mathcal F}$ as a covering space of $\mathcal F$.

As in \cite{FST}, let $\mathcal A = \mathcal A(\mathbb S, \mathsf M)$ be the cluster algebra associated with $(\mathbb S, \mathsf M)$. 
The {\em exchange graph} $\mathcal E$ of $\mathcal A$ has a vertex for every seed and an edge when two seeds differ by a mutation. The {\em labelled exchange graph} $\widetilde{\mathcal E}$ of $\mathcal A$ has a vertex for every labelled seed of $\mathcal A$ and an edge when two labelled seeds differ by a mutation.

\begin{Lem}[\cite{FST}] \label{lem-iso}
Let $(\mathbb{S}, \mathsf M)$ be a bordered surface with marked points and no punctures. Then we have $\mathcal F \cong \mathcal E$ as graphs.
\end{Lem}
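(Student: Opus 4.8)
The plan is to follow the construction of Fomin--Shapiro--Thurston \cite{FST} and exhibit an explicit graph isomorphism. First I would recall the assignment $T \mapsto \Sigma_T$ sending a triangulation $T$ of $(\mathbb S, \mathsf M)$ to the seed $\Sigma_T = (\mathbf x_T, B_T)$ of $\mathcal A = \mathcal A(\mathbb S, \mathsf M)$, where the arcs of $T$ index the cluster variables $\mathbf x_T$ and $B_T$ is the signed adjacency matrix of $T$. Because $(\mathbb S, \mathsf M)$ has no punctures, there are no self-folded triangles, so every arc of every triangulation admits a flip and $B_T$ is genuinely skew-symmetric; this is exactly what keeps the unpunctured case clean (in the punctured case one is forced to pass to tagged triangulations).

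The core step is to verify that this assignment intertwines flips and mutations: if $T'$ is obtained from $T$ by flipping the arc labelled $k$, then $\Sigma_{T'} = \mu_k(\Sigma_T)$, i.e.\ $B_{T'}$ is the matrix mutation of $B_T$ at $k$ and the new cluster variable satisfies the exchange relation read off from the two triangles sharing the flipped arc. I would invoke the relevant compatibility results of \cite{FST} here rather than re-derive them; this is the main structural input being imported.

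Next I would establish that the induced vertex map is a bijection. For surjectivity: the ordinary flip graph of an unpunctured surface is connected, so, fixing a triangulation $T_0$, every triangulation is reached from $T_0$ by a sequence of flips; by the previous step the image $\{\Sigma_T\}$ is a subgraph of $\mathcal E$ containing the initial seed and closed under all mutations (every arc is flippable, no punctures), hence is all of $\mathcal E$ by connectedness of the exchange graph. For injectivity I would use that distinct arcs of $(\mathbb S, \mathsf M)$ give distinct cluster variables \cite{FST}, so $\Sigma_T = \Sigma_{T'}$ forces $\mathbf x_T = \mathbf x_{T'}$, whence $T$ and $T'$ have the same arc set and $T = T'$. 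Finally, two triangulations are joined by an edge of $\mathcal F$ precisely when they share all but one arc, which under this bijection corresponds to two seeds sharing all but one cluster variable, i.e.\ an edge of $\mathcal E$; together with the flip--mutation compatibility this upgrades the vertex bijection to an isomorphism of graphs.

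I expect the genuine obstacle to lie entirely inside \cite{FST}: the flip--mutation compatibility, and more delicately the injectivity of the arc-to-cluster-variable map (which in \cite{FST} rests on the surface/lambda-length model for cluster variables). Granting those results, the remaining work -- connectedness of the flip graph, closure under mutation in the absence of punctures, and matching of the edge relations -- is routine bookkeeping.
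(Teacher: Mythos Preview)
Your outline is a faithful sketch of the argument in \cite{FST}, and the ingredients you name --- the assignment $T \mapsto \Sigma_T$, flip--mutation compatibility, connectedness of the flip graph, and injectivity via distinctness of arc cluster variables --- are exactly the ones needed. There is nothing to compare against, however: the paper does not supply its own proof of this lemma but simply imports it as a known result from \cite{FST} (note the citation in the lemma heading). So your proposal is correct and, by construction, takes the same approach as the source the paper is citing; the paper itself treats the statement as a black box.
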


For any transposition $(i, j) \in \mathfrak S_n$, 
we write $(\bx,B)^{(i,j)}$ for the labelled seed obtained from a labelled seed $(\bx,B)$ after swapping the $i^{\mathrm{th}}$ and $j^{\mathrm{th}}$ components of $\bx$ and rows and columns of $B$. Similarly, we write $\widetilde T^{(i,j)}$ for the labelled triangulation which swaps the labels $i$ and $j$ of a labelled triangulation $\widetilde T$. More generally, for any elements $\sigma, \gamma \in \mathfrak S_n$, we define $(\bx, B)^{\sigma \gamma} =( (\bx, B)^\sigma)^\gamma$ and $\widetilde T^{\sigma\gamma} = (\widetilde T^\sigma)^\gamma$.

\begin{Def} \label{elem-walk} \hfill

(1) An {\em elementary walk} in the flip graph $\mathcal F$ is a closed walk of the form $\bp \bv \bp^{-1}$, where $\bp$ is a walk starting at a triangulation $T$ and $\bv$ is a 4-cycle or 5-cycle. It is known \cite{FST} that $\bv=[i,j,i,j]$ or $[i,j,i,j,i]$ for some $i,j \in \mathcal I$.

(2) Let $\bp$ be a walk from a triangulation $T$ which corresponds to the seed $(\bx, B)$. Then we call walks of the form $\bp\bp^{-1}$ {\em spurs}. Also, a {\em stable walk} is a walk of the form $\bp[i,j,i,j]\bp^{-1}$ where $b_{ij}^\bp = 0$ and $B^\bp = [b_{ij}^\bp]$. Note that a stable walk in $\mathcal F$ lifts to a closed walk in $\widetilde{\mathcal F}$.

(3) For any elementary walk $\bp\bv\bp^{-1}$, if $\bv=[i,j,i,j,i]$ is a 5-cycle, then $\bp\bv\bp^{-1}$ lifts to the walk $\bp \bv \up^{(i,j)}$ in the labelled flip graph $\widetilde{\mathcal F}$. In this case, we write  $$ [i,j]_\bp := \bp \bv \up^{(i,j)},$$ and say that $[i,j]_\bp$ is an {\em elementary swap}.
\end{Def}

The following lemma explains why $[i,j]_\bp$ is so called. This terminology also appears in \cite{LMW} from a different context. 

\begin{Lem} \label{swap-as-transposition}
	An elementary swap $[i,j]_\bp$ acts as a transposition of the labels of a labelled triangulation, i.e., the elementary swap $[i,j]_\bp$ takes $\widetilde{T}$ to $\widetilde{T}^{(i,j)}$.
	Similarly, mutating from $(\bx,B)$ along $[i,j]_{\bp}$ produces $(\bx,B)^{(i,j)}$.\end{Lem}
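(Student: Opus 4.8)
The plan is to track what the elementary swap $[i,j]_\bp = \bp\,\bv\,\up^{(i,j)}$ does, step by step, using Lemma \ref{lem-iso} to move freely between triangulations and seeds. First I would reduce to the case $\bp = [\,]$: the walk $\bp$ carries the base triangulation $T$ (with seed $(\bx,B)$) to some triangulation $T' = \mu_\bp(T)$ with seed $(\bx',B') = \mu_\bp(\bx,B)$, and by the covering-space picture the portion $\up^{(i,j)}$ of the swap simply runs the reverse path $\bp^{-1}$ but with all labels transposed by $(i,j)$; so if I can show the middle piece $\bv = [i,j,i,j,i]$ takes the labelled triangulation $\widetilde{T'}$ to $\widetilde{T'}^{(i,j)}$, then conjugating by $\bp$ (which, unlabelled, is a closed walk since $\bp\bv\bp^{-1}$ is) gives $\widetilde T \mapsto \widetilde T^{(i,j)}$, and the seed statement follows identically by Lemma \ref{lem-iso}. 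So the real content is the local computation for a 5-cycle.

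\textbf{The 5-cycle computation.} For $\bv = [i,j,i,j,i]$ to be a legitimate 5-cycle of the flip graph at $T'$, the arcs labelled $i$ and $j$ in $T'$ must sit inside a configuration (a once-punctured triangle's worth of arcs, i.e.\ the standard ``pentagon'' relation region for surfaces without punctures: a pentagon with two diagonals) in which the alternating sequence of five flips returns to the starting \emph{unlabelled} triangulation. I would recall from \cite{FST} the explicit pentagon: label the five sides of a pentagon and let arcs $i,j$ be the two diagonals; flipping $i$, then $j$, then $i$, then $j$, then $i$ cycles the two diagonals around the pentagon and, after five steps, returns to a pair of diagonals occupying the \emph{same two slots} as at the start — but with their roles exchanged, so the arc that was labelled $i$ now occupies the slot where $j$ was and vice versa. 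Reading off which physical arc carries which label after the five flips (carefully, using the mutation rule's sign conventions to identify the flipped arc at each step) shows that $\widetilde{T'}\,\xrightarrow{\bv}\,\widetilde{T'}^{(i,j)}$ as labelled triangulations. The matrix/seed side is the classical pentagon identity $\mu_i\mu_j\mu_i\mu_j\mu_i = (i,j)$ on labelled seeds, which one can either cite or verify directly on the $2\times 2$ exchange submatrix $\begin{bmatrix} 0 & \pm1 \\ \mp1 & 0\end{bmatrix}$ together with its effect on the cluster variables via the exchange relations.

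\textbf{Assembling the two pieces.} With the $\bp=[\,]$ case in hand, I would run the general argument: start at $\widetilde T$, apply $\bp$ to reach $\widetilde{T'}$ (some labelling of $T'$), apply $\bv$ to reach $\widetilde{T'}^{(i,j)}$, then apply $\up^{(i,j)}$. Here I must be careful that $\up^{(i,j)}$ is by definition the \emph{reverse} of $\bp$ with every entry hit by $(i,j)$; because transposing all labels is a graph automorphism of $\widetilde{\mathcal F}$ that intertwines the labelled flips, applying $\up^{(i,j)}$ to $\widetilde{T'}^{(i,j)}$ gives exactly $\bigl(\mu_\bp^{-1}(\widetilde{T'})\bigr)^{(i,j)} = \widetilde T^{(i,j)}$ — this is precisely the statement that $\bp\bv\bp^{-1}$ is a closed walk downstairs in $\mathcal F$ (which holds because $\bv$ is a cycle and $\bp^{-1}$ undoes $\bp$), lifted upstairs with the label-transposition monodromy that the 5-cycle introduces. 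The seed statement is obtained by transporting everything through the isomorphism $\mathcal F \cong \mathcal E$ of Lemma \ref{lem-iso}, under which flips correspond to mutations and labelled triangulations to labelled seeds, so $(\bx,B) \mapsto (\bx,B)^{(i,j)}$ along $[i,j]_\bp$.

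\textbf{Main obstacle.} The genuinely delicate point is the bookkeeping in the 5-cycle: verifying that after $[i,j,i,j,i]$ the two diagonals of the pentagon end up with their labels \emph{transposed} rather than, say, returned unchanged or cyclically permuted among more slots. This requires pinning down at each of the five flips \emph{which} arc is being replaced and \emph{which} arc replaces it, using the precise sign conventions of \eqref{eqn-mmuu}; a single mislabelled step produces the wrong permutation. Everything else is a formal consequence of the covering-space structure of $\widetilde{\mathcal F} \to \mathcal F$ and Lemma \ref{lem-iso}.
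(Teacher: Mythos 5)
Your proposal is correct and follows essentially the same route as the paper: the content in both is the pentagon computation for the base case $\bp=[\,]$ (the 5-cycle $[i,j,i,j,i]$ swaps the two labels, with the seed version checked via Lemma \ref{lem-iso} and the classical pentagon identity), followed by transport along $\bp$ and $\up^{(i,j)}$. The only packaging difference is that the paper runs an induction on the length of $\bp$, whereas you conjugate in one step using the observation that the global relabelling by $(i,j)$ is an automorphism of $\widetilde{\mathcal F}$ intertwining the flips — both are valid and amount to the same bookkeeping.
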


\begin{proof}
	We argue by induction on the length $m$ of $\bp$.
	If $m = 0$, then $[i,j]_\bp = [i,j,i,j,i]$.
	As $[i,j,i,j,i]$ corresponds to a pentagon in the flip graph, we have that $[i,j]_\bp$ transposes two labels of the triangulation, completing the base case.
	
	If we assume the inductive hypothesis holds for $m$, we look at walks of length $m+1$.
	Then ${\bp} = [k]\bw$ and $\up^{(i,j)} =\underline{\bw}^{(i,j)}[k]^{(i,j)}$ for some label $k$ and some sequence $\bw$ of length $m$.
	By the inductive hypothesis, $[i,j]_\bw$ permutes the labelled triangulation formed by flipping at label $k$ in $\widetilde{T}$.
	Flipping at $k$ and $(i,j)k$ at the respective ends of $[i,j]_\bw$ clearly produces $[i,j]_\bp$.
	Hence $[i,j]_\bp$ permutes the labels of $\widetilde{T}$ by the permutation $(i,j)$, demonstrating our assertion.
	
A similar inductive argument proves that $[i,j]_\bp$ yields $(\bx,B)^{(i,j)}$. 
	Indeed, if $[i,j]_\bp = [i,j,i,j,i]$ is a 5-cycle in $\mathcal F$, then applying $[i,j]_\bp$ to the initial seed $(\bx, B)$ comes back to $(\bx, B)$ as an unlabelled seed by Lemma \ref{lem-iso}. One can check that the labels are swapped to produce $(\bx, B)^{(i,j)}$, establishing the base case. (See, e.g., \cite{LS}.)  The remaining argument is straightforward as with $\widetilde{T}$.
\end{proof}

We now prove a workhorse lemma, generalizing Lemma \ref{lem-iso}, that makes our analysis possible.

\begin{Lem} \label{labelled-seed-bijection}
Let $(\mathbb{S}, \mathsf M)$ be a bordered surface with marked points and no punctures. Then we have $\widetilde{\mathcal F} \cong \widetilde{\mathcal E}$ as graphs.
\end{Lem}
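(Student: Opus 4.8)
The plan is to establish the graph isomorphism $\widetilde{\mathcal F} \cong \widetilde{\mathcal E}$ by lifting the isomorphism $\mathcal F \cong \mathcal E$ of Lemma~\ref{lem-iso} through the covering maps $\widetilde{\mathcal F} \to \mathcal F$ and $\widetilde{\mathcal E} \to \mathcal E$. Both $\widetilde{\mathcal F}$ and $\widetilde{\mathcal E}$ carry a free action of $\mathfrak S_n$ by relabelling, and in each case the quotient by this action recovers the unlabelled graph; moreover the relabelling action is compatible with the edge structure (a flip/mutation at label $k$ becomes a flip/mutation at label $\sigma(k)$ after applying $\sigma$). So the first step is to fix a base labelled triangulation $\widetilde T_0$ sitting over a base triangulation $T_0$, and declare its image under $\mathcal F \cong \mathcal E$ to be a chosen labelled seed $\widetilde{(\bx_0, B_0)}$. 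Since $\widetilde{\mathcal F}$ is connected (the flip graph $\mathcal F$ is connected and relabellings are realized by walks), every vertex of $\widetilde{\mathcal F}$ is reached from $\widetilde T_0$ by a walk $\bp$, and we define $\Phi(\widetilde T_0 \cdot \bp)$ to be the labelled seed reached from $\widetilde{(\bx_0,B_0)}$ by the corresponding walk in $\widetilde{\mathcal E}$.

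The crux is then \emph{well-definedness}: if two walks $\bp$ and $\bp'$ from $\widetilde T_0$ end at the same labelled triangulation, their images in $\widetilde{\mathcal E}$ must end at the same labelled seed. Equivalently, one must show that every closed walk in $\widetilde{\mathcal F}$ based at $\widetilde T_0$ maps to a closed walk in $\widetilde{\mathcal E}$. A closed walk in $\widetilde{\mathcal F}$ projects to a closed walk $w$ in $\mathcal F$, and by Lemma~\ref{lem-iso} the image of $w$ is a closed walk in $\mathcal E$; the question is whether the lift to $\widetilde{\mathcal E}$ returns to the \emph{same} labelled seed rather than a relabelled one. Here I would invoke the structural result of \cite{FST} on the fundamental group of the (unlabelled) flip graph: every closed walk in $\mathcal F$ is, up to homotopy through the squares and pentagons of $\mathcal F$, a product of elementary walks $\bp\bv\bp^{-1}$ with $\bv$ a $4$-cycle or $5$-cycle. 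For a closed walk in $\widetilde{\mathcal F}$ this decomposition must refine to one in which each factor lifts to a closed walk in $\widetilde{\mathcal F}$; since a $4$-cycle square lifts to a closed square (a stable walk, as noted in Definition~\ref{elem-walk}(2)) only when the relevant $b_{ij}^{\bp}=0$, and a $5$-cycle pentagon lifts to the elementary swap $[i,j]_\bp$ which by Lemma~\ref{swap-as-transposition} realizes the transposition $(i,j)$, a closed walk in $\widetilde{\mathcal F}$ corresponds to a product of elementary swaps and spurs composing to the identity permutation. The same product of elementary swaps, read inside $\widetilde{\mathcal E}$, again composes to the identity permutation on labels by the ``similarly'' clause of Lemma~\ref{swap-as-transposition} (each $[i,j]_\bp$ sends $(\bx,B)$ to $(\bx,B)^{(i,j)}$), hence returns to the same labelled seed. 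This gives well-definedness; applying the symmetric argument with the roles of $\widetilde{\mathcal F}$ and $\widetilde{\mathcal E}$ reversed produces a two-sided inverse, and edge-preservation is immediate since $\Phi$ is built to send a flip at label $k$ to a mutation at label $k$.

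\textbf{Main obstacle.} The genuinely delicate point is the homotopy/decomposition step: asserting that an arbitrary closed walk in the \emph{labelled} flip graph is a product of lifted elementary walks and spurs. In the unlabelled setting this is exactly the content of the presentation of $\pi_1(\mathcal F)$ by squares and pentagons from \cite{FST}, but one must check that the lift respects this, i.e. that when a pentagon in $\mathcal F$ appears in the decomposition of a closed labelled walk, the labelled walk genuinely traverses the elementary swap $[i,j]_\bp$ (returning with labels transposed) rather than some other lift, and that the bookkeeping of which labels get transposed is consistent between $\widetilde{\mathcal F}$ and $\widetilde{\mathcal E}$ — this is where Lemma~\ref{swap-as-transposition} is used on both sides. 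I would handle it by working at the level of covering spaces: $\widetilde{\mathcal F}$ and $\widetilde{\mathcal E}$ are the covers of $\mathcal F\cong\mathcal E$ corresponding to the same subgroup of $\pi_1$ (the kernel of the monodromy map to $\mathfrak S_n$ that records the net relabelling of a closed walk), and Lemma~\ref{swap-as-transposition} is precisely the statement that these two monodromy maps agree on the generating elementary walks, hence agree on all of $\pi_1$. Once the monodromy maps are identified, the isomorphism of covers is automatic, and unwinding it gives the claimed graph isomorphism.
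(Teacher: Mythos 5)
Your proposal is correct and follows essentially the same route as the paper: both arguments reduce well-definedness to closed walks, invoke the decomposition into elementary walks from \cite{FST}, discard stable walks, and use Lemma~\ref{swap-as-transposition} to match the transpositions induced on labelled triangulations and labelled seeds. Your covering-space/monodromy phrasing is just a repackaging of the paper's explicit induction on the number of elementary swaps, with the same key ingredients.
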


\begin{proof}
	Fix a triangulation $T_0$ of $(\mathbb S, \mathsf M)$ and choose a labelled triangulation $\widetilde{T}_0$ of $T_0$. Denote the corresponding labelled seed by $(\bx, B)$. For any vertex $\widetilde T$ in $\widetilde{\mathcal F}$, there exists $\sigma \in \mathfrak{S}_n$ and a walk $\bp = [p_1, p_2, \dots, p_m]$ from $\widetilde T_0^\sigma$ to $\widetilde T$.
	Such a walk exists because the flip graph of $(\mathbb{S},\mathsf M)$ is connected and the labelled flip graph (not necessarily connected) is a covering space of the flip graph.
	Let $\mu_\bp((\bx,B)^{\sigma})$ be the labelled seed produced by consecutively mutating $(\bx,B)^{\sigma}$ at labels in $\bp$. 

Define a map from $\widetilde{\mathcal F}$ to $\widetilde{\mathcal E}$ by $\widetilde T \mapsto \mu_\bp((\bx,B)^{\sigma})$.
	To see that this map is well-defined, let $\gamma$ be another permutation such that there exists a walk $\bu$ from $\widetilde{T}_0^\gamma$ to $\widetilde T$.
	We need to show $\mu_\bp((\bx,B)^\sigma) = \mu_\bu((\bx,B)^\gamma)$, which is equivalent to $\mu_{\bp \bu^{-1}}((\bx,B)^\sigma) = (\bx, B)^\gamma$.
	Thus it is sufficient to show that
\begin{equation*} \tag{$*$}\label{tas} \text{every walk $\bw$ from $\widetilde T_0^\sigma$ to $\widetilde T_0^\gamma$ has $\mu_\bw((\bx,B)^\sigma) = (\bx,B)^\gamma$.}
\end{equation*}
	
	Every walk $\bw$ in $\widetilde{\mathcal F}$ from $\widetilde T_0^\sigma$ to $\widetilde T_0^\gamma$ corresponds under the covering map to a closed walk at $T_0$ in $\mathcal F$.
	By Theorem 3.10 in \cite{FST}, we may write $\bw$ as a concatenation of finitely many elementary walks in $\mathcal F$, up to a finite number of spur insertions or deletions. Write this decomposition of $\bw$ in $\mathcal F$ as $(\bw_1\bv_1\bw_1^{-1})(\bw_2\bv_2\bw_2^{-1})\dots(\bw_k\bv_k\bw_k^{-1})$.
Since the lifts of stable walks are closed in $\widetilde{\mathcal F}$ and the corresponding walks in $\widetilde{\mathcal E}$ are also closed,  
we may remove all stable walks from the decomposition without affecting the flips of $\widetilde{T}_0^\sigma$ or the mutations of $(\bx,B)^\sigma$.  
Thus we may assume that $\bw$ is decomposed into the product $(\bw_1\bv_1\bw_1^{-1})(\bw_2\bv_2\bw_2^{-1})\dots(\bw_k\bv_k\bw_k^{-1})$ in $\mathcal F$ with $\bw_l\bv_l\bw_l^{-1}$ lifting to an elementary swap in $\widetilde{\mathcal F}$ for each $l=1,2, \dots, k$.
If we let $(i_l, j_l)$ denote the transposition given by $\bv_l$,
	then $\bw$ in $\widetilde{\mathcal F}$ is written as 
\begin{equation} \label{dele} \bw = [i_1,j_1]_{\bw_1}[i_2,j_2]_{\bw_2}\dots[i_k,j_k]_{\bw_k}. \end{equation}

Now let us prove the assertion \eqref{tas}.	We argue by induction on $k$.  
	If $k = 1$, it follows from Lemma \ref{swap-as-transposition} that $\widetilde T_0^{\sigma (i_1, j_1)}= \widetilde T_0^\gamma$ and $\sigma (i_1, j_1) = \gamma$. Thus, again by Lemma  \ref{swap-as-transposition}, we have 
$\mu_\bw((\bx,B)^\sigma)= (\bx, B)^{\sigma(i_1,j_1)}= (\bx,B)^\gamma$, completing the base case.
	Assume that the assertion \eqref{tas} holds for $k$.
	Now, if 
	$$\bw = [i_1,j_1]_{\bw_1}[i_2,j_2]_{\bw_2}\cdots[i_{k+1},j_{k+1}]_{\bw_{k+1}},$$
	where each $[i_l,j_l]_{\bw_l}$ is an elementary swap in $\widetilde F$, let 
	$$\bw' = [i_1,j_1]_{\bw_1}[i_2,j_2]_{\bw_2}\cdots[i_k,j_k]_{\bw_k} \quad \text{ and } \quad \tau=(i_1,j_1)(i_2, j_2) \cdots (i_k, j_k) \in \mathfrak S_n.$$
Since $\bw'$ takes $\widetilde T_0^\sigma$ to $\widetilde T_0^{\sigma\tau}$, we have $\mu_{\bw'}(\bx, B)^\sigma = (\bx, B)^{\sigma \tau}$ by induction. Now $\bw$ takes $\widetilde T_0$ to $\widetilde T_0^{\sigma \tau (i_{k+1}, j_{k+1})}$ with $ \gamma=\sigma \tau (i_{k+1}, j_{k+1})$. By Lemma \ref{swap-as-transposition}, we have
\[ \mu_\bw((\bx, B)^\sigma) = \mu_{[i_{k+1}, j_{k+1}]_{\bw_{k+1}}}((\bx, B)^{\sigma \tau}) = (\bx, B)^{\sigma \tau (i_{k+1}, j_{k+1})} =(\bx, B)^\gamma . \]  
Thus the assertion \eqref{tas} is proven for all $k \ge 1$, and the map  $\widetilde T \mapsto \mu_\bp((\bx,B)^{\sigma})$ is well-defined. From the construction of the map, the edges in $\widetilde{\mathcal F}$ are preserved under this map.
	
In a similar way, we can define the inverse map and show that every sequence $\bw$ of mutations such that $\mu_\bw((\bx,B)^\sigma)= (\bx,B)^\gamma$ is a sequence of flips from $\widetilde T_0^\sigma$ to $\widetilde T_0^\gamma$. Thus each labelled triangulation $\widetilde T$ is uniquely associated with a labelled seed, preserving the graph structure.
\end{proof}

With Lemma \ref{labelled-seed-bijection} established, the terminology defined for $\widetilde{\mathcal F}$ can be used for $\widetilde{\mathcal E}$ as well. We immediately obtain the following corollary.

\begin{Cor} \label{decomposition-walks}
	In the cluster algebra associated to $(\mathbb S, \mathsf M)$, every walk from $(\bx,B)$ to a labelled seed $(\bx,B)^{\sigma}$ for a permutation $\sigma \in \mathfrak{S}_n$ may be decomposed into a sequence of elementary swaps, up to spur insertion or deletion or the possible removal of stable walks.
\end{Cor}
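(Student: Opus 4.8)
The plan is to obtain Corollary~\ref{decomposition-walks} as an essentially immediate consequence of Lemma~\ref{labelled-seed-bijection} together with the structural decomposition proven inside its proof. First I would use Lemma~\ref{labelled-seed-bijection} to transport the problem from the labelled exchange graph $\widetilde{\mathcal E}$ to the labelled flip graph $\widetilde{\mathcal F}$: a walk in $\widetilde{\mathcal E}$ from $(\bx,B)$ to $(\bx,B)^\sigma$ corresponds under the graph isomorphism $\widetilde{\mathcal E}\cong\widetilde{\mathcal F}$ to a walk in $\widetilde{\mathcal F}$ from $\widetilde T_0$ to $\widetilde T_0^\sigma$. So it suffices to prove the statement for walks in $\widetilde{\mathcal F}$, where the covering-space structure over $\mathcal F$ is available.

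Next I would invoke exactly the argument already carried out in the proof of Lemma~\ref{labelled-seed-bijection}: such a walk $\bw$ projects under the covering map $\widetilde{\mathcal F}\to\mathcal F$ to a closed walk at $T_0$ in $\mathcal F$, and by Theorem~3.10 of \cite{FST} this closed walk decomposes, up to insertion/deletion of spurs, into a concatenation of elementary walks $(\bw_1\bv_1\bw_1^{-1})\cdots(\bw_k\bv_k\bw_k^{-1})$. Each $\bv_l$ is a $4$-cycle $[i,j,i,j]$ or a $5$-cycle $[i,j,i,j,i]$. Lifting to $\widetilde{\mathcal F}$, the $4$-cycle pieces with $b_{ij}^{\bw_l}=0$ are stable walks whose lifts are closed and can be removed; the remaining pieces lift either to spurs (the $4$-cycle case with $b_{ij}^{\bw_l}\neq 0$ still closes up to a transposition that is trivial, or is absorbed) or to elementary swaps $[i_l,j_l]_{\bw_l}$ coming from the $5$-cycles. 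Pulling this decomposition back through $\widetilde{\mathcal F}\cong\widetilde{\mathcal E}$ and translating flips into mutations via Lemma~\ref{swap-as-transposition} gives the claimed decomposition of the original walk in the cluster algebra into a sequence of elementary swaps, modulo spurs and stable walks.

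Concretely, the writeup would be short: cite Lemma~\ref{labelled-seed-bijection} to move to $\widetilde{\mathcal F}$, cite the decomposition $\bw=[i_1,j_1]_{\bw_1}\cdots[i_k,j_k]_{\bw_k}$ established in equation~\eqref{dele} of that proof (after discarding stable walks), and then note that under the isomorphism $\widetilde{\mathcal F}\cong\widetilde{\mathcal E}$ each elementary swap in $\widetilde{\mathcal F}$ corresponds to an elementary swap of labelled seeds, with the bookkeeping of which transposition each performs supplied by Lemma~\ref{swap-as-transposition}. I would phrase it as: ``This is immediate from Lemma~\ref{labelled-seed-bijection} and the decomposition established in its proof.''

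The only real content to be careful about — and thus the main obstacle, though it is minor — is the handling of the $4$-cycles $[i,j,i,j]$ for which $b_{ij}^{\bw_l}\neq 0$: these are genuine $5$-or-$4$-valence faces in $\mathcal F$ that are \emph{not} stable walks, so one must check that their lifts to $\widetilde{\mathcal F}$ are still closed (they act by the trivial permutation on labels, since $\mu_i\mu_j\mu_i\mu_j$ returns to the same seed without relabelling when $b_{ij}\neq 0$) and hence may be folded into spurs/stable-type removals, leaving only the $5$-cycle contributions as genuine elementary swaps. Once this case-split is acknowledged, the corollary follows with no further computation, since everything was already done in Lemma~\ref{labelled-seed-bijection}; the corollary is really just a restatement of that lemma's internal argument phrased for arbitrary target permutations $\sigma$ rather than for the well-definedness check.
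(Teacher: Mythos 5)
Your proposal is correct and takes essentially the same route as the paper, whose entire proof is the one-liner ``Since $\widetilde{\mathcal F}\cong\widetilde{\mathcal E}$, the assertion follows from \eqref{dele}'' — i.e., transport via Lemma \ref{labelled-seed-bijection} and reuse the elementary-walk decomposition already established inside its proof. The only slip is in your final paragraph: a $4$-cycle $[i,j,i,j]$ occurs in the flip graph only when $b_{ij}^{\bw_l}=0$ (if $b_{ij}\neq 0$ the four alternating mutations do \emph{not} return to the same seed — one needs the pentagon when $|b_{ij}|=1$ and never closes up when $|b_{ij}|=2$), so every $4$-cycle elementary walk is automatically a stable walk and the case you flag as the main obstacle never arises.
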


\begin{proof}
Since $\widetilde{\mathcal F} \cong \widetilde{\mathcal E}$, the assertion follows from \eqref{dele}. 
\end{proof}

The following proposition enables us to focus on only elementary swaps in the rest of the paper.

\begin{Prop} \label{decomposition}
Let a skew-symmetrizable matrix $B$ be associated with a bordered surface $(\mathbb S, \mathsf M)$ with marked points. Assume that $\bv$ is a mutation sequence such that $C^\bv=I$. Then $\bv$ can be written as a product of elementary swaps,
	\[ [i_1,j_1]_{\bp_1} [i_2,j_2]_{\bp_2} \cdots [i_s,j_s]_{\bp_s},\]
	up to a finite number of spur insertions and deletions as well as the removal of stable walks.  
	Moreover,
	\[ (i_1,j_1) (i_{2},j_{2}) \cdots (i_s,j_s) =e \in \mathfrak{S}_n .\]
\end{Prop}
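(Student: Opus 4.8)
The plan is to reduce Proposition~\ref{decomposition} to the general decomposition result already obtained in Corollary~\ref{decomposition-walks}. The essential observation is that a mutation sequence $\bv$ with $C^\bv = I$ does not merely return $B$ to itself; by the standard theory of $C$-matrices and the bijection between seeds and pairs (cluster, exchange matrix), having $C^\bv = I$ forces the labelled seed $(\bx, B)$ to be sent to $(\bx, B)^\sigma$ for some permutation $\sigma$ that is \emph{forced to fix every label}, i.e.\ $\sigma = e$. Concretely, one argues that $C^\bv = I$ together with $B^\bv = B$ implies the full labelled seed is unchanged: the $c$-vectors record exactly how coordinates have been permuted relative to the initial seed, so $C^\bv = I$ means each label returns to its own position. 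Thus $\bv$ is a closed walk at $(\bx, B)$ in $\widetilde{\mathcal E}$, hence (via Lemma~\ref{labelled-seed-bijection}) lifts to a closed walk at $\widetilde T_0$ in $\widetilde{\mathcal F}$.

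First I would invoke Lemma~\ref{labelled-seed-bijection} to transport $\bv$ to a closed walk in $\widetilde{\mathcal F}$ based at $\widetilde T_0$, which projects to a closed walk at $T_0$ in $\mathcal F$. Next I would apply Theorem~3.10 of \cite{FST} exactly as in the proof of Lemma~\ref{labelled-seed-bijection}: write this closed walk, up to spur insertions/deletions, as a concatenation of elementary walks $(\bw_1 \bv_1 \bw_1^{-1}) \cdots (\bw_s \bv_s \bw_s^{-1})$. Then I would discard the stable walks (their lifts are closed in $\widetilde{\mathcal F}$, so they do not affect which labelled triangulation we reach), leaving only elementary walks whose lifts are elementary swaps; by Definition~\ref{elem-walk}(3) and equation~\eqref{dele}, this gives $\bv = [i_1, j_1]_{\bp_1} \cdots [i_s, j_s]_{\bp_s}$ up to the allowed modifications. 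Finally, since the total effect of $\bv$ on labels is trivial (we return to $(\bx, B)^e$, not merely $(\bx, B)^\sigma$) and each elementary swap $[i_l, j_l]_{\bp_l}$ contributes the transposition $(i_l, j_l)$ to the overall label permutation by Lemma~\ref{swap-as-transposition}, composing all the swaps yields $(i_1, j_1)(i_2, j_2) \cdots (i_s, j_s) = e$ in $\mathfrak S_n$.

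The step I expect to require the most care is the very first one: justifying that $C^\bv = I$ genuinely implies the labelled seed $(\bx, B)$ returns to itself with \emph{no} label permutation, rather than merely $B^\bv = B$ with some nontrivial relabelling. This needs the standard fact that the assignment sending a mutation sequence to its resulting labelled seed is determined by the $C$-matrix (sign-coherence and the separation/injectivity results of \cite{GHKK, NZ} for the surface case), so that $C^\bv = I$ pins down the permutation to be the identity. Once that is in hand, everything else is a direct re-run of the machinery in Section~\ref{swap-results}: the decomposition into elementary swaps is Corollary~\ref{decomposition-walks} applied with $\sigma = e$, and the statement $(i_1,j_1)\cdots(i_s,j_s) = e$ is immediate from tracking the cumulative transposition and comparing with the trivial net permutation.
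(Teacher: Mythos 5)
Your proposal is correct and follows essentially the same route as the paper: use the fact that the $C$-matrix determines the labelled seed to see that $\bv$ is a closed walk in $\widetilde{\mathcal E}$, apply Corollary~\ref{decomposition-walks} (via Lemma~\ref{labelled-seed-bijection} and the decomposition into elementary walks from \cite{FST}) to obtain the product of elementary swaps, and then conclude $\sigma = e$ by comparing $(\bx,B)^\sigma$ with $(\bx,B)$ using Lemma~\ref{swap-as-transposition}. The point you flag as delicate — that $C^\bv = I$ pins down the labelled seed exactly rather than up to relabelling — is precisely the "well-known" fact the paper invokes, so there is no gap.
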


\begin{proof}
	We write $(\bx,B)$ for the seed associated with the matrix $[B \, I]$.  
It is well-known that the $C$-matrix determines the labelled seed. In particular, the assumption $C^\bv=I$ implies that $\bv$ is a closed walk in $\widetilde{\mathcal E}$.
It follows from Corollary \ref{decomposition-walks} that we may decompose $\bv$ into \[ [i_1,j_1]_{\bp_1} [i_2,j_2]_{\bp_2} \cdots [i_s,j_s]_{\bp_s}.\]
Write $\sigma := (i_1, j_1)(i_2, j_2) \cdots (i_s, j_s) \in \mathfrak S_n$. Then, as in the proof of Lemma \ref{labelled-seed-bijection}, we have
\[  (\bx, B)=\mu_\bv((\bx, B))= (\bx, B)^\sigma. \] Thus we obtain $\sigma=e \in \mathfrak S_n$.
\end{proof}

Proposition \ref{decomposition} allows us to break any closed walk beginning at the starting seed into a series of elementary swaps, and we can concentrate on the effects of a single elementary swap.
When we understand how they work individually, we will be able to combine them together to understand closed walks in general. 

\section{Main Theorem} \label{sec-results}

From now on, we restrict ourselves to type $A_n$ quivers. We consider {\em all type $A_n$ quivers}, even non-acyclic ones, as clarified in the introduction. In this section, we prove the main theorem of this paper, deferring proofs of the first three lemmas to later sections. 

\medskip

To begin, we have Lemmas \ref{swap-indices-i,j} and \ref{swap-indices-k} whose proofs will be given in Sections \ref{proof-swap-indices-i,j}, \ref{discussion-swap-indices-k} and \ref{proof-swap-indices-k}.

\begin{Lem} \label{swap-indices-i,j}
	If $\bv = [i,j]_{\bp}$ is an elementary swap for some $i,j \in \mathcal{I}$ and for a mutation sequence $\bp$, then $\pi(r_i^{\bp [i,j,i,j,i]}) = \pi(r_i^{\bp} r_j^{\bp})^3 \pi(r_j^{\bp})$ or $\pi(r_j^{\bp} r_i^{\bp})^3 \pi(r_j^{\bp})$ and $\pi(r_j^{\bp [i,j,i,j,i]}) = \pi(r_i^{\bp} r_j^{\bp})^3 \pi(r_i^{\bp})$ or $\pi(r_j^{\bp} r_i^{\bp})^3 \pi(r_i^{\bp})$.
\end{Lem}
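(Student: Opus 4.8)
The plan is to reduce the claim to a rank-two computation. Write $\rho_i := r_i^\bp$ and $\rho_j := r_j^\bp$. Since $[i,j,i,j,i]$ mutates only at the indices $i$ and $j$, the recursion of Definition~\ref{def-r} shows that as $\bw$ runs through $\bp,\ \bp[i],\ \bp[i,j],\ \bp[i,j,i],\ \bp[i,j,i,j],\ \bp[i,j,i,j,i]$, the pair $(r_i^\bw,r_j^\bw)$ is produced from $\rho_i,\rho_j$ alone: the elements $r_m^\bw$ with $m\neq i,j$ may change but never feed back into $r_i^\bw$ or $r_j^\bw$. Moreover, to decide at each of the five steps whether a conjugation $r\mapsto r_k^\bw r\, r_k^\bw$ occurs, one needs only the sign $\sgn(b_{ij}^\bw)$ and the tropical signs $\epsilon_i^\bw:=\sgn(c_i^\bw)$, $\epsilon_j^\bw:=\sgn(c_j^\bw)$, because the condition $b_{i'j'}^\bw c_{j'}^\bw>0$ is equivalent to $\sgn(b_{i'j'}^\bw)=\epsilon_{j'}^\bw$ (with $b_{i'j'}^\bw\neq0$). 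I would record this reduction first.

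The second step is to track that sign data along the pentagon. The entry $b_{ij}^\bw$ simply changes sign at each mutation at $i$ or at $j$, so it runs through $b,-b,b,-b,b$ with $b:=b_{ij}^\bp$; since $Q$ is of type $A_n$ we have $b\in\{-1,0,1\}$, and $b\neq0$ (otherwise $[i,j,i,j,i]$ could not be a $5$-cycle), so $b=\pm1$, which is also to say that the restriction of the exchange pattern to $\{i,j\}$ is of finite type $A_2$. The tropical signs $(\epsilon_i^\bw,\epsilon_j^\bw)$ then satisfy, because we mutate only at $i$ and $j$ and $|b_{ij}^\bw|=1$, a closed rank-two recursion --- it is exactly the $C$-matrix recursion of the type $A_2$ rank-two pattern --- so once $\epsilon_i^\bp$ and $\epsilon_j^\bp$ are fixed, all five pairs are determined. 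Hence there are only finitely many cases, indexed by $\sgn(b_{ij}^\bp)$ and $(\epsilon_i^\bp,\epsilon_j^\bp)$, and the symmetry interchanging $i$ and $j$ cuts the work roughly in half.

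In each remaining case I would run the recursion of Definition~\ref{def-r} through the five steps and simplify the resulting word in $\mathcal W$ using only $\rho_i^2=\rho_j^2=1$. One checks that in every case $r_i^{\bp[i,j,i,j,i]}$ equals $(\rho_i\rho_j)^3\rho_j$ or $(\rho_j\rho_i)^3\rho_j$ and $r_j^{\bp[i,j,i,j,i]}$ equals $(\rho_i\rho_j)^3\rho_i$ or $(\rho_j\rho_i)^3\rho_i$; note that $(\rho_i\rho_j)^3\rho_j=(\rho_i\rho_j)^2\rho_i$ has reduced length $5$ while $(\rho_j\rho_i)^3\rho_j$ has reduced length $7$, so the two alternatives are genuinely different and which one arises is dictated by the case. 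Applying the representation $\pi$ then yields the lemma. The step I expect to be most delicate is the sign bookkeeping of the second paragraph --- verifying case by case which of the five conditions $b^\bw c^\bw>0$ hold, i.e.\ that the tropical signs along the pentagon are precisely those of the $A_2$ rank-two pattern; once that table is pinned down, the remainder is a mechanical simplification of words in the free product $\mathbb Z/2\ast\mathbb Z/2\cong\mathcal W$.
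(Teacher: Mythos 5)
Your overall strategy is the same as the paper's: restrict attention to the indices $i,j$, track the signs of $b_{ij}^{\bw}c_j^{\bw}$ and $b_{ji}^{\bw}c_i^{\bw}$ through the five mutations to decide where conjugations occur, and then simplify words using $\rho_i^2=\rho_j^2=1$. The first reduction (that $r_i^{\bw},r_j^{\bw}$ are built from $\rho_i,\rho_j$ alone, and that each decision depends only on $\sgn(b_{ij}^{\bw})$ and the tropical signs) is correct.

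The gap is in your second step, where you assert that the tropical signs satisfy a closed rank-two recursion, so that ``once $\epsilon_i^{\bp}$ and $\epsilon_j^{\bp}$ are fixed, all five pairs are determined.'' This is false in one of the four cases. When a conjugation occurs at the first step, the update is $c_j^{\bp[i]}=c_j^{\bp}+|b_{ji}^{\bp}|\,c_i^{\bp}$, and in the case $b_{ij}^{\bp}c_j^{\bp}>0$ and $b_{ji}^{\bp}c_i^{\bp}>0$ the vectors $c_i^{\bp}$ and $c_j^{\bp}$ have opposite tropical signs, so the sign of their sum is not determined by $(\epsilon_i^{\bp},\epsilon_j^{\bp})$; sign coherence only tells you it is $\pm$. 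Concretely, a single sign pair corresponds to two distinct positions on the $A_2$ pentagon (five seeds, four sign pairs), so the subsequent sign sequence genuinely branches. The paper resolves exactly this ambiguity in its Case (4) by invoking the global fact that $[i,j,i,j,i]$ is an elementary swap, hence $c_i^{\bp[i,j,i,j,i]}=c_j^{\bp}$ and $c_j^{\bp[i,j,i,j,i]}=c_i^{\bp}$; from this one deduces that $b_{ij}^{\bp[i]}c_j^{\bp[i]}$ and $b_{ij}^{\bp[i,j,i,j]}c_j^{\bp[i,j,i,j]}$ have opposite signs, so exactly one of the two subcases occurs, and each subcase is then checked separately (one forwards from $\bp$, one backwards from $\bp[i,j,i,j,i]$). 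Your argument as written never uses the 5-cycle property beyond $|b_{ij}^{\bp}|=1$, and without it the sign table in the ambiguous case cannot be pinned down; you would either need to import this input as the paper does, or at minimum split into both subcases and verify that each yields one of the two allowed words.
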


\begin{Lem} \label{swap-indices-k}
	Suppose that $\bv = [i,j]_{\bp}$ is an elementary swap for some $i,j \in \mathcal{I}$ and for a mutation sequence $\bp$.
	Let $k \neq i,j \in \mathcal{I}$.
	Then the following hold:
	\begin{enumerate}
		
		\item[(A)] If $b_{ki}^{\bp} = 0 = b_{kj}^{\bp}$, then 
		$$\pi(r_k^{\bp[i,j,i,j,i]}) = \pi(r_k^{\bp}).$$
		
		\item[(B)] If $b_{ki}^{\bp} = 0$ and $b_{kj}^{\bp} \neq 0$, then
		\begin{align*} \pi(r_k^{\bp [i,j,i,j,i]}) = &\ \pi(r_k^{\bp}), \quad \pi(r_i^\bp r_j^\bp)^3\pi(r_k^{\bp}) \pi(r_j^\bp r_i^\bp)^3, \quad  \pi(r_j^\bp r_i^\bp)^3\pi(r_k^{\bp}) \pi(r_i^\bp r_j^\bp)^3, \\ &\  \pi(r_i^{\bp}r_k^{\bp})^2 \pi(r_k^{\bp}), \
		\text{ or } \  \pi(r_i^\bp r_j^\bp)^3\pi(r_i^\bp r_k^{\bp})^2 \pi(r_k^\bp)\pi(r_j^\bp r_i^\bp)^3.\end{align*}
		
		\item[(C)] If $b_{ki}^{\bp} \neq 0$ and $b_{kj}^{\bp} = 0$, then
		\begin{align*} \pi(r_k^{\bp [i,j,i,j,i]}) =& \ \pi(r_k^{\bp}), \quad \pi(r_i^\bp r_j^\bp)^3\pi(r_k^{\bp}) \pi(r_j^\bp r_i^\bp)^3, \quad \pi(r_j^\bp r_i^\bp)^3\pi(r_k^{\bp}) \pi(r_i^\bp r_j^\bp)^3, \\ & \ \pi(r_j^{\bp}r_k^{\bp})^2 \pi(r_k^{\bp}), \ \text{ or } \ \pi(r_i^\bp r_j^\bp)^3\pi(r_j^\bp r_k^{\bp})^2 \pi(r_k^\bp)\pi(r_j^\bp r_i^\bp)^3. \end{align*}
		
		\item[(D)] If $b_{ki}^{\bp} \neq 0$ and $b_{kj}^{\bp} \neq 0$, then
		$$\pi(r_k^{\bp[i,j,i,j,i]}) = \pi(r_k^{\bp}) \ \text{ or } \ \pi(r_j^\bp r_i^\bp r_j^\bp r_k^{\bp})^2 \pi(r_k^\bp).$$
	\end{enumerate}
\end{Lem}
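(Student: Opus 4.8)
The plan is to reduce the statement to a finite, purely local computation around the three vertices $i$, $j$, $k$. Case (A) is immediate: if $b_{ki}^{\bp}=0=b_{kj}^{\bp}$, then none of the five mutations at $i,j,i,j,i$ takes place at a vertex adjacent to $k$ in the successive quivers, so the $k$-th row and column of the $B$-matrix are never altered, the recursion \eqref{def-sx_i-1} never conjugates $r_k$, and hence $\pi(r_k^{\bp[i,j,i,j,i]})=\pi(r_k^{\bp})$. Case (C) follows from case (B) by interchanging $i$ and $j$ everywhere, so it suffices to treat (B) and (D).

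For (B) and (D) I would first record the structural constraints. Since $[i,j]_{\bp}$ is an elementary swap, $[i,j,i,j,i]$ is a pentagon in the flip graph of the associated surface, which forces $b_{ij}^{\bp}=\pm 1$; and as $B^{\bp}$ is again a type $A_n$ quiver, all of its arrows have multiplicity one and every $3$-cycle in it is cyclically oriented. Thus the induced sub-quiver of $B^{\bp}$ on $\{i,j,k\}$ is a cyclically oriented triangle (case (D)) or a path with middle vertex $j$ (case (B)), all relevant entries being $\pm 1$; as a sanity check, $B^{\bp[i,j,i,j,i]}$ is just $B^{\bp}$ with rows and columns $i,j$ transposed because $[i,j,i,j,i]$ is a pentagon. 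Now set $\epsilon_m:=\sgn(c_m^{\bp})\in\{\pm 1\}$ for $m\in\{i,j,k\}$. Two bookkeeping facts drive the computation: by sign-coherence, $\sgn(c_m)$ flips exactly at a mutation at $m$ and is unchanged under any other mutation; and along \eqref{def-sx_i-1}, at a mutation at $m$ the element $r_\ell$ is replaced by $r_m r_\ell r_m$ precisely when $\sgn(b_{\ell m})=\sgn(c_m)$ at that instant, while the $B$-matrix evolves by \eqref{eqn-mmuu}. Hence the whole process — including the evolution of $r_i$ and $r_j$, which is needed because $r_k$ gets conjugated by the \emph{current} $r_i$ or $r_j$, so this step overlaps with the proof of Lemma \ref{swap-indices-i,j} — is determined by the initial local quiver on $\{i,j,k\}$ together with the triple $(\epsilon_i,\epsilon_j,\epsilon_k)$, a finite list of cases. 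Enumerating them, keeping track in case (B) of the fact that the mutation at $j$ may create an arrow between $k$ and $i$ (turning the path into a $3$-cycle, depending on the orientation), and rewriting the resulting words in terms of $r_i^{\bp},r_j^{\bp},r_k^{\bp}$ using $s_m^2=1$ and the collapsing of nested conjugations, one obtains in each case exactly one of the expressions listed in the statement.

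The main obstacle is the size and delicacy of this bookkeeping. In (B) and (D) there are many subcases once the local orientation and the sign triple are fixed, and one must verify both that the displayed list is exhaustive and that each output truly normalizes to the claimed form — for instance, that a word of the shape $\pi(r_j^{\bp}r_i^{\bp}r_j^{\bp})\,\pi(\text{current }r_k)\,\pi(r_j^{\bp}r_i^{\bp}r_j^{\bp})$ with the correct intervening factor reorganizes as $\pi(r_j^{\bp}r_i^{\bp}r_j^{\bp}r_k^{\bp})^2\pi(r_k^{\bp})$. One must also make sure the type $A_n$ hypothesis is genuinely used: without multiplicity-one arrows and cyclically oriented triangles, extra local configurations — and hence additional correction factors not on the list — would arise. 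These correction factors are exactly the Barot--Marsh-type relations whose triviality, under a suitable ordering $\prec$, is the subject of the later sections.
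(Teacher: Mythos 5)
Your overall strategy---an exhaustive case analysis of the local configuration on $\{i,j,k\}$ together with sign data, tracking the recursion \eqref{def-sx_i-1} through the five mutations and simplifying the resulting words---is exactly the paper's strategy (Sections 7 and 8), and your treatment of case (A) agrees with the paper's. However, two of the concrete devices you rely on are wrong. The more serious one is your ``bookkeeping fact'' that $\sgn(c_m)$ flips exactly at a mutation at $m$ and is unchanged by any other mutation: at a mutation at $k\neq m$ the vector $c_m$ is replaced by $c_m$ plus a non-negative multiple of $\pm c_k$, and this can reverse its sign (sign coherence only guarantees the result is again of one sign). The paper's own tables show this; e.g.\ in the case $(b_{ij}^{\bp}c_j^{\bp},b_{ji}^{\bp}c_i^{\bp})=(+,-)$ with $(b_{ij}^{\bp},b_{ji}^{\bp})=(+,-)$ one has $(c_i^{\bp},\dots,c_i^{\bp[i,j,i,j]})=(+,-,-,+,-)$, so $c_i$ changes sign at the final mutation at $j$. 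Since the conjugations in \eqref{def-sx_i-1} are governed by the signs of the products $b_{k\ell}^{\bw}c_\ell^{\bw}$ at each intermediate seed, your rule would produce wrong intermediate signs and hence wrong output words. Relatedly, the data consisting of the local quiver and $(\epsilon_i,\epsilon_j,\epsilon_k)$ does \emph{not} determine the whole evolution: in the subcase $b_{ij}^{\bp}c_j^{\bp}>0$, $b_{ji}^{\bp}c_i^{\bp}>0$ an additional sign (essentially that of $c_i^{\bp}+c_j^{\bp}$) must be pinned down by a separate argument, as in the last paragraph of the paper's proof of Lemma \ref{swap-indices-i,j}; the correct bookkeeping tracks the products $b^{\bw}c^{\bw}$ directly through the mutation formulas, which is what Section 8 does.

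The second problem is the claim that case (C) follows from case (B) ``by interchanging $i$ and $j$ everywhere.'' The mutation sequence $[i,j,i,j,i]$ is not symmetric in $i$ and $j$, so the relabelling relates $r_k^{\bp[i,j,i,j,i]}$ to $r_k^{\bp[j,i,j,i,j]}$, a different computation. This is already visible in the statement: applying $i\leftrightarrow j$ to the last item of (B) yields $\pi(r_j^\bp r_i^\bp)^3\pi(r_j^\bp r_k^{\bp})^2\pi(r_k^\bp)\pi(r_i^\bp r_j^\bp)^3$, whereas (C) asserts $\pi(r_i^\bp r_j^\bp)^3\pi(r_j^\bp r_k^{\bp})^2\pi(r_k^\bp)\pi(r_j^\bp r_i^\bp)^3$; these are different elements of $\pi(\mathcal W)$ at this stage of the argument (no relations have yet been imposed). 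The paper accordingly treats the $b_{ki}^{\bp}\neq 0$, $b_{kj}^{\bp}=0$ configurations (its cases (7)--(10)) by separate direct computations. Your reduction to finitely many local cases is sound in outline, but both of these steps must be replaced by the direct sign-tracking computations of Section 8.
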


If we can show that the powered elements in Lemmas \ref{swap-indices-i,j} and \ref{swap-indices-k} reduce to the identity, then we will have shown that Corollary \ref{single-swap-result} holds for all elementary swaps.
To do so, we first look at elementary swaps $[i,j]_\bp$ where $\bp =[\,]$.
Note that $|b_{ij}^\bp| = 1$ if and only if there is an elementary swap $[i,j]_\bp$.

\begin{Lem} \label{base-case}
	If $b_{ij} = 0$ for some $i,j \in \mathcal{I}$, then 
	$$\pi(r_ir_j)^2 = \mathrm{id} = \pi(r_jr_i)^2.$$
	If $|b_{ij}| = 1$, then 
	$$\pi(r_ir_j)^3 = \mathrm{id} = \pi(r_jr_i)^3.$$
	Additionally, if $k \neq i,j \in \mathcal{I}$ such that $b_{ki} = b_{ij} = b_{jk} = 1$ and the linear order satisfies
	$$i \prec k \prec j, \quad j \prec i \prec k, \ \text{ or } \  k \prec j \prec i,$$
	then
	$$\pi(r_j r_i r_j r_k)^2 = \mathrm{id} = \pi(r_k r_j r_i r_j)^2.$$
	A similar identity holds for $b_{ki} = b_{ij} = b_{jk} = -1$ with 
	$$i \prec j \prec k, \quad j \prec k \prec i, \ \text{ or } \ k \prec i \prec j.$$
\end{Lem}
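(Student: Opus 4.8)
The plan is to verify each identity by a direct computation of the action on the root lattice $\Gamma=\bigoplus_m\mathbb Z\alpha_m$ of the relevant element of $\mathcal W\subset\mathcal A$. Since $\bw=[\,]$ we have $r_m=s_m$, so everything concerns the initial reflections and the representation $\pi$ attached to $\prec$ and its GIM $A=[a_{mn}]$; as $B$ is skew-symmetric we may use $a_{mn}=a_{nm}$ and $a_{mn}^2=b_{mn}^2$. For the first two identities, note that on $\mathbb Z\alpha_i\oplus\mathbb Z\alpha_j$ the operator $\pi(s_is_j)$ is given by $\left(\begin{smallmatrix}a_{ij}a_{ji}-1&a_{ji}\\-a_{ij}&-1\end{smallmatrix}\right)$, of determinant $1$ and trace $b_{ij}^2-2$; hence it is $-\mathrm{id}$ if $b_{ij}=0$ and of order $3$ (characteristic polynomial $\lambda^2+\lambda+1$) if $|b_{ij}|=1$. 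For $l\neq i,j$ a short computation using $a_{ij}=a_{ji}$ and $a_{ij}a_{ji}\in\{0,1\}$ shows $\pi(s_is_j)^N(\alpha_l)=\alpha_l$ with $N=2$, resp.\ $N=3$; thus $\pi(r_ir_j)^N=\mathrm{id}$, and $\pi(r_jr_i)^N=\pi(r_ir_j)^{-N}=\mathrm{id}$ since the $r_m$ are involutions. In particular, whenever $|b_{mn}|=1$ we obtain the braid identity $\pi(s_ms_ns_m)=\pi(s_ns_ms_n)$ (valid after applying $\pi$, though false in $\mathcal W$ itself), which will be used below.

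For the cycle identity with $b_{ki}=b_{ij}=b_{jk}=1$, I would first handle the single ordering $i\prec k\prec j$, for which the GIM entries among $i,j,k$ are $a_{ij}=a_{ji}=1$ and $a_{jk}=a_{kj}=a_{ki}=a_{ik}=-1$. A direct computation on $V:=\mathbb Z\alpha_i\oplus\mathbb Z\alpha_j\oplus\mathbb Z\alpha_k$ gives
\[ \pi(s_js_is_js_k)(\alpha_i)=\alpha_j+\alpha_k,\qquad \pi(s_js_is_js_k)(\alpha_j)=\alpha_i+\alpha_k,\qquad \pi(s_js_is_js_k)(\alpha_k)=-\alpha_k; \]
the resulting matrix $M$ satisfies $M^2=I$ and has $\ker(M+I)=\mathrm{span}(\alpha_i-\alpha_j,\alpha_k)$. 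As $V$ is $\pi(s_m)$-invariant for $m\in\{i,j,k\}$, every $\alpha_l$ with $l\neq i,j,k$ satisfies $\pi(s_js_is_js_k)(\alpha_l)=\alpha_l+v$ with $v\in V$, and writing $v$ out explicitly reduces the requirement $v\in\ker(M+I)$ to the identity $a_{lk}(a_{kj}+1)=0$, which holds since $a_{kj}=-1$. Hence $\pi(s_js_is_js_k)^2(\alpha_l)=(\alpha_l+v)+Mv=\alpha_l$, so $\pi(s_js_is_js_k)^2=\mathrm{id}$ on all of $\Gamma$; equivalently $\pi(r_jr_ir_jr_k)^2=\mathrm{id}$ and $\pi(r_kr_jr_ir_j)^2=\pi(r_jr_ir_jr_k)^{-2}=\mathrm{id}$.

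The orderings $j\prec i\prec k$ and $k\prec j\prec i$ are the remaining cyclic rotations of $(i,k,j)$; applying the relabeling $i\mapsto j\mapsto k\mapsto i$ and its square to the case just proved yields $\pi(s_ks_js_ks_i)^2=\mathrm{id}$ and $\pi(s_is_ks_is_j)^2=\mathrm{id}$ for these orderings, and the braid identities above (legitimate because $|b_{ij}|=|b_{jk}|=|b_{ki}|=1$) show that $\pi(s_ks_js_ks_i)$ and $\pi(s_is_ks_is_j)$ are conjugate in $\GL(\Gamma)$ to $\pi(s_js_is_js_k)$, giving the desired identity there too. (Alternatively one simply repeats the $3\times 3$ computation for each ordering: $M^2=I$ persists and the $\alpha_l$-condition again collapses to a true sign identity.) Finally the case $b_{ki}=b_{ij}=b_{jk}=-1$, with orderings $i\prec j\prec k$, $j\prec k\prec i$, $k\prec i\prec j$ and identity $\pi(r_kr_ir_kr_j)^2=\mathrm{id}=\pi(r_jr_kr_ir_k)^2$, follows from the $+1$ case by the transposition $j\leftrightarrow k$, which carries the reversed $3$-cycle and these orderings precisely onto the configurations already treated.

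The main obstacle is bookkeeping rather than conceptual depth: one must pin down the correct GIM sign pattern for each of the six ordering/orientation cases, carry out the (routine) three-dimensional matrix computation, and — most delicately — arrange the relabeling reductions so that the only braid relations invoked are exactly the $|b|=1$ identities established in the first step, which are not available a priori in the universal Coxeter group $\mathcal W$.
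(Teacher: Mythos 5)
Your proof is correct, and while the underlying idea is the same as the paper's (verify the identities by explicit computation in the representation $\pi$ attached to the GIM), the organization is genuinely different and leaner. The paper expands $\pi(r_i)=I+A_i$, multiplies everything out into scalar identities among the $a_{mn}$, and then checks those identities separately for each of the six linear orderings of $\{i,j,k\}$; you instead restrict to the invariant sublattice $V$ spanned by the relevant $\alpha$'s, dispose of the rank-two identities by determinant/trace and Cayley--Hamilton (so that $(I+M)v=0$, resp.\ $(I+M+M^2)v=0$, is automatic for the $V$-component $v$ of $\pi(g)(\alpha_l)-\alpha_l$), and for the $3$-cycle do the $3\times 3$ computation once and transport it to the remaining orderings and to the opposite orientation by relabelling. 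What the relabelling buys is that you only ever verify one sign pattern; what it costs is the extra step of converting $\pi(s_ks_js_ks_i)^2=\mathrm{id}$ into $\pi(s_js_is_js_k)^2=\mathrm{id}$, which you correctly reduce to the order-$3$ (braid) identities already established in the first part together with conjugation by single generators --- and you are right to flag that these identities are only available in $\pi(\mathcal W)$, not in $\mathcal W$ itself. One small point to make explicit when writing this up: the version of the cycle relation needed downstream (in the induction of Lemma \ref{induction}) is precisely $\pi(r_jr_ir_jr_k)^2=\mathrm{id}$, so you should record that the chain of conjugations and braid moves lands on that exact form (it does: conjugating ``$s_as_bs_a$ commutes with $s_c$'' by $s_a$ gives ``$s_as_cs_a$ commutes with $s_b$'', and the braid relation swaps the first two indices, so all six variants coincide). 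Your fallback of simply redoing the $3\times 3$ computation for each ordering is of course also fine and is what the paper effectively does.
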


Section \ref{proof-base-case} is devoted to a proof of Lemma \ref{base-case}.
\begin{Lem} \label{order-choice}
	If $Q$ is a quiver of type $A_n$ containing $m$ triangles (cycles of length 3) and $q$ vertices not in cycles, then we may choose a linear order $\prec$ on $\mathcal I$ such that
	$$\pi(r_j r_i r_j r_k)^2 = \mathrm{id} = \pi(r_k r_j r_i r_j)^2$$
	whenever $b_{ki} = b_{ij} = b_{jk} = \pm 1$ for some $i,j,k \in \mathcal{I}$.
\end{Lem}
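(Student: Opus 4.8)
The plan is to reduce Lemma~\ref{order-choice} to the combinatorial problem of orienting a ``block-decomposition'' graph of the type $A_n$ quiver. First I would recall the structure of type $A_n$ quivers: such a quiver is a ``gentle'' or triangulation quiver whose underlying graph is a tree of triangles and edges, glued along vertices (no two triangles share an edge, since that would create a configuration outside type $A_n$). The relevant local data for Lemma~\ref{base-case} is precisely each triangle $b_{ki}=b_{ij}=b_{jk}=\pm1$, and for each such triangle the admissible linear orders are the three ``cyclic rotations'' of one of the two cyclic orientations, depending on the sign. So the task is: given the collection of all $3$-cycles in $Q$, choose a single global linear order $\prec$ on $\mathcal I$ that simultaneously lies in the admissible set of each triangle.

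Next I would set up the combinatorial reformulation. For each triangle on vertices $\{i,j,k\}$ with, say, $b_{ki}=b_{ij}=b_{jk}=1$, the condition ``$i\prec k\prec j$ or $j\prec i\prec k$ or $k\prec j\prec i$'' says exactly that the cyclic order induced on $\{i,j,k\}$ by $\prec$ agrees with the cyclic order $(i,k,j)$ — equivalently, it forbids the opposite cyclic order. Thus what I really need is a linear order whose restriction to each triangle induces a prescribed cyclic order. I would build an auxiliary directed graph (or rather exploit the tree structure directly): process the triangles in the order they appear along the tree, using induction on the number $m$ of triangles. For the base case $m=0$ there are no constraints and any order works (this is where the $q$ vertices not in cycles sit freely). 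For the inductive step, remove a ``leaf'' triangle $\Delta$ of the tree — one attached to the rest of $Q$ at a single cut vertex $v$ (or along a path through non-cycle vertices to a cut vertex). By induction choose $\prec'$ on the vertices of $Q\setminus(\Delta\setminus\{v\})$; then I must insert the (one or two) remaining vertices of $\Delta$ into $\prec'$ so that the prescribed cyclic order on $\Delta$ is realized. Since only the position of $v$ in $\prec'$ is already fixed and $\Delta$ has just three vertices, I can always place the new vertex(es) immediately before or immediately after $v$ to land in one of the three allowed patterns — concretely, if $v$ plays the role of the ``middle'' element we put one new vertex just before $v$ and one just after; if $v$ is an ``end'' element we put both new vertices consecutively on the appropriate side.

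The main obstacle I anticipate is verifying that this insertion is always consistent — i.e., that placing the new vertices adjacent to $v$ in $\prec'$ does not violate any previously-imposed triangle constraint and really does realize the required local cyclic order for every sign pattern. This requires checking that the new vertices of $\Delta$ appear in no other triangle (true, because $\Delta$ is a leaf attached only at $v$, and type $A_n$ quivers have no vertex in more than two triangles in a way that would obstruct this — a point I would justify from the tree-of-triangles description and possibly a short case analysis on how triangles can share the cut vertex $v$), and a finite check of the $2$ sign patterns $\times$ the possible roles of $v$. Once the order $\prec$ is constructed, Lemma~\ref{order-choice} follows immediately by applying Lemma~\ref{base-case} to each triangle: the hypotheses of Lemma~\ref{base-case} on the linear order are met by construction, so $\pi(r_jr_ir_jr_k)^2=\mathrm{id}=\pi(r_kr_jr_ir_j)^2$ holds for every triangle, which is exactly the assertion. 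I would also remark that the count ``$m$ triangles and $q$ free vertices'' in the statement is not essential to the argument beyond organizing the induction — it simply records that the non-cycle vertices impose no constraints and can be placed anywhere.
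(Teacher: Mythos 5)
Your proposal is correct and follows essentially the same strategy as the paper: both exploit the tree-of-triangles structure of a type $A_n$ quiver, observe that each $3$-cycle's admissible orders are the three linearizations of a prescribed cyclic order, and build the global order by inserting the two new vertices of each successive triangle immediately adjacent to its shared vertex (the paper does this as a forward greedy construction, you as an induction peeling off leaf triangles, which is the same idea). The final appeal to Lemma \ref{base-case} is exactly how the paper concludes as well.
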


\begin{proof}
	We first note that a type $A_n$ quiver $Q$ only contains simple cycles that are of length $3$ and are oriented, say $m$ of them. We know from Lemma \ref{base-case} that we need to choose a linear ordering that respects cycles. An important note is that the maximum degree of any vertex is $4$ in $Q$. If a vertex has degree $4$, then it must be a common vertex for two triangles (see vertex $2$ in our example below). Proposition 3.1 in \cite{NS} gives a complete description of type $A_n$ quivers. For example, the quiver below is mutation equivalent to $A_{18}$ with $m = 5$ and $q = 6$.
	\begin{center}
	\includegraphics[scale=0.5]{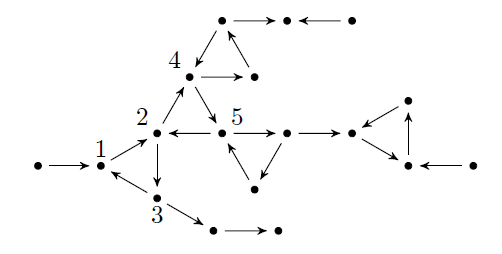}
	\end{center}
	Our argument exploits the quasi-tree nature of $Q$. We labeled only certain vertices in the above example to make the following process of choosing a linear ordering clear. We can choose a linear ordering like so:
	\begin{enumerate}
		\item Number the cycles $1, \ldots , m$. Choose one of the $3$ acceptable linear orderings for the first cycle, say  $3 \prec 2 \prec 1$.
		\item If the next cycle shares a vertex with any previous cycle, replace the shared vertex with one of the $3$ accepted linear orderings for the next cycle:
		$$3 \prec (5 \prec 2 \prec 4) \prec 1 = 3 \prec 5 \prec 2 \prec 4 \prec 1$$
		If the next cycle does not share a vertex with a previous one, form the following ordering:
		$$3 \prec 2 \prec 1 \prec i \prec j \prec k$$
		If $b_{ik} = b_{ij} = b_{jk} = -1$ is the next cycle. There are $3$ choices for this cycle as well.
		\item Continue in this way until we finish the last cycle:
		$$3 \prec 5 \prec 2 \prec \cdots \prec 4 \prec \cdots \prec 1 \prec \cdots \prec x \prec \cdots \prec y \prec z, \quad \text{ or}$$
		$$3 \prec 2 \prec 1 \prec \cdots \prec i \prec \cdots \prec j \prec \cdots \prec k \prec \cdots \prec x \prec \cdots \prec y \prec z$$
		If $b_{zx} = b_{xy} = b_{yz} = -1$ is the $m^{th}$ cycle.
		\item Finally, place the remaining $q$ vertices not in a cycle anywhere in the sequence.
	\end{enumerate}
	Clearly, for any $i,j,k \in \mathcal{I}$, if $b_{ik} = b_{ij} = b_{jk} = -1$ then $i \prec j \prec k$. This along with Lemma \ref{base-case} proves Lemma \ref{order-choice}.
\end{proof}

Note that step 2 is always feasible (with a minor adjustment) even if a cycle shares an edge with another; the only requirement we have is that cycles must be oriented and there are no ``cycles of cycles''. Therefore, this proof may be extended to the case of a type $D_n$ quiver, making adjustments as needed. 

If a type $A_n$ quiver $Q$ is acyclic, then Lemma \ref{base-case} completes the base case of the induction in Lemma \ref{induction} for arbitrary orders.
If $Q$ possesses cycles, then we can use Lemma \ref{order-choice} to complete this same induction but for a specific linear ordering.

\begin{Lem} \label{induction}
	Assume that $Q$ is a quiver of type $A_n$. 
	Then there exists a linear ordering $\prec$ such that if $|b_{ij}^\bv|=0$, then $\pi(r_i^\bv r_j^\bv)^2 =\mathrm{id}$; if $|b_{ij}^\bv| =1$, then $\pi(r_i^\bv r_j^\bv)^3 =\mathrm{id}$; if $b_{ki}^{\bv} = b_{ij}^{\bv} = b_{jk}^{\bv} = \pm 1$, then $\pi(r_j^{\bv} r_i^{\bv} r_j^{\bv} r_k^{\bv})^2 = \mathrm{id}$ for $i,j,k \in \mathcal I$ and all mutation sequences $\bv$.
\end{Lem}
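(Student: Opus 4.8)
\textbf{Proof strategy for Lemma \ref{induction}.}
The plan is to induct on the length of the mutation sequence $\bv$, with the base case $\bv = [\,]$ handled by combining Lemma \ref{base-case} with Lemma \ref{order-choice}: fix once and for all the linear ordering $\prec$ produced by Lemma \ref{order-choice} (any ordering if $Q$ is acyclic, since then no oriented $3$-cycles occur and Lemma \ref{base-case} alone suffices). This choice guarantees that for the initial matrix $B$ all three relations — the order-$2$ relation when $|b_{ij}|=0$, the order-$3$ relation when $|b_{ij}|=1$, and the order-$2$ relation on $\pi(r_j r_i r_j r_k)$ for every oriented $3$-cycle $b_{ki}=b_{ij}=b_{jk}=\pm 1$ — hold simultaneously in $\pi(\mathcal W)$.

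For the inductive step, write $\bv = \bv'[k]$ and assume the three relations hold for all pairs and triples with respect to $B^{\bv'}$ and the reflections $r_i^{\bv'}$. One then has to verify that a single mutation at index $k$ preserves all three relations for $B^{\bv}$ and $r_i^{\bv} $. The key observation is that a type $A_n$ quiver stays type $A_n$ under mutation, so $B^{\bv}$ is again (the exchange matrix of) an $A_n$ quiver: its nonzero entries are $\pm 1$, its only simple cycles are oriented triangles, and Proposition 3.1 of \cite{NS} describes them. Thus after the mutation the relations we must check are again exactly: order $2$ for disconnected pairs, order $3$ for edges, and the four-letter order-$2$ relation for triangles. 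By Definition \ref{def-r}, each $r_i^{\bv}$ is either $r_i^{\bv'}$ or $r_k^{\bv'} r_i^{\bv'} r_k^{\bv'}$ depending on the sign of $b_{ik}^{\bv'} c_k^{\bv'}$; in the representation $\pi$ this is conjugation by $\pi(r_k^{\bv'})$ (when it occurs). So for any sub-configuration of $B^{\bv}$ not involving a change at $k$, the relevant relation is literally a $\pi(r_k^{\bv'})$-conjugate of a relation already known by induction, hence still holds. The real content is local: one must enumerate, using the $A_n$ structure, the possible local configurations around the mutated vertex $k$ in $B^{\bv'}$ (vertex $k$ has degree at most $4$, so there are only finitely many shapes — $k$ isolated from a given pair, $k$ adjacent to one of them, $k$ forming a triangle with them, two triangles meeting at $k$, etc.), compute how $B$ and the reflections transform under mutation at $k$ using \eqref{eqn-mmuu} and \eqref{def-sx_i-1}, and check in each case that the resulting relation for $B^{\bv}$ follows from the inductive hypotheses (possibly several of them at once) together with Lemma \ref{base-case}.

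The main obstacle, I expect, is precisely this case analysis around the mutated vertex: one has to be careful that when mutation at $k$ \emph{creates} a new triangle (or destroys one, or flips an edge between $i$ and $j$), the new four-letter or order-$3$ relation is deducible from the old relations — this is where the hypothesis that $\prec$ was chosen compatibly with \emph{all} triangles of \emph{every} $B^{\bv}$, not just the initial one, is used, and it is not a priori obvious that a single fixed $\prec$ does the job for every $\bv$ simultaneously. The resolution should be that the combinatorial recipe of Lemma \ref{order-choice} is mutation-stable in the appropriate sense: an oriented triangle of $B^{\bv}$ either already "was" a triangle traceable back through the quasi-tree structure, or arises in a controlled way, and in all cases the cyclic/acyclic compatibility of $\prec$ with it can be checked against Lemma \ref{base-case}. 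Organizing this verification cleanly — ideally by reducing every instance to one of the finitely many local pictures and invoking Lemmas \ref{swap-indices-i,j}, \ref{swap-indices-k}, and \ref{base-case} — is the heart of the argument, and is what the later sections of the paper are presumably set up to carry out in detail.
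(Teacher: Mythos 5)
Your skeleton is the same as the paper's: induct on the length of $\bv$, establish the base case $\bv=[\,]$ via Lemma \ref{base-case} with the ordering supplied by Lemma \ref{order-choice}, and show that a single mutation at a vertex $l$ preserves all three relations by a local case analysis exploiting the type $A_n$ structure (entries $\pm 1$, only oriented triangles, bounded degree). However, you have deferred precisely the part that constitutes the proof. The paper's inductive step is not a routine conjugation argument: when $l\neq i,j$ the product $r_i^{\bv[l]}r_j^{\bv[l]}$ can equal $r_i^{\bv}r_l^{\bv}r_j^{\bv}r_l^{\bv}$, with only one factor conjugated, and this is where the real work lies. The paper splits this into four subcases according to the vanishing of $b_{il}^{\bv}$ and $b_{ij}^{\bv}$; the last subcase (mutation at $l$ killing the edge $ij$ of a triangle $\{i,j,l\}$) is exactly where the four-letter relation $\pi(r_j^{\bv}r_l^{\bv}r_j^{\bv}r_i^{\bv})^2=\mathrm{id}$ must be invoked, and Counterexample \ref{exa-counter} shows the step genuinely fails for a bad ordering. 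Similarly, when a mutation at $l\in\{i,j,k\}$ creates a triangle $\{i,j,k\}$ in $B^{\bv[l]}$, the new four-letter relation has to be derived by explicit computation from the braid relations of the length-two path that the triangle was before the mutation. Your proposal names these phenomena but does not verify any of them.

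One conceptual point in your outline is also off, in a way that would make the argument harder than it needs to be. You worry that the fixed ordering $\prec$ must be ``compatible with all triangles of every $B^{\bv}$'' and that one must show the recipe of Lemma \ref{order-choice} is ``mutation-stable.'' That is not how the proof goes: the ordering enters only in the base case. Once the three relations hold for $\bv=[\,]$, the inductive step deduces the relations for $B^{\bv[l]}$ purely algebraically from those for $B^{\bv}$, with no further reference to $\prec$; a triangle newly created by mutation inherits its relation from the inductive hypothesis applied to the pre-mutation path, not from a fresh compatibility check of the ordering. Finally, Lemmas \ref{swap-indices-i,j} and \ref{swap-indices-k} play no role in the proof of this lemma (they are combined with it afterwards, to feed Lemma \ref{swap-identity}), so invoking them here is a misattribution.
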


\begin{proof}
	
	If $Q$ is acyclic, we may pick any linear ordering. 
	If $Q$ is not acyclic, pick a linear ordering by Lemma \ref{order-choice}.
	From Lemma \ref{base-case}, we have that the result holds for $\bv = [\,]$ and our chosen linear ordering, completing the base case for both acyclic and non-acyclic $Q$.
	Let us continue with induction on the length $m$ of $\bv$.
	Assume that $\pi(r_i^\bv r_j^\bv)^2 =\mathrm{id}$ whenever $|b_{ij}^{\bv}| = 0$, $\pi(r_i^\bv r_j^\bv)^3 =\mathrm{id}$ whenever $|b_{ij}^{\bv}| = 1$, and $\pi(r_j^{\bv} r_i^{\bv} r_j^{\bv} r_k^{\bv})^2 = \mathrm{id}$ whenever $b_{ki}^{\bv} = b_{ij}^{\bv} = b_{jk}^{\bv} = \pm 1$ for $i,j,k \in \mathcal{I}$.
	We wish to show that the same identities hold for $\bv[l]$, where $l \in \mathcal{I}$.
	We begin with the first two identities.
	
	Let us mutate in the direction $l \in \mathcal{I}$.
	If $l = i$ or $j$, then $|b_{ij}^{\bv[l]}| = |b_{ij}^{\bv}|$ and $r_i^{\bv[l]}r_j^{\bv[l]} = r_i^{\bv}r_j^{\bv}$ or $r_j^{\bv}r_i^{\bv}$. 
	The inductive hypothesis then gives us the desired result. 
	If $l \neq i, j$, then we would like to know how $|b_{ij}^{\bv[l]}|$ differs from $|b_{ij}^{\bv}|$.
	As $b_{ij}^{\bv[l]} = b_{ij}^{\bv} + \sgn(b_{il}^{\bv})\max(b_{il}^{\bv}b_{lj}^{\bv}, 0)$, it only changes when $b_{il}^{\bv}b_{lj}^{\bv} > 0$.
	Thus we may split into four cases:
	\begin{align*} r_i^{\bv[l]} r_j^{\bv[l]} &= r_i^{\bv} r_j^{\bv}, &
	r_i^{\bv[l]} r_j^{\bv[l]}& = r_i^{\bv} r_l^{\bv}r_j^{\bv} r_l^{\bv},\\
	r_i^{\bv[l]} r_j^{\bv[l]}& =  r_l^{\bv}r_i^{\bv} r_l^{\bv}r_j^{\bv},&
	r_i^{\bv[l]} r_j^{\bv[l]}&=  r_l^{\bv}r_i^{\bv}r_j^{\bv} r_l^{\bv}.\end{align*}
	In the first and fourth cases, we have $b_{il}^{\bv}b_{lj}^{\bv} \leq 0$, forcing $b_{ij}^{\bv[l]} = b_{ij}^{\bv}$.
	In the second and third cases, we have $b_{il}^{\bv}b_{lj}^{\bv} \geq 0$.
	Clearly, the inductive hypothesis would lead to the desired result in the first and last cases.
	The third case is the second case with $i$ and $j$ swapped.
	Hence, we may restrict solely to the second case to complete our induction.
	
	If
	$$r_i^{\bv[l]} r_j^{\bv[l]} = r_i^{\bv} r_l^{\bv}r_j^{\bv} r_l^{\bv},$$
	then there are four further possible situations:
	\begin{align*} (1)& \,b_{il}^{\bv} = 0, \, b_{ij}^{\bv} = 0, \, b_{ij}^{\bv[l]} = 0;&
	(2)& \,b_{il}^{\bv} = 0, \, b_{ij}^{\bv} \neq 0, \, b_{ij}^{\bv[l]} = b_{ij}^{\bv};\\
	(3)& \,b_{il}^{\bv} \neq 0, \, b_{ij}^{\bv} = 0, \, b_{ij}^{\bv[l]} \neq 0;&
	(4)& \,b_{il}^{\bv} \neq 0, \, b_{ij}^{\bv} \neq 0, \, b_{ij}^{\bv[l]} = b_{ij}^{\bv} + \sgn(b_{il}^{\bv})b_{il}^{\bv}b_{lj}^{\bv}.\end{align*}
	Note that, when working with type $A_n$ quivers, case (4) requires that $b_{ij}^{\bv[l]} = 0$.
	
	\underline{Case (1):}
	The inductive hypothesis gives that $\pi(r_i^{\bv}r_l^{\bv})^2 = \mathrm{id}$ and $\pi(r_i^{\bv}r_j^{\bv})^2 = \mathrm{id}$.
	Hence
	$$\pi(r_i^{\bv[l]} r_j^{\bv[l]})^2 = \pi(r_i^{\bv} r_l^{\bv}r_j^{\bv} r_l^{\bv}r_i^{\bv} r_l^{\bv}r_j^{\bv} r_l^{\bv}) = \pi(r_i^{\bv} r_l^{\bv}r_j^{\bv} r_i^{\bv} r_j^{\bv} r_l^{\bv}) =  \pi(r_i^{\bv} r_l^{\bv} r_i^{\bv}  r_l^{\bv}) = \mathrm{id},$$
	satisfying the result.
	
	\underline{Case (2):}
	The inductive hypothesis gives that $\pi(r_i^{\bv}r_l^{\bv})^2 = \mathrm{id}$ and $\pi(r_i^{\bv} r_j^{\bv})^3 = \mathrm{id}$.
	Hence
	$$\pi(r_i^{\bv[l]} r_j^{\bv[l]})^3 = \pi(r_i^{\bv} r_l^{\bv}r_j^{\bv} r_l^{\bv}r_i^{\bv} r_l^{\bv}r_j^{\bv} r_l^{\bv}r_i^{\bv} r_l^{\bv}r_j^{\bv} r_l^{\bv}) = \pi(r_i^{\bv} r_l^{\bv}r_j^{\bv} r_i^{\bv} r_j^{\bv} r_i^{\bv}r_j^{\bv} r_l^{\bv}) = \pi(r_i^{\bv} r_l^{\bv}r_i^{\bv} r_l^{\bv}) = \mathrm{id},$$
	satisfying the result.
	
	\underline{Case (3):}
	The inductive hypothesis gives that $\pi(r_i^{\bv}r_l^{\bv})^3 = \mathrm{id}$ and $\pi(r_i^{\bv} r_j^{\bv})^2 = \mathrm{id}$.
	Hence
	$$\pi(r_i^{\bv[l]} r_j^{\bv[l]})^3 = \pi(r_i^{\bv} r_l^{\bv}r_j^{\bv} r_l^{\bv}r_i^{\bv} r_l^{\bv}r_j^{\bv} r_l^{\bv}r_i^{\bv} r_l^{\bv}r_j^{\bv} r_l^{\bv}) =\pi(r_i^{\bv} r_j^{\bv}r_l^{\bv} r_j^{\bv}r_i^{\bv} r_j^{\bv}r_l^{\bv} r_j^{\bv}r_i^{\bv} r_j^{\bv}r_l^{\bv} r_j^{\bv}) $$
	$$= \pi(r_i^{\bv} r_j^{\bv}r_l^{\bv} r_i^{\bv}r_l^{\bv}r_i^{\bv}r_l^{\bv} r_j^{\bv}) = \pi(r_i^{\bv} r_j^{\bv}r_i^{\bv} r_j^{\bv}) = \mathrm{id},$$
	satisfying the result.
	
	\underline{Case (4):}
	Note that $b_{il}^{\bv}b_{lj}^{\bv} > 0$.
	If $b_{il}^{\bv} < 0$, then $b_{ij}^{\bv} > 0$.
	If $b_{il}^{\bv} > 0$, then $b_{ij}^{\bv} < 0$.
	Hence $b_{il}^{\bv}, b_{lj}^{\bv},$ and $b_{ji}^{\bv}$ all have the same sign.
	The inductive hypothesis gives that $\pi(r_i^{\bv}r_l^{\bv})^3 = \mathrm{id}$, $\pi(r_i^{\bv} r_j^{\bv})^3 = \mathrm{id}$, and $\pi(r_j^{\bv} r_l^{\bv}r_j^{\bv} r_i^{\bv})^2 = \mathrm{id}$.
	Hence
	$$\pi(r_i^{\bv[l]} r_j^{\bv[l]})^2 = \pi(r_i^{\bv} r_l^{\bv}r_j^{\bv} r_l^{\bv}r_i^{\bv} r_l^{\bv}r_j^{\bv} r_l^{\bv}) = \pi(r_i^{\bv} r_j^{\bv}r_l^{\bv} r_j^{\bv}r_i^{\bv} r_j^{\bv}r_l^{\bv} r_j^{\bv}) =\pi(r_i^{\bv} r_j^{\bv}r_l^{\bv} r_j^{\bv})^2 = \mathrm{id}. $$
	Case (4) is the reason why we require the $\pi(r_j^{\bv} r_l^{\bv}r_j^{\bv} r_i^{\bv})^2$ identity, which causes certain orders in non-acyclic $Q$ to fail.
	See Counterexample \ref{exa-counter}.
	
	We now work to show that $\pi(r_j^{\bv[l]} r_i^{\bv[l]} r_j^{\bv[l]} r_k^{\bv[l]})^2 = \mathrm{id}$ whenever $b_{ki}^{\bv[l]} = b_{ij}^{\bv[l]}= b_{jk}^{\bv[l]} \neq 0$ for vertices $i,j,k,l \in \mathcal{I}$.
	Assume that $i,j,k \neq l$.
	Then at least two of $b_{il}, b_{jl},$ and $b_{kl}$ are zero due to our restriction to type $A_n$ quivers.
	It is then easy to show from our inductive hypothesis that 
	$$\pi(r_j^{\bv[l]} r_i^{\bv[l]} r_j^{\bv[l]} r_k^{\bv[l]})^2 = \pi(r_j^{\bv} r_i^{\bv} r_j^{\bv} r_k^{\bv})^2  \ \text{ or } \ \pi(r_l^{\bv})\pi(r_j^{\bv} r_i^{\bv} r_j^{\bv} r_k^{\bv})^2\pi(r_l^{\bv}).$$
	The inductive hypothesis then gives us again that 
	$$\pi(r_j^{\bv[l]} r_i^{\bv[l]} r_j^{\bv[l]} r_k^{\bv[l]})^2 = \mathrm{id},$$
	as $b_{ki}^{\bv} = b_{ij}^{\bv}= b_{jk}^{\bv} \neq 0$.
Hence, we need only prove our result for the case $l \in \{i,j,k\}$, which we will split into three cases.	
	
	\underline{Case $l = i$:}
	Then $b_{ki}^{\bv} = b_{ij}^{\bv}$ and $b_{jk}^{\bv} = 0$.
	The inductive hypothesis gives us $\pi(r_i^{\bv}r_k^{\bv})^3 = \mathrm{id}$, $\pi(r_i^{\bv} r_j^{\bv})^3 = \mathrm{id}$, and $\pi(r_j^{\bv}r_k^{\bv})^2 = \mathrm{id}$.
	Additionally, either $b_{ki}^{\bv}c_i^{\bv}$ or $b_{ji}^{\bv}c_i^{\bv}$ must be positive but not both.
	Thus $r_j^{\bv[l]} r_i^{\bv[l]} r_j^{\bv[l]} r_k^{\bv[l]} = r_i^{\bv} r_j^{\bv} r_i^{\bv} r_j^{\bv}  r_i^{\bv}r_k^{\bv}$ or $ r_j^{\bv} r_i^{\bv} r_j^{\bv}  r_i^{\bv}r_k^{\bv}r_i^{\bv}$.
	
	If $r_j^{\bv[l]} r_i^{\bv[l]} r_j^{\bv[l]} r_k^{\bv[l]} = r_i^{\bv} r_j^{\bv} r_i^{\bv} r_j^{\bv}  r_i^{\bv}r_k^{\bv}$, then
	$$\pi(r_j^{\bv[l]} r_i^{\bv[l]} r_j^{\bv[l]} r_k^{\bv[l]} )^2 = \pi(r_i^{\bv} r_j^{\bv} r_i^{\bv} r_j^{\bv}  r_i^{\bv}r_k^{\bv}r_i^{\bv} r_j^{\bv} r_i^{\bv} r_j^{\bv}  r_i^{\bv}r_k^{\bv}) $$
	$$ = \pi(r_j^{\bv} r_k^{\bv}r_j^{\bv} r_k^{\bv}) = \pi(r_j^{\bv} r_k^{\bv})^2 = \mathrm{id}.$$
	
	If $r_j^{\bv[l]} r_i^{\bv[l]} r_j^{\bv[l]} r_k^{\bv[l]} =  r_j^{\bv} r_i^{\bv} r_j^{\bv}  r_i^{\bv}r_k^{\bv}r_i^{\bv}$, then
	\begin{align*} \pi(r_j^{\bv[l]} r_i^{\bv[l]} r_j^{\bv[l]} r_k^{\bv[l]} )^2&= \pi(r_j^{\bv} r_i^{\bv} r_j^{\bv}  r_i^{\bv}r_k^{\bv}r_i^{\bv}r_j^{\bv} r_i^{\bv} r_j^{\bv}  r_i^{\bv}r_k^{\bv}r_i^{\bv})\\
	&=\pi(r_j^{\bv} r_i^{\bv} r_j^{\bv}  r_i^{\bv}r_k^{\bv}r_j^{\bv}r_k^{\bv}r_i^{\bv}) = \pi(r_j^{\bv} r_i^{\bv} r_j^{\bv}  r_i^{\bv}r_j^{\bv}r_i^{\bv}) = \pi(r_j^{\bv}  r_i^{\bv})^3 = \mathrm{id}. \end{align*}
	
	\underline{Case $l = j$:}
	Then $b_{ij}^{\bv} = b_{jk}^{\bv}$ and $b_{ki}^{\bv} = 0$.
	The inductive hypothesis gives us $\pi(r_i^{\bv}r_j^{\bv})^3 = \mathrm{id}$, $\pi(r_j^{\bv} r_k^{\bv})^3 = \mathrm{id}$, and $\pi(r_i^{\bv}r_k^{\bv})^2 = \mathrm{id}$.
	Additionally, either $b_{ij}^{\bv}c_j^{\bv}$ or $b_{kj}^{\bv}c_j^{\bv}$ must be positive but not both.
	Thus $r_j^{\bv[l]} r_i^{\bv[l]} r_j^{\bv[l]} r_k^{\bv[l]} = r_i^{\bv}r_k^{\bv}$ or $ r_j^{\bv} r_i^{\bv} r_k^{\bv}r_j^{\bv}$.
	
	If $r_j^{\bv[l]} r_i^{\bv[l]} r_j^{\bv[l]} r_k^{\bv[l]} = r_i^{\bv}r_k^{\bv}$, then
	$$\pi(r_j^{\bv[l]} r_i^{\bv[l]} r_j^{\bv[l]} r_k^{\bv[l]})^2 = \pi( r_i^{\bv}r_k^{\bv})^2 = \mathrm{id}.$$
	
	If $r_j^{\bv[l]} r_i^{\bv[l]} r_j^{\bv[l]} r_k^{\bv[l]} = r_j^{\bv} r_i^{\bv} r_k^{\bv}r_j^{\bv}$, then
	$$\pi(r_j^{\bv[l]} r_i^{\bv[l]} r_j^{\bv[l]} r_k^{\bv[l]})^2 = \pi(r_j^{\bv} r_i^{\bv} r_k^{\bv}r_i^{\bv} r_k^{\bv}r_j^{\bv}) = \pi(r_j^{\bv} r_j^{\bv}) = \mathrm{id}.$$
	
	\underline{Case $l = k$:}
	Then $b_{jk}^{\bv} = b_{ki}^{\bv}$ and $b_{ij}^{\bv} = 0$.
	The inductive hypothesis gives us $\pi(r_j^{\bv}r_k^{\bv})^3 = \mathrm{id}$, $\pi(r_k^{\bv} r_i^{\bv})^3 = \mathrm{id}$, and $\pi(r_i^{\bv}r_j^{\bv})^2 = \mathrm{id}$.
	Additionally, either $b_{jk}^{\bv}c_k^{\bv}$ or $b_{ik}^{\bv}c_k^{\bv}$ must be positive but not both.
	Thus $r_j^{\bv[l]} r_i^{\bv[l]} r_j^{\bv[l]} r_k^{\bv[l]} = r_k^{\bv}r_j^{\bv}r_k^{\bv}r_i^{\bv}r_k^{\bv}r_j^{\bv}$ or $ r_j^{\bv}r_k^{\bv} r_i^{\bv}r_k^{\bv} r_j^{\bv}r_k^{\bv}$.
	
	If $r_j^{\bv[l]} r_i^{\bv[l]} r_j^{\bv[l]} r_k^{\bv[l]} = r_k^{\bv}r_j^{\bv}r_k^{\bv}r_i^{\bv}r_k^{\bv}r_j^{\bv}$, then
	$$\pi(r_j^{\bv[l]} r_i^{\bv[l]} r_j^{\bv[l]} r_k^{\bv[l]} )^2 = \pi(r_k^{\bv}r_j^{\bv}r_k^{\bv}r_i^{\bv}r_k^{\bv}r_j^{\bv}r_k^{\bv}r_j^{\bv}r_k^{\bv}r_i^{\bv}r_k^{\bv}r_j^{\bv})$$
	$$= \pi(r_k^{\bv}r_j^{\bv}r_k^{\bv}r_i^{\bv}r_j^{\bv}r_i^{\bv}r_k^{\bv}r_j^{\bv}) = \pi(r_k^{\bv}r_j^{\bv}r_k^{\bv}r_j^{\bv}r_k^{\bv}r_j^{\bv}) = \pi(r_k^{\bv}r_j^{\bv})^3 = \mathrm{id}.$$
	
	If $r_j^{\bv[l]} r_i^{\bv[l]} r_j^{\bv[l]} r_k^{\bv[l]} = r_j^{\bv}r_k^{\bv} r_i^{\bv}r_k^{\bv} r_j^{\bv}r_k^{\bv}$, then
	$$\pi(r_j^{\bv[l]} r_i^{\bv[l]} r_j^{\bv[l]} r_k^{\bv[l]} )^2 = \pi(r_j^{\bv}r_k^{\bv} r_i^{\bv}r_k^{\bv} r_j^{\bv}r_k^{\bv}r_j^{\bv}r_k^{\bv} r_i^{\bv}r_k^{\bv} r_j^{\bv}r_k^{\bv})$$
	$$ = \pi(r_j^{\bv}r_k^{\bv} r_i^{\bv}r_j^{\bv} r_i^{\bv}r_k^{\bv} r_j^{\bv}r_k^{\bv}) = \pi(r_j^{\bv}r_k^{\bv}r_j^{\bv}r_k^{\bv} r_j^{\bv}r_k^{\bv}) = \pi(r_j^{\bv} r_k^{\bv})^3 = \mathrm{id}.$$
	
	This last case completes the inductive step.
	Therefore, there exists a linear ordering $\prec$ such that if $|b_{ij}^\bv|=0$, then $\pi(r_i^\bv r_j^\bv)^2 =\mathrm{id}$; if $|b_{ij}^\bv| =1$, then $\pi(r_i^\bv r_j^\bv)^3 =\mathrm{id}$; if $b_{ki}^{\bv} = b_{ij}^{\bv} = b_{jk}^{\bv} = \pm 1$, then $\pi(r_j^{\bv} r_i^{\bv} r_j^{\bv} r_k^{\bv})^2 = \mathrm{id}$ for $i,j,k \in \mathcal I$.
\end{proof}

We give a counterexample to demonstrate the necessity of Lemma \ref{order-choice} when proving Lemma \ref{induction}.

\begin{Exa}[Counterexample to Lemma \ref{induction} for arbitrary linear orders when quivers possess cycles] \label{exa-counter}
	Observe the case where we have the following quiver:
	\begin{center}
	\includegraphics[scale=0.5]{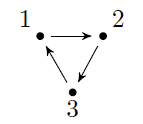}
	\end{center}
	Then  we have that 
	$$\pi(r_1^{[2]} r_3^{[2]})^2 = \mathrm{id}$$
	only when
	$$1 \prec 3 \prec 2, \quad 2 \prec 1 \prec 3, \ \text{ or } \  3 \prec 2 \prec 1.$$
	If
	$$1 \prec 2 \prec 3, \quad 2 \prec 3 \prec 1, \ \text{ or } \ 3 \prec 1 \prec 2,$$
	then
	$$\pi(r_1^{[2]} r_3^{[2]})^2 \neq \mathrm{id},$$
	creating a contradiction to Lemma \ref{induction} for arbitrary linear orders.
\end{Exa}

In what follows, notice that Lemma \ref{induction} combined with Lemmas \ref{swap-indices-i,j} and \ref{swap-indices-k} satisfies the assumption of Lemma \ref{swap-identity}.

\begin{Lem} \label{swap-identity}
If $\bv = [i,j]_{\bp}$ is an elementary swap for some $i,j \in \mathcal{I}$ and for a mutation sequence $\bp$ with $\pi(r_k^{\bp [i,j,i,j,i]}) = \pi(r_{(i, j) k}^{\bp })$ for all $k \in \mathcal{I}$, then $\pi(r_k^{\bp [i,j,i,j,i] \bu}) = \pi(r_{(i,j)k}^{\bp \bw})$, where $\bu=[u_1, u_2, \dots , u_m]$ is an arbitrary mutation sequence of length $m$ and $\bw = [(i,j) u_1, (i,j) u_2, \dots , (i,j) u_m]$.

\end{Lem}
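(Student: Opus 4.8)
The plan is to prove this by induction on the length $m$ of $\bu$, mirroring the structure of the inductive arguments already used in the proof of Lemma \ref{induction}. The base case $m = 0$ is precisely the hypothesis $\pi(r_k^{\bp[i,j,i,j,i]}) = \pi(r_{(i,j)k}^{\bp})$ for all $k \in \mathcal I$. For the inductive step, I would assume $\pi(r_k^{\bp[i,j,i,j,i]\bu}) = \pi(r_{(i,j)k}^{\bp\bw})$ for all $k$, where $\bu$ has length $m$ and $\bw$ is its $(i,j)$-relabelling, and then show the same identity holds for $\bu[l]$ and $\bw[(i,j)l]$ for any $l \in \mathcal I$.

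The key observation driving the inductive step is that the recursive definition \eqref{def-sx_i-1} of $r_k^{\bw[l]}$ depends only on the sign of $b_{kl}^{\bw} c_l^{\bw}$, and these quantities are compatible with relabelling: since $\bp[i,j,i,j,i]\bu$ and $\bp\bw$ produce seeds related by the transposition $(i,j)$ (by Lemma \ref{swap-as-transposition} applied along the elementary swap, together with the fact that $\bw$ is the $(i,j)$-relabelling of $\bu$), we have $b_{kl}^{\bp[i,j,i,j,i]\bu} = b_{(i,j)k,(i,j)l}^{\bp\bw}$ and $c_l^{\bp[i,j,i,j,i]\bu} = $ (the $(i,j)$-permutation of) $c_{(i,j)l}^{\bp\bw}$, with the same sign pattern. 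Hence the condition ``$b_{kl}^{\bp[i,j,i,j,i]\bu} c_l^{\bp[i,j,i,j,i]\bu} > 0$'' in \eqref{def-sx_i-1} holds if and only if ``$b_{(i,j)k,(i,j)l}^{\bp\bw} c_{(i,j)l}^{\bp\bw} > 0$'' holds. Applying \eqref{def-sx_i-1} on both sides, in the first case $r_k^{\bp[i,j,i,j,i]\bu[l]} = r_l^{\bp[i,j,i,j,i]\bu} r_k^{\bp[i,j,i,j,i]\bu} r_l^{\bp[i,j,i,j,i]\bu}$, which by the inductive hypothesis maps under $\pi$ to $\pi(r_{(i,j)l}^{\bp\bw} r_{(i,j)k}^{\bp\bw} r_{(i,j)l}^{\bp\bw}) = \pi(r_{(i,j)k}^{\bp\bw[(i,j)l]})$; in the second case both sides are simply $r_k$ respectively $r_{(i,j)k}$, so the identity persists. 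This gives the result for $\bu[l]$, completing the induction.

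The main technical point to get right — and the step I expect to require the most care — is establishing the claimed relabelling compatibility of the $B$-matrices and $c$-vectors along $\bp[i,j,i,j,i]\bu$ versus $\bp\bw$. One must verify that after traversing the elementary swap $[i,j,i,j,i]$ starting from the seed at the end of $\bp$, the seed is genuinely $(\bx,B)^{(i,j)}$-type (this is exactly the content of Lemma \ref{swap-as-transposition}, which covers both the labelled triangulation and the labelled seed), and then that mutating by $\bu$ on one side corresponds to mutating by the relabelled sequence $\bw$ on the other — a standard equivariance of mutation under simultaneous relabelling of indices and mutation directions, provable by a trivial induction on $m$ directly from \eqref{eqn-mmuu}. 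Once this bookkeeping is in place, the argument is a clean transport of the recursion \eqref{def-sx_i-1} through the relabelling, with no case analysis beyond the binary split already present in that recursion.
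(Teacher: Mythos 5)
Your proposal is correct and follows essentially the same route as the paper's proof: induction on the length of $\bu$, with the inductive step driven by the sign identity $b_{kl}^{\bp[i,j,i,j,i]\bu}c_l^{\bp[i,j,i,j,i]\bu} = b_{(i,j)k\,(i,j)l}^{\bp\bw}c_{(i,j)l}^{\bp\bw}$ and the two-case split of the recursion \eqref{def-sx_i-1}. If anything, you are more careful than the paper, which simply asserts this sign identity, whereas you correctly flag it as the main point and justify it via Lemma \ref{swap-as-transposition} together with the equivariance of mutation under simultaneous relabelling.
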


\begin{proof}
	
	We argue by induction on the length $m$ of a mutation sequence $\bu$.
	If $m = 0$, then $\pi(r_k^{\bp [i,j,i,j,i]}) = \pi(r_{(i, j)k}^{\bp})$ by Lemmas \ref{swap-indices-k} and \ref{induction}, completing the base case.
	Assume now that $\pi(r_k^{\bp [i,j,i,j,i] \bu}) = \pi(r_{(i, j)k}^{\bp \bw})$ for all $\bu$ with length $m - 1$, where $\bw = [(i, j) u_1, (i, j) u_{2}, \dots, (i, j) u_{m-1}]$.
	If we mutate at the vertex $k \in \mathcal{I}$, then 
	$$b_{kl}^{\bp [i,j,i,j,i] \bu} c_l^{\bp [i,j,i,j,i] \bu} = b_{(i, j)k (i, j) l}^{\bp \bw}c_{(i, j) l}^{\bp \bw}.$$
	We may then reduce to two cases:
	$$r_k^{\bp [i,j,i,j,i] \bu[l]} = r_k^{\bp [i,j,i,j,i] \bu},$$
	$$r_k^{\bp [i,j,i,j,i] \bu[l]} = r_l^{\bp [i,j,i,j,i] \bu}r_k^{\bp [i,j,i,j,i] \bu}r_l^{\bp [i,j,i,j,i] \bu}.$$
	
	\underline{Case (1):}
	From our equality $b_{kl}^{\bp [i,j,i,j,i] \bu} c_l^{\bp [i,j,i,j,i] \bu} = b_{(i, j)k (i, j) l}^{\bp \bw}c_{(i, j) l}^{\bp \bw}$, we know that 
	$$r_{(i, j)k}^{\bp \bw [(i,j)l]} = r_{(i, j)k}^{\bp \bw}.$$
	We have that 
	$$\pi(r_{k}^{\bp [i,j,i,j,i] \bu[l]}) = \pi(r_{k}^{\bp [i,j,i,j,i] \bu }) = \pi(r_{(i,j)k}^{\bp \bw}) = \pi(r_{(i,j)k}^{\bp \bw[(i,j)l]}),$$
	completing the inductive step.
	
	\underline{Case (2):}
	From our equality $b_{kl}^{\bp [i,j,i,j,i] \bu} c_l^{\bp [i,j,i,j,i] \bu} = b_{(i, j)k (i, j) l}^{\bp \bw}c_{(i, j) l}^{\bp \bw}$, we know that 
	$$r_{(i, j)k}^{\bp \bw [(i,j)l]} = r_{(i,j)l}^{\bp \bw}r_{(i, j)k}^{\bp \bw}r_{(i,j)l}^{\bp \bw}.$$
	Repeating the same process as before, we arrive at 
	$$\pi(r_k^{\bp [i,j,i,j,i] \bu[l]}) = \pi(r_{(i, j)k}^{\bp \bw[(i,j)l]}),$$
	completing the inductive step.
	
	Therefore, for any mutation sequence $\bu$, we have $\pi(r_k^{\bp [i,j,i,j,i] \bu}) = \pi(r_{(i,j)k}^{\bp \bw})$.
\end{proof}

\begin{Cor} \label{single-swap-result}
	If $\bv = [i,j]_{\bp}$ is an elementary swap for some $i,j \in \mathcal{I}$ and for a mutation sequence $\bp$, then $\pi(r_k^{\bv}) = \pi(r_{(i,j)k})$ for $k \in \mathcal I$.
	
\end{Cor}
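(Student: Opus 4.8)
The plan is to derive Corollary~\ref{single-swap-result} by combining the structural decomposition of an elementary swap $[i,j]_\bp = \bp\,[i,j,i,j,i]\,\up^{(i,j)}$ with the relations furnished by Lemmas~\ref{swap-indices-i,j}, \ref{swap-indices-k} and \ref{induction}, and then propagating the result along the tail $\up^{(i,j)}$ using Lemma~\ref{swap-identity}. First I would record the trivial observation that since $[i,j]_\bp$ is an elementary swap, $|b_{ij}^\bp|=1$, so Lemma~\ref{induction} applies to the seed $B^\bp$: we have $\pi(r_i^\bp r_j^\bp)^3 = \mathrm{id}$, and for any $k$ with a length-$3$ cycle on $\{i,j,k\}$ in $B^\bp$ we have $\pi(r_j^\bp r_i^\bp r_j^\bp r_k^\bp)^2 = \mathrm{id}$, and similarly all the order-$2$ relations $\pi(r_a^\bp r_b^\bp)^2=\mathrm{id}$ whenever $b_{ab}^\bp=0$.

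Next I would plug these identities into Lemmas~\ref{swap-indices-i,j} and \ref{swap-indices-k}. Lemma~\ref{swap-indices-i,j} expresses $\pi(r_i^{\bp[i,j,i,j,i]})$ as one of $\pi(r_i^\bp r_j^\bp)^3\pi(r_j^\bp)$ or $\pi(r_j^\bp r_i^\bp)^3\pi(r_j^\bp)$; since the cubed factor is $\mathrm{id}$, this collapses to $\pi(r_j^\bp) = \pi(r_{(i,j)i}^\bp)$, and symmetrically $\pi(r_j^{\bp[i,j,i,j,i]}) = \pi(r_i^\bp) = \pi(r_{(i,j)j}^\bp)$. For $k\ne i,j$, Lemma~\ref{swap-indices-k} lists the possibilities for $\pi(r_k^{\bp[i,j,i,j,i]})$; in every listed case the discrepancy from $\pi(r_k^\bp)$ is built out of the powered elements $\pi(r_i^\bp r_j^\bp)^3$, $\pi(r_j^\bp r_i^\bp)^3$, $\pi(r_i^\bp r_k^\bp)^2$ (or $\pi(r_j^\bp r_k^\bp)^2$), and $\pi(r_j^\bp r_i^\bp r_j^\bp r_k^\bp)^2$. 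One checks that each of these is $\mathrm{id}$: the cubed ones by Lemma~\ref{induction} since $|b_{ij}^\bp|=1$; the squared two-letter ones because in the relevant branches of Lemma~\ref{swap-indices-k} the hypothesis forces $b_{ik}^\bp=0$ or $b_{jk}^\bp=0$ (so Lemma~\ref{induction} gives the order-$2$ relation); and the quadratic one $\pi(r_j^\bp r_i^\bp r_j^\bp r_k^\bp)^2$ because the branch in which it appears (case (D), both $b_{ki}^\bp, b_{kj}^\bp \ne 0$) together with $|b_{ij}^\bp|=1$ forces $\{i,j,k\}$ to span an oriented $3$-cycle in the type $A_n$ quiver $B^\bp$, which is exactly the hypothesis of the cyclic relation in Lemma~\ref{induction}. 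Hence $\pi(r_k^{\bp[i,j,i,j,i]}) = \pi(r_k^\bp) = \pi(r_{(i,j)k}^\bp)$ for all $k\ne i,j$ as well. Combining the three cases, $\pi(r_k^{\bp[i,j,i,j,i]}) = \pi(r_{(i,j)k}^\bp)$ for \emph{all} $k\in\mathcal I$, which is precisely the hypothesis of Lemma~\ref{swap-identity}.

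Finally, recall that $[i,j]_\bp = \bp\,[i,j,i,j,i]\,\up^{(i,j)}$, so $r_k^{\bv} = r_k^{\bp\,[i,j,i,j,i]\,\up^{(i,j)}}$. Writing $\up^{(i,j)} = [(i,j)u_1,\dots,(i,j)u_m]$ with $\bp = [u_1,\dots,u_m]$ reversed — more precisely, taking $\bu = \bp^{-1}$ and $\bw = \up^{(i,j)}$ so that $\bw$ is $\bu$ with every label transposed by $(i,j)$ — Lemma~\ref{swap-identity} yields $\pi(r_k^{\bp[i,j,i,j,i]\bu}) = \pi(r_{(i,j)k}^{\bp\bw})$. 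But $\bp\bw = \bp\up^{(i,j)}$; here I must be slightly careful to match indices: applying Lemma~\ref{swap-identity} with the roles set up so that the outer sequence after $[i,j,i,j,i]$ is $\up^{(i,j)}$ and the comparison sequence is $\bp$ (the reverse), we get $\pi(r_k^{[i,j]_\bp}) = \pi(r_{(i,j)k}^{\bp\bp^{-1}})$, and since $\bp\bp^{-1}$ is a spur it acts trivially on the reflections (each mutation and its immediate inverse cancel in Definition~\ref{def-r}), giving $\pi(r_{(i,j)k}^{\bp\bp^{-1}}) = \pi(r_{(i,j)k})$ — wait, that would require reducing $\bp\bp^{-1}$ back to the empty sequence at the level of reflections, which holds because $r_a^{\bw[l][l]} = r_a^{\bw}$ directly from \eqref{def-sx_i-1}. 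Thus $\pi(r_k^{\bv}) = \pi(r_{(i,j)k})$. The main obstacle in writing this cleanly is the bookkeeping in the last step: one must verify that $\up^{(i,j)}$ is exactly the $(i,j)$-relabeling of the reverse of $\bp$ and that Lemma~\ref{swap-identity}'s conclusion, applied with these sequences, collapses via the spur $\bp\bp^{-1}$ to the initial reflections $r_{(i,j)k}$; the substantive mathematical content — that all the obstruction terms vanish — is entirely handled by feeding Lemma~\ref{induction} into Lemmas~\ref{swap-indices-i,j} and \ref{swap-indices-k}, so the real work is confirming that each powered element that appears genuinely satisfies the hypotheses ($b_{ab}^\bp = 0$ or $|b_{ij}^\bp|=1$ or the $3$-cycle condition) in its branch.
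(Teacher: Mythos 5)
Your proposal is correct and follows essentially the same route the paper intends: the paper itself states the Corollary as an immediate consequence of Lemma \ref{swap-identity}, whose hypothesis is verified exactly as you do by feeding the relations of Lemma \ref{induction} (using $|b_{ij}^{\bp}|=1$, the vanishing conditions in each branch, and the $3$-cycle relation for case (D)) into Lemmas \ref{swap-indices-i,j} and \ref{swap-indices-k}, and then taking $\bu=\up^{(i,j)}$ so that $\bw=\bp^{-1}$ and the spur $\bp\bp^{-1}$ cancels at the level of reflections. The only blemish is the momentary swap of the roles of $\bu$ and $\bw$ in your last paragraph, which you correct before concluding.
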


Corollary \ref{single-swap-result} will be used to prove Lemma \ref{repeated-swap} below.

\begin{Lem} \label{repeated-swap}
	Suppose that $\bv$ is a product of elementary swaps:
	$$\bv = [i_1,j_1]_{\bp_1} [i_2,j_2]_{\bp_2} \cdots [i_s,j_s]_{\bp_s}.$$
	If $\sigma = (i_1,j_1) (i_{2},j_{2}) \cdots (i_s,j_s) \in \mathfrak S_n,$
	then
	$$\pi(r_k^{\bv}) = \pi(r_{\sigma(k)})$$
	for all $k \in \mathcal{I}.$
\end{Lem}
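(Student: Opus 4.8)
The plan is to prove Lemma~\ref{repeated-swap} by induction on the number $s$ of elementary swaps in the decomposition of $\bv$. The base case $s=1$ is precisely Corollary~\ref{single-swap-result}, which asserts $\pi(r_k^{[i_1,j_1]_{\bp_1}}) = \pi(r_{(i_1,j_1)k})$ for all $k \in \mathcal I$. So the work is entirely in the inductive step.

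For the inductive step, write $\bv = [i_1,j_1]_{\bp_1}\bu$ where $\bu = [i_2,j_2]_{\bp_2}\cdots[i_s,j_s]_{\bp_s}$ is a product of $s-1$ elementary swaps, and set $\tau = (i_1,j_1)$ and $\sigma' = (i_2,j_2)\cdots(i_s,j_s)$, so that $\sigma = \tau\sigma'$. Recalling Definition~\ref{elem-walk}, the elementary swap $[i_1,j_1]_{\bp_1}$ is the mutation sequence $\bp_1[i_1,j_1,i_1,j_1,i_1]\up_1^{(i_1,j_1)}$; I would first absorb the tail $\up_1^{(i_1,j_1)}$ together with $\bu$ and apply Lemma~\ref{swap-identity} to transport the reflections past the core pentagon $\bp_1[i_1,j_1,i_1,j_1,i_1]$. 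Concretely, Corollary~\ref{single-swap-result} (via Lemma~\ref{swap-identity} with the empty tail) gives $\pi(r_k^{\bp_1[i_1,j_1,i_1,j_1,i_1]}) = \pi(r_{\tau k}^{\bp_1})$, which is exactly the hypothesis needed to invoke Lemma~\ref{swap-identity}: for any further mutation sequence $\bu'$ and its conjugate $\bw' = \bu'^{\tau}$ (apply $\tau$ entrywise), we get $\pi(r_k^{\bp_1[i_1,j_1,i_1,j_1,i_1]\bu'}) = \pi(r_{\tau k}^{\bp_1 \bw'})$.

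The subtlety is bookkeeping: the full sequence $\bv$ equals $\bp_1[i_1,j_1,i_1,j_1,i_1]$ followed by $\up_1^{(i_1,j_1)}\bu$, so taking $\bu' = \up_1^{(i_1,j_1)}\bu$ and $\bw' = (\up_1^{(i_1,j_1)}\bu)^{\tau} = \bp_1 \bu^{\tau}$ (using $(\up_1^{(i_1,j_1)})^{\tau} = \bp_1$, since applying $\tau = (i_1,j_1)$ twice is the identity), Lemma~\ref{swap-identity} yields
\[ \pi(r_k^{\bv}) = \pi(r_k^{\bp_1[i_1,j_1,i_1,j_1,i_1]\,\up_1^{(i_1,j_1)}\bu}) = \pi(r_{\tau k}^{\bp_1 \bu^{\tau}}). \]
Now $\bp_1 \bu^{\tau}$ is again a product of $s-1$ elementary swaps: each factor $[i_l,j_l]_{\bp_l}$ of $\bu$ becomes $[\tau i_l, \tau j_l]_{(\bp_1\bp_l)^{\tau}}$ or similar after prepending $\bp_1$ and conjugating by $\tau$; the point is only that it is a genuine product of $s-1$ elementary swaps whose associated permutation is $\tau\sigma'\tau^{-1} = \tau\sigma'\tau$. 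Applying the inductive hypothesis to $\bp_1\bu^{\tau}$ gives $\pi(r_{\tau k}^{\bp_1\bu^{\tau}}) = \pi(r_{(\tau\sigma'\tau)(\tau k)}) = \pi(r_{\tau\sigma'(k)}) = \pi(r_{\sigma(k)})$, which completes the induction.

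The main obstacle I anticipate is making the conjugation/reindexing in the last paragraph fully rigorous: one must verify carefully that $\bp_1\bu^{\tau}$ is literally of the form required by the inductive hypothesis (a product of elementary swaps $[i'_l,j'_l]_{\bp'_l}$, with the correct walks $\bp'_l$ accounting for the prepended $\bp_1$), and that conjugating the transposition list by $\tau$ produces exactly the permutation $\tau\sigma'\tau$. This is essentially the same reindexing bookkeeping that appeared in the proof of Lemma~\ref{labelled-seed-bijection} (compare the manipulation $\up^{(i,j)} = \underline{\bw}^{(i,j)}[k]^{(i,j)}$ there), so I would model the argument on that, but it requires care with the definitions of $\bp^{(i,j)}$ and $\up^{(i,j)}$ to avoid sign or order errors. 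Everything else is a direct assembly of Corollary~\ref{single-swap-result}, Lemma~\ref{swap-identity}, and the induction hypothesis.
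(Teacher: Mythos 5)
Your overall strategy --- induction on $s$ with base case Corollary \ref{single-swap-result} and the inductive step driven by Lemma \ref{swap-identity} --- is the same as the paper's, but you peel off the \emph{first} swap while the paper peels off the \emph{last}, and this choice creates two problems. The first is a concrete bookkeeping error: applying $\tau=(i_1,j_1)$ entrywise to $\up_1^{(i_1,j_1)}$ only undoes the relabeling and does not reverse the order again, so $(\up_1^{(i_1,j_1)})^{\tau}=\bp_1^{-1}$, not $\bp_1$. Hence the correct tail is $\bw'=\bp_1^{-1}\bu^{\tau}$, and Lemma \ref{swap-identity} gives $\pi(r_k^{\bv})=\pi(r_{\tau k}^{\bp_1\bw'})=\pi(r_{\tau k}^{\bu^{\tau}})$ after cancelling the spur $\bp_1\bp_1^{-1}$ --- not $\pi(r_{\tau k}^{\bp_1\bu^{\tau}})$. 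The object you must feed to the inductive hypothesis is therefore $\bu^{\tau}$, whose factors are $[\tau i_l,\tau j_l]_{\bp_l^{\tau}}$; your description of them as $[\tau i_l,\tau j_l]_{(\bp_1\bp_l)^{\tau}}$ ``after prepending $\bp_1$'' does not parse, since prepending a walk to the whole product does not alter the walks of the individual factors.

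The second problem is the one you yourself flag as ``the main obstacle,'' and it is a genuine gap rather than mere bookkeeping: the inductive hypothesis applies only to a product of \emph{elementary swaps}, so you must prove that each relabeled factor $[\tau i_l,\tau j_l]_{\bp_l^{\tau}}$ is again an elementary swap, i.e.\ that $[\tau i_l,\tau j_l,\tau i_l,\tau j_l,\tau i_l]$ is still the lift of a pentagon at the seed reached by $\bp_l^{\tau}$, and that the associated permutation of the relabeled product is $\tau\sigma'\tau$. This equivariance-under-relabeling statement is believable and extractable from the covering-space picture behind Lemma \ref{labelled-seed-bijection}, but it is nowhere established in your argument. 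The paper's proof avoids both issues by writing $\bv=\bw\,[i_{s+1},j_{s+1}]_{\bp_{s+1}}$ with $\bw$ the product of the first $s$ swaps and observing that, up to a spur, $\bv=[i_{s+1},j_{s+1}]_{\bw\bp_{s+1}}\,\bw^{(i_{s+1},j_{s+1})}$; Lemma \ref{swap-identity} then yields $\pi(r_k^{\bv})=\pi(r_{(i_{s+1},j_{s+1})k}^{\bw})$ with $\bw$ left completely untouched, so the inductive hypothesis applies verbatim and no relabeled elementary swaps ever appear. You should either switch to that decomposition or supply the missing equivariance lemma.
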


\begin{proof}
	
	We argue by induction on $s$.
	For $s = 1$, Lemmas \ref{swap-indices-i,j}, \ref{swap-indices-k} and Corollary \ref{single-swap-result} prove the base case.
	We now assume that 
	$$\pi(r_k^{\bv}) = \pi(r_{\sigma(k)})$$
	for all mutation sequences $\bv$ composed of $s$ elementary swaps.
	
	Suppose that we have a mutation sequence
	$$\bv = [i_1,j_1]_{\bp_1} [i_2,j_2]_{\bp_2} \cdots [i_s,j_s]_{\bp_s}[i_{s+1},j_{s+1}]_{\bp_{s+1}}.$$
	Then $\bv = \bw [i_{s+1},j_{s+1}]_{\bp_{s+1}}$ for 
$\bw= [i_1,j_1]_{\bp_1} [i_2,j_2]_{\bp_2} \cdots [i_s,j_s]_{\bp_s}$.
	Hence, up to a spur insertion, 
	$$\bv = [i_{s+1},j_{s+1}]_{\bw \bp_{s+1}} \bu,$$
	where $\bu=\bw^{(i_{s+1},j_{s+1})}$.
	By Lemma \ref{swap-identity}, we have that
	$$\pi(r_k^{[i_{s+1},j_{s+1}]_{\bw \bp_{s+1}} \bu}) = \pi(r_{(i_{s+1},j_{s+1}) k}^{\bw}).$$
	From the inductive hypothesis
	$$\pi(r_{(i_{s+1},j_{s+1}) k}^{\bw}) = \pi(r_{\sigma((i_{s+1},j_{s+1})k)}).$$
	As $\sigma(i_{s+1},j_{s+1})$ is the permutation corresponding to $\bv$, we have completed the inductive step.
\end{proof}

\begin{Lem} \label{4-cycles-good} 
	Suppose that $b_{ij}^{\bu \bp} = 0$ for any two mutation sequences $\bu$ and $\bp$.
	Let $\bv = \bp[i,j,i,j]\bp^{-1}$, where $\bp^{-1}$ is the inverse of $\bp$.
	Then there exists a linear ordering $\prec$ such that
	$$\pi(r_k^{\bu \bv \bw}) = \pi(r_k^{\bu \bw})$$
	for any mutation sequence $\bw$ and any $k \in \mathcal{I}$. 
	
\end{Lem}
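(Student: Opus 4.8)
The plan is to reduce the statement about the ``stable'' 4-cycle segment $\bv=\bp[i,j,i,j]\bp^{-1}$ to the reflection-level identities already packaged in Lemma~\ref{induction}, and then to propagate the equality of reflections past the trailing tail $\bw$. First I would unwind how the reflections $r_k$ evolve along $\bp$, then along $[i,j,i,j]$, then along $\bp^{-1}$, keeping track only of the conjugating elements $g_k$ modulo $\pi$. The key input is the hypothesis $b_{ij}^{\bu\bp}=0$: since the $B$-matrix after $\bu\bp$ has no edge between $i$ and $j$, the mutation rule \eqref{def-sx_i-1} shows that mutating at $i$ and then $j$ (or the reverse) leaves every $r_k^{\bu\bp}$ unchanged except for conjugation behavior governed solely by $r_i^{\bu\bp}$ and $r_j^{\bu\bp}$, and these two reflections commute at the level of $\pi$ by the first identity of Lemma~\ref{induction}, namely $\pi(r_i^{\bu\bp}r_j^{\bu\bp})^2=\mathrm{id}$. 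Tracking the four mutations $i,j,i,j$, each factor $r_i^{\bu\bp}$ or $r_j^{\bu\bp}$ that appears in a conjugator cancels in pairs modulo this commutation relation, so $\pi(r_k^{\bu\bp[i,j,i,j]})=\pi(r_k^{\bu\bp})$ for all $k$.

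Next I would run $\bp^{-1}$ in reverse. The point is that mutation of the extended matrix $[B\,I]$ is an involution in the appropriate sense, so the sequence of signs $b_{k\ell}^{\cdot}c_\ell^{\cdot}$ encountered while retracing $\bp^{-1}$ from the seed $\mu_{\bu\bp[i,j,i,j]}$ mirrors those encountered going forward along $\bp$ from $\mu_\bu$ — \emph{provided} the reflections agree at the turnaround point, which we have just established. A short induction on the length of $\bp$ (peeling off the last letter of $\bp$, applying the inductive step to the shorter prefix, and using that the mutation data at the final letter matches on both branches because the $C$-matrices and $B$-matrices coincide there) then yields
\[ \pi(r_k^{\bu\bv}) \;=\; \pi(r_k^{\bu\bp[i,j,i,j]\bp^{-1}}) \;=\; \pi(r_k^{\bu}), \qquad k\in\mathcal I. \]
This is exactly the ``$\bw=[\,]$'' case of the desired conclusion.

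Finally, to insert an arbitrary tail $\bw$, I would argue that once $\pi(r_k^{\bu\bv})=\pi(r_k^{\bu})$ for all $k$ and moreover $C^{\bu\bv}=C^\bu$, $B^{\bu\bv}=B^\bu$ (the latter because $\bv$ is a closed walk in the exchange graph, being a spur composed with a stable 4-cycle), the recursive definition \eqref{def-sx_i-1} propagates the equality: mutating both $\mu_{\bu\bv}$ and $\mu_\bu$ in the same direction $w_1$ produces the same sign conditions $b_{k\ell}c_\ell>0$ and the same conjugators, so $\pi(r_k^{\bu\bv[w_1]})=\pi(r_k^{\bu[w_1]})$; iterate along $\bw$. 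This last propagation is essentially the mechanism already used in the proof of Lemma~\ref{swap-identity}, so I would cite that pattern rather than redo it.

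\textbf{Main obstacle.} I expect the genuine difficulty to be the middle step — the careful bookkeeping that retracing $\bp^{-1}$ undoes the conjugations introduced by $\bp$ at the level of $\pi$. Forward and backward mutation of reflections are not literally inverse as group elements (the conjugators accumulate on one side), so one must verify that the relevant products telescope \emph{modulo} the Barot--Marsh-type relations of Lemma~\ref{induction}, and in particular that no ``new'' relation beyond the three listed there is needed. The case analysis in the inductive step — distinguishing whether the last letter of $\bp$ coincides with $i$, with $j$, or with neither, and tracking how $b^{\bu\bp'}_{ij}$ can become nonzero along a shorter prefix $\bp'$ even though $b^{\bu\bp}_{ij}=0$ — is where I would spend the most care, likely mirroring Cases (1)--(4) of the induction in Lemma~\ref{induction}.
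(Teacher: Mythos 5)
Your proposal is correct, and it rests on the same single non-formal input as the paper's proof, namely the identity $\pi(r_i^{\bu\bp} r_j^{\bu\bp})^2 = \mathrm{id}$ from Lemma \ref{induction} at the seed where $b_{ij}^{\bu\bp}=0$; the difference is organizational. You evaluate the effect of $[i,j,i,j]$ once, at the turnaround seed reached by $\bu\bp$ (where indeed each $r_k^{\bu\bp}$ is either fixed or conjugated by $(r_i^{\bu\bp}r_j^{\bu\bp})^{\pm 2}$, because the products $b_{ki}c_i$ and $b_{kj}c_j$ are unchanged by all four mutations), and then propagate outward through $\bp^{-1}\bw$ using the fact that the two branches carry identical $(B,C)$-data and $\pi$-identical reflections, finally telescoping the spur $\bp\bp^{-1}$. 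The paper instead runs two nested inductions, peeling letters off the front of $\bp$ (writing $\bv'=[l]\bv[l]$ and cancelling the outer conjugations by $r_l$ one letter at a time) and then off the front of $\bw$ (absorbing $[l]\bv[l]$ into a longer stable walk); the base case of that induction is exactly your turnaround computation, so the two routes are equivalent in content, and yours has the small economy of quoting the propagation mechanism of Lemma \ref{swap-identity} wholesale rather than re-deriving it. One reassurance about your stated ``main obstacle'': no Barot--Marsh-type relation is needed for the backward pass. Mutating twice in succession at the same index returns every $r_k$ to its previous value exactly in $\mathcal W$ (since $b_{kl}^{\bw[l]}c_l^{\bw[l]}=b_{kl}^{\bw}c_l^{\bw}$, the same conjugation is applied twice and cancels), so the spur telescopes on the nose; once the reflections agree modulo $\pi$ and the $B$- and $C$-matrices agree exactly at the two seeds being compared, everything after the turnaround point is purely formal.
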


\begin{proof}
	
	Let us argue first by induction on the length of $\bw$ and $\bp$.
	If $\bw = [\,]$ and $\bp = [\,]$, then $b_{ij}^\bu = 0$ implies that
	$$r_k^{\bu [i,j,i,j]} = r_k^{\bu} \ \text{ or }\ (r_j^{\bu} r_i^{\bu})^2 r_k^{\bu} (r_i^{\bu} r_j^{\bu})^2.$$
	Lemma \ref{induction} gives us a linear ordering $\prec$ such that
	$$\pi(r_k^{\bu \bv \bw}) = \pi(r_k^{\bu \bw}),$$
	completing the base case.
	
	Suppose that $\pi(r_k^{\bu \bv \bw}) = \pi(r_k^{\bu \bw})$ whenever $\bw = [\,]$ and $\bp$ is a mutation sequence of length $m$.
	If $\bp'$ is a mutation sequence of length $m+1$, then $\bp' = [l]\bp$ for some $l \in \mathcal{I}$ and a mutation sequence $\bp$ of length $m$.
	Then $\bv' =  \bp'[i,j,i,j]\bp'^{-1} = [l]\bv[l]$ and
	$$r_k^{\bu \bv'} = r_k^{\bu[l]\bv [l]}.$$
	We know that $b_{kl}^{\bu} c_l^{\bu} = b_{kl}^{\bu [l] \bv [l] [l]} c_l^{\bu [l] \bv [l][l]}= b_{kl}^{\bu [l] \bv } c_l^{\bu [l] \bv }$.
	Thus $r_k^{\bu[l]\bv [l]} = r_k^{\bu[l]\bv}$ implies that
	$$\pi(r_k^{\bu[l]\bv [l]}) = \pi(r_k^{\bu[l]}) = \pi(r_k^{\bu}).$$
	Similarly, $r_k^{\bu[l]\bv [l]} = r_l^{\bu[l]\bv}r_k^{\bu[l]\bv}r_l^{\bu[l]\bv}$ implies that
	$$\pi(r_k^{\bu[l]\bv [l]}) = \pi(r_l^{\bu[l]\bv}r_k^{\bu[l]\bv}r_l^{\bu[l]\bv}) = \pi(r_l^{\bu[l]}r_k^{\bu[l]}r_l^{\bu[l]}) = \pi(r_k^{\bu}).$$
	Thus $\pi(r_k^{\bu \bv}) = \pi(r_k^{\bu})$ for all $\bv = \bp[i,j,i,j]\bp^{-1}$ by induction.
	
	Now, suppose that $\pi(r_k^{\bu \bv \bw}) = \pi(r_k^{\bu \bw})$ whenever $\bw$ is a mutation sequence of length $m$ and $\bp$ is any mutation sequence.
	If $\bw'$ is a mutation sequence of length $m+1$, then $\bw' = [l]\bw$ for some $l \in \mathcal{I}$ and a mutation sequence $\bw$ of length $m$.
	Then
	$$r_k^{\bu \bv \bw'} = r_k^{\bu \bv [l] \bw} = r_k^{\bu [l] [l] \bv [l] \bw}.$$
	Let $\bp' = [l]\bp$ and $\bv' = \bp'[i,j,i,j]\bp'^{-1}$.
	Then 
	$$\pi(r_k^{\bu [l] [l] \bv [l] \bw}) = \pi(r_k^{\bu [l] \bv' \bw}) = \pi(r_k^{\bu [l] \bw})$$
	by the inductive hypothesis.
	However, $[l] \bw = \bw'$.
	Thus
	$$\pi(r_k^{\bu \bv \bw}) = \pi(r_k^{\bu \bw}),$$
	completing the inductive step.
\end{proof}

Finally, Lemma \ref{repeated-swap} along with Proposition \ref{decomposition}, Lemmas \ref{order-choice}, and \ref{4-cycles-good} proves our main result of this paper.

\begin{Thm}[Proof of Theorem \ref{recomposition-1}] \label{recomposition}
	For any type $A_n$ quiver $Q$, there exists a pseudo-acyclic ordering $\prec$ on $\mathcal I$ for $Q$. That is, there exist a linear ordering $\prec$ and its associated GIM $A$ such that if $\bv$ is a mutation sequence with $C^\bv = I$ then
	$$\pi(r_k^{\bv}) = \pi(r_k)$$
	for all $k \in \mathcal{I}$.
\end{Thm}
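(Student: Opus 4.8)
The plan is to obtain Theorem~\ref{recomposition} by assembling the structural results established above, so the proof itself will be short. First I would fix the linear ordering $\prec$ that will serve as the pseudo-acyclic ordering: if $Q$ is acyclic, any ordering works, and otherwise I take the ordering manufactured in Lemma~\ref{order-choice}, which is built precisely so that the Barot--Marsh-type relation $\pi(r_jr_ir_jr_k)^2=\mathrm{id}$ holds on every oriented triangle of $Q$. With this choice in hand, Lemma~\ref{induction} upgrades the relations to \emph{all} mutation sequences $\bv$: $\pi(r_i^\bv r_j^\bv)^2=\mathrm{id}$ when $b_{ij}^\bv=0$, $\pi(r_i^\bv r_j^\bv)^3=\mathrm{id}$ when $|b_{ij}^\bv|=1$, and $\pi(r_j^\bv r_i^\bv r_j^\bv r_k^\bv)^2=\mathrm{id}$ on triangles. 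This is exactly the hypothesis needed by Lemmas~\ref{swap-indices-i,j}, \ref{swap-indices-k} and \ref{swap-identity}, hence by Corollary~\ref{single-swap-result} and Lemma~\ref{repeated-swap}.

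Now let $\bv$ be any mutation sequence with $C^\bv=I$. Since the $C$-matrix determines the labelled seed, $\bv$ is a closed walk at the initial labelled seed, so Proposition~\ref{decomposition} rewrites $\bv$ --- up to inserting or deleting spurs and removing stable walks --- as a product of elementary swaps $[i_1,j_1]_{\bp_1}\cdots[i_s,j_s]_{\bp_s}$ with $(i_1,j_1)\cdots(i_s,j_s)=e\in\mathfrak S_n$. The key intermediate step is to show that none of these modifications changes the elements $\pi(r_k^\bv)$. Removing a stable walk $\bp[i,j,i,j]\bp^{-1}$ sitting between a prefix $\bu$ and a suffix $\bw$ of the current sequence is exactly Lemma~\ref{4-cycles-good}, whose hypothesis $b_{ij}^{\bu\bp}=0$ is just the defining property of a stable walk in that position; applying it repeatedly deletes all stable walks. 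For a spur $[l,l]$ I would use that a mutation at $l$ flips the sign of $b_{kl}$ and of the $l$-th $c$-vector simultaneously, so the conjugation criterion of Definition~\ref{def-r} is triggered identically at both steps and the two conjugations by $r_l$ cancel (using $r_l^{\bw[l]}=r_l^\bw$ and $(r_l^\bw)^2=1$), leaving every $r_k$ unchanged; an induction on the length of $\bp$ then shows a general spur $\bp\bp^{-1}$ acts trivially on all $r_k$.

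After these deletions, $\bv$ has been replaced by $\bv'=[i_1,j_1]_{\bp_1}\cdots[i_s,j_s]_{\bp_s}$ with associated permutation $\sigma=e$, and $\pi(r_k^\bv)=\pi(r_k^{\bv'})$. Lemma~\ref{repeated-swap} now gives $\pi(r_k^{\bv'})=\pi(r_{\sigma(k)})=\pi(r_k)$ for every $k\in\mathcal I$, so $\pi(r_k^\bv)=\pi(r_k)$, which is precisely the statement that $\prec$ is a pseudo-acyclic ordering for $Q$. Within this final assembly the only genuinely new verification is the spur- and stable-walk-invariance of the reflections $r_k^\bv$; the substantive difficulty of the whole argument lies upstream, in the case analyses of Lemmas~\ref{swap-indices-i,j}, \ref{swap-indices-k} and especially Lemma~\ref{induction}, where the type-$A_n$ hypothesis (only oriented triangles, maximum vertex degree $4$, quasi-tree shape) is what makes the order-choice of Lemma~\ref{order-choice} --- and hence this whole chain --- go through.
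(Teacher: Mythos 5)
Your proposal is correct and follows essentially the same route as the paper: fix the ordering via Lemma~\ref{order-choice}, feed Lemma~\ref{induction} into the swap lemmas, decompose $\bv$ by Proposition~\ref{decomposition}, dispose of stable walks by Lemma~\ref{4-cycles-good}, and conclude with Lemma~\ref{repeated-swap} and $\sigma=e$. The only difference is that you spell out the spur-invariance of the $r_k^\bv$ (sign of $b_{kl}c_l$ preserved under double mutation at $l$, so the two conjugations cancel exactly), which the paper simply asserts as known.
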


\begin{proof}
	
	From Proposition \ref{decomposition}, we may decompose $\bv$ into a series of elementary swaps,
	$$\bw = [i_1,j_1]_{\bp_1} [i_2,j_2]_{\bp_2} \cdots [i_s,j_s]_{\bp_s},$$
	up to insertion or removal of spurs and removal of closed walks of the form $\bp[i,j,i,j]\bp^{-1}$.
	We already know that removal of spurs does not affect $r_k^\bv$ for any $k \in \mathcal{I}$, and Lemma \ref{4-cycles-good} proves that $\pi(r_k^{\bv}) = \pi(r_k^{\bw})$ after the removal of closed walks of the form $\bp[i,j,i,j]\bp^{-1}$.
	Lemma \ref{repeated-swap} shows that $\pi(r_k^{\bw}) = \pi(r_k)$.
	Therefore,
	$$\pi(r_k^{\bv}) = \pi(r_k),$$
	proving our result.
	
\end{proof}

\section{Proof of Lemma \ref{swap-indices-i,j}} \label{proof-swap-indices-i,j}

We restate the lemma for convenience.

\begin{Lem}
	If $\bv = [i,j]_{\bp}$ is an elementary swap for some $i,j \in \mathcal{I}$ and for a mutation sequence $\bp$, then $\pi(r_i^{\bp [i,j,i,j,i]}) = \pi(r_i^{\bp} r_j^{\bp})^3 \pi(r_j^{\bp})$ or $\pi(r_j^{\bp} r_i^{\bp})^3 \pi(r_j^{\bp})$ and $\pi(r_j^{\bp [i,j,i,j,i]}) = \pi(r_i^{\bp} r_j^{\bp})^3 \pi(r_i^{\bp})$ or $\pi(r_j^{\bp} r_i^{\bp})^3 \pi(r_i^{\bp})$.
\end{Lem}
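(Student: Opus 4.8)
The plan is to unwind the recursion in Definition~\ref{def-r} along the sequence $\bp[i,j,i,j,i]$, tracking only the conjugating elements $g_i^\bw, g_j^\bw$ modulo the fact that $r_i^\bw = g_i^\bw s_i (g_i^\bw)^{-1}$. Set $a := r_i^\bp$ and $b := r_j^\bp$. The sign-coherence of $c$-vectors guarantees that at each of the five steps, exactly one of the two relevant products $b_{\star i}^\bw c_i^\bw$ or $b_{\star j}^\bw c_j^\bw$ is positive (this is the standard observation that in a $2$-vertex restriction the mutation either conjugates or fixes, never both ambiguously), so the branch at each step is essentially forced once we fix which of $i,j$ is ``active.'' Concretely, I would argue that along $[i,j,i,j,i]$ the reflection $r_i$ gets conjugated at the steps where we mutate at $j$ (and $b_{ij}\neq 0$, which holds since $|b_{ij}^\bp|=1$ is exactly the condition for the elementary swap to exist) and is fixed at the steps where we mutate at $i$, and symmetrically for $r_j$; but one also has to account for whether the pattern of signs forces conjugation by $a$ first or by $b$ first, which is the source of the two alternatives in the statement.

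The key steps, in order: (1) record that the existence of $[i,j]_\bp$ forces $|b_{ij}^\bp| = 1$, so at every step of $[i,j,i,j,i]$ where we mutate at $i$ or $j$ the off-diagonal entry $b_{ij}$ stays $\pm 1$ and never vanishes; consequently the recursion \eqref{def-sx_i-1} always conjugates, we never land in the ``otherwise'' branch for the indices in $\{i,j\}$. (2) Compute $r_i^{\bp[i]}, r_j^{\bp[i]}$, then $r_i^{\bp[i,j]}, r_j^{\bp[i,j]}$, and so on, each time using $r_k^{\bw[k]} = r_k^\bw r_k^\bw r_k^\bw = r_k^\bw$ trivially and $r_\ell^{\bw[k]} = r_k^\bw r_\ell^\bw r_k^\bw$ for $\ell$ the other index; this is a purely formal five-step substitution. (3) Collect the resulting words: after the five mutations $r_i$ will be a word of the form $(\text{alternating product of }a,b\text{ of length }6)\cdot b$ and $r_j$ similarly with a trailing $a$; comparing with $(ab)^3 b$, $(ba)^3 b$, $(ab)^3 a$, $(ba)^3 a$ gives exactly the four listed possibilities. (4) Apply $\pi$ to pass from the statement about $\mathcal W$-elements to the statement about $\mathrm{End}(\Gamma)$; this step is free since $\pi$ is a ring homomorphism and the identity $r_i^{\bp[i,j,i,j,i]} = (ab)^3 b$ (say) already holds in $\mathcal W\subset\mathcal A$ before applying $\pi$.

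I expect the main obstacle to be \emph{bookkeeping the sign conditions} rather than any conceptual difficulty: one must verify that, given $|b_{ij}^\bp| = 1$, the five successive mutations at $i,j,i,j,i$ really do alternate ``conjugate $r_j$ / conjugate $r_i$'' in the right pattern, and that the $c$-vector signs cannot conspire to make the recursion take the ``otherwise'' branch for $r_i$ at a $j$-mutation step or vice versa. This is where one uses that within the $\{i,j\}$-block the mutation behaves like a rank-$2$ mutation, whose periodicity ($5$-cycle, since $|b_{ij}|=1$ corresponds to type $A_2$) is precisely what makes $[i,j,i,j,i]$ an elementary swap; the two alternatives in the conclusion correspond to the two possible initial signs of the relevant $c$-vector. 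Once that sign analysis is pinned down, steps (2)--(4) are a short explicit computation, and I would present it as a single five-line display tracking $(r_i^\bw, r_j^\bw)$ through the sequence.
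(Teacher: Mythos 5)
There is a genuine gap, and it sits exactly where you flagged the ``bookkeeping'': your step (1) asserts that because $|b_{ij}^\bp|=1$ stays nonzero along $[i,j,i,j,i]$, the recursion \eqref{def-sx_i-1} ``always conjugates'' and never takes the otherwise branch for the indices $i,j$. That is false. The branch is governed by the sign of the product $b_{ij}^{\bw}c_j^{\bw}$, not by the nonvanishing of $b_{ij}^{\bw}$, and the $c$-vector factor is frequently negative. For example, when $b_{ij}^{\bp}c_j^{\bp}>0$ and $b_{ji}^{\bp}c_i^{\bp}<0$ one computes $b_{ij}^{\bp[i]}c_j^{\bp[i]}<0$, so $r_i^{\bp[i,j]}=r_i^{\bp[i]}=r_i^{\bp}$ with no conjugation at the second step, and $r_i$ is conjugated only once in the whole five-step sequence (the conjugator being $r_j^{\bp[i,j,i]}$, itself a length-five word in $r_i^\bp,r_j^\bp$, which is how the final answer still lands on $(r_i^\bp r_j^\bp)^3 r_j^\bp$). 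Your preliminary claim that ``exactly one of the two relevant products is positive'' is also wrong: the paper's proof must treat four cases according to the signs of $b_{ij}^{\bp}c_j^{\bp}$ and $b_{ji}^{\bp}c_i^{\bp}$, including the cases where both are negative and where both are positive.

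The case where both products are positive is the real content of the lemma and your sketch gives no mechanism for it. There the sign of $b_{ij}^{\bp[i]}c_j^{\bp[i]}$ is not determined by the initial data; the paper resolves this by a global argument showing $c_j^{\bp[i]}=c_i^{\bp}+c_j^{\bp}=c_j^{\bp[i,j,i,j]}$ while $b_{ij}^{\bp[i]}=-b_{ij}^{\bp[i,j,i,j]}$, so exactly one of $b_{ij}^{\bp[i]}c_j^{\bp[i]}$ and $b_{ij}^{\bp[i,j,i,j]}c_j^{\bp[i,j,i,j]}$ is negative, and it then runs the five-step substitution forward from the bottom or backward from the top accordingly. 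This uses $b_{ij}^{\bp}=\pm1$ and the periodicity data $c_i^{\bp[i,j,i,j,i]}=c_j^{\bp}$, $c_j^{\bp[i,j,i,j,i]}=c_i^{\bp}$ in an essential, non-formal way. So the deferred sign analysis is not a routine verification appended to your steps (2)--(4); it is the proof, and the pattern it produces is not the alternating ``conjugate at every $j$-step'' pattern your outline is built on.
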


\begin{proof}

To begin, we have four cases:
\begin{align*} (1)& \ b_{ij}^{\bp}c_{j}^{\bp} > 0 \, \text{ and }\, b_{ji}^{\bp}c_{i}^{\bp} < 0,&
(2)&\ b_{ij}^{\bp}c_{j}^{\bp} < 0 \, \text{ and } \, b_{ji}^{\bp}c_{i}^{\bp} > 0,\\
(3)&\ b_{ij}^{\bp}c_{j}^{\bp} < 0 \, \text{ and } \, b_{ji}^{\bp}c_{i}^{\bp} < 0,&
(4)&\ b_{ij}^{\bp}c_{j}^{\bp} > 0 \, \text{ and } \, b_{ji}^{\bp}c_{i}^{\bp} > 0.\end{align*}

\underline{Case (1):}
For $\bp [i]$:
$$b_{ij}^{\bp [i]}c_{j}^{\bp [i]} = -b_{ij}^{\bp}c_{j}^{\bp} < 0,$$
$$b_{ji}^{\bp [i]}c_{i}^{\bp [i]} = b_{ji}^{\bp}c_{i}^{\bp} < 0.$$
For $\bp [i,j]$:
$$b_{ij}^{\bp [i,j]}c_{j}^{\bp [i,j]} = b_{ij}^{\bp [i]}c_{j}^{\bp [i]} < 0,$$
$$b_{ji}^{\bp [i,j]}c_{i}^{\bp [i,j]} = -b_{ji}^{\bp [i]}c_{i}^{\bp [i]} > 0.$$
For $\bp [i,j,i]$:
$$b_{ij}^{\bp [i,j,i]}c_{j}^{\bp [i,j,i]} = -b_{ij}^{\bp [i,j]}(c_{j}^{\bp [i,j]} + \sgn(b_{ji}^{\bp [i,j]})b_{ji}^{\bp [i,j]}c_i^{\bp [i,j]}) > 0,$$
$$b_{ji}^{\bp [i,j,i]}c_{i}^{\bp [i,j,i]} = b_{ji}^{\bp [i,j]}c_{i}^{\bp [i,j]} > 0.$$
For $\bp [i,j,i,j]$:
$$b_{ij}^{\bp [i,j,i,j]}c_{j}^{\bp [i,j,i,j]} = b_{ij}^{\bp [i,j,i]}c_{j}^{\bp [i,j,i]} > 0,$$
$$b_{ji}^{\bp [i,j,i,j]}c_{i}^{\bp [i,j,i,j]} = b_{ij}^{\bp [j]}c_{j}^{\bp [j]} = b_{ij}^{\bp}c_{j}^{\bp} > 0.$$
Hence,
$$r_i^{\bp [i,j,i,j,i]} = r_i^{\bp [i,j,i,j]} = r_j^{\bp [i,j,i]}r_i^{\bp [i,j,i]}r_j^{\bp [i,j,i]} = r_i^{\bp [i,j]}r_j^{\bp [i,j]}r_i^{\bp [i,j]}r_j^{\bp [i,j]}r_i^{\bp [i,j]} = r_i^{\bp }r_j^{\bp }r_i^{\bp }r_j^{\bp }r_i^{\bp},$$
showing that $\pi(r_i^{\bp [i,j,i,j,i]}) = \pi(r_i^{\bp} r_j^{\bp})^3 \pi(r_j^{\bp})$.
We also have 
$$r_j^{\bp [i,j,i,j,i]} = r_i^{\bp [i,j,i,j]} r_j^{\bp [i,j,i,j]} r_i^{\bp [i,j,i,j]} = r_j^{\bp [i,j,i]}r_i^{\bp [i,j,i]}r_j^{\bp [i,j,i]}r_i^{\bp [i,j,i]}r_j^{\bp [i,j,i]}$$
$$ = r_i^{\bp [i,j]}r_j^{\bp [i,j]}r_i^{\bp [i,j]}r_j^{\bp [i,j]}r_i^{\bp [i,j]}r_j^{\bp [i,j]}r_i^{\bp [i,j]} = r_i^{\bp}r_j^{\bp}r_i^{\bp}r_j^{\bp}r_i^{\bp}r_j^{\bp}r_i^{\bp},$$
showing that $\pi(r_j^{\bp [i,j,i,j,i]}) = \pi(r_i^{\bp} r_j^{\bp})^3 \pi(r_i^{\bp})$.
This demonstrates that the result holds in Case (1).

\underline{Case (2):}
For $\bp [i]$:
$$b_{ij}^{\bp [i]}c_{j}^{\bp [i]} = -b_{ij}^{\bp}c_{j}^{\bp} > 0,$$
$$b_{ji}^{\bp [i]}c_{i}^{\bp [i]} = b_{ji}^{\bp}c_{i}^{\bp} > 0.$$
For $\bp [i,j]$:
$$b_{ij}^{\bp [i,j]}c_{j}^{\bp [i,j]} = b_{ij}^{\bp [i]}c_{j}^{\bp [i]} > 0,$$
$$b_{ji}^{\bp [i,j]}c_{i}^{\bp [i,j]} = b_{ij}^{\bp [j,i,j]} c_j^{\bp [j,i,j]} > 0.$$
The last inequality comes from the first case with $\bp [j,i,j,i,j]$ instead of $\bp [i,j,i,j,i]$.
For $\bp [i,j,i]$:
$$b_{ij}^{\bp [i,j,i]}c_{j}^{\bp [i,j,i]} = b_{ji}^{\bp [j,i]} c_i^{\bp [j,i]} < 0,$$
$$b_{ji}^{\bp [i,j,i]}c_{i}^{\bp [i,j,i]} = b_{ji}^{\bp [i,j]}c_{i}^{\bp [i,j]} > 0.$$
For $\bp [i,j,i,j]$:
$$b_{ij}^{\bp [i,j,i,j]}c_{j}^{\bp [i,j,i,j]} = b_{ij}^{\bp [i,j,i]}c_{j}^{\bp [i,j,i]} < 0,$$
$$b_{ji}^{\bp [i,j,i,j]}c_{i}^{\bp [i,j,i,j]} = -b_{ji}^{\bp [i,j,i]}c_{i}^{\bp [i,j,i]} < 0.$$
Hence,
$$r_i^{\bp [i,j,i,j,i]} = r_i^{\bp [i,j,i,j]} = r_i^{\bp [i,j,i]} = r_j^{\bp [i,j]}r_i^{\bp [i,j]}r_j^{\bp [i,j]} = r_i^{\bp [i]}r_j^{\bp [i]}r_i^{\bp [i]}r_j^{\bp [i]}r_i^{\bp [i]} = r_i^{\bp}r_j^{\bp}r_i^{\bp}r_j^{\bp}r_i^{\bp},$$
showing that $\pi(r_i^{\bp [i,j,i,j,i]}) = \pi(r_i^{\bp} r_j^{\bp})^3 \pi(r_j^{\bp})$.
We also have 
$$r_j^{\bp [i,j,i,j,i]} = r_j^{\bp [i,j,i]} = r_i^{\bp [i,j]}r_j^{\bp [i,j]}r_i^{\bp [i,j]} = r_j^{\bp [i]}r_i^{\bp [i]}r_j^{\bp [i]}r_i^{\bp [i]}r_j^{\bp [i]} = r_i^{\bp }r_j^{\bp }r_i^{\bp }r_j^{\bp }r_i^{\bp}r_j^{\bp}r_i^{\bp },$$
showing that $\pi(r_j^{\bp [i,j,i,j,i]}) = \pi(r_i^{\bp} r_j^{\bp})^3 \pi(r_i^{\bp})$.
This demonstrates that the result holds in Case (2).

\underline{Case (3):}
For $\bp [i]$:
$$b_{ij}^{\bp [i]}c_{j}^{\bp [i]} = -b_{ij}^{\bp}c_{j}^{\bp} > 0,$$
$$b_{ji}^{\bp [i]}c_{i}^{\bp [i]} = b_{ji}^{\bp}c_{i}^{\bp} < 0.$$
For $\bp [i,j]$:
$$b_{ij}^{\bp [i,j]}c_{j}^{\bp [i,j]} = b_{ij}^{\bp [i]}c_{j}^{\bp [i]} > 0,$$
$$b_{ji}^{\bp [i,j]}c_{i}^{\bp [i,j]} = -b_{ji}^{\bp [i]}(c_{i}^{\bp [i]}+ \sgn(b_{ij}^{\bp [i]})b_{ij}^{\bp [i]}c_{j}^{\bp [i]}) > 0.$$
For $\bp [i,j,i]$:
$$b_{ij}^{\bp [i,j,i]}c_{j}^{\bp [i,j,i]} = -b_{ij}^{\bp [i,j]}(c_{j}^{\bp [i,j]} + \sgn(b_{ji}^{\bp [i,j]})b_{ji}^{\bp [i,j]}c_i^{\bp [i,j]}),$$
$$b_{ji}^{\bp [i,j,i]}c_{i}^{\bp [i,j,i]} = b_{ji}^{\bp [i,j]}c_{i}^{\bp [i,j]} > 0.$$
We are not sure of the sign of the $b_{ij}^{\bp [i,j,i]}c_{j}^{\bp [i,j,i]}$.
However, we do know that, for $\bp [i,j,i,j,i]$,
$$b_{ij}^{\bp [i,j,i,j,i]}c_{j}^{\bp [i,j,i,j,i]} = b_{ji}^{\bp} c_{i}^{\bp} < 0,$$
$$b_{ji}^{\bp [i,j,i,j,i]}c_{i}^{\bp [i,j,i,j,i]} = b_{ij}^{\bp }c_{j}^{\bp  } < 0.$$
This allows us to solve for $\bp [i,j,i,j]$:
$$b_{ij}^{\bp [i,j,i,j]}c_{j}^{\bp [i,j,i,j]} = -b_{ij}^{\bp [i,j,i,j,i]}c_{j}^{\bp [i,j,i,j,i]} > 0,$$
$$b_{ji}^{\bp [i,j,i,j]}c_{i}^{\bp [i,j,i,j]} = b_{ji}^{\bp [i,j,i,j,i]}c_{i}^{\bp [i,j,i,j,i]} < 0.$$
We now may revisit $\bp [i,j,i]$:
$$b_{ij}^{\bp [i,j,i]}c_{j}^{\bp [i,j,i]} = b_{ij}^{\bp [i,j,i,j]}c_{j}^{\bp [i,j,i,j]} > 0,$$
$$b_{ji}^{\bp [i,j,i]}c_{i}^{\bp [i,j,i]} = b_{ji}^{\bp [i,j]}c_{i}^{\bp [i,j]} > 0.$$
Hence,
$$r_i^{\bp [i,j,i,j,i]} = r_i^{\bp [i,j,i,j]} = r_j^{\bp [i,j,i]}r_i^{\bp [i,j,i]}r_j^{\bp [i,j,i]} = r_i^{\bp [i,j]}r_j^{\bp [i,j]}r_i^{\bp [i,j]}r_j^{\bp [i,j]}r_i^{\bp [i,j]}$$
$$ = r_j^{\bp [i]}r_i^{\bp [i]}r_j^{\bp [i]}r_i^{\bp [i]}r_j^{\bp [i]}r_i^{\bp [i]}r_j^{\bp [i]} = r_j^{\bp}r_i^{\bp}r_j^{\bp}r_i^{\bp}r_j^{\bp}r_i^{\bp}r_j^{\bp},$$
We also have 
showing that $\pi(r_i^{\bp [i,j,i,j,i]}) = \pi(r_j^{\bp} r_i^{\bp})^3 \pi(r_j^{\bp})$.
$$r_j^{\bp [i,j,i,j,i]} = r_j^{\bp [i,j,i]} = r_i^{\bp [i,j]}r_j^{\bp [i,j]}r_i^{\bp [i,j]} = r_j^{\bp [i]}r_i^{\bp [i]}r_j^{\bp [i]}r_i^{\bp [i]}r_j^{\bp [i]} = r_i^{\bp }r_j^{\bp }r_i^{\bp }r_j^{\bp }r_i^{\bp}r_j^{\bp}r_i^{\bp },$$
showing that $\pi(r_j^{\bp [i,j,i,j,i]}) = \pi(r_i^{\bp} r_j^{\bp})^3 \pi(r_i^{\bp})$.
This demonstrates that the result holds in Case (3).

\underline{Case (4):}
The fourth case is the hardest.
For $\bp [i]$:
$$b_{ij}^{\bp [i]}c_{j}^{\bp [i]} = b_{ji}^{\bp [j,i,j,i]}c_{i}^{\bp [j,i,j,i]},$$
$$b_{ji}^{\bp [i]}c_{i}^{\bp [i]} = b_{ji}^{\bp}c_{i}^{\bp} > 0.$$
We immediately ran into an ambiguity.
Let us assume that $b_{ij}^{\bp [i]}c_{j}^{\bp [i]} < 0$.
For $\bp [i,j]$:
$$b_{ij}^{\bp [i,j]}c_{j}^{\bp [i,j]} = b_{ij}^{\bp [i]}c_{j}^{\bp [i]} < 0,$$
$$b_{ji}^{\bp [i,j]}c_{i}^{\bp [i,j]} = -b_{ji}^{\bp [i]}c_{i}^{\bp [i]} < 0.$$
For $\bp [i,j,i]$:
$$b_{ij}^{\bp [i,j,i]}c_{j}^{\bp [i,j,i]} = -b_{ij}^{\bp [i,j]}c_{j}^{\bp [i,j]} > 0,$$
$$b_{ji}^{\bp [i,j,i]}c_{i}^{\bp [i,j,i]} = b_{ji}^{\bp [i,j]}c_{i}^{\bp [i,j]} < 0.$$
For $\bp [i,j,i,j]$:
$$b_{ij}^{\bp [i,j,i,j]}c_{j}^{\bp [i,j,i,j]} = b_{ij}^{\bp [i,j,i]}c_{j}^{\bp [i,j,i]} > 0,$$
$$b_{ji}^{\bp [i,j,i,j]}c_{i}^{\bp [i,j,i,j]} = -b_{ji}^{\bp [i,j,i]}(c_{i}^{\bp [i,j,i]} + \sgn(b_{ij}^{\bp [i,j,i]})b_{ij}^{\bp [i,j,i]}c_{j}^{\bp [i,j,i]} ) > 0.$$
Hence,
$$r_i^{\bp [i,j,i,j,i]} = r_i^{\bp [i,j,i,j]} = r_j^{\bp [i,j,i]}r_i^{\bp [i,j,i]} r_j^{\bp [i,j,i]} = r_j^{\bp [i,j]}r_i^{\bp [i,j]}r_j^{\bp [i,j]} = r_j^{\bp [i]}r_i^{\bp [i]}r_j^{\bp [i]} = r_i^{\bp}r_j^{\bp}r_i^{\bp}r_j^{\bp}r_i^{\bp},$$
showing that $\pi(r_i^{\bp [i,j,i,j,i]}) = \pi(r_i^{\bp} r_j^{\bp})^3 \pi(r_j^{\bp})$.
We also have,
$$r_j^{\bp [i,j,i,j,i]} = r_i^{\bp [i,j,i,j]}r_j^{\bp [i,j,i,j]}r_i^{\bp [i,j,i,j]} = r_j^{\bp [i,j,i]}r_i^{\bp [i,j,i]} r_j^{\bp [i,j,i]} r_i^{\bp [i,j,i]} r_j^{\bp [i,j,i]}$$
$$= r_j^{\bp [i,j]}r_i^{\bp [i,j]}r_j^{\bp [i,j]}r_i^{\bp [i,j]}r_j^{\bp [i,j]} = r_j^{\bp [i]}r_i^{\bp [i]}r_j^{\bp [i]}r_i^{\bp [i]}r_j^{\bp [i]} = r_i^{\bp }r_j^{\bp }r_i^{\bp }r_j^{\bp }r_i^{\bp}r_j^{\bp}r_i^{\bp },$$
showing that $\pi(r_j^{\bp [i,j,i,j,i]}) = \pi(r_i^{\bp} r_j^{\bp})^3 \pi(r_i^{\bp})$.

Somewhat flipping the script, we begin at the top and assume that $b_{ij}^{\bp [i,j,i,j]} c_j^{\bp [i,j,i,j]} < 0$, as we can run into an identical ambiguity there.
For $\bp [i,j,i,j]$:
$$b_{ij}^{\bp [i,j,i,j]}c_{j}^{\bp [i,j,i,j]} < 0,$$
$$b_{ji}^{\bp [i,j,i,j]}c_{i}^{\bp [i,j,i,j]} = b_{ji}^{\bp [i,j,i,j,i]}c_{i}^{\bp [i,j,i,j,i]} > 0.$$
For $\bp [i,j,i]$:
$$b_{ij}^{\bp [i,j,i]}c_{j}^{\bp [i,j,i]} = b_{ij}^{\bp [i,j,i,j]}c_{j}^{\bp [i,j,i,j]} < 0,$$
$$b_{ji}^{\bp [i,j,i]}c_{i}^{\bp [i,j,i]} = -b_{ji}^{\bp [i,j,i,j]}c_{i}^{\bp [i,j,i,j]} < 0.$$
For $\bp [i,j]$:
$$b_{ij}^{\bp [i,j]}c_{j}^{\bp [i,j]} = -b_{ij}^{\bp [i,j,i]}c_{j}^{\bp [i,j,i]} > 0,$$
$$b_{ji}^{\bp [i,j]}c_{i}^{\bp [i,j]} = b_{ji}^{\bp [i,j,i]}c_{i}^{\bp [i,j,i]} < 0.$$
For $\bp [i]$:
$$b_{ij}^{\bp [i]}c_{j}^{\bp [i]} = b_{ij}^{\bp [i,j]}c_{j}^{\bp [i,j]} > 0,$$
$$b_{ji}^{\bp [i]}c_{i}^{\bp [i]} = -b_{ji}^{\bp [i,j]}(c_{i}^{\bp [i,j]} + \sgn(b_{ij}^{\bp [i,j]})b_{ij}^{\bp [i,j]}c_{j}^{\bp [i,j]})  > 0.$$
Hence,
$$r_i^{\bp [i,j,i,j,i]} = r_i^{\bp [i,j,i,j]} = r_j^{\bp [i,j,i]} = r_i^{\bp [i,j]} = r_j^{\bp [i]}r_i^{\bp [i]}r_j^{\bp [i]} = r_i^{\bp}r_j^{\bp}r_i^{\bp}r_j^{\bp}r_i^{\bp},$$
showing that $\pi(r_i^{\bp [i,j,i,j,i]}) = \pi(r_i^{\bp} r_j^{\bp})^3 \pi(r_j^{\bp})$.
We also have,
$$r_j^{\bp [i,j,i,j,i]} = r_i^{\bp [i,j,i,j]}r_j^{\bp [i,j,i,j]}r_i^{\bp [i,j,i,j]} =  r_i^{\bp [i,j,i]} r_j^{\bp [i,j,i]} r_i^{\bp [i,j,i]}   =  r_i^{\bp [i,j]}r_j^{\bp [i,j]}r_i^{\bp [i,j]} $$
$$ = r_j^{\bp [i]}r_i^{\bp [i]}r_j^{\bp [i]}r_i^{\bp [i]}r_j^{\bp [i]} = r_i^{\bp }r_j^{\bp }r_i^{\bp }r_j^{\bp }r_i^{\bp}r_j^{\bp}r_i^{\bp } .$$
showing that $\pi(r_j^{\bp [i,j,i,j,i]}) = \pi(r_i^{\bp} r_j^{\bp})^3 \pi(r_i^{\bp})$.
Thus the result holds for case (4) whenever $b_{ij}^{\bp [i]}c_{j}^{\bp [i]} < 0$ or $b_{ij}^{\bp [i,j,i,j]} c_j^{\bp [i,j,i,j]} < 0$.

Now, we use the fact that $b_{ij}^{\bp} = \pm 1$, as we are working with quivers of type $A_n$.
If $b_{ij}^\bp c_j^{\bp}$ and $b_{ji}^{\bp}c_i^{\bp}$ are both greater than zero, the two $c$-vectors must have opposite sign.
Additionally, we have that $c_i^{\bp[i,j,i,j,i]} = c_j^{\bp}$ and $c_j^{\bp[i,j,i,j,i]} = c_i^{\bp}$.
Since $b_{ij}^{\bp} = \pm 1$,
$$c_j^{\bp[i]} = c_j^{\bp} + \sgn(b_{ji}^{\bp})b_{ji}^{\bp}c_i^{\bp} = c_j^{\bp} + c_i^{\bp}$$
$$\text{and } c_j^{\bp[i,j,i,j]} = c_j^{\bp[i,j,i,j,i]} + \sgn(b_{ji}^{\bp[i,j,i,j,i]})b_{ji}^{\bp[i,j,i,j,i]}c_i^{\bp[i,j,i,j,i]} = c_i^{\bp} + \sgn(b_{ij}^{\bp})b_{ij}^{\bp}c_j^{\bp} = c_i^{\bp} + c_j^{\bp}.$$
Thus $c_j^{\bp[i]}$ and $c_j^{\bp[i,j,i,j]}$ share the same sign.
However, as $b_{ij}^{\bp[i]} = b_{ji}^{\bp[i,j,i,j]} = -b_{ij}^{\bp[i,j,i,j]}$, the two expressions  $b_{ij}^{\bp [i,j,i,j]} c_j^{\bp [i,j,i,j]}$ and $b_{ij}^{\bp [i]}c_{j}^{\bp [i]}$ have opposite sign, proving that one of the two is negative.
Hence, the result holds for Case (4).
\end{proof}

\section{Proof of Lemma \ref{base-case}} \label{proof-base-case}

We restate the lemma for convenience. 

\begin{Lem} 
	If $b_{ij} = 0$ for some $i,j \in \mathcal{I}$, then 
	$$\pi(r_ir_j)^2 = \mathrm{id} = \pi(r_jr_i)^2.$$
	If $|b_{ij}| = 1$, then 
	$$\pi(r_ir_j)^3 = \mathrm{id} = \pi(r_jr_i)^3.$$
	
	Additionally, if $k \neq i,j \in \mathcal{I}$ such that $b_{ki} = b_{ij} = b_{jk} = 1$ and the linear order satisfies
	$$i \prec k \prec j, \ j \prec i \prec k, \ \text{ or } \  k \prec j \prec i,$$
	then
	$$\pi(r_j r_i r_j r_k)^2 = \mathrm{id} = \pi(r_k r_j r_i r_j)^2.$$
	A similar identity holds for $b_{ki} = b_{ij} = b_{jk} = -1$ with 
	$$i \prec j \prec k, \ j \prec k \prec i, \ \text{ or } \ k \prec i \prec j.$$
	
\end{Lem}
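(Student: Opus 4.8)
The plan is to verify all three families of relations directly in the representation $\pi$, using the explicit action $\pi(s_i)(\alpha_j)=\alpha_j - a_{ji}\alpha_i$ of the initial reflections $r_i = s_i$ on the lattice $\Gamma = \bigoplus \mathbb{Z}\alpha_i$. Since $b_{ij}\in\{-1,0,1\}$ throughout, the entries $a_{ij}$ of the associated GIM lie in $\{-1,0,1,2\}$, so each computation reduces to $2\times 2$ or $3\times 3$ integer matrix identities. For the first two claims only the $i,j$ components matter: restrict $\pi(r_ir_j)$ to the rank-two sublattice $\mathbb{Z}\alpha_i\oplus\mathbb{Z}\alpha_j$ (it acts as the identity on the complement since $\pi(s_i)$ fixes $\alpha_l$ for $l\neq i$ modulo $\alpha_i$, and when $b_{ij}=0$ we have $a_{ij}=a_{ji}=0$ so $s_i,s_j$ commute and each squares to $1$). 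When $b_{ij}=0$, $a_{ij}=a_{ji}=0$, so $\pi(r_i)$ and $\pi(r_j)$ commute and $\pi(r_ir_j)^2 = \pi(r_i)^2\pi(r_j)^2 = \mathrm{id}$. When $|b_{ij}|=1$, one has $\{a_{ij},a_{ji}\} = \{1,-1\}$ by \eqref{eqn-gim} (the sign depending on whether $i\prec j$), and the $2\times 2$ matrix of $\pi(r_ir_j)$ on $\mathbb{Z}\alpha_i\oplus\mathbb{Z}\alpha_j$ is conjugate to the standard order-$3$ element of $\GL_2(\mathbb{Z})$; a one-line matrix multiplication gives $\pi(r_ir_j)^3=\mathrm{id}$, and $\pi(r_jr_i)^3 = (\pi(r_ir_j)^3)^{-1}$ conjugated, hence also trivial.

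For the third claim, the relevant data is the $3\times 3$ principal submatrix $A'$ of the GIM on $\{i,j,k\}$. Since $b_{ki}=b_{ij}=b_{jk}=1$ describes an oriented $3$-cycle, exactly one of the three orderings listed — $i\prec k\prec j$, $j\prec i\prec k$, $k\prec j\prec i$ — is the cyclic rotation class for which \eqref{eqn-gim} produces the ``cyclic'' GIM, with the three off-diagonal pairs being $(a_{xy},a_{yx})=(1,-1)$ going around the triangle one way. I would compute the $3\times 3$ matrix $N := \pi(r_jr_ir_jr_k)$ explicitly in this case and check $N^2 = I$ by direct multiplication; the identity $\pi(r_kr_jr_ir_j)^2=\mathrm{id}$ then follows since $r_kr_jr_ir_j$ is a cyclic conjugate of $r_jr_ir_jr_k$ (conjugation by $r_j$, using $r_j^2=\mathrm{id}$), and conjugate elements have the same order. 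The case $b_{ki}=b_{ij}=b_{jk}=-1$ with the complementary triple of orderings is handled by the symmetric computation (it is obtained from the previous one by reversing all arrows, equivalently transposing $A'$, equivalently swapping the roles of $\prec$ and $\succ$), so it suffices to record that the same matrix identity holds mutatis mutandis.

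The main obstacle is bookkeeping rather than conceptual: one must be careful that \eqref{eqn-gim} is applied with the correct signs for each of the three admissible orderings, and that the three orderings genuinely give the same matrix $N$ up to relabeling (they are the three cyclic rotations of the vertices of the oriented triangle, and $r_jr_ir_jr_k$ is built to be rotation-invariant in exactly the right way — this is the Barot–Marsh relation for the cyclically oriented triangle, cf.\ \cite{BMa}). I would make this precise by observing that cyclically permuting $(i,j,k)$ simultaneously permutes the $r$'s and the basis $\alpha$'s, which conjugates $N$ by a permutation matrix while leaving $N^2=I$ intact; thus one explicit verification in one chosen ordering suffices, and the fact that the three listed orderings are precisely the rotations (not the reflections) of the triangle is what makes the statement correct as stated. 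The ``similar identity'' for the $-1$ case is then immediate by the arrow-reversing symmetry noted above.
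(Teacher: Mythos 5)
Your overall strategy (direct matrix verification in the representation $\pi$, reducing everything to small integer matrix identities) is the same as the paper's, but there are three concrete problems, one of which is fatal as written. First, your claim that $|b_{ij}|=1$ forces $\{a_{ij},a_{ji}\}=\{1,-1\}$ misreads \eqref{eqn-gim}: since $b_{ji}=-b_{ij}$, the definition gives $a_{ij}=a_{ji}=b_{ij}$ when $i\prec j$ and $a_{ij}=a_{ji}=-b_{ij}$ when $i\succ j$, so the two entries are \emph{equal}, not opposite. This matters: on $\mathbb{Z}\alpha_i\oplus\mathbb{Z}\alpha_j$ the product $\pi(r_ir_j)$ has determinant $1$ and trace $a_{ij}a_{ji}-2$, which equals $-1$ (order $3$) precisely because $a_{ij}a_{ji}=1$; with your values $a_{ij}a_{ji}=-1$ the trace is $-3$ and the element has infinite order, so the ``one-line matrix multiplication'' you defer to would refute the claim rather than confirm it. Second, restricting to $\mathbb{Z}\alpha_i\oplus\mathbb{Z}\alpha_j$ is not enough: for $l\neq i,j$ one has $\pi(s_is_j)(\alpha_l)=\alpha_l-a_{li}\alpha_i-a_{lj}(\alpha_j-a_{ji}\alpha_i)$, so the operator is block \emph{triangular}, and one must also check that the coupling block of $\pi(r_ir_j)^n$ vanishes, i.e.\ that $(I+P+\cdots+P^{n-1})Q=0$. (This does hold here because $P$ satisfies $P+I=0$ resp.\ $P^2+P+I=0$, but it is exactly the content of the $a_{ki}$, $a_{kj}$ identities the paper verifies and you omit.)

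Third, your symmetry reduction for the cycle relation does not close. The three listed orderings produce three \emph{different} GIMs on $\{i,j,k\}$ (in each, a different one of the pairs $a_{ij},a_{jk},a_{ki}$ is positive), and conjugating by the permutation matrix of the cycle $(i\,j\,k)$ carries $\pi(r_jr_ir_jr_k)$ in one ordering to $\pi'(r_kr_jr_kr_i)$ in the rotated ordering --- a different word from $r_jr_ir_jr_k$. So a single verification plus conjugation proves a different cycle relation in the other two orderings, not the one asserted; you would need either three separate $3\times3$ checks or the Barot--Marsh equivalence of the various cycle relations modulo the braid relations, neither of which you supply. The paper avoids this by deriving the vanishing conditions as polynomial identities in the $a_{xy}$ and observing that they hold whenever all three, or exactly one, of $a_{ki},a_{ij},a_{jk}$ are positive, which covers all six admissible orderings at once. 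Your closing remark that the $b=-1$ case follows ``by arrow-reversing symmetry'' is fine once the $b=+1$ case is actually established.
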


\begin{proof}
	
	We start with $\pi(r_i) = I + A_i$, where $I$ is the $n\times n$ identity matrix and $A_i$ is the matrix whose $(i,k)$-element is $-a_{ki}$ and 0 elsewhere for all $k \in \mathcal{I}$, acting on elements of $\Gamma = \sum_{i=1}^n\mathbb{Z}\alpha_i$ on the left.
	Thus 
	$$\pi(r_ir_j) = I + A_iA_j + A_i + A_j = I + A_j + (A_iA_j + A_i).$$
	Let $B_{i} = A_iA_j + A_i$.
	Then the $(i,k)$-element of $B_{i}$ is $a_{ji}a_{kj} - a_{ki}$ and 0 elsewhere.
	Thus
	$$\pi(r_ir_j)^2 = I + 2A_j + 2B_{i} + A_j^2 + A_jB_i + B_iA_j + B_i^2$$
	$$ = I + (2A_j + A_jB_i + A_j^2) + (2B_i + B_iA_j + B_i^2).$$
	Let $C_i = 2B_i + B_iA_j + B_i^2$ and $D_j = 2A_j + A_jB_i + A_j^2$.
	Then the $(i,k)$-element of $C_i$ is $ a_{ji}^2a_{ij}a_{kj} -a_{ji}a_{kj}  - a_{ji}a_{ij}a_{ki}$ and 0 elsewhere.
	The $(j,k)$-element of $D_j$ is $a_{ij}a_{ki} - a_{ij}a_{ji}a_{kj}$ and 0 elsewhere ($2A_j = -A_j^2$).
	This also shows that $\pi(r_ir_j)^2 = I$ if $a_{ij} = 0$, meaning that $\pi(r_i)$ and $\pi(r_j)$ commute.
	Thus $b_{ij} = 0$ implies that $\pi(r_ir_j)^2 = \mathrm{id}$, proving the first identity.
	
	To continue on with the second identity, we have 
	$$\pi(r_ir_j)^3 = I + A_j + B_i + C_i + D_j+ A_jC_i + A_jD_j + B_iC_i + B_iD_j $$
	$$= I + (A_j + D_j + A_jC_i + A_jD_j) + (B_i + C_i + B_iC_i + B_iD_j).$$
	For $\pi(r_ir_j)^3 = I$, we require that $2A_j + A_jC_i + A_jD_j = 0 = 2B_i + B_iC_i + B_iD_j$.
	This necessitates the following equations for all $k \leq n$:
	$$2a_{ij}a_{ji}a_{kj} -a_{kj} - a_{ij}a_{ki} + a_{ji}a_{ij}^2a_{ki} -a_{ji}^2a_{ij}^2a_{kj} = 0$$
	$$2a_{ji}a_{kj} - a_{ki} - 3a_{ji}^2a_{ij}a_{kj} + 2a_{ji}a_{ij}a_{ki} + a_{ji}^3a_{ij}^2a_{kj} - a_{ji}^2a_{ij}^2a_{ki} = 0.$$
	
	Factoring produces:
	$$(2a_{ij}a_{ji} - 1 -a_{ji}^2a_{ij}^2)a_{kj}  + (a_{ji}a_{ij}^2 - a_{ij})a_{ki}  = 0$$
	$$(2a_{ji} - 3a_{ji}^2a_{ij}  + a_{ji}^3a_{ij}^2)a_{kj} + (2a_{ji}a_{ij}- a_{ji}^2a_{ij}^2 - 1)a_{ki}  = 0.$$
	If $|b_{ij}| = 1$, then $a_{ij} = a_{ji} = \pm 1$.
	Hence the above equations become:
	$$(2 - 1 -1)a_{kj}  + (a_{ij} -a_{ij})a_{ki}  = 0$$
	$$(2a_{ji} - 3a_{ji}  + a_{ji})a_{kj} + (2 - 1 - 1)a_{ki}  = 0.$$
	This demonstrates that $\pi(r_i r_j)^3 = \mathrm{id}$ for any linear ordering, as long as $|b_{ij}| = 1$, proving the second identity.
	
	On to the third identity, assume that $k \neq i,j \in \mathcal{I}$.
	Then
	$$\pi(r_jr_i) = I + A_i + (A_jA_i + A_j),$$
	and
	$$\pi(r_jr_k) = I + A_k + (A_jA_k + A_j).$$
	Let $B_{j,i} = A_jA_i + A_j$ and $B_{j,k} = A_jA_k + A_j$.
	Then
	$$\pi(r_jr_ir_jr_k) = (I + A_i + B_{j,i})(I + A_k + B_{j,k})$$
	$$ = I + A_k + B_{j,k} + A_i + A_iA_k + A_iB_{j,k} + B_{j,i} + B_{j,i}A_k + B_{j,i}B_{j,k}$$
	$$ = I + A_k + (A_i + A_iA_k + A_iB_{j,k}) + (B_{j,k} + B_{j,i} + B_{j,i}A_k + B_{j,i}B_{j,k}).$$
	
	Let $E_i = A_i + A_iA_k + A_iB_{j,k}$ and $F_j = B_{j,k} + B_{j,i} + B_{j,i}A_k + B_{j,i}B_{j,k}$.
	Then the $(i,l)$th element of $E_i$ is $-a_{li} + a_{ki}a_{lk} - a_{ji}a_{kj}a_{lk} + a_{ji}a_{lj}$ for any vertex $l \in \mathcal{I}$.
	The $(j,l)$th element of $F_j$ is 
	$$a_{kj}a_{lk} - a_{lj} + a_{ij}a_{li} - a_{lj}  - a_{ij}a_{ki}a_{lk} + a_{kj}a_{lk}  + a_{ij}a_{ji}a_{kj}a_{lk} - a_{jj}a_{kj}a_{lk} - a_{ij}a_{ji}a_{lj} + a_{jj}a_{lj}$$
	$$= a_{ij}a_{li} - a_{ij}a_{ki}a_{lk} + a_{ij}a_{ji}a_{kj}a_{lk} - a_{ij}a_{ji}a_{lj}.$$
	We can then write:
	$$\pi(r_jr_ir_jr_k)^2 = (I + A_k + E_i + F_j)^2$$
	$$= I + A_k + E_i + F_j + A_k + A_k^2 + A_kE_i + A_kF_j + E_i + E_iA_k +E_i^2 + E_iF_j + F_j + F_jA_k + F_jE_i + F_j^2$$
	$$= I + (2A_k + A_k^2 + A_kE_i + A_kF_j) + (2E_i + E_i^2 + E_iA_k  + E_iF_j) + (2F_j + F_jA_k + F_jE_i + F_j^2)$$
	$$= I  + A_k(E_i + F_j) + (2E_i + E_i^2 + E_iA_k  + E_iF_j) + (2F_j + F_j^2 + F_jA_k + F_jE_i).$$
	
	This implies three equations if $\pi(r_jr_ir_jr_k)^2 = I$.
	The first is 
	$$-a_{ik}(-a_{li} + a_{ki}a_{lk} - a_{ji}a_{kj}a_{lk} + a_{ji}a_{lj}) - a_{jk}(a_{ij}a_{li} - a_{ij}a_{ki}a_{lk} + a_{ij}a_{ji}a_{kj}a_{lk} - a_{ij}a_{ji}a_{lj}) = 0$$
	$$a_{ik}a_{li} - a_{ik}a_{ki}a_{lk} + a_{ik}a_{ji}a_{kj}a_{lk} - a_{ik}a_{ji}a_{lj} - a_{jk}a_{ij}a_{li} + a_{jk}a_{ij}a_{ki}a_{lk} - a_{jk} a_{ij}a_{ji}a_{kj}a_{lk} + a_{jk}a_{ij}a_{ji}a_{lj} = 0$$
	$$(a_{ik}- a_{jk}a_{ij})a_{li} + (a_{jk}a_{ij}a_{ji} - a_{ik}a_{ji})a_{lj} + (a_{ik}a_{ji}a_{kj}- a_{ik}a_{ki} + a_{jk}a_{ij}a_{ki} - a_{jk} a_{ij}a_{ji}a_{kj})a_{lk} = 0.$$
	
	The second is composed of the following:
	$$(2 -a_{ii} + a_{ki}a_{ik} - a_{ji}a_{kj}a_{ik} + a_{ji}a_{ij})(-a_{li} + a_{ki}a_{lk} - a_{ji}a_{kj}a_{lk} + a_{ji}a_{lj})$$
	$$= (a_{ki}a_{ik} - a_{ji}a_{kj}a_{ik} + a_{ji}a_{ij})(-a_{li} + a_{ki}a_{lk} - a_{ji}a_{kj}a_{lk} + a_{ji}a_{lj})$$
	$$ = (a_{ji}a_{kj}a_{ik}-a_{ki}a_{ik} - a_{ji}a_{ij})a_{li} + (a_{ki}a_{ik}a_{ji} - a_{ji}^2a_{kj}a_{ik} +a_{ji}^2a_{ij})a_{lj}$$
	$$+ (a_{ki}^2a_{ik}- 2a_{ji}a_{kj}a_{ik}a_{ki} + a_{ji}^2a_{kj}^2a_{ik} + a_{ji}a_{ij}a_{ki}- a_{ji}^2a_{ij}a_{kj})a_{lk},$$
	the expression
	$$(a_{ki} - a_{ji}a_{kj})(-a_{lk}) = (a_{ji}a_{kj} - a_{ki})a_{lk},$$
	and
	$$(a_{ki}a_{jk} - a_{ji}a_{kj}a_{jk} + a_{ji})(a_{ij}a_{li} - a_{ij}a_{ki}a_{lk} + a_{ij}a_{ji}a_{kj}a_{lk} - a_{ij}a_{ji}a_{lj})$$
	$$= (a_{ki}a_{jk}a_{ij} - a_{ji}a_{kj}a_{jk}a_{ij} + a_{ji}a_{ij})a_{li} + ( a_{ji}^2a_{kj}a_{jk}a_{ij} - a_{ki}a_{jk}a_{ij}a_{ji} - a_{ji}^2a_{ij})a_{lj}$$
	$$+ (2a_{ji}a_{kj}a_{jk}a_{ij}a_{ki} -a_{ki}^2a_{jk}a_{ij}  - a_{ji}a_{ij}a_{ki} - a_{ji}^2a_{kj}^2a_{jk}a_{ij} + a_{ji}^2a_{ij}a_{kj})a_{lk}.$$
	Thus
	$$(a_{ji}a_{kj}a_{ik} -a_{ki}a_{ik} + a_{ki}a_{jk}a_{ij} - a_{ji}a_{kj}a_{jk}a_{ij})a_{li} + (a_{ki}a_{ik}a_{ji} - a_{ji}^2a_{kj}a_{ik} +  a_{ji}^2a_{kj}a_{jk}a_{ij} - a_{ki}a_{jk}a_{ij}a_{ji})a_{lj}$$
	$$+ (a_{ki}^2a_{ik}- 2a_{ji}a_{kj}a_{ik}a_{ki} + a_{ji}^2a_{kj}^2a_{ik}+ a_{ji}a_{kj} - a_{ki} + 2a_{ji}a_{kj}a_{jk}a_{ij}a_{ki} -a_{ki}^2a_{jk}a_{ij} - a_{ji}^2a_{kj}^2a_{jk}a_{ij})a_{lk} = 0.$$
	
	The third is composed of the following:
	$$(2 - a_{ij}a_{ki}a_{jk} + a_{ij}a_{ji}a_{kj}a_{jk} - a_{ij}a_{ji})(a_{ij}a_{li} - a_{ij}a_{ki}a_{lk} + a_{ij}a_{ji}a_{kj}a_{lk} - a_{ij}a_{ji}a_{lj})$$
	$$= (2a_{ij} - a_{ij}^2a_{ki}a_{jk} + a_{ij}^2a_{ji}a_{kj}a_{jk} - a_{ij}^2a_{ji})a_{li} + (a_{ij}^2a_{ki}a_{jk}a_{ji} - 2a_{ij}a_{ji} - a_{ij}^2a_{ji}^2a_{kj}a_{jk} + a_{ij}^2a_{ji}^2)a_{lj}$$
	$$+ (a_{ij}^2a_{ki}^2a_{jk} - 2a_{ij}a_{ki} - 2a_{ij}^2a_{ji}a_{kj}a_{jk}a_{ki} + a_{ij}^2a_{ji}a_{ki} + 2a_{ij}a_{ji}a_{kj} + a_{ij}^2a_{ji}^2a_{kj}^2a_{jk} - a_{ij}^2a_{ji}^2a_{kj})a_{lk},$$
	the expression
	$$(a_{ij}a_{ji}a_{kj}-a_{ij}a_{ki})(-a_{lk}) = (a_{ij}a_{ki} - a_{ij}a_{ji}a_{kj})a_{lk},$$
	and
	$$(2a_{ij} - a_{ij}a_{ki}a_{ik} + a_{ij}a_{ji}a_{kj}a_{ik} - a_{ij}^2a_{ji})(-a_{li} + a_{ki}a_{lk} - a_{ji}a_{kj}a_{lk} + a_{ji}a_{lj})$$
	$$= (a_{ij}a_{ki}a_{ik} - 2a_{ij} - a_{ij}a_{ji}a_{kj}a_{ik} + a_{ij}^2a_{ji})a_{li} + (2a_{ij}a_{ji} - a_{ij}a_{ki}a_{ik}a_{ji} + a_{ij}a_{ji}^2a_{kj}a_{ik} - a_{ij}^2a_{ji}^2)a_{lj}$$
	$$+(2a_{ij}a_{ki} - a_{ij}a_{ki}^2a_{ik} + 2a_{ij}a_{ji}a_{kj}a_{ik}a_{ki} - a_{ij}^2a_{ji}a_{ki} - 2a_{ij}a_{ji}a_{kj} - a_{ij}a_{ji}^2a_{kj}^2a_{ik} + a_{ij}^2a_{ji}^2a_{kj})a_{lk}.$$
	Thus
	\begin{align*}& (a_{ij}^2a_{ji}a_{kj}a_{jk}- a_{ij}^2a_{ki}a_{jk} + a_{ij}a_{ki}a_{ik} - a_{ij}a_{ji}a_{kj}a_{ik})a_{li}\\&+ (a_{ij}^2a_{ki}a_{jk}a_{ji} - a_{ij}^2a_{ji}^2a_{kj}a_{jk}- a_{ij}a_{ki}a_{ik}a_{ji} + a_{ij}a_{ji}^2a_{kj}a_{ik})a_{lj}\\
	&+  (a_{ij}^2a_{ki}^2a_{jk} + a_{ij}a_{ki} - 2a_{ij}^2a_{ji}a_{kj}a_{jk}a_{ki}  - a_{ij}a_{ji}a_{kj}) a_{lk}  \\&+( a_{ij}^2a_{ji}^2a_{kj}^2a_{jk}  - a_{ij}a_{ki}^2a_{ik} + 2a_{ij}a_{ji}a_{kj}a_{ik}a_{ki} - a_{ij}a_{ji}^2a_{kj}^2a_{ik}  )a_{lk} = 0.\end{align*}
	
	Now, let's assume $b_{ki} = b_{ij} = b_{jk} = \pm 1$, which implies that
	$$|a_{ij}| = |a_{ji}| = |a_{ik}| = |a_{ki}| = |a_{jk}| = |a_{kj}| = 1.$$
	Note that the $3 \times 3$ submatrix defined by these indices is a symmetric GIM.
	Then the three above equations reduce to:
	$$(a_{ik}- a_{jk}a_{ij})a_{li} + (a_{jk} - a_{ik}a_{ji})a_{lj} + 2(a_{ik}a_{ji}a_{kj} - 1)a_{lk} = 0;$$
	$$2(a_{ji}a_{kj}a_{ik} - 1)a_{li} + 2(a_{ji} - a_{ki}a_{jk})a_{lj} + 3(a_{ik} - a_{jk}a_{ij})a_{lk} = 0;$$
	$$2(a_{ji}- a_{kj}a_{ik})a_{li} + 2(a_{ki}a_{jk}a_{ji} - 1)a_{lj} + 3(a_{jk}- a_{ij}a_{ik})a_{lk} = 0.$$
	Note that the above equations hold if $a_{ki}$, $a_{ij}$, and $a_{jk}$ are either all positive or only one of them is.
	
	The six possible linear orderings are
	\begin{align*} (1)& \, i \prec j \prec k, &
	(2)& \, i \prec k \prec j, &
	(3)& \, j \prec i \prec k, \\ 
	(4)& \, j \prec k \prec i, &
	(5)& \, k \prec i \prec j,&
	(6)& \,  k \prec j \prec i. \end{align*}
	For linear orderings (2), (3), and (6), the above equations hold if  $b_{ki} = b_{ij} = b_{jk} = 1$.
	For linear orderings (1), (4), and (5), the above equations hold if  $b_{ki} = b_{ij} = b_{jk} = -1$.
	Therefore $\pi(r_j r_i r_j r_k)^2 = \mathrm{id}$ under the appropriate linear orderings, completing our lemma.
	
\end{proof}

\section{Discussion of Lemma \ref{swap-indices-k}} \label{discussion-swap-indices-k}

We restate the lemma for convenience. 

\begin{Lem}
	Suppose that $\bv = [i,j]_{\bp}$ is an elementary swap for some $i,j \in \mathcal{I}$ and for a mutation sequence $\bp$.
	Let $k \neq i,j \in \mathcal{I}$.
	Then the following hold:
	\begin{enumerate}
		\item[(A)] If $b_{ki}^{\bp} = 0 = b_{kj}^{\bp}$, then 
		$$\pi(r_k^{\bp[i,j,i,j,i]}) = \pi(r_k^{\bp}).$$
		\item[(B)] If $b_{ki}^{\bp} = 0$ and $b_{kj}^{\bp} \neq 0$, then
		\begin{align*} \pi(r_k^{\bp [i,j,i,j,i]}) = &\ \pi(r_k^{\bp}), \quad \pi(r_i^\bp r_j^\bp)^3\pi(r_k^{\bp}) \pi(r_j^\bp r_i^\bp)^3, \quad  \pi(r_j^\bp r_i^\bp)^3\pi(r_k^{\bp}) \pi(r_i^\bp r_j^\bp)^3, \\ &\  \pi(r_i^{\bp}r_k^{\bp})^2 \pi(r_k^{\bp}), \
		\text{ or } \  \pi(r_i^\bp r_j^\bp)^3\pi(r_i^\bp r_k^{\bp})^2 \pi(r_k^\bp)\pi(r_j^\bp r_i^\bp)^3.\end{align*}
		\item[(C)] If $b_{ki}^{\bp} \neq 0$ and $b_{kj}^{\bp} = 0$, then
		\begin{align*} \pi(r_k^{\bp [i,j,i,j,i]}) =& \ \pi(r_k^{\bp}), \quad \pi(r_i^\bp r_j^\bp)^3\pi(r_k^{\bp}) \pi(r_j^\bp r_i^\bp)^3, \quad \pi(r_j^\bp r_i^\bp)^3\pi(r_k^{\bp}) \pi(r_i^\bp r_j^\bp)^3, \\ & \ \pi(r_j^{\bp}r_k^{\bp})^2 \pi(r_k^{\bp}), \ \text{ or } \ \pi(r_i^\bp r_j^\bp)^3\pi(r_j^\bp r_k^{\bp})^2 \pi(r_k^\bp)\pi(r_j^\bp r_i^\bp)^3. \end{align*}
		\item[(D)] If $b_{ki}^{\bp} \neq 0$ and $b_{kj}^{\bp} \neq 0$, then
		$$\pi(r_k^{\bp[i,j,i,j,i]}) = \pi(r_k^{\bp}) \ \text{ or } \ \pi(r_j^\bp r_i^\bp r_j^\bp r_k^{\bp})^2 \pi(r_k^\bp).$$
	\end{enumerate}
\end{Lem}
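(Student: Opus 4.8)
The plan is to carry out the pentagon of mutations $[i,j,i,j,i]$ one step at a time, working inside the universal Coxeter group $\mathcal W$ and applying the representation $\pi$ only at the end (exactly as in the proof of Lemma~\ref{swap-indices-i,j}). Write $\bp_0:=\bp$ and, for $m=1,\dots,5$, let $\bp_m$ be $\bp$ followed by the first $m$ letters of $[i,j,i,j,i]$, so $\bp_5=\bp[i,j,i,j,i]$. At each stage $\bp_m$ I track three bundles of data: the signs of $b_{ki}^{\bp_m}$, $b_{kj}^{\bp_m}$, $b_{ij}^{\bp_m}$; the signs of $c_i^{\bp_m}$, $c_j^{\bp_m}$, $c_k^{\bp_m}$ (meaningful by sign-coherence) together with the relations among them forced by \eqref{eqn-mmuu}; and the word expressing $r_k^{\bp_m}$ in the alphabet $\{r_i^\bp,r_j^\bp,r_k^\bp\}$. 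By \eqref{def-sx_i-1}, at the step $\bp_m\to\bp_{m+1}$ — a mutation at $i$ when $m\in\{0,2,4\}$ and at $j$ when $m\in\{1,3\}$ — the element $r_k$ is unchanged unless $b_{k\ell}^{\bp_m}c_\ell^{\bp_m}>0$ for the mutating index $\ell$, in which case it is conjugated by the current $r_\ell^{\bp_m}$, whose value is already recorded in the proof of Lemma~\ref{swap-indices-i,j}. Three structural facts keep the bookkeeping finite: since $[i,j]_\bp$ is an elementary swap, $|b_{ij}^\bp|=1$, hence $|b_{ij}^{\bp_m}|=1$ for all $m$; since $Q$ is of type $A_n$, every $b_{ab}^{\bp_m}$ lies in $\{-1,0,1\}$ and every vertex has degree at most $4$; and since $[i,j,i,j,i]$ is a $5$-cycle it transforms the labelled seed by the transposition $(i,j)$, so at stage $\bp_5$ we have $b_{ki}^{\bp_5}=b_{kj}^\bp$, $b_{kj}^{\bp_5}=b_{ki}^\bp$, and $c_k^{\bp_5}$ has the same sign as $c_k^\bp$.

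Case~(A) is immediate: if $b_{ki}^\bp=b_{kj}^\bp=0$, a mutation at $i$ or $j$ can never create an edge at $k$, so $b_{ki}^{\bp_m}=b_{kj}^{\bp_m}=0$ for all $m$, the conjugation never triggers, and $r_k^{\bp_5}=r_k^\bp$. Cases~(B) and~(C) are entirely analogous (interchange $i$ and $j$), so it suffices to discuss~(B): here $i,j,k$ form a path with $j$ in the middle, and the task is to locate, as a function of the initial sign pattern, the stages at which the edge between $k$ and $i$ switches on and off (it is off at $\bp_0$ and on at $\bp_5$) and to read off the resulting conjugator on $r_k$; sign-coherence of the intermediate $c$-vectors eliminates the patterns that cannot occur, and precisely the five listed words survive — the $\pi(r_i^\bp r_k^\bp)^2$ and $\pi(r_i^\bp r_j^\bp)^3$ factors being the ``extra factors'' that Lemma~\ref{induction} will later collapse to the identity. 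In Case~(D), $i,j,k$ are pairwise adjacent, hence, since type $A_n$ quivers contain only oriented $3$-cycles, they form an oriented triangle; running the pentagon, $r_k$ picks up at most a conjugator of the form $r_j^\bp r_i^\bp r_j^\bp$ (an involution in $\mathcal W$), so $r_k^{\bp_5}$ equals either $r_k^\bp$ or $r_j^\bp r_i^\bp r_j^\bp r_k^\bp r_j^\bp r_i^\bp r_j^\bp=(r_j^\bp r_i^\bp r_j^\bp r_k^\bp)^2 r_k^\bp$, which upon applying $\pi$ is the Barot--Marsh-type relation appearing in Lemmas~\ref{base-case} and~\ref{induction}.

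The main obstacle is the size of the sign bookkeeping in Cases~(B) and~(D): the evolution of $r_i^{\bp_m}$ and $r_j^{\bp_m}$ itself branches into genuinely ambiguous sub-cases — compare Case~(4) in the proof of Lemma~\ref{swap-indices-i,j}, where the sign of an intermediate product $b_{ij}c_j$ is not forced by the data carried so far and has to be resolved by hand using $|b_{ij}^\bp|=1$ — so a naive case tree is large. Keeping it finite and discarding spurious branches rests on the same two levers: the type $A_n$ restriction $|b_{ab}^{\bp_m}|\le 1$, which bounds the edge data and excludes several a priori configurations of $i,j,k$, and sign-coherence of $c$-vectors, which forbids any intermediate $c$-vector from being non-uniformly signed. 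After these reductions, each surviving branch is a short, purely mechanical word computation whose outcome is matched against the finite list in the statement; setting up and organizing these computations is the content of the present discussion, with the branch-by-branch verifications deferred to Section~\ref{proof-swap-indices-k}.
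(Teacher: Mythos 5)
Your strategy is the same as the paper's: march through the pentagon $[i,j,i,j,i]$ one mutation at a time, track the signs of $b_{ki}$, $b_{kj}$, $b_{ij}$, $b_{ji}$ and of the relevant $c$-vectors using \eqref{eqn-mmuu} and sign coherence, exploit $|b_{ij}^{\bp}|=1$ and the type $A_n$ bound $|b_{ab}|\le 1$ to keep the tree finite, and convert the resulting conjugations of $r_k$ into words in $r_i^{\bp},r_j^{\bp},r_k^{\bp}$. Your Case (A) is complete and correct, and your identification of the endpoint constraint $b_{ki}^{\bp[i,j,i,j,i]}=b_{kj}^{\bp}$, $b_{kj}^{\bp[i,j,i,j,i]}=b_{ki}^{\bp}$ as a tool for pinning down intermediate signs is exactly how the paper resolves the ambiguous branches.

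There are, however, two gaps. First, for this particular lemma the case analysis \emph{is} the proof: the paper spends two sections enumerating eighteen sign configurations of $(b_{ki}^{\bp},b_{kj}^{\bp},b_{ij}^{\bp},b_{ji}^{\bp})$, discarding six by the observation that every triangle in a type $A_n$ quiver is an oriented $3$-cycle, and then running five sub-cases (indexed by the sign patterns of $b_{ij}^{\bp}c_j^{\bp}$, $b_{ji}^{\bp}c_i^{\bp}$ and, where needed, $b_{ij}^{\bp[i]}c_j^{\bp[i]}$ or $b_{ij}^{\bp[i,j,i,j]}c_j^{\bp[i,j,i,j]}$ from Lemma \ref{swap-indices-i,j}) through each of the ten survivors. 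Deferring ``the branch-by-branch verifications'' defers essentially all of the content; nothing in your outline certifies that the outputs land in the specific five-element lists of (B) and (C) rather than in some larger set. Second, your proposed reduction of (C) to (B) by ``interchanging $i$ and $j$'' is not valid: the walk $[i,j,i,j,i]$ begins and ends with $i$, so it is not symmetric in $i$ and $j$, and indeed the stated conclusions of (B) and (C) are not each other's images under that swap --- for instance the conjugated items in both (B) and (C) carry the outer factors $\pi(r_i^{\bp}r_j^{\bp})^3\cdots\pi(r_j^{\bp}r_i^{\bp})^3$ in the \emph{same} order, whereas a literal $i\leftrightarrow j$ substitution in (B) would reverse them. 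This is precisely why the paper computes cases (3), (5) and (7)--(10) separately rather than deducing one family from the other.
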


Suppose $\bv = [i,j]_{\bp}$ is an elementary swap for some $i,j \in \mathcal{I}$ and for a mutation sequence $\bp$.
We wish to find some analogue of Lemma \ref{swap-indices-i,j} for $\pi(r_k^{\bp [i,j,i,j,i]})$ in terms of $\pi(r_k^\bp), \pi(r_i^\bp),$ and $\pi(r_j^\bp)$ for any index $k \in \mathcal{I}$ such that $k \neq i,j$.
Then we have eighteen cases, each with five sub-cases corresponding to the cases and sub-cases in Lemma \ref{swap-indices-i,j}.
\begin{align*}(1)& \, b_{ki}^{\bp} =0, \, b_{kj}^{\bp} = 0, \, b_{ij}^{\bp} > 0, \, \mathrm{ and }\, b_{ji}^{\bp} < 0, & 
(2)& \, b_{ki}^{\bp} =0, \, b_{kj}^{\bp} = 0, \, b_{ij}^{\bp} < 0, \, \mathrm{ and }\, b_{ji}^{\bp} > 0,\\
(3)& \, b_{ki}^{\bp} =0, \, b_{kj}^{\bp} > 0, \, b_{ij}^{\bp} > 0, \, \mathrm{ and }\, b_{ji}^{\bp} < 0, &
(4)& \, b_{ki}^{\bp} =0, \, b_{kj}^{\bp} > 0, \, b_{ij}^{\bp} < 0, \, \mathrm{ and }\, b_{ji}^{\bp} > 0,\\ 
(5)& \, b_{ki}^{\bp} =0, \, b_{kj}^{\bp} < 0, \, b_{ij}^{\bp} > 0, \, \mathrm{ and }\, b_{ji}^{\bp} < 0, &
(6)& \, b_{ki}^{\bp} =0, \, b_{kj}^{\bp} < 0, \, b_{ij}^{\bp} < 0, \, \mathrm{ and }\, b_{ji}^{\bp} > 0,\\ 
(7)& \, b_{ki}^{\bp} >0, \, b_{kj}^{\bp} = 0, \, b_{ij}^{\bp} > 0, \, \mathrm{ and }\, b_{ji}^{\bp} < 0,&
(8)& \, b_{ki}^{\bp} >0, \, b_{kj}^{\bp} = 0, \, b_{ij}^{\bp} < 0, \, \mathrm{ and }\, b_{ji}^{\bp} > 0,\\ 
(9)& \, b_{ki}^{\bp} <0, \, b_{kj}^{\bp} = 0, \, b_{ij}^{\bp} > 0, \, \mathrm{ and }\, b_{ji}^{\bp} < 0, &
(10)& \, b_{ki}^{\bp} <0, \, b_{kj}^{\bp} = 0, \, b_{ij}^{\bp} < 0, \, \mathrm{ and }\, b_{ji}^{\bp} > 0,\\ 
(11)& \, b_{ki}^{\bp} >0, \, b_{kj}^{\bp} > 0, \, b_{ij}^{\bp} > 0, \, \mathrm{ and }\, b_{ji}^{\bp} < 0, &
(12)& \, b_{ki}^{\bp} >0, \, b_{kj}^{\bp} > 0, \, b_{ij}^{\bp} < 0, \, \mathrm{ and }\, b_{ji}^{\bp} > 0,\\ 
(13)& \, b_{ki}^{\bp} <0, \, b_{kj}^{\bp} > 0, \, b_{ij}^{\bp} > 0, \, \mathrm{ and }\, b_{ji}^{\bp} < 0, &
(14)& \, b_{ki}^{\bp} <0, \, b_{kj}^{\bp} > 0, \, b_{ij}^{\bp} < 0, \, \mathrm{ and }\, b_{ji}^{\bp} > 0,\\ 
(15)& \, b_{ki}^{\bp} >0, \, b_{kj}^{\bp} < 0, \, b_{ij}^{\bp} > 0, \, \mathrm{ and }\, b_{ji}^{\bp} < 0, &
(16)& \, b_{ki}^{\bp} >0, \, b_{kj}^{\bp} < 0, \, b_{ij}^{\bp} < 0, \, \mathrm{ and }\, b_{ji}^{\bp} > 0,\\ 
(17)& \, b_{ki}^{\bp} <0, \, b_{kj}^{\bp} < 0, \, b_{ij}^{\bp} > 0, \, \mathrm{ and }\, b_{ji}^{\bp} < 0, &
(18)& \, b_{ki}^{\bp} <0, \, b_{kj}^{\bp} < 0, \, b_{ij}^{\bp} < 0, \, \mathrm{ and }\, b_{ji}^{\bp} > 0. \end{align*}
Note that cases (1) and (2) are the easiest, having $\pi(r_k^{\bp [i,j,i,j,i]}) = \pi(r_k^{\bp })$.
Additionally, the cases (11), (12), (13), (16), (17), and (18) cannot occur in a type $A_n$ quiver, as all triangles must be cycles. 
As such, we may discard these cases.
They may return when observing arbitrary cluster algebras related to bordered surfaces, but we do not need them for our current purposes.
The following cases remain:
$$(3) ,\ (4)  ,\ (5)  , \ (6) ,\ (7) ,\ (8) ,\ (9) ,\ (10) ,\ (14) ,\ (15). $$

In type $A_n$ quivers, each case will correspond with exactly one of six subquivers, up to permutation of the indices $i$ and $j$.
The first quiver belongs to the cases (1) and (2). 

\begin{center}
	\includegraphics[scale=0.5]{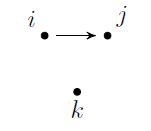}
	\end{center}

\begin{Lem}[Case (1)] \label{lem-case-1}
	Suppose that $\bv = [i,j]_{\bp}$ is an elementary swap for some $i,j \in \mathcal{I}$ and for a mutation sequence $\bp$.
	If $k \neq i,j \in \mathcal{I}$ satisfies
	$$b_{ki}^{\bp} =0, \, b_{kj}^{\bp} = 0, \, b_{ij}^{\bp} > 0, \, \mathrm{ and }\, b_{ji}^{\bp} < 0,$$
	then $\pi(r_k^{\bp [i,j,i,j,i]}) = \pi(r_k^{\bp})$.
\end{Lem}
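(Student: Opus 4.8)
The statement to prove is Lemma \ref{lem-case-1}: if $b_{ki}^{\bp}=b_{kj}^{\bp}=0$ (with $b_{ij}^\bp>0$, $b_{ji}^\bp<0$, i.e.\ we are in Case (1) of Lemma \ref{swap-indices-i,j}), then $\pi(r_k^{\bp[i,j,i,j,i]})=\pi(r_k^\bp)$. The plan is to track the evolution of $r_k$ along the five mutation steps $\bp, \bp[i], \bp[i,j], \dots, \bp[i,j,i,j,i]$ using the recursion in Definition \ref{def-r}, namely that $r_k^{\bw[l]}=r_l^\bw r_k^\bw r_l^\bw$ exactly when $b_{kl}^\bw c_l^\bw>0$ and $r_k^{\bw[l]}=r_k^\bw$ otherwise. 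Since we only ever mutate at $l\in\{i,j\}$ here and $k\neq i,j$, the value $r_k$ changes at a given step only if $b_{ki}$ (resp.\ $b_{kj}$) is nonzero at that point, and even then the change is conjugation by $r_i$ or $r_j$.

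First I would establish that $b_{ki}^\bw$ and $b_{kj}^\bw$ remain $0$ throughout the whole sequence $\bw = \bp, \bp[i], \bp[i,j], \bp[i,j,i], \bp[i,j,i,j]$. This is the crux of the argument and follows from the mutation rule \eqref{eqn-mmuu}: mutating at $i$ changes $b_{kj}$ only by a term proportional to $b_{ki}\cdot b_{ij}$ (here $b_{ki}=0$), and changes $b_{ki}$ only to $-b_{ki}=0$; symmetrically for mutation at $j$. So by a trivial induction on the five steps, $b_{ki}=b_{kj}=0$ persists — the vertex $k$ is simply disconnected from both $i$ and $j$ for the entire duration of the pentagon. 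Consequently, at every one of the five mutation steps, the hypothesis $b_{k,\bullet}^\bw c_\bullet^\bw>0$ fails (the $b$ is zero), so the second branch of \eqref{def-sx_i-1} applies each time and $r_k^{\bw[l]}=r_k^\bw$.

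Chaining these five equalities gives $r_k^{\bp[i,j,i,j,i]}=r_k^\bp$ as elements of $\mathcal W\subset\mathcal A$, hence certainly $\pi(r_k^{\bp[i,j,i,j,i]})=\pi(r_k^\bp)$, which is stronger than needed. I do not expect any genuine obstacle here: unlike the later cases (where $k$ is attached to $i$ or $j$ and one must carefully analyze which of the five conjugations actually fire, using the sign bookkeeping for $c$-vectors already set up in the proof of Lemma \ref{swap-indices-i,j}), Case (1) is the degenerate one in which $k$ never interacts with the pentagon at all. The only point requiring a word of care is to confirm that the hypotheses $b_{ij}^\bp>0,\ b_{ji}^\bp<0$ are not needed for this particular conclusion — they only fix which sub-case of Lemma \ref{swap-indices-i,j} governs $r_i,r_j$, and play no role in the behavior of $r_k$ — so the lemma is really a statement about $k$ being isolated from $\{i,j\}$.
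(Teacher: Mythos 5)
Your proof is correct and is exactly the reasoning the paper has in mind: it does not even write out a proof for Cases (1) and (2), remarking only that they are ``the easiest,'' precisely because $b_{ki}$ and $b_{kj}$ stay zero under mutation at $i$ and $j$, so the conjugation branch of \eqref{def-sx_i-1} never fires and $r_k$ is literally unchanged in $\mathcal W$. Your observation that the signs of $b_{ij}^{\bp}$ and $b_{ji}^{\bp}$ are irrelevant to this conclusion is also accurate; they only serve to place the statement within the paper's eighteen-case enumeration.
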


\begin{Lem}[Case (2)] \label{lem-case-2}
	Suppose that $\bv = [i,j]_{\bp}$ is an elementary swap for some $i,j \in \mathcal{I}$ and for a mutation sequence $\bp$.
	If $k \neq i,j \in \mathcal{I}$ satisfies
	$$b_{ki}^{\bp} =0, \, b_{kj}^{\bp} = 0, \, b_{ij}^{\bp} < 0, \, \mathrm{ and }\, b_{ji}^{\bp} > 0,$$ then $\pi(r_k^{\bp [i,j,i,j,i]}) = \pi(r_k^{\bp})$.	
\end{Lem}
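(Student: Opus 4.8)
The plan is to observe that Cases (1) and (2) are the degenerate situations in which the index $k$ is never "seen" by the swap, so that nothing at all happens to $r_k$. The sequence $[i,j,i,j,i]$ consists of five mutations, each performed at the index $i$ or the index $j$, and by Definition \ref{def-r} the reflection attached to a third index $k$ changes under a mutation at $l$ only when $b_{kl}^\bw c_l^\bw>0$. So it is enough to show that the entries $b_{ki}$ and $b_{kj}$ stay equal to $0$ at every intermediate stage of the sequence: then at each of the five steps the relevant vector $b_{ki}^\bw c_i^\bw$ or $b_{kj}^\bw c_j^\bw$ is zero, the ``otherwise'' branch of \eqref{def-sx_i-1} is taken, and $r_k$ is carried along unchanged.

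First I would record the local behaviour of the $B$-matrix under the mutations occurring in the sequence. By the mutation rule \eqref{eqn-mmuu}, mutating at $i$ replaces $b_{ki}$ by $-b_{ki}$ and replaces $b_{kj}$ by $b_{kj}+\sgn(b_{ki})\max(b_{ki}b_{ij},0)$; when $b_{ki}=0$ the first of these stays $0$ and the second equals $b_{kj}$. Symmetrically, mutating at $j$ fixes the pair $(b_{ki},b_{kj})=(0,0)$ whenever it holds. Starting from the hypothesis $b_{ki}^{\bp}=0=b_{kj}^{\bp}$ and walking along $\bp[i],\ \bp[i,j],\ \bp[i,j,i],\ \bp[i,j,i,j],\ \bp[i,j,i,j,i]$, a one-line induction gives $b_{ki}^{\bw}=0=b_{kj}^{\bw}$ for every prefix $\bw$ of $\bp[i,j,i,j,i]$ extending $\bp$.

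With that in hand the conclusion is immediate: at each of the five mutations the condition $b_{kl}^{\bw}c_l^{\bw}>0$ with $l\in\{i,j\}$ fails because $b_{kl}^{\bw}=0$, so by \eqref{def-sx_i-1} we obtain $r_k^{\bp[i,j,i,j,i]}=r_k^{\bp}$, hence $\pi(r_k^{\bp[i,j,i,j,i]})=\pi(r_k^{\bp})$. The same computation proves Lemma \ref{lem-case-1} verbatim: the sign hypotheses $b_{ij}^{\bp}>0,\ b_{ji}^{\bp}<0$ in Case (1) and $b_{ij}^{\bp}<0,\ b_{ji}^{\bp}>0$ in Case (2) play no role in the argument and serve only as bookkeeping that separates the two cases inside the eighteen-case analysis. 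Accordingly I do not expect a genuine obstacle here — as Section \ref{discussion-swap-indices-k} remarks, these are the easiest cases — and the only point requiring care is the verification that $b_{ki}$ and $b_{kj}$ really stay zero, which is exactly the short computation with \eqref{eqn-mmuu} above.
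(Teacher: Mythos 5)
Your proof is correct and matches the paper's treatment: the paper simply remarks that Cases (1) and (2) are the easiest, with $\pi(r_k^{\bp[i,j,i,j,i]})=\pi(r_k^{\bp})$, and omits the details, which are exactly the observation you make — since $b_{ki}^{\bp}=b_{kj}^{\bp}=0$, the mutation rule \eqref{eqn-mmuu} keeps both entries zero at every intermediate stage, so the ``otherwise'' branch of \eqref{def-sx_i-1} is taken at each of the five steps and $r_k$ is unchanged. No gap; your write-up just supplies the short bookkeeping the paper leaves implicit.
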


The second quiver belongs to the cases (5) and (10).

\begin{center}
	\includegraphics[scale=0.5]{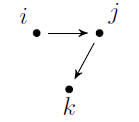}
	\end{center}

\begin{Lem}[Case (5)] \label{lem-case-5}
	Suppose that $\bv = [i,j]_{\bp}$ is an elementary swap for some $i,j \in \mathcal{I}$ and for a mutation sequence $\bp$.
	If $k \neq i,j \in \mathcal{I}$ satisfies
	$$b_{ki}^{\bp} =0, \, b_{kj}^{\bp} < 0, \, b_{ij}^{\bp} > 0, \, \mathrm{ and }\, b_{ji}^{\bp} < 0,$$
	then $\pi(r_k^{\bp [i,j,i,j,i]}) = \pi(r_k^{\bp})$, $\pi(r_i^\bp r_j^\bp)^3\pi(r_k^{\bp}) \pi(r_j^\bp r_i^\bp)^3$, or $\pi(r_i^{\bp}r_k^{\bp})^2 \pi(r_k^{\bp})$.
\end{Lem}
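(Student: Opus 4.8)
The statement we wish to prove (Lemma \ref{lem-case-5}, Case (5)) concerns how the reflection $r_k^\bp$ transforms under an elementary swap $[i,j]_\bp$ when $k$ is adjacent to $j$ but not to $i$, with the orientation $b_{ki}^\bp=0$, $b_{kj}^\bp<0$, $b_{ij}^\bp>0$, $b_{ji}^\bp<0$. The approach is direct: track the mutation sequence $\bp[i],\bp[i,j],\bp[i,j,i],\bp[i,j,i,j],\bp[i,j,i,j,i]$ step by step, and at each step use Definition \ref{def-r} to determine whether $r_k$ conjugates by $r_i$ or $r_j$ (or is unchanged), governed by the signs $b_{k\ell}^\bullet c_\ell^\bullet$. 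Since we work in type $A_n$, all off-diagonal entries have absolute value $\le 1$, so the mutation rule \eqref{eqn-mmuu} for the $B$-matrix entries among the triangle $\{i,j,k\}$ and the interactions with other vertices are tightly constrained; in particular $b_{ki}^\bp=0$ and $b_{kj}^\bp<0$ means $k$ is pendant off the edge $ij$ in the relevant local picture (the ``FifthQuiver'' subquiver).

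\textbf{Key steps.} First I would record the evolution of the sign data $b_{ki}^\bullet c_\bullet, b_{kj}^\bullet c_\bullet$ along the five mutations, splitting into the same sub-cases (1)--(4) used in the proof of Lemma \ref{swap-indices-i,j} according to the signs of $b_{ij}^\bp c_j^\bp$ and $b_{ji}^\bp c_i^\bp$; here we are already in the regime $b_{ij}^\bp>0$, $b_{ji}^\bp<0$, so only the sub-cases of Lemma \ref{swap-indices-i,j} compatible with this survive. Second, in each surviving sub-case I would compute $r_k^{\bp[i,j,i,j,i]}$ as a word in $r_i^\bp, r_j^\bp, r_k^\bp$ by composing the conjugations dictated by Definition \ref{def-r}; because $b_{ki}^\bp=0$, the reflection $r_k$ is untouched by the first mutation at $i$, so the word that emerges is built from conjugations by $r_j^\bp$ and possibly $r_k^\bp$-involving updates of $b_{kj}$. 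Third, I would simplify each resulting word using only the relations $r_i^2=1$ and the already-established cases of Lemma \ref{induction} (namely $\pi(r_i r_j)^3=\mathrm{id}$ since $|b_{ij}^\bp|=1$, $\pi(r_j r_k)^3=\mathrm{id}$ since $|b_{kj}^\bp|=1$, and $\pi(r_i r_k)^2=\mathrm{id}$ since $b_{ki}^\bp=0$) to show each word collapses to one of the three claimed forms $\pi(r_k^\bp)$, $\pi(r_i^\bp r_j^\bp)^3\pi(r_k^\bp)\pi(r_j^\bp r_i^\bp)^3$, or $\pi(r_i^\bp r_k^\bp)^2\pi(r_k^\bp)$. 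Finally I would observe that, since these are exactly the three forms appearing in the statement, the proof is complete; crucially we do \emph{not} need to further reduce these to $\pi(r_k^\bp)$ here --- that collapse is the job of Lemma \ref{induction} and is invoked only later via Lemma \ref{swap-identity}.

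\textbf{Main obstacle.} The bookkeeping of which conjugation occurs at each of the five mutation steps is delicate, because the sign of $b_{kj}^\bullet c_\bullet$ can flip when mutating at $i$ does nothing to $k$ but mutating at $j$ interacts, and the updated entry $b_{kj}^{\bp[i,j]}$ (etc.) can change value due to the triangle $\{i,j,k\}$ --- here the type $A_n$ restriction that $b_{ij}^{\bp[j,\ldots]}$ never exceeds $1$ in absolute value is what keeps the case analysis finite and is the linchpin of the argument. A secondary subtlety is that, as in sub-cases (3) and (4) of Lemma \ref{swap-indices-i,j}, the sign of some intermediate product is genuinely ambiguous, forcing a back-substitution from the known terminal data $c_i^{\bp[i,j,i,j,i]}=c_j^\bp$, $c_j^{\bp[i,j,i,j,i]}=c_i^\bp$ together with $|b_{ij}^\bp|=1$ to pin down the branch; I expect the bulk of the work is carefully carrying out this forward/backward sign analysis and then the (entirely mechanical, but lengthy) word reductions in $\mathcal W$. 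I would organize the write-up by first proving the sign evolution as a short sub-lemma, then tabulating the resulting words, then applying the three braid/commutation relations from Lemma \ref{induction} to land in the stated list.
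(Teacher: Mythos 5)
Your plan is essentially the paper's own proof: track the signs of $b_{ki}^{\bullet}c_i^{\bullet}$ and $b_{kj}^{\bullet}c_j^{\bullet}$ through the five mutations, split into the same five sub-cases as in Lemma \ref{swap-indices-i,j} (including the two resolutions of the ambiguous sign in its Case (4)), and unwind Definition \ref{def-r} into a word in $r_i^{\bp}, r_j^{\bp}, r_k^{\bp}$ that lands on one of the three listed forms. The one correction: the paper performs the final reduction using only $r^2=1$, the free cancellation $\pi(r_i^{\bp}r_j^{\bp})^3\pi(r_j^{\bp}r_i^{\bp})^3=\mathrm{id}$, and the explicit expressions for $r_i^{\bp[i,j,i,j]}$, $r_j^{\bp[i]}$, etc.\ supplied by (the proof of) Lemma \ref{swap-indices-i,j} --- it does \emph{not} invoke the braid relations of Lemma \ref{induction}, which keeps the lemma valid for an arbitrary ordering and agrees with your own (correct) closing remark that the collapse to $\pi(r_k^{\bp})$ is deferred; importing those relations here, as your third step proposes, would not be circular but is unnecessary and would obscure why the conclusion is stated as a list of ``powered'' factors.
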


\begin{Lem}[Case (10)] \label{lem-case-10}
	Suppose that $\bv = [i,j]_{\bp}$ is an elementary swap for some $i,j \in \mathcal{I}$ and for a mutation sequence $\bp$.
	If $k \neq i,j \in \mathcal{I}$ satisfies
	$$b_{ki}^{\bp} <0, \, b_{kj}^{\bp} = 0, \, b_{ij}^{\bp} < 0, \, \mathrm{ and }\, b_{ji}^{\bp} > 0,$$ then $\pi(r_k^{\bp [i,j,i,j,i]}) = \pi(r_k^{\bp})$, $\pi(r_j^\bp r_i^\bp)^3\pi(r_k^{\bp}) \pi(r_i^\bp r_j^\bp)^3$, or $\pi(r_i^\bp r_j^\bp)^3\pi(r_j^\bp r_k^{\bp})^2 \pi(r_k^\bp)\pi(r_j^\bp r_i^\bp)^3$.	
\end{Lem}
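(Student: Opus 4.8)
The plan is to repeat, now for the reflection $r_k^{\cdot}$, the step-by-step sign analysis carried out for $r_i^{\cdot}$ and $r_j^{\cdot}$ in the proof of Lemma~\ref{swap-indices-i,j}. By the recursion~\eqref{def-sx_i-1}, along a mutation $\bw\to\bw[m]$ with $m\in\{i,j\}$ one has $r_k^{\bw[m]}=r_m^\bw r_k^\bw r_m^\bw$ when $b_{km}^\bw c_m^\bw>0$ and $r_k^{\bw[m]}=r_k^\bw$ otherwise, so it suffices to follow the signs of $b_{ki}^\bw c_i^\bw$ and $b_{kj}^\bw c_j^\bw$ along the walk $\bp\to\bp[i]\to\bp[i,j]\to\bp[i,j,i]\to\bp[i,j,i,j]\to\bp[i,j,i,j,i]$. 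Since $Q$ is of type $A_n$ and $b_{kj}^\bp=0$, the hypotheses force $b_{ki}^\bp=-1$, $b_{ij}^\bp=-1$, $b_{ji}^\bp=1$, and in particular $b_{jk}^\bp=0$.

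First I would read off the $b$-entries of row and column $k$ along the walk directly from~\eqref{eqn-mmuu} (consistently with Lemma~\ref{swap-as-transposition}, which tells us the walk transposes the labels $i$ and $j$): the pairs $(b_{ki}^\bw,b_{kj}^\bw)$ run through $(-1,0)\to(1,-1)\to(0,1)\to(0,1)\to(0,-1)\to(0,-1)$, so the edge $k\!-\!i$ passes through a transient oriented $3$-cycle on $\{i,j,k\}$ at $\bp[i]$ before settling as an edge $k\!-\!j$. These entries are independent of the $c$-vectors, and they show that $r_k^{\cdot}$ can be conjugated only at the first mutation (by $r_i^\bp$, exactly when $c_i^\bp<0$), at the second (by $r_j^{\bp[i]}$, exactly when $c_j^{\bp[i]}<0$), and at the fourth (by $r_j^{\bp[i,j,i]}$, exactly when $c_j^{\bp[i,j,i]}>0$); the third and fifth mutations leave $r_k^{\cdot}$ fixed.

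Next I would superimpose the four cases of the proof of Lemma~\ref{swap-indices-i,j}, which here correspond to the four sign patterns of $(\sgn c_i^\bp,\sgn c_j^\bp)$ and already record the signs of $c_i^\bw$ and $c_j^\bw$ at every intermediate seed (since $b_{ij}^\bw=\pm1$ there) as well as the words for $r_j^{\bp[i]}$ and $r_j^{\bp[i,j,i]}$ in terms of $r_i^\bp,r_j^\bp$; in Case~(4) one splits off, exactly as there, the two sub-cases in which a single sign is ambiguous and resolves them using $|b_{ij}^\bp|=1$ together with the identities $c_i^{\bp[i,j,i,j,i]}=c_j^\bp$, $c_j^{\bp[i,j,i,j,i]}=c_i^\bp$ used in reverse. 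Running through the (few) branches and collapsing the resulting word in $r_i^\bp,r_j^\bp,r_k^\bp$ by inserting trivial pairs $r_m^\bp r_m^\bp=1$ yields one of three outcomes: $r_k^{\bp[i,j,i,j,i]}=r_k^\bp$ when no conjugator survives; $r_k^{\bp[i,j,i,j,i]}=(r_j^\bp r_i^\bp)^3 r_k^\bp (r_i^\bp r_j^\bp)^3$ when the net conjugator is $(r_j^\bp r_i^\bp)^3$ (for instance when $c_i^\bp<0$ and $c_j^\bp>0$, where the surviving conjugators are $r_i^\bp$ and $r_j^{\bp[i,j,i]}=(r_j^\bp r_i^\bp)^2 r_j^\bp$); and $r_k^{\bp[i,j,i,j,i]}=(r_i^\bp r_j^\bp)^3(r_j^\bp r_k^\bp)^2 r_k^\bp (r_j^\bp r_i^\bp)^3$ otherwise (for instance when $c_i^\bp<0$ and $c_j^\bp<0$, where the conjugators are $r_i^\bp$, $r_j^\bp$, $r_i^\bp r_j^\bp r_i^\bp$, so that $r_k^{\bp[i,j,i,j,i]}=r_i^\bp r_j^\bp r_i^\bp r_j^\bp r_i^\bp r_k^\bp r_i^\bp r_j^\bp r_i^\bp r_j^\bp r_i^\bp$, which rearranges to the stated word). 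Applying $\pi$ gives the three forms in the statement.

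The obstacle is bookkeeping rather than ideas: although each mutation contributes at most one conjugator, those conjugators are themselves words of length up to five in $r_i^\bp,r_j^\bp$, so in every branch one must simplify a word of length at most about a dozen in $r_i^\bp,r_j^\bp,r_k^\bp$ and check that it lands in the short list above and nowhere else --- the two delicate points being the sign-ambiguity resolution in Case~(4) and the verification that the transient $3$-cycle contributes at most one factor $(r_j^\bp r_k^\bp)^2$ while the $(i,j)$-pentagon contributes at most one factor $(r_i^\bp r_j^\bp)^3$. Combining the result with Lemma~\ref{induction}, which trivializes all three factors (as $|b_{ij}^\bp|=1$ and $b_{jk}^\bp=0$), then gives $\pi(r_k^{\bp[i,j,i,j,i]})=\pi(r_k^\bp)$, which is what is used afterwards; Lemma~\ref{lem-case-5} is established by an entirely analogous computation.
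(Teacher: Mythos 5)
Your proposal is correct and follows essentially the same route as the paper: track the evolution $(b_{ki}^{\bw},b_{kj}^{\bw})=(-1,0)\to(1,-1)\to(0,1)\to(0,1)\to(0,-1)$, note that $r_k$ can only be conjugated at the first, second and fourth mutations, superimpose the sign cases of Lemma \ref{swap-indices-i,j} for $(c_i^{\bp},c_j^{\bp})$, and simplify the resulting words; your two worked instances match the paper's computations for the second and third outcomes. The only cosmetic imprecision is describing the trivial outcome as ``no conjugator survives,'' since in one branch two equal conjugators fire and cancel, but this does not affect the argument.
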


The third quiver belongs to the cases (3) and (8).

\begin{center}
	\includegraphics[scale=0.5]{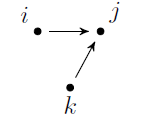}
	\end{center}

\begin{Lem}[Case (3)] \label{lem-case-3}
	Suppose that $\bv = [i,j]_{\bp}$ is an elementary swap for some $i,j \in \mathcal{I}$ and for a mutation sequence $\bp$.
	If $k \neq i,j \in \mathcal{I}$ satisfies
	$$b_{ki}^{\bp} =0, \, b_{kj}^{\bp} > 0, \, b_{ij}^{\bp} > 0, \, \mathrm{ and }\, b_{ji}^{\bp} < 0,$$ then $\pi(r_k^{\bp [i,j,i,j,i]}) = \pi(r_k^{\bp})$, $\pi(r_i^\bp r_j^\bp)^3\pi(r_k^{\bp}) \pi(r_j^\bp r_i^\bp)^3$, or $\pi(r_i^\bp r_j^\bp)^3\pi(r_i^\bp r_k^{\bp})^2 \pi(r_k^\bp)\pi(r_j^\bp r_i^\bp)^3$.
\end{Lem}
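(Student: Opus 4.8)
The plan is to track the pair of products $b_{ij}^{\bq}c_j^{\bq}$ and $b_{ji}^{\bq}c_i^{\bq}$ together with the auxiliary quantities $b_{ki}^{\bq}c_i^{\bq}$ and $b_{kj}^{\bq}c_j^{\bq}$ along the mutation string $\bq = \bp, \bp[i], \bp[i,j], \bp[i,j,i], \bp[i,j,i,j], \bp[i,j,i,j,i]$, exactly as in the proof of Lemma \ref{swap-indices-i,j} but now keeping the extra index $k$ in view. The sign of $b_{ki}^{\bq}c_i^{\bq}$ (resp.\ $b_{kj}^{\bq}c_j^{\bq}$) is what decides, via Definition \ref{def-r}, whether a given step conjugates $r_k$ by $r_i$ (resp.\ $r_j$) or leaves it fixed; so multiplying out these conjugations along the five mutations of the 5-cycle $[i,j,i,j,i]$ yields $\pi(r_k^{\bp[i,j,i,j,i]})$ as a word in $\pi(r_i^\bp), \pi(r_j^\bp), \pi(r_k^\bp)$. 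First I would fix the starting hypotheses $b_{ki}^\bp=0$, $b_{kj}^\bp>0$, $b_{ij}^\bp>0$, $b_{ji}^\bp<0$ (so we are in Case (1) of Lemma \ref{swap-indices-i,j} for the $(i,j)$-part, where $r_i^{\bp[i,j,i,j,i]}=r_i^\bp r_j^\bp r_i^\bp r_j^\bp r_i^\bp$, etc.), and then subdivide according to the five sub-cases of Lemma \ref{swap-indices-i,j}, i.e.\ according to the signs of $b_{ij}^\bp c_j^\bp$ and $b_{ji}^\bp c_i^\bp$; but note that the $(i,j)$-mutation behaviour of $b_{ki}, b_{kj}$ and the $c$-vectors is further constrained by the type $A_n$ shape (the relevant subquiver on $\{i,j,k\}$ is the third quiver pictured above, so $b_{jk}^\bp<0$ and no triangle is present, forcing e.g.\ $b_{ik}^\bp=0$ to persist or to become nonzero only in a controlled way). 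In each sub-case I would write down the five-step sign table explicitly and read off the resulting conjugating word for $r_k$.

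The key combinatorial point is that, since $b_{ki}^\bp=0$, the first mutation $\bp\to\bp[i]$ does \emph{not} conjugate $r_k$; the string therefore reduces to tracking whether, at the mutation at $j$ in position $2$ and at the mutation at $j$ in position $4$, one has $b_{kj}^{\bq}c_j^{\bq}>0$, and whether, at the mutation at $i$ in position $3$ and position $5$, one has $b_{ki}^{\bq}c_i^{\bq}>0$ (which can only happen after $b_{ki}$ has become nonzero, i.e.\ after the first $j$-mutation has "passed" the arrow $i\to j$ onto a path to $k$). Collecting cases, the conjugating word is either empty (giving $\pi(r_k^\bp)$), or the full 5-cycle conjugation $\pi(r_i^\bp r_j^\bp)^3$ acting by conjugation on $\pi(r_k^\bp)$ (giving $\pi(r_i^\bp r_j^\bp)^3\pi(r_k^\bp)\pi(r_j^\bp r_i^\bp)^3$ — here one uses that $(r_j^\bp r_i^\bp)^3 = ((r_i^\bp r_j^\bp)^3)^{-1}$ as group elements, together with $r_j^\bp r_i^\bp r_j^\bp r_i^\bp r_j^\bp r_i^\bp$ being an involution-free rewriting), or a "mixed" word in which a $j$-conjugation and an $i$-conjugation interleave, which after the standard reshuffling $r_i^\bp r_j^\bp r_i^\bp = r_j^\bp r_i^\bp r_j^\bp$–type manipulations (legitimate here only as formal words, not yet as relations) collapses to $\pi(r_i^\bp r_j^\bp)^3\pi(r_i^\bp r_k^\bp)^2\pi(r_k^\bp)\pi(r_j^\bp r_i^\bp)^3$. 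One must be careful to present these as \emph{words in $\mathcal W$} (group elements), not yet invoking the relations of Lemma \ref{induction}: that reduction happens later, in Lemma \ref{swap-identity}.

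The main obstacle I anticipate is the bookkeeping in the sub-cases where the sign of some intermediate product (e.g.\ $b_{ij}^{\bp[i,j,i]}c_j^{\bp[i,j,i]}$ or $b_{kj}^{\bp[i,j]}c_j^{\bp[i,j]}$) is not determined by the starting data alone — as already happened in Case (3) and Case (4) of Lemma \ref{swap-indices-i,j}. There the trick was to compute the relevant product at both ends of the 5-cycle (using $c_i^{\bp[i,j,i,j,i]}=c_j^\bp$, $c_j^{\bp[i,j,i,j,i]}=c_i^\bp$, and $|b_{ij}^\bp|=1$) and argue that the two candidate signs are forced to be opposite, pinning down the ambiguity; I expect the same device, applied now to the $k$-flavoured products, to resolve the remaining cases, at the cost of splitting into the $|b_{kj}^\bp|=1$ situation (automatic for type $A_n$). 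Once every sub-case is tabulated, each lands in exactly one of the three listed forms, and since (as observed in the discussion section) Case (3) corresponds to the single subquiver pictured above, these three are exhaustive, completing the proof.
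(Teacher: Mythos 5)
Your proposal follows essentially the same route as the paper's proof: track the signs of $b_{ki}c_i$ and $b_{kj}c_j$ through the five mutations of $[i,j,i,j,i]$, split into the five sub-cases of Lemma \ref{swap-indices-i,j} according to the signs of $b_{ij}^{\bp}c_j^{\bp}$ and $b_{ji}^{\bp}c_i^{\bp}$, use the type $A_n$ constraint (which forces $b_{kj}^{\bp[i,j,i]}=0$ here) and the end-point trick to resolve ambiguous signs, and read off the conjugating word for $r_k$. One caution: the ``reshuffling $r_i^{\bp} r_j^{\bp} r_i^{\bp} = r_j^{\bp} r_i^{\bp} r_j^{\bp}$'' you invoke is a braid relation, which holds neither as a formal word identity nor in the universal Coxeter group $\mathcal W$, and is not needed --- the paper reaches the three listed forms using only the involutivity $(r_\ell^{\bw})^2=1$, the intermediate identities such as $r_j^{\bp[i]}=r_i^{\bp}r_j^{\bp}r_i^{\bp}$ and $r_i^{\bp[i,j]}=r_i^{\bp}$, and the conclusion $\pi(r_i^{\bp[i,j,i,j]})=\pi(r_i^{\bp}r_j^{\bp})^3\pi(r_j^{\bp})$ of Lemma \ref{swap-indices-i,j}, so that step should be replaced by plain cancellation.
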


\begin{Lem}[Case (8)] \label{lem-case-8}
	Suppose that $\bv = [i,j]_{\bp}$ is an elementary swap for some $i,j \in \mathcal{I}$ and for a mutation sequence $\bp$.
	If $k \neq i,j \in \mathcal{I}$ satisfies
	$$b_{ki}^{\bp} >0, \, b_{kj}^{\bp} = 0, \, b_{ij}^{\bp} < 0, \, \mathrm{ and }\, b_{ji}^{\bp} > 0,$$ then $\pi(r_k^{\bp [i,j,i,j,i]}) = \pi(r_k^{\bp})$, $\pi(r_i^\bp r_j^\bp)^3\pi(r_k^{\bp}) \pi(r_j^\bp r_i^\bp)^3$, or $\pi(r_j^\bp r_k^{\bp})^2 \pi(r_k^\bp)$.	
\end{Lem}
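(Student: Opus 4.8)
The plan is to mimic, for the index $k$, the step-by-step sign analysis carried out for the indices $i$ and $j$ in the proof of Lemma~\ref{swap-indices-i,j} in Section~\ref{proof-swap-indices-i,j}. By Definition~\ref{def-r} the element $r_k$ is altered only at a mutation at a vertex $m\in\{i,j\}$ for which $b_{km}^{\bullet}c_{m}^{\bullet}>0$, in which case it is conjugated by $r_m^{\bullet}$; every other mutation leaves it fixed. So first I would compute, using the $B$-matrix rule~\eqref{eqn-mmuu} and the $C$-matrix rule together with sign coherence, the relevant entries along the sequence $\bp,\bp[i],\bp[i,j],\bp[i,j,i],\bp[i,j,i,j],\bp[i,j,i,j,i]$. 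Under the hypothesis $b_{ki}^\bp>0$, $b_{kj}^\bp=0$, $b_{ij}^\bp<0$, $b_{ji}^\bp>0$, the subquiver on $\{i,j,k\}$ is the ``third quiver'' $k\to i\leftarrow j$ and $|b_{ij}^\bp|=1$; one finds
\[ b_{ki}^{\bp}=1,\quad b_{ki}^{\bp[i]}=-1,\quad b_{ki}^{\bp[i,j]}=-1,\quad b_{ki}^{\bp[i,j,i]}=1,\quad b_{ki}^{\bp[i,j,i,j]}=0, \]
while $b_{kj}^{\bullet}=0$ until the third mutation, where the edge between $k$ and $j$ first appears, after which $b_{kj}^{\bp[i,j,i]}=-1$. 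Consequently $r_k$ can be conjugated only at the first mutation at $i$ (precisely when $c_i^\bp>0$), at the third mutation (at $i$, precisely when $c_i^{\bp[i,j]}<0$), and at the fourth mutation (at $j$, precisely when $c_j^{\bp[i,j,i]}<0$).

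The second step is to assemble the resulting word in $r_i^\bp$, $r_j^\bp$, $r_k^\bp$. For this I would substitute the case-by-case expressions for $r_i^{\bp[i,j]}$ and $r_j^{\bp[i,j,i]}$ already extracted in the proof of Lemma~\ref{swap-indices-i,j} (using $r_i^{\bw[i]}=r_i^\bw$ and $r_j^{\bw[j]}=r_j^\bw$ to shorten them), and collect terms. If none of the three conjugations occurs, $r_k^{\bp[i,j,i,j,i]}=r_k^\bp$; if all three occur in the balanced configuration, the accumulated prefix telescopes to $(r_i^\bp r_j^\bp)^3$, giving $\pi(r_i^\bp r_j^\bp)^3\pi(r_k^\bp)\pi(r_j^\bp r_i^\bp)^3$; and in the one remaining configuration the single surviving conjugation is by $r_j^\bp$, coming from the newly created edge between $k$ and $j$, which produces $r_j^\bp r_k^\bp r_j^\bp=(r_j^\bp r_k^\bp)^2 r_k^\bp$. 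These are exactly the three words in the statement. One can also observe that the present hypothesis is the image under the transposition $(i,j)$ of the hypothesis of Lemma~\ref{lem-case-3}, so the computation runs in close parallel to the Case~(3) argument, the displayed lists matching once one applies $\pi(r_i^\bp r_j^\bp)^3=\mathrm{id}$, which is valid by Lemmas~\ref{base-case} and~\ref{induction} since $|b_{ij}^\bp|=1$.

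The main obstacle is exactly the one met in Case~(4) of Section~\ref{proof-swap-indices-i,j}: a single forward pass does not always fix the sign of $c_j^{\bp[i,j,i]}$, nor of certain intermediate products $b^{\bullet}c^{\bullet}$, so the sub-case analysis does not close on its own. I would resolve this as there, by combining the forward recursion from $\bp$ with a backward recursion from $\bp[i,j,i,j,i]$, using the identities $c_i^{\bp[i,j,i,j,i]}=c_j^\bp$ and $c_j^{\bp[i,j,i,j,i]}=c_i^\bp$ together with $b_{ij}^\bp=\pm1$ to pin down the ambiguous signs and verify that the two passes agree. Beyond this point the argument is routine bookkeeping, and I do not expect any phenomenon not already present in Sections~\ref{proof-swap-indices-i,j} and~\ref{proof-base-case}.
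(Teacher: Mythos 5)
Your overall strategy is exactly the paper's: track the signs of $b_{ki}$, $b_{kj}$, $b_{ij}$, $b_{ji}$ and the relevant $c$-vector entries through $\bp,\bp[i],\dots,\bp[i,j,i,j]$, observe that $r_k$ can only be conjugated at the first mutation (iff $c_i^{\bp}>0$), the third (iff $c_i^{\bp[i,j]}<0$) and the fourth (iff $c_j^{\bp[i,j,i]}<0$), and then simplify the resulting word using the expressions for $r_i^{\bp[i,j]}$, $r_j^{\bp[i]}$, $r_j^{\bp[i,j,i]}$ already extracted in the proof of Lemma \ref{swap-indices-i,j}. Your $B$-matrix sign evolution $(+,-,-,+,0)$ for $b_{ki}$, and the first appearance of the $kj$-edge at $\bp[i,j,i]$ with $b_{kj}^{\bp[i,j,i]}=-1$, agree with the paper, as does the suggestion to resolve the ambiguous $c$-vector signs by running the recursion both forward from $\bp$ and backward from $\bp[i,j,i,j,i]$.

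However, your assignment of outcomes to configurations is wrong, which indicates the word computation was not actually carried out. In the paper's five sub-cases the realized quintuples of products $b_{k\cdot}^{\bullet}c_{\cdot}^{\bullet}$ are $(-,0,-,-,0)$, $(+,0,-,+,0)$, $(-,0,-,-,0)$, $(+,0,+,-,0)$ and $(+,0,+,+,0)$. The outcome $\pi(r_i^\bp r_j^\bp)^3\pi(r_k^{\bp}) \pi(r_j^\bp r_i^\bp)^3$ comes from the configuration with conjugations at the first and fourth mutations only, not from ``all three''; the configuration in which all three conjugations occur is the one whose accumulated conjugator $r_j^{\bp[i,j,i]}r_i^{\bp[i,j]}r_i^{\bp}$ collapses to the single letter $r_j^{\bp}$ and yields $\pi(r_j^\bp r_k^{\bp})^2 \pi(r_k^\bp)$; and you omit the configuration with conjugations at the first and third mutations, where $r_i^{\bp[i,j]}=r_i^{\bp}$ forces complete cancellation back to $r_k^{\bp}$. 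The final list of three possible values is correct and the method would succeed once the bookkeeping is done properly, but as written the concrete claims in your second paragraph are false. Your appeal to $\pi(r_i^\bp r_j^\bp)^3=\mathrm{id}$ is also unnecessary here: the lemma is meant to record the words themselves, with the reduction to the identity deferred to Lemma \ref{induction} and Corollary \ref{single-swap-result}.
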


The fourth quiver belongs to the cases (9) and (6).
\begin{center}
	\includegraphics[scale=0.5]{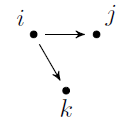}
	\end{center}

\begin{Lem}[Case (6)] \label{lem-case-6}
	Suppose that $\bv = [i,j]_{\bp}$ is an elementary swap for some $i,j \in \mathcal{I}$ and for a mutation sequence $\bp$.
	If $k \neq i,j \in \mathcal{I}$ satisfies
	$$b_{ki}^{\bp} = 0, \, b_{kj}^{\bp} < 0, \, b_{ij}^{\bp} < 0, \, \mathrm{ and }\, b_{ji}^{\bp} > 0,$$ then $\pi(r_k^{\bp [i,j,i,j,i]}) = \pi(r_k^{\bp})$ or $\pi(r_i^\bp r_j^\bp)^3\pi(r_k^{\bp})\pi(r_j^\bp r_i^\bp)^3$.
\end{Lem}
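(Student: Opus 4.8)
The plan is to follow the element $r_k$ through the five-step mutation sequence $[i,j,i,j,i]$ applied after $\bp$, imitating the step-by-step analysis in the proof of Lemma~\ref{swap-indices-i,j}. By Definition~\ref{def-r}, $r_k$ is conjugated (by the current $r_i$ or $r_j$) precisely at those steps where $b_{k\ell}^\bullet c_\ell^\bullet>0$ for the mutation direction $\ell\in\{i,j\}$; thus the whole computation reduces to determining, at each of the five stages $\bp,\ \bp[i],\ \bp[i,j],\ \bp[i,j,i],\ \bp[i,j,i,j]$, the signs of the two products $b_{ki}^\bullet c_i^\bullet$ and $b_{kj}^\bullet c_j^\bullet$. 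I would combine two inputs. First, mutating only at $i$ and $j$ rewrites $r_i,r_j$ as words in $r_i^\bp,r_j^\bp$, and the evolution of $b_{ij}^\bullet,\ b_{ji}^\bullet,\ c_i^\bullet,\ c_j^\bullet,\ r_i^\bullet,\ r_j^\bullet$ along $[i,j,i,j,i]$ has already been worked out, case by case, in the proof of Lemma~\ref{swap-indices-i,j}. Second, we are in type $A_n$, so $b_{ij}^\bp=\pm1$, $b_{kj}^\bp=\pm1$, $b_{ki}^\bp=0$, and all $b$-entries among $\{i,j,k\}$ stay in $\{-1,0,1\}$ throughout since mutation preserves the mutation class; moreover $[i,j,i,j,i]$ realizes the transposition $(i,j)$ on the seed, so at the last stage $b_{ki}^{\bp[i,j,i,j,i]}=b_{kj}^\bp$, $b_{kj}^{\bp[i,j,i,j,i]}=b_{ki}^\bp=0$, and $c_i^{\bp[i,j,i,j,i]}=c_j^\bp$, $c_j^{\bp[i,j,i,j,i]}=c_i^\bp$.

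Since $b_{ij}^\bp<0$ and $b_{ji}^\bp>0$ are fixed, the only remaining freedom is the sign of each of the sign-coherent vectors $c_i^\bp$ and $c_j^\bp$, which gives four sub-cases matching Cases~(1)--(4) of the proof of Lemma~\ref{swap-indices-i,j}. In each sub-case I would propagate the four sign quantities $\mathrm{sgn}(b_{ki}^\bullet),\ \mathrm{sgn}(b_{kj}^\bullet),\ \mathrm{sgn}(c_i^\bullet),\ \mathrm{sgn}(c_j^\bullet)$ through the five stages using the mutation rule~\eqref{eqn-mmuu} for the $B$-part and the analogous rule for the $C$-part. The key structural point is that $b_{ki}^\bullet=0$ up to (and including) the first mutation at $j$, and only afterward does $b_{ki}^\bullet$ become $\pm1$; so, reading off the recorded signs, one sees that $r_k$ can be conjugated only at some of steps $2,3,4,5$, and one can identify exactly which steps contribute and by which word in $r_i^\bp,r_j^\bp$.

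Assembling the conjugators, $r_k^{\bp[i,j,i,j,i]}=g\,r_k^\bp\,g^{-1}$ in $\mathcal W$ for a definite word $g$ in $r_i^\bp,r_j^\bp$. In the sub-cases where $g$ is trivial or telescopes to the identity, we get $\pi(r_k^{\bp[i,j,i,j,i]})=\pi(r_k^\bp)$. In the remaining sub-case, rewriting $g$ (using $(r_i^\bp r_j^\bp)^{-1}=r_j^\bp r_i^\bp$ and $|b_{ij}^\bp|=1$) should produce $g=(r_i^\bp r_j^\bp)^3$, hence $\pi(r_k^{\bp[i,j,i,j,i]})=\pi(r_i^\bp r_j^\bp)^3\pi(r_k^\bp)\pi(r_j^\bp r_i^\bp)^3$; in particular the three further possibilities permitted by part~(B) of Lemma~\ref{swap-indices-k} do not occur for this sign configuration. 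As in the proof of Lemma~\ref{swap-indices-i,j}, certain $c$-vector signs at the middle stages are not pinned down directly; I would resolve these ambiguities by computing from both ends of the sequence, using $c_i^{\bp[i,j,i,j,i]}=c_j^\bp$, $c_j^{\bp[i,j,i,j,i]}=c_i^\bp$, and $|b_{ij}^\bp|=1$ to force the undetermined product to be negative.

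The main obstacle is organizational rather than conceptual: one must carry four sign quantities through five mutation stages in each of four sub-cases, and --- unlike the easier cases~(A),(1),(2) of Lemma~\ref{swap-indices-k} --- here \emph{both} channels $b_{ki}^\bullet c_i^\bullet$ and $b_{kj}^\bullet c_j^\bullet$ are active, because $b_{ki}^\bullet$ switches on partway through the sequence. One also has to confirm that no $b$-entry among $\{i,j,k\}$ ever exceeds $1$ in modulus (automatic in type $A_n$), which is what keeps the conjugating word short enough to be identified with $(r_i^\bp r_j^\bp)^3$, and one must deal with the sign-ambiguous middle stages exactly as in Case~(4) of the proof of Lemma~\ref{swap-indices-i,j}.
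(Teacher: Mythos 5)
Your proposal is correct and follows essentially the same route as the paper: reduce to the sign sub-cases of $(b_{ij}^{\bp}c_j^{\bp},b_{ji}^{\bp}c_i^{\bp})$ already analyzed in the proof of Lemma \ref{swap-indices-i,j}, track the signs of $b_{ki}^{\bullet}, b_{kj}^{\bullet}, c_i^{\bullet}, c_j^{\bullet}$ through the five mutations to see exactly when $r_k$ is conjugated, and then telescope the resulting word in $r_i^{\bp}, r_j^{\bp}$ to the identity or to $(r_i^{\bp}r_j^{\bp})^3$. The structural observations you single out (that $b_{ki}^{\bullet}$ only switches on after the first mutation at $j$, and that the end-of-sequence identities resolve the ambiguous middle-stage signs as in Case (4) of Lemma \ref{swap-indices-i,j}) are precisely the points the paper's computation relies on.
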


\begin{Lem}[Case (9)] \label{lem-case-9}
	Suppose that $\bv = [i,j]_{\bp}$ is an elementary swap for some $i,j \in \mathcal{I}$ and for a mutation sequence $\bp$.
	If $k \neq i,j \in \mathcal{I}$ satisfies
	$$b_{ki}^{\bp} < 0, \, b_{kj}^{\bp} = 0, \, b_{ij}^{\bp} > 0, \, \mathrm{ and }\, b_{ji}^{\bp} < 0,$$ then $\pi(r_k^{\bp [i,j,i,j,i]}) = \pi(r_k^{\bp})$, $\pi(r_j^\bp r_i^\bp)^3\pi(r_k^{\bp})\pi(r_i^\bp r_j^\bp)^3$, or $\pi(r_j^{\bp} r_k^{\bp})^2 \pi(r_k^{\bp})$.
\end{Lem}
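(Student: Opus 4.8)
The plan is to follow how the reflection $r_k$ evolves along the five elementary mutations composing $[i,j,i,j,i]$, that is, along the prefixes $\bp,\ \bp[i],\ \bp[i,j],\ \bp[i,j,i],\ \bp[i,j,i,j],\ \bp[i,j,i,j,i]$. By Definition \ref{def-r}, at each step mutating at $l\in\{i,j\}$ the element $r_k$ is either left unchanged or replaced by $r_l^\bw r_k^\bw r_l^\bw$, the choice being governed by the sign of $b_{kl}^\bw c_l^\bw$; consequently $r_k^{\bp[i,j,i,j,i]}=w\,r_k^\bp\,w^{-1}$, where $w$ is the ordered product of the reflections $r_l^\bw$ occurring at the steps where $r_k$ actually moved. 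Since $\bp$ is arbitrary I treat $B^\bp$ and $C^\bp$ as initial data; by sign coherence each $c_m^\bw$ is either $>0$ or $<0$, so the only inputs needed are the signs $\sgn(c_i^\bw),\sgn(c_j^\bw)$ together with the entries $b_{ki}^\bw,b_{kj}^\bw$, and since $Q$ is of type $A_n$ every off-diagonal entry $b_{\bullet\bullet}^\bw$ lies in $\{-1,0,1\}$, which keeps this bookkeeping finite.

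First I would import from the proof of Lemma \ref{swap-indices-i,j}, for each of these six prefixes $\bw$, the signs of $b_{ij}^\bw c_j^\bw$ and $b_{ji}^\bw c_i^\bw$ and the explicit expressions of $r_i^\bw,r_j^\bw$ as words in $r_i^\bp,r_j^\bp$. Combined with $\sgn(b_{ij}^\bw)$, which simply flips at each mutation at $i$ or $j$ (only $i,j$ are mutated), this pins down $\sgn(c_i^\bw)$ and $\sgn(c_j^\bw)$ throughout. The Case (9) hypotheses $b_{ij}^\bp>0$, $b_{ji}^\bp<0$ place $(c_i^\bp,c_j^\bp)$ into exactly one of the four sign patterns treated as Cases (1)--(4) in the proof of Lemma \ref{swap-indices-i,j}, so I split along those; using $b_{ki}^\bp<0$, the first mutation leaves $r_k$ fixed in the two patterns with $c_i^\bp>0$ and replaces it by $r_i^\bp r_k^\bp r_i^\bp$ in the two with $c_i^\bp<0$.

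Next, within each case I would propagate $b_{ki}^\bw$ and $b_{kj}^\bw$ through \eqref{eqn-mmuu}, observing that although $b_{kj}^\bp=0$ a second mutation at $i$ can create $b_{kj}^\bw=\pm1$ once the path $k\!-\!i\!-\!j$ is present, so a further branch on the sign with which this edge is born may be needed (as already occurs inside Case (4) of Lemma \ref{swap-indices-i,j}). At each of the five steps I then decide from \eqref{def-sx_i-1} whether $r_k$ picks up a conjugation by $r_i^\bw$ or by $r_j^\bw$; substituting the words for $r_i^\bw,r_j^\bw$ just recorded and cancelling in the universal Coxeter group $\mathcal W$, the conjugator $w$ collapses, case by case, to $1$, to $(r_i^\bp r_j^\bp)^{\pm3}$, or to $r_j^\bp$. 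Finally, because $|b_{ij}^\bp|=1$ in type $A_n$, Lemma \ref{induction} (whose proof rests only on Lemmas \ref{base-case} and \ref{order-choice}, hence is available here) gives $\pi(r_i^\bp r_j^\bp)^3=\mathrm{id}$; applying $\pi$ and using $\pi(r_j^\bp r_k^\bp r_j^\bp)=\pi(r_j^\bp r_k^\bp)^2\pi(r_k^\bp)$ then puts each outcome into one of the three asserted forms.

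I expect the delicate part to be the pattern $c_i^\bp<0$, $c_j^\bp>0$ (Case (4) of Lemma \ref{swap-indices-i,j}), where that lemma's proof itself leaves an intermediate sign such as $\sgn(b_{ij}^{\bp[i]}c_j^{\bp[i]})$ undetermined and splits into two sub-branches; it is in one of these that the conjugator $w$ ends with a surviving $r_j^\bp$, producing the $\pi(r_j^\bp r_k^\bp)^2\pi(r_k^\bp)$ outcome, so one must carry both sub-branches along, keep the evolving edge $b_{kj}^\bw$ synchronized with the $c_j^\bw$-signs inherited from Lemma \ref{swap-indices-i,j}, and verify that no conjugation persists beyond the point at which that word is reached. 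A secondary point is checking that the \eqref{eqn-mmuu}-updates of $b_{ki}^\bw,b_{kj}^\bw$ remain consistent with those $c$-vector signs; this stays manageable because type $A_n$ keeps all arrows simple and forces every $3$-cycle on $\{i,j,k\}$ that appears to be oriented, limiting the local configurations to a short list.
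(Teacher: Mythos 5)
Your proposal is correct and follows essentially the same route as the paper's proof: split into the same five sign sub-cases inherited from the proof of Lemma \ref{swap-indices-i,j} (including the two sub-branches of its Case (4)), propagate $b_{ki}^{\bw},b_{kj}^{\bw}$ and the signs of $c_i^{\bw},c_j^{\bw}$ through the five mutations, read off from \eqref{def-sx_i-1} which steps conjugate $r_k$, and collapse the resulting conjugator. One minor remark: the closing appeal to Lemma \ref{induction} is unnecessary (and would make the conclusion depend on the choice of ordering, whereas the lemma is stated for an arbitrary GIM); once the conjugator has collapsed in $\mathcal W$ to $1$, $(r_i^{\bp}r_j^{\bp})^{\pm 3}$, or $r_j^{\bp}$, the asserted forms follow directly from involutivity and $\pi(r_j^{\bp}r_k^{\bp}r_j^{\bp})=\pi(r_j^{\bp}r_k^{\bp})^2\pi(r_k^{\bp})$, exactly as in the paper, where the reduction of the powered factors to the identity is deliberately deferred to the proof of the main theorem.
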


The fifth belongs to the cases (7) and (4).

\begin{center}
	\includegraphics[scale=0.5]{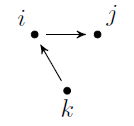}
	\end{center}

\begin{Lem}[Case (4)] \label{lem-case-4}
	Suppose that $\bv = [i,j]_{\bp}$ is an elementary swap for some $i,j \in \mathcal{I}$ and for a mutation sequence $\bp$.
	If $k \neq i,j \in \mathcal{I}$ satisfies
	$$b_{ki}^{\bp} = 0, \, b_{kj}^{\bp} > 0, \, b_{ij}^{\bp} < 0, \, \mathrm{ and }\, b_{ji}^{\bp} > 0,$$ then $\pi(r_k^{\bp [i,j,i,j,i]}) = \pi(r_k^{\bp})$ or $\pi(r_j^\bp r_i^\bp)^3\pi(r_k^{\bp})\pi(r_i^\bp r_j^\bp)^3$.
\end{Lem}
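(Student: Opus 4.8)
The plan is to run the same seed-by-seed computation used in the proof of Lemma~\ref{swap-indices-i,j}, but now following the index $k$; this parallels the other cases laid out in Section~\ref{discussion-swap-indices-k}. I would pass through the six seeds
\[ \bp,\quad \bp[i],\quad \bp[i,j],\quad \bp[i,j,i],\quad \bp[i,j,i,j],\quad \bp[i,j,i,j,i], \]
recording at each one the restriction of $B^{\bullet}$ to $\{i,j,k\}$ together with the signs of $b_{ki}^{\bullet}c_i^{\bullet}$ and $b_{kj}^{\bullet}c_j^{\bullet}$. By Definition~\ref{def-r}, a mutation in direction $\ell\in\{i,j\}$ replaces $r_k^{\bullet}$ by $r_\ell^{\bullet}r_k^{\bullet}r_\ell^{\bullet}$ exactly when $b_{k\ell}^{\bullet}c_\ell^{\bullet}>0$ and leaves it fixed otherwise; since $k$ is never a mutation direction of $[i,j,i,j,i]$, only the subquiver on $\{i,j,k\}$ and the two vectors $c_i^{\bullet},c_j^{\bullet}$ are needed, so the whole analysis is local.

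\textbf{The $B$-side.} Using \eqref{eqn-mmuu}, the hypotheses $b_{ki}^{\bp}=0$, $b_{kj}^{\bp}>0$, $b_{ij}^{\bp}<0$ (hence $|b_{ij}^{\bp}|=|b_{kj}^{\bp}|=1$ in type $A_n$) force the restriction of $B^{\bullet}$ to $\{i,j,k\}$ to run through
\[ k\to j\to i,\ \ k\to j\leftarrow i,\ \ i\leftarrow j\to k,\ \ i\to j\to k,\ \ j\to i\to k\to j,\ \ k\to i\to j \]
along these six seeds (three $2$-paths, then an oriented triangle at $\bp[i,j,i,j]$, then a $2$-path; the triangle is consistent with type $A_n$, where every triangle is an oriented $3$-cycle, and matches the fact that cases such as (11) do not occur). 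Reading off the three seeds $\bp,\bp[i,j],\bp[i,j,i,j]$ that precede a mutation at $i$ (where $b_{ki}^{\bullet}=0,0,-1$) and the two seeds $\bp[i],\bp[i,j,i]$ that precede a mutation at $j$ (where $b_{kj}^{\bullet}\neq0$), I conclude that $r_k$ can be conjugated only at the second, fourth, and fifth mutations of $[i,j,i,j,i]$, by $r_j^{\bp[i]}$, $r_j^{\bp[i,j,i]}$, and $r_i^{\bp[i,j,i,j]}$ respectively, each either present or absent.

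\textbf{The $C$-side and the branching.} The hypotheses fix the $B$-pattern but not the signs of the sign-coherent vectors $c_i^{\bp},c_j^{\bp}$, so I would split into the corresponding sub-cases, which are exactly the cases — and the two internal sub-cases of Case~(4) — of the proof of Lemma~\ref{swap-indices-i,j}. Propagating $[B\,|\,C]$ by \eqref{eqn-mmuu}, one reads off in each sub-case the intermediate reflections $r_j^{\bp[i]},r_j^{\bp[i,j,i]},r_i^{\bp[i,j,i,j]}$ as words in $r_i^{\bp},r_j^{\bp}$ and the decisive signs $b_{kj}^{\bp[i]}c_j^{\bp[i]}$, $b_{kj}^{\bp[i,j,i]}c_j^{\bp[i,j,i]}$, $b_{ki}^{\bp[i,j,i,j]}c_i^{\bp[i,j,i,j]}$, exactly as the analogous data is computed in Lemma~\ref{swap-indices-i,j}; the only ambiguities of sign are those of Case~(4) there, and they are resolved by $|b_{ij}^{\bp}|=1$, which makes the relevant $c$-vectors into sums such as $c_i^{\bp}+c_j^{\bp}$ whose signs are then forced by sign coherence.

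\textbf{Assembling, and the main obstacle.} In every surviving sub-case the outcome will be $r_k^{\bp[i,j,i,j,i]}=g\,r_k^{\bp}\,g^{-1}$ with $g=h_3h_2h_1$, where $h_1\in\{r_j^{\bp[i]},1\}$, $h_2\in\{r_j^{\bp[i,j,i]},1\}$, $h_3\in\{r_i^{\bp[i,j,i,j]},1\}$ record the three possible conjugations. Substituting the computed expressions for $h_1,h_2,h_3$ and reducing with $(r_i^{\bp})^2=(r_j^{\bp})^2=1$, I expect $g$ to come out either as the identity — giving $\pi(r_k^{\bp[i,j,i,j,i]})=\pi(r_k^{\bp})$ — or as $(r_j^{\bp}r_i^{\bp})^3$, in which case $\big((r_j^{\bp}r_i^{\bp})^3\big)^{-1}=(r_i^{\bp}r_j^{\bp})^3$ gives $\pi(r_k^{\bp[i,j,i,j,i]})=\pi(r_j^{\bp}r_i^{\bp})^3\,\pi(r_k^{\bp})\,\pi(r_i^{\bp}r_j^{\bp})^3$. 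The main obstacle is exactly this case analysis on the $C$-side: there are several sub-cases according to the signs of $c_i^{\bp},c_j^{\bp}$, and in each one must carefully resolve the sign ambiguities created by the middle mutations of $[i,j,i,j,i]$; the type-$A_n$ hypothesis $|b_{ij}^{\bp}|=1$ is precisely the ingredient that makes every sub-case collapse to one of the two stated words.
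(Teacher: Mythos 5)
Your proposal is correct and follows essentially the same route as the paper: track the restriction of $[B\,|\,C]$ to $\{i,j,k\}$ through the six seeds (your $B$-pattern, including the oriented triangle at $\bp[i,j,i,j]$ and the vanishing of $b_{ki}$ at $\bp$, $\bp[i,j]$, matches the paper's sign table $(0,0,0,0,-)$ and $(+,+,-,-,+)$ exactly), branch on the sign sub-cases inherited from Lemma~\ref{swap-indices-i,j}, and observe that only the second, fourth, and fifth mutations can conjugate $r_k$, with the resulting conjugator reducing to $1$ or $(r_j^{\bp}r_i^{\bp})^3$. The explicit sign-chasing you defer is precisely what the paper carries out, and your structural claims all check out against it.
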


\begin{Lem}[Case (7)] \label{lem-case-7}
	Suppose that $\bv = [i,j]_{\bp}$ is an elementary swap for some $i,j \in \mathcal{I}$ and for a mutation sequence $\bp$.
	If $k \neq i,j \in \mathcal{I}$ satisfies
	$$b_{ki}^{\bp} > 0, \, b_{kj}^{\bp} = 0, \, b_{ij}^{\bp} > 0, \, \mathrm{ and }\, b_{ji}^{\bp} < 0,$$ then $\pi(r_k^{\bp [i,j,i,j,i]}) = \pi(r_k^{\bp})$, $\pi(r_i^{\bp}r_j^{\bp})^3\pi(r_j^{\bp}r_k^{\bp})^2 \pi(r_k^{\bp})\pi(r_j^{\bp}r_i^{\bp})^3$, or $\pi(r_j^\bp r_i^\bp)^3\pi(r_k^{\bp})\pi(r_i^\bp r_j^\bp)^3$.
\end{Lem}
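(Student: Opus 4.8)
The proof will mirror that of Lemma~\ref{swap-indices-i,j}: the plan is to track the reflection $r_k^{\bw}$ as $\bw$ runs through the five partial sequences $\bp,\ \bp[i],\ \bp[i,j],\ \bp[i,j,i],\ \bp[i,j,i,j]$, and to decide at each of the five mutations whether $r_k$ is conjugated or left fixed. By Definition~\ref{elem-walk}, the hypothesis that $[i,j]_{\bp}$ is an elementary swap forces $|b_{ij}^{\bp}| = 1$ and forces $[i,j,i,j,i]$ to be a pentagon at the triangulation corresponding to $\bp$; hence, by Lemma~\ref{swap-as-transposition}, mutating the seed at $\bp$ along $[i,j,i,j,i]$ produces its $(i,j)$-relabelling, so that $c_i^{\bp[i,j,i,j,i]} = c_j^{\bp}$, $c_j^{\bp[i,j,i,j,i]} = c_i^{\bp}$, $c_k^{\bp[i,j,i,j,i]} = c_k^{\bp}$, and each of $b_{ki}^{\bw}, b_{kj}^{\bw}, b_{ij}^{\bw}, b_{ji}^{\bw}$ returns at the end to its $(i,j)$-swapped initial value. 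Because $Q$ is of type $A_n$ every $|b_{xy}^{\bw}| \le 1$, so a mutation at index $m$ changes $c_l^{\bw}$ to $c_l^{\bw}+c_m^{\bw}$ exactly when $b_{lm}^{\bw} c_m^{\bw} > 0$ and leaves it fixed otherwise, and it sends $r_k^{\bw}$ to $r_m^{\bw} r_k^{\bw} r_m^{\bw}$ exactly when $b_{km}^{\bw} c_m^{\bw} > 0$.

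First I would record the local picture. The conditions $b_{ki}^{\bp} > 0$, $b_{kj}^{\bp} = 0$, $b_{ij}^{\bp} > 0$, $b_{ji}^{\bp} < 0$ together with $|b_{xy}^{\bp}| \le 1$ say that $B^{\bp}$ restricted to $\{i,j,k\}$ is the path $k \to i \to j$, and that mutating at $i$ creates the arc $b_{kj}^{\bp[i]} = 1$, producing the oriented triangle $i \to k \to j \to i$; I would then tabulate $b_{ki}^{\bw}, b_{kj}^{\bw}, b_{ij}^{\bw}, b_{ji}^{\bw}$ along the whole pentagon. Next I would split into the four sub-cases determined by the (coherence-forced) signs of $c_i^{\bp}$ and $c_j^{\bp}$; since $b_{ij}^{\bp}=1$ and $b_{ji}^{\bp}=-1$, these are exactly Cases (1)--(4) of the proof of Lemma~\ref{swap-indices-i,j}, so I can import from there the signs of $b_{ij}^{\bw}c_j^{\bw}$ and $b_{ji}^{\bw}c_i^{\bw}$ at every intermediate stage together with the rewritings of $r_i^{\bw[\cdot]}$ and $r_j^{\bw[\cdot]}$ back in terms of $r_i^{\bp}$ and $r_j^{\bp}$. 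Feeding these into the $c$-vector rule, along with the tabulated $b_{ki}^{\bw}, b_{kj}^{\bw}$, determines at each of the five steps whether $r_k$ is conjugated by $r_i^{\bw}$, by $r_j^{\bw}$, or not at all. Collecting the resulting factors, re-expressing them through $r_i^{\bp}, r_j^{\bp}$, and simplifying with $r_l^2 = 1$ and $((r_i^{\bp}r_j^{\bp})^3)^{-1} = (r_j^{\bp}r_i^{\bp})^3$ should, in each sub-case, collapse to one of the three claimed expressions $\pi(r_k^{\bp})$, $\pi(r_i^{\bp}r_j^{\bp})^3\pi(r_j^{\bp}r_k^{\bp})^2\pi(r_k^{\bp})\pi(r_j^{\bp}r_i^{\bp})^3$, $\pi(r_j^{\bp}r_i^{\bp})^3\pi(r_k^{\bp})\pi(r_i^{\bp}r_j^{\bp})^3$; the middle one is the output of those sub-cases in which the mutation at $j$ over stage $\bp[i]$ conjugates $r_k$ by the newly relevant reflection $r_j^{\bp[i]}$.

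The main obstacle will be the sign ambiguity already encountered in Case (4) of the proof of Lemma~\ref{swap-indices-i,j}, now aggravated by the freshly created $k$--$j$ arc. In the sub-case $c_i^{\bp}<0<c_j^{\bp}$ (the one matching Case (4) there), the vector $c_j^{\bp[i]} = c_j^{\bp}+c_i^{\bp}$ has no a priori sign, and this sign is exactly what decides whether the step at $j$ over $\bp[i]$ contributes the factor $(r_j^{\bp}r_k^{\bp})^2$; similar undetermined intermediate signs can appear at the later steps. To pin them down I would run the sign computation backwards from the end of the pentagon, using $|b_{ij}^{\bp}| = 1$ together with the closure identities $c_i^{\bp[i,j,i,j,i]} = c_j^{\bp}$, $c_j^{\bp[i,j,i,j,i]} = c_i^{\bp}$, $c_k^{\bp[i,j,i,j,i]} = c_k^{\bp}$, exactly as in the final paragraph of Section~\ref{proof-swap-indices-i,j}; the remaining intermediate signs are then forced, and the rest is routine bookkeeping of the kind used for the other case lemmas. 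Finally, although it is not part of the statement, each of the three outputs collapses to $\pi(r_k^{\bp})$ once Lemma~\ref{induction} is available, since $b_{kj}^{\bp} = 0$ gives $\pi(r_j^{\bp}r_k^{\bp})^2 = \mathrm{id}$ and $|b_{ij}^{\bp}| = 1$ gives $\pi(r_i^{\bp}r_j^{\bp})^3 = \pi(r_j^{\bp}r_i^{\bp})^3 = \mathrm{id}$, which is what is needed for the pseudo-acyclic ordering downstream.
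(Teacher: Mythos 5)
Your proposal is correct and follows essentially the same route as the paper: tabulating the $b$-entries on $\{i,j,k\}$ through the pentagon, importing the sign sub-cases (including the two resolutions of the Case (4) ambiguity via the backward computation at the end of Section \ref{proof-swap-indices-i,j}) to decide at each step whether $r_k$ is conjugated, and then simplifying the accumulated conjugators using the rewritings of $r_i^{\bw}, r_j^{\bw}$ in terms of $r_i^{\bp}, r_j^{\bp}$. This is precisely the book-keeping carried out in Sections \ref{discussion-swap-indices-k} and \ref{proof-swap-indices-k}.
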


Finally, the sixth quiver belongs to the cases (15) and (14).

\begin{center}
	\includegraphics[scale=0.5]{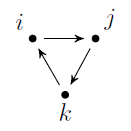}
	\end{center}

\begin{Lem}[Case (14)] \label{lem-case-14}
	Suppose that $\bv = [i,j]_{\bp}$ is an elementary swap for some $i,j \in \mathcal{I}$ and for a mutation sequence $\bp$.
	If $k \neq i,j \in \mathcal{I}$ satisfies
	$$b_{ki}^{\bp} <0, \, b_{kj}^{\bp} > 0, \, b_{ij}^{\bp} < 0, \, \mathrm{ and }\, b_{ji}^{\bp} > 0,$$ then $\pi(r_k^{\bp [i,j,i,j,i]}) = \pi(r_k^{\bp})$ or $\pi(r_k^{\bp})\pi(r_k^{\bp}r_j^{\bp}r_i^{\bp}r_j^{\bp})^2$.	
\end{Lem}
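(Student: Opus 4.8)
The plan is to run the same sign-chasing argument as in the proof of Lemma~\ref{swap-indices-i,j}, now tracking the reflection $r_k$ as well. Since $[i,j]_\bp$ is an elementary swap and $\{i,j,k\}$ spans an oriented $3$-cycle of a type $A_n$ quiver, we have $|b_{ki}^\bp|=|b_{kj}^\bp|=|b_{ij}^\bp|=1$, and the three triangle entries evolve completely explicitly along $\bp\to\bp[i]\to\bp[i,j]\to\bp[i,j,i]\to\bp[i,j,i,j]\to\bp[i,j,i,j,i]$. In particular, the hypotheses $b_{ki}^\bp<0$, $b_{kj}^\bp>0$, $b_{ij}^\bp<0$ give $b_{kj}^{\bp[i]}=0$ and, chasing the mutation formula, $b_{kj}^{\bp[i,j]}=b_{kj}^{\bp[i,j,i]}=0$ as well, while $b_{ki}^{\bp[i,j]}=1$ and $b_{ki}^{\bp[i,j,i,j]}=-1$. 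Consequently, by \eqref{def-sx_i-1}, $r_k$ is never conjugated at the two mutations at $j$, and at the three mutations at $i$ (steps $1,3,5$) it is conjugated exactly when $c_i^\bp<0$, $c_i^{\bp[i,j]}>0$, $c_i^{\bp[i,j,i,j]}<0$, respectively, by the current reflection attached to $i$, namely $r_i^\bp$, $r_i^{\bp[i,j]}$, $r_i^{\bp[i,j,i,j]}$.

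Next I would split into the four cases (1)--(4) of Lemma~\ref{swap-indices-i,j} according to the signs of $b_{ij}^\bp c_j^\bp$ and $b_{ji}^\bp c_i^\bp$; since $b_{ij}^\bp,b_{ji}^\bp$ are fixed here, these are just the four sign patterns of the vectors $c_i^\bp,c_j^\bp$ permitted by sign coherence. In each case the sign data recorded in Section~\ref{proof-swap-indices-i,j} determines the three Boolean conditions above as well as the words $r_i^{\bp[i,j]}$ and $r_i^{\bp[i,j,i,j]}$, each of which — substituting the alternating forms of $r_i^{\bp[i]},r_j^{\bp[i]},\dots$ already produced in that proof — is an alternating word in $r_i^\bp,r_j^\bp$. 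Carrying out the (at most three) conjugations therefore expresses $r_k^{\bp[i,j,i,j,i]}$ as $g\,r_k^\bp\,g^{-1}$ for an explicit alternating word $g$ in $r_i^\bp,r_j^\bp$; using $(r_i^\bp)^2=(r_j^\bp)^2=1$ and, where a length-$5$ alternating factor appears, the braid relation $\pi\bigl((r_i^\bp r_j^\bp)^3\bigr)=\mathrm{id}$ (valid since $|b_{ij}^\bp|=1$, by Lemma~\ref{base-case} and Lemma~\ref{induction}), one checks that $\pi(g)$ is either $\mathrm{id}$ or $\pi(r_j^\bp r_i^\bp r_j^\bp)$. In the first case $\pi(r_k^{\bp[i,j,i,j,i]})=\pi(r_k^\bp)$, and in the second $\pi(r_k^{\bp[i,j,i,j,i]})=\pi\bigl((r_j^\bp r_i^\bp r_j^\bp)r_k^\bp(r_j^\bp r_i^\bp r_j^\bp)\bigr)=\pi(r_k^\bp)\pi(r_k^\bp r_j^\bp r_i^\bp r_j^\bp)^2$, the last equality because $(r_j^\bp r_i^\bp r_j^\bp)^2=1$; this is exactly the asserted dichotomy. (For instance, in case~(1), i.e.\ $c_i^\bp<0$ and $c_j^\bp<0$, steps $1$ and $3$ both conjugate by $r_i^\bp$ and step $5$ does not fire, so $g=r_i^\bp r_i^\bp=1$.)

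As in Section~\ref{proof-swap-indices-i,j}, in the cases corresponding to (3) and (4) there the sign of one intermediate product $b^\bullet c^\bullet$ is not determined by forward propagation, so I would resolve it with the same two devices used in that proof: propagate \emph{backward} from $\bp[i,j,i,j,i]$ via the transposition identities $c_i^{\bp[i,j,i,j,i]}=c_j^\bp$, $c_j^{\bp[i,j,i,j,i]}=c_i^\bp$, $c_k^{\bp[i,j,i,j,i]}=c_k^\bp$ and the matching row/column swaps $b_{ki}^{\bp[i,j,i,j,i]}=b_{kj}^\bp$, $b_{kj}^{\bp[i,j,i,j,i]}=b_{ki}^\bp$ — which hold because the $5$-cycle realizes the transposition $(i,j)$ on the seed (Lemma~\ref{swap-as-transposition}) and fixes $k$ — and then invoke $|b_{ij}^\bp|=1$ to compare two a priori ambiguous quantities and conclude that one of the two options necessarily occurs, just as at the end of the Case~(4) argument in Section~\ref{proof-swap-indices-i,j}.

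The real obstacle is bookkeeping, not ideas: keeping the signs of the four relevant $b$-entries and the three $c$-vectors mutually consistent across all five mutations in each case, and pinning down precisely the branch(es) in which a conjugation at $i$ can be present or absent — it is this residual ambiguity, together with the $\pi$-class identification of $g$, that produces the two alternatives in the statement. No input beyond Section~\ref{proof-swap-indices-i,j} and the relations of Lemma~\ref{base-case} (via Lemma~\ref{induction}) is needed.
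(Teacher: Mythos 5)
Your proposal is correct and follows essentially the same route as the paper: forward (and, for the ambiguous sub-cases, backward) sign-propagation of the triangle entries through the five mutations, observing that $b_{kj}$ vanishes after the first step so $r_k$ is only ever conjugated at the mutations at $i$, and then collapsing the resulting conjugator $g$ to $\mathrm{id}$ or $r_j^{\bp}r_i^{\bp}r_j^{\bp}$ via $g\,r_k^{\bp}\,g^{-1}=r_k^{\bp}(r_k^{\bp}g)^2$. The only cosmetic difference is that the paper's reductions in this case telescope using $r^2=1$ alone, so the braid relation you keep in reserve turns out not to be needed.
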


\begin{Lem}[Case (15)] \label{lem-case-15}
	Suppose that $\bv = [i,j]_{\bp}$ is an elementary swap for some $i,j \in \mathcal{I}$ and for a mutation sequence $\bp$.
	If $k \neq i,j \in \mathcal{I}$ satisfies
	$$b_{ki}^{\bp} >0, \, b_{kj}^{\bp} < 0, \, b_{ij}^{\bp} > 0, \, \mathrm{ and }\, b_{ji}^{\bp} < 0,$$ then $\pi(r_k^{\bp [i,j,i,j,i]}) = \pi(r_k^{\bp})$ or $\pi(r_k^{\bp})\pi(r_k^{\bp}r_j^{\bp}r_i^{\bp}r_j^{\bp})^2$.	
\end{Lem}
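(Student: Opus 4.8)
The plan is to track the reflection $r_k$ directly through the five mutations of the sequence $[i,j,i,j,i]$ by repeated application of Definition \ref{def-r}, in the spirit of the argument for Lemma \ref{swap-indices-i,j}. First I would pin down the local picture: since $b_{ki}^{\bp}$, $b_{ij}^{\bp}$ and $b_{kj}^{\bp}$ are all nonzero and $Q$ is of type $A_n$, the vertices $i,j,k$ span an oriented $3$-cycle with all entries $\pm1$, and the sign hypotheses force this cycle to be $k\to i\to j\to k$; in particular $b_{ij}^{\bp}=1$, which is consistent with the existence of the elementary swap $[i,j]_{\bp}$ (which requires $|b_{ij}^{\bp}|=1$).

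Next I would carry out the finite, purely local computation of $B^{\bw}|_{\{i,j,k\}}$ along $\bw=\bp,\ \bp[i],\ \bp[i,j],\ \bp[i,j,i],\ \bp[i,j,i,j]$. The crucial output is that $b_{kj}^{\bw}=0$ at every one of these intermediate stages while $b_{ki}^{\bw}=\mp1$. By Definition \ref{def-r} this means $r_k$ is unchanged at the two mutations at $j$ (steps $2$ and $4$) and can change only at the three mutations at $i$; moreover it changes at step $1$ exactly when $c_i^{\bp}>0$, at step $3$ exactly when $c_i^{\bp[i,j]}<0$, and at step $5$ exactly when $c_i^{\bp[i,j,i,j]}>0$, and whenever it changes at a mutation at $i$ from the seed $\bw$ it is conjugated, $r_k^{\bw[i]}=r_i^{\bw}r_k^{\bw}r_i^{\bw}$. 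Hence
$$ r_k^{\bp[i,j,i,j,i]} = a_5\,a_3\,a_1\; r_k^{\bp}\; a_1\,a_3\,a_5, \qquad a_1\in\{1,r_i^{\bp}\},\ a_3\in\{1,r_i^{\bp[i,j]}\},\ a_5\in\{1,r_i^{\bp[i,j,i,j]}\}, $$
with the three choices governed by the three sign conditions just listed.

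Then I would determine, in each of the four cases for the signs of $b_{ij}^{\bp}c_j^{\bp}$ and $b_{ji}^{\bp}c_i^{\bp}$ (equivalently, of $c_j^{\bp}$ and $c_i^{\bp}$, since $b_{ij}^{\bp}=1$ and $b_{ji}^{\bp}=-1$), both the signs of $c_i^{\bp}$, $c_i^{\bp[i,j]}$, $c_i^{\bp[i,j,i,j]}$ and the elements $r_i^{\bp}$, $r_i^{\bp[i,j]}$, $r_i^{\bp[i,j,i,j]}$ expressed in terms of $r_i^{\bp}$ and $r_j^{\bp}$. Here one can reuse essentially verbatim the sign analysis from the proof of Lemma \ref{swap-indices-i,j}: the rule $c_i^{\bw[j]}\in\{c_i^{\bw},\ c_i^{\bw}+c_j^{\bw}\}$ (valid because $|b_{ij}^{\bw}|=1$), the pentagon identities that relate the $\bp[i,j,i,j]$ seed to the $\bp[j]$ seed with labels $i,j$ transposed, and — in the single ambiguous subcase, the analogue of Case (4) there — the observation that $b_{ij}^{\bp}=1$ forces one of two intermediate quantities to be negative. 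Substituting these expressions into the displayed formula and simplifying with $(r_i^{\bp})^2=(r_j^{\bp})^2=1$, the word collapses in every case either to $r_k^{\bp}$ or to $(r_j^{\bp}r_i^{\bp}r_j^{\bp})\,r_k^{\bp}\,(r_j^{\bp}r_i^{\bp}r_j^{\bp})=r_k^{\bp}\,(r_k^{\bp}r_j^{\bp}r_i^{\bp}r_j^{\bp})^2$; applying $\pi$ yields the assertion. The same computation, with $i,j$ and the signs exchanged, handles the companion Case (14).

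The main obstacle is the bookkeeping in the last two steps: keeping track of the signs of the intermediate $c$-vectors across all cases (particularly the ambiguous subcase, where the sign of $c_i$ at an intermediate stage is not immediately visible and must be forced through the pentagon structure together with $|b_{ij}^{\bp}|=1$), and then composing the up to three conjugations with the evolving reflections $r_i^{\bw}$ so that the resulting word in $r_i^{\bp},r_j^{\bp},r_k^{\bp}$ reduces to exactly one of the two stated forms. The local $B$-matrix computation in Step 2 is routine, and once the bookkeeping is under control the rest is mechanical.
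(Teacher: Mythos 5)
Your proposal is correct and follows essentially the same route as the paper: track the restricted $B$-matrix and the signs of $c_i$ through the five mutations, observe that $b_{kj}$ vanishes from the stage $\bp[i]$ onward so that only the three mutations at $i$ can conjugate $r_k$, and then collapse the resulting word $a_5a_3a_1\,r_k^{\bp}\,a_1a_3a_5$ case by case using the expressions for $r_i^{\bp[i,j]}$ and $r_i^{\bp[i,j,i,j]}$ from Lemma \ref{swap-indices-i,j}. (One small wording slip: $b_{kj}^{\bp}$ itself is $-1$ by hypothesis, not $0$; it only becomes $0$ after the first mutation at $i$, which is all that your argument actually uses.)
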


Seeing what quivers belong to each case allows us to calculate exactly what values the entries $b_{ki}^{\bp}$ and $b_{kj}^{\bp}$ take throughout the mutation $[i,j,i,j,i]$.
We can now prove each lemma in the following section.
The combination of Lemmas \ref{lem-case-1} through \ref{lem-case-15} proves Lemma \ref{swap-indices-k}.

\section{Proof of Lemma \ref{swap-indices-k}} \label{proof-swap-indices-k}

To begin the proof, we need to consider the signs of $c_i^\bp$ and $c_j^\bp$ according to the possible outcomes outlined in Lemma \ref{swap-indices-i,j}.
In what follows, we write $(b_{ij}^{\bp},...,b_{ij}^{\bp[i,j,i,j]})$ for $(b_{ij}^{\bp},b_{ij}^{\bp[i]},b_{ij}^{\bp[i,j]},b_{ij}^{\bp[i,j,i]},b_{ij}^{\bp[i,j,i,j]})$ and $(+,-,+,-,+)$ records the signs of the values. We will use similar shorthand notations throughout this section. 

Note the values of $b_{ij}^{\bp}$ and $b_{ji}^{\bp}$ throughout the mutation $[i,j,i,j,i]$. If $b_{ij}^{\bp} > 0$ and $b_{ji}^{\bp} < 0$, we have
$$
(b_{ij}^{\bp},...,b_{ij}^{\bp[i,j,i,j]})=(+,-,+,-,+) \ \text{ and }\ (b_{ji}^{\bp},...,b_{ji}^{\bp[i,j,i,j]})=(-,+,-,+,-);
$$ if $b_{ij}^{\bp} < 0$ and $b_{ji}^{\bp} > 0$, we have 
$$
(b_{ij}^{\bp},...,b_{ij}^{\bp[i,j,i,j]})=(-,+,-,+,-) \ \text{ and }\ (b_{ji}^{\bp},...,b_{ji}^{\bp[i,j,i,j]})=(+,-,+,-,+).
$$
We replicate them here for ease of use and continue on to the signs of $c_i^{\bp}$ and $c_j^{\bp}$.

\noindent \underline{Case $(b_{ij}^{\bp}c_j^{\bp},b_{ji}^{\bp}c_i^{\bp})=(+,-)$}
$$
(b_{ij}^{\bp [i]}c_{j}^{\bp [i]},...,b_{ij}^{\bp [i,j,i,j]}c_{j}^{\bp [i,j,i,j]} )=(-,-,+,+)\text{ and } (b_{ji}^{\bp [i]}c_{i}^{\bp [i]},...,b_{ji}^{\bp [i,j,i,j]}c_{i}^{\bp [i,j,i,j]})=(-,+,+,+)
.$$

If $(b_{ij}^{\bp},b_{ji}^{\bp})=(+,-)$ then
$
(c_j^{\bp},...,c_j^{\bp[i,j,i,j]})=(+,+,-,-,+)$ and $(c_i^{\bp},...,c_i^{\bp[i,j,i,j]})=(+,-,-,+,-).
$

If $(b_{ij}^{\bp},b_{ji}^{\bp})=(-,+)$ then
$
(c_j^{\bp},...,c_j^{\bp[i,j,i,j]})=(-,-,+,+,-)$ and $(c_i^{\bp},...,c_i^{\bp[i,j,i,j]})=(-,+,+,-,+).
$

\noindent\underline{Case $(b_{ij}^{\bp}c_j^{\bp},b_{ji}^{\bp}c_i^{\bp})=(-,+)$}
$$
(...,b_{ij}^{\bp [i,j,i,j]}c_{j}^{\bp [i,j,i,j]} )=(+,+,-,-)\text{ and }(...,b_{ji}^{\bp [i,j,i,j]}c_{i}^{\bp [i,j,i,j]})=(+,+,+,-).
$$


If $(b_{ij}^{\bp},b_{ji}^{\bp})=(+,-)$ then
$
(c_j^{\bp},...,c_j^{\bp[i,j,i,j]})=(-,-,+,+,-)$ and $(c_i^{\bp},...,c_i^{\bp[i,j,i,j]})=(-,+,-,+,+).
$


If $(b_{ij}^{\bp},b_{ji}^{\bp})=(-,+)$ then
$
(c_j^{\bp},...,c_j^{\bp[i,j,i,j]})=(+,+,-,-,+)$ and $(c_i^{\bp},...,c_i^{\bp[i,j,i,j]})=(+,-,+,-,-).
$


\noindent\underline{Case $(b_{ij}^{\bp}c_j^{\bp},b_{ji}^{\bp}c_i^{\bp})=(-,-)$}
$$
(...,b_{ij}^{\bp [i,j,i,j]}c_{j}^{\bp [i,j,i,j]} )=(+,+,+,+)\text{ and }(...,b_{ji}^{\bp [i,j,i,j]}c_{i}^{\bp [i,j,i,j]})=(-,+,+,-).
$$


If $(b_{ij}^{\bp},b_{ji}^{\bp})=(+,-)$ then
$
(c_j^{\bp},...,c_j^{\bp[i,j,i,j]})=(-,-,+,-,+)$ and $(c_i^{\bp},...,c_i^{\bp[i,j,i,j]})=(+,-,-,+,+).
$


If $(b_{ij}^{\bp},b_{ji}^{\bp})=(-,+)$ then
$
(c_j^{\bp},...,c_j^{\bp[i,j,i,j]})=(+,+,-,+,-)$ and $(c_i^{\bp},...,c_i^{\bp[i,j,i,j]})=(-,+,+,-,-).
$


\noindent\underline{Case $(b_{ij}^{\bp}c_j^{\bp},b_{ji}^{\bp}c_i^{\bp},b_{ij}^{\bp[i]}c_j^{\bp[i]})=(+,+,-)$}
$$
(...,b_{ij}^{\bp [i,j,i,j]}c_{j}^{\bp [i,j,i,j]} )=(-,-,+,+)\text{ and }(...,b_{ji}^{\bp [i,j,i,j]}c_{i}^{\bp [i,j,i,j]})=(+,-,-,+).
$$


If $(b_{ij}^{\bp},b_{ji}^{\bp})=(+,-)$ then
$
(c_j^{\bp},...,c_j^{\bp[i,j,i,j]})=(+,+,-,-,+)$ and $(c_i^{\bp},...,c_i^{\bp[i,j,i,j]})=(-,+,+,-,-).
$


If $(b_{ij}^{\bp},b_{ji}^{\bp})=(-,+)$ then
$
(c_j^{\bp},...,c_j^{\bp[i,j,i,j]})=(-,-,+,+,-)$ and $(c_i^{\bp},...,c_i^{\bp[i,j,i,j]})=(+,-,-,+,+).
$


\noindent\underline{Case $(b_{ij}^{\bp}c_j^{\bp},b_{ji}^{\bp}c_i^{\bp},b_{ij}^{\bp[i,j,i,j]}c_j^{\bp[i,j,i,j]})=(+,+,-)$}
$$
(...,b_{ij}^{\bp [i,j,i,j]}c_{j}^{\bp [i,j,i,j]} )=(+,+,-,-)\text{ and }(...,b_{ji}^{\bp [i,j,i,j]}c_{i}^{\bp [i,j,i,j]})=(+,-,-,+).
$$


If $(b_{ij}^{\bp},b_{ji}^{\bp})=(+,-)$ then
$
(c_j^{\bp},...,c_j^{\bp[i,j,i,j]})=(+,-,+,+,-)$ and $(c_i^{\bp},...,c_i^{\bp[i,j,i,j]})=(-,+,+,-,-).
$


If $(b_{ij}^{\bp},b_{ji}^{\bp})=(-,+)$ then
$
(c_j^{\bp},...,c_j^{\bp[i,j,i,j]})=(-,+,-,-,+)$ and $(c_i^{\bp},...,c_i^{\bp[i,j,i,j]})=(+,-,-,+,+).
$


\noindent Now that the book-keeping is taken care of, we can compute $r_k^{\bp[i,j,i,j,i]}$ in each case as required:

\begin{proof}[\boxed{Proof\ of\ Lemma\ \ref{lem-case-5}}]
	We need to keep track of $b_{ki}^\bp, b_{kj}^\bp, b_{ij}^\bp, b_{ji}^\bp, c_i^\bp,$ and $c_j^\bp$ throughout the entire mutation $[i,j,i,j,i]$.
	Let us list all the possible values of the first four variables.
	{\tiny $$
		(...,b_{ki}^{\bp[i,j,i,j]})=(0,0,0,0,+);\, (...,b_{kj}^{\bp[i,j,i,j]})=(-,-,+,+,-);\, (...,b_{ij}^{\bp[i,j,i,j]})=(+,-,+,-,+);\, (...,b_{ji}^{\bp[i,j,i,j]})=(-,+,-,+,-)
		$$}
	
	
	\noindent \underline{Case $(b_{ij}^{\bp}c_j^{\bp},b_{ji}^{\bp}c_i^{\bp})=(+,-)$} The values we are concerned with are as follows:
	$$
	(b_{ki}^{\bp}c_i^{\bp}, \ b_{kj}^{\bp[i]}c_j^{\bp[i]}, \ b_{ki}^{\bp[i,j]}c_i^{\bp[i,j]}, \ b_{kj}^{\bp[i,j,i]}c_j^{\bp[i,j,i]}, \ b_{ki}^{\bp[i,j,i,j]}c_i^{\bp[i,j,i,j]}) = (0,-,0,-,-).
	$$
	
	Hence
	$r_k^{\bp[i,j,i,j,i]} = r_k^{\bp},$
	and  
	$\pi(r_k^{\bp[i,j,i,j,i]} ) = \pi(r_k^{\bp}).$
	
	\noindent \underline{Case $(b_{ij}^{\bp}c_j^{\bp},b_{ji}^{\bp}c_i^{\bp})=(-,+)$} The values we are concerned with are as follows:
	$$
	(b_{ki}^{\bp}c_i^{\bp}, \ b_{kj}^{\bp[i]}c_j^{\bp[i]}, \ b_{ki}^{\bp[i,j]}c_i^{\bp[i,j]}, \ b_{kj}^{\bp[i,j,i]}c_j^{\bp[i,j,i]}, \ b_{ki}^{\bp[i,j,i,j]}c_i^{\bp[i,j,i,j]}) = (0,+,0,+,+).
	$$
	
	Hence
	$$r_k^{\bp[i,j,i,j,i]} = r_i^{\bp[i,j,i,j]}r_k^{\bp[i,j,i,j]}r_i^{\bp[i,j,i,j]} = r_i^{\bp[i,j,i,j]}r_j^{\bp[i,j,i]}r_k^{\bp[i,j,i]}r_j^{\bp[i,j,i]}r_i^{\bp[i,j,i,j]}$$
	$$= r_i^{\bp[i,j,i,j]}r_j^{\bp[i,j,i]}r_k^{\bp[i,j]}r_j^{\bp[i,j,i]}r_i^{\bp[i,j,i,j]} = r_i^{\bp[i,j,i,j]}r_j^{\bp[i,j,i]}r_j^{\bp[i]}r_k^{\bp[i]}r_j^{\bp[i]}r_j^{\bp[i,j,i]}r_i^{\bp[i,j,i,j]}$$
	$$= r_i^{\bp[i,j,i,j]}r_j^{\bp[i,j,i]}r_i^{\bp}r_j^{\bp}r_i^{\bp}r_k^{\bp}r_i^{\bp}r_j^{\bp}r_i^{\bp}r_j^{\bp[i,j,i]}r_i^{\bp[i,j,i,j]}.$$
	
	By Lemma \ref{swap-indices-i,j}, we have that  
	$$\pi(r_i^{\bp[i,j,i,j]}) = \pi(r_i^{\bp[i,j,i,j,i]}) = \pi(r_i^\bp r_j^\bp)^3\pi(r_j^\bp) = \pi(r_j^\bp)\pi(r_j^\bp r_i^\bp)^3.$$
	Additionally, 
	$$r_j^{\bp[i,j,i]} = r_i^{\bp[i,j]}r_j^{\bp[i]}r_i^{\bp[i,j]} = r_j^{\bp[i]}r_i^{\bp[i]}r_j^{\bp[i]}r_i^{\bp[i]}r_j^{\bp[i]} = r_i^{\bp}r_j^{\bp}r_i^{\bp}r_j^{\bp}r_i^{\bp}r_i^{\bp}r_j^{\bp}r_i^{\bp}$$
	$$= (r_i^{\bp}r_j^{\bp})^3r_i^{\bp} =  r_i^{\bp}(r_j^{\bp}r_i^{\bp})^3.$$
	
	Thus
	$$\pi(r_k^{\bp[i,j,i,j,i]}) = \pi(r_j^{\bp}) \pi(r_j^\bp r_i^\bp)^3\pi(r_i^{\bp}r_j^{\bp})^3 \pi(r_i^{\bp}r_i^{\bp}r_j^{\bp}r_i^{\bp}r_k^{\bp}r_i^{\bp}r_j^{\bp}r_i^{\bp} r_i^{\bp}) \pi(r_j^{\bp}r_i^{\bp})^3\pi(r_i^{\bp}r_j^{\bp})^3\pi(r_j^{\bp})$$
	$$ = \pi(r_i^{\bp}r_k^{\bp})^2\pi(r_k^{\bp}).$$
	
	\noindent \underline{Case $(b_{ij}^{\bp}c_j^{\bp},b_{ji}^{\bp}c_i^{\bp})=(-,-)$} The values we are concerned with are as follows:
	$$
	(b_{ki}^{\bp}c_i^{\bp}, \ b_{kj}^{\bp[i]}c_j^{\bp[i]}, \ b_{ki}^{\bp[i,j]}c_i^{\bp[i,j]}, \ b_{kj}^{\bp[i,j,i]}c_j^{\bp[i,j,i]}, \ b_{ki}^{\bp[i,j,i,j]}c_i^{\bp[i,j,i,j]}) = (0,+,0,-,+).
	$$
	
	Hence
	$$r_k^{\bp[i,j,i,j,i]} = r_i^{\bp[i,j,i,j]}r_k^{\bp[i,j,i,j]}r_i^{\bp[i,j,i,j]} = r_i^{\bp[i,j,i,j]}r_k^{\bp[i,j]}r_i^{\bp[i,j,i,j]} = r_i^{\bp[i,j,i,j]}r_j^{\bp[i]}r_k^{\bp}r_j^{\bp[i]}r_i^{\bp[i,j,i,j]}$$
	$$ = r_i^{\bp[i,j,i,j]}r_j^{\bp}r_k^{\bp}r_j^{\bp}r_i^{\bp[i,j,i,j]}.$$
	
	By Lemma \ref{swap-indices-i,j}, we have that  
	$$\pi(r_i^{\bp[i,j,i,j]}) = \pi(r_i^{\bp[i,j,i,j,i]}) = \pi(r_i^\bp r_j^\bp)^3\pi(r_j^\bp) = \pi(r_j^\bp)\pi(r_j^\bp r_i^\bp)^3.$$
	
	Thus
	$$\pi(r_k^{\bp[i,j,i,j,i]}) = \pi(r_i^\bp r_j^\bp)^3\pi(r_k^\bp)\pi(r_j^\bp r_i^\bp)^3.$$
	
	\noindent \underline{Case $(b_{ij}^{\bp}c_j^{\bp},b_{ji}^{\bp}c_i^{\bp},b_{ij}^{\bp[i]}c_j^{\bp[i]})=(+,+,-)$} The values we are concerned with are as follows:
	$$
	(b_{ki}^{\bp}c_i^{\bp}, \ b_{kj}^{\bp[i]}c_j^{\bp[i]}, \ b_{ki}^{\bp[i,j]}c_i^{\bp[i,j]}, \ b_{kj}^{\bp[i,j,i]}c_j^{\bp[i,j,i]}, \ b_{ki}^{\bp[i,j,i,j]}c_i^{\bp[i,j,i,j]}) = (0,-,0,-,-).
	$$
	
	Hence
	$r_k^{\bp[i,j,i,j,i]} = r_k^{\bp},$
	and  
	$\pi(r_k^{\bp[i,j,i,j,i]} ) = \pi(r_k^{\bp}).$
	
	\noindent \underline{Case $(b_{ij}^{\bp}c_j^{\bp},b_{ji}^{\bp}c_i^{\bp},b_{ij}^{\bp[i,j,i,j]}c_j^{\bp[i,j,i,j]})=(+,+,-)$} The values we are concerned with are as follows:
	$$
	(b_{ki}^{\bp}c_i^{\bp}, \ b_{kj}^{\bp[i]}c_j^{\bp[i]}, \ b_{ki}^{\bp[i,j]}c_i^{\bp[i,j]}, \ b_{kj}^{\bp[i,j,i]}c_j^{\bp[i,j,i]}, \ b_{ki}^{\bp[i,j,i,j]}c_i^{\bp[i,j,i,j]}) = (0,+,0,+,-).
	$$
	
	Hence
	$$r_k^{\bp[i,j,i,j,i]} = r_k^{\bp[i,j,i,j]} = r_j^{\bp[i,j,i]}r_k^{\bp[i,j,i]}r_j^{\bp[i,j,i]} = r_j^{\bp[i,j,i]}r_k^{\bp[i,j]}r_j^{\bp[i,j,i]}$$
	$$= r_j^{\bp[i,j,i]}r_j^{\bp[i]}r_k^{\bp[i]}r_j^{\bp[i]}r_j^{\bp[i,j,i]} = r_j^{\bp[i,j,i]}r_i^{\bp}r_j^{\bp}r_i^{\bp}r_k^{\bp}r_i^{\bp}r_j^{\bp}r_i^{\bp}r_j^{\bp[i,j,i]}.$$
	
	As
	$r_j^{\bp[i,j,i]} = r_j^{\bp[i]} = r_i^{\bp}r_j^{\bp}r_i^{\bp},$
	we have
	$\pi(r_k^{\bp[i,j,i,j,i]} ) = \pi(r_k^{\bp}).$
\end{proof}

\begin{proof}[\boxed{Proof\ of\ Lemma\ \ref{lem-case-10}}]
	We need to keep track of $b_{ki}^\bp, b_{kj}^\bp, b_{ij}^\bp, b_{ji}^\bp, c_i^\bp,$ and $c_j^\bp$ throughout the entire mutation $[i,j,i,j,i]$.
	Let us first list all the possible values of the first four variables.
	{\tiny $$
		(...,b_{ki}^{\bp[i,j,i,j]})=(-,+,0,0,0);\, (...,b_{kj}^{\bp[i,j,i,j]})=(0,-,+,+,-);\, (...,b_{ij}^{\bp[i,j,i,j]})=(-,+,-,+,-);\, (...,b_{ji}^{\bp[i,j,i,j]})=(+,-,+,-,+)
		$$}
	
	\noindent \underline{Case $(b_{ij}^{\bp}c_j^{\bp},b_{ji}^{\bp}c_i^{\bp})=(+,-)$} The values we are concerned with are as follows:
	$$
	(b_{ki}^{\bp}c_i^{\bp}, \ b_{kj}^{\bp[i]}c_j^{\bp[i]}, \ b_{ki}^{\bp[i,j]}c_i^{\bp[i,j]}, \ b_{kj}^{\bp[i,j,i]}c_j^{\bp[i,j,i]}, \ b_{ki}^{\bp[i,j,i,j]}c_i^{\bp[i,j,i,j]}) = (+,+,0,+,0).
	$$
	
	Hence
	$$r_k^{\bp[i,j,i,j,i]} = r_k^{\bp[i,j,i,j]} = r_j^{\bp[i,j,i]}r_k^{\bp[i,j,i]}r_j^{\bp[i,j,i]} = r_j^{\bp[i,j,i]}r_k^{\bp[i,j]}r_j^{\bp[i,j,i]}$$
	$$ = r_j^{\bp[i,j,i]}r_j^{\bp[i]}r_k^{\bp[i]}r_j^{\bp[i]}r_j^{\bp[i,j,i]} = r_j^{\bp[i,j,i]}r_j^{\bp}r_i^{\bp}r_k^{\bp}r_i^{\bp}r_j^{\bp}r_j^{\bp[i,j,i]}.$$
	
	Additionally,
	$$r_j^{\bp[i,j,i]} = r_i^{\bp[i,j]}r_j^{\bp[i,j]}r_i^{\bp[i,j]} = r_i^{\bp}r_j^{\bp}r_i^{\bp}.$$
	
	Thus
	$$\pi(r_k^{\bp[i,j,i,j,i]}) = \pi(r_i^{\bp}r_j^{\bp}r_i^{\bp}r_j^{\bp}r_i^{\bp}r_k^{\bp}r_i^{\bp}r_j^{\bp}r_i^{\bp}r_j^{\bp}r_i^{\bp}) = \pi(r_i^{\bp}r_j^{\bp})^3 \pi(r_j^{\bp}r_k^{\bp}r_j^{\bp})\pi(r_j^{\bp}r_i^{\bp})^3$$
	$$= \pi(r_i^{\bp}r_j^{\bp})^3 \pi(r_j^{\bp}r_k^{\bp})^2\pi(r_k^{\bp})\pi(r_j^{\bp}r_i^{\bp})^3.$$
	
	\noindent \underline{Case $(b_{ij}^{\bp}c_j^{\bp},b_{ji}^{\bp}c_i^{\bp})=(-,+)$} The values we are concerned with are as follows:
	$$
	(b_{ki}^{\bp}c_i^{\bp}, \ b_{kj}^{\bp[i]}c_j^{\bp[i]}, \ b_{ki}^{\bp[i,j]}c_i^{\bp[i,j]}, \ b_{kj}^{\bp[i,j,i]}c_j^{\bp[i,j,i]}, \ b_{ki}^{\bp[i,j,i,j]}c_i^{\bp[i,j,i,j]}) = (-,-,0,-,0).
	$$
	
	Hence
	$r_k^{\bp[i,j,i,j,i]} = r_k^{\bp},$
	and  
	$\pi(r_k^{\bp[i,j,i,j,i]} ) = \pi(r_k^{\bp}).$
	
	\noindent \underline{Case $(b_{ij}^{\bp}c_j^{\bp},b_{ji}^{\bp}c_i^{\bp})=(-,-)$} The values we are concerned with are as follows:
	$$
	(b_{ki}^{\bp}c_i^{\bp}, \ b_{kj}^{\bp[i]}c_j^{\bp[i]}, \ b_{ki}^{\bp[i,j]}c_i^{\bp[i,j]}, \ b_{kj}^{\bp[i,j,i]}c_j^{\bp[i,j,i]}, \ b_{ki}^{\bp[i,j,i,j]}c_i^{\bp[i,j,i,j]}) = (+,-,0,+,0).
	$$
	
	Hence
	$$r_k^{\bp[i,j,i,j,i]} = r_k^{\bp[i,j,i,j]} = r_j^{\bp[i,j,i]}r_k^{\bp[i,j,i]}r_j^{\bp[i,j,i]} = r_j^{\bp[i,j,i]}r_k^{\bp[i]}r_j^{\bp[i,j,i]} = r_j^{\bp[i,j,i]}r_i^{\bp}r_k^{\bp}r_i^{\bp}r_j^{\bp[i,j,i]}.$$
	
	Additionally,
	$$r_j^{\bp[i,j,i]} = r_i^{\bp[i,j]}r_j^{\bp[i,j]}r_i^{\bp[i,j]} = r_j^{\bp}r_i^{\bp}r_j^{\bp}r_i^{\bp}r_j^{\bp}.$$
	
	Thus
	$\pi(r_k^{\bp[i,j,i,j,i]}) = \pi(r_j^{\bp}r_i^{\bp})^3 \pi(r_k^{\bp}) \pi(r_i^{\bp}r_j^{\bp})^3.$
	
	\noindent \underline{Case $(b_{ij}^{\bp}c_j^{\bp},b_{ji}^{\bp}c_i^{\bp},b_{ij}^{\bp[i]}c_j^{\bp[i]})=(+,+,-)$} The values we are concerned with are as follows:
	$$
	(b_{ki}^{\bp}c_i^{\bp}, \ b_{kj}^{\bp[i]}c_j^{\bp[i]}, \ b_{ki}^{\bp[i,j]}c_i^{\bp[i,j]}, \ b_{kj}^{\bp[i,j,i]}c_j^{\bp[i,j,i]}, \ b_{ki}^{\bp[i,j,i,j]}c_i^{\bp[i,j,i,j]}) = (-,+,0,+,0).
	$$
	
	Hence
	$$r_k^{\bp[i,j,i,j,i]} = r_k^{\bp[i,j,i,j]} = r_j^{\bp[i,j,i]}r_k^{\bp[i,j,i]}r_j^{\bp[i,j,i]} = r_j^{\bp[i,j,i]}r_k^{\bp[i,j]}r_j^{\bp[i,j,i]} = r_j^{\bp[i,j,i]}r_j^{\bp[i]}r_k^{\bp}r_j^{\bp[i]}r_j^{\bp[i,j,i]}$$
	$$ = r_j^{\bp[i,j,i]}r_i^{\bp}r_j^{\bp}r_i^{\bp}r_k^{\bp}r_i^{\bp}r_j^{\bp}r_i^{\bp}r_j^{\bp[i,j,i]}.$$
	
	Additionally, 
	$$r_j^{\bp[i,j,i]} = r_j^{\bp[i]} = r_i^{\bp}r_j^{\bp}r_i^{\bp}.$$
	Thus
	$\pi(r_k^{\bp[i,j,i,j,i]}) = \pi(r_k^{\bp}).$
	
	\noindent \underline{Case $(b_{ij}^{\bp}c_j^{\bp},b_{ji}^{\bp}c_i^{\bp},b_{ij}^{\bp[i,j,i,j]}c_j^{\bp[i,j,i,j]})=(+,+,-)$} The values we are concerned with are as follows:
	$$
	(b_{ki}^{\bp}c_i^{\bp}, \ b_{kj}^{\bp[i]}c_j^{\bp[i]}, \ b_{ki}^{\bp[i,j]}c_i^{\bp[i,j]}, \ b_{kj}^{\bp[i,j,i]}c_j^{\bp[i,j,i]}, \ b_{ki}^{\bp[i,j,i,j]}c_i^{\bp[i,j,i,j]}) = (-,-,0,-,0).
	$$
	
	Hence
	$r_k^{\bp[i,j,i,j,i]} = r_k^{\bp},$
	and  
	$\pi(r_k^{\bp[i,j,i,j,i]} ) = \pi(r_k^{\bp}).$
\end{proof}

\begin{proof}[\boxed{Proof\ of\ Lemma\ \ref{lem-case-3}}]
	
	Let's break down what variables we have to keep track.
	We need to keep track of $b_{ki}^\bp, b_{kj}^\bp, b_{ij}^\bp, b_{ji}^\bp, c_i^\bp,$ and $c_j^\bp$ throughout the entire mutation $[i,j,i,j,i]$.
	First list all the possible values of the first four variables.
	{\tiny $$
	(...,b_{ki}^{\bp[i,j,i,j]})=(0,0,+,-,-);\, (...,b_{kj}^{\bp[i,j,i,j]})=(+,+,-,0,0);\, (...,b_{ij}^{\bp[i,j,i,j]})=(+,-,+,-,+);\, (...,b_{ji}^{\bp[i,j,i,j]})=(-,+,-,+,-)
	$$}
	{\small Note that $b_{kj}^{\bp[i,j,i]} = 0$ follows from our restriction to the $A_n$ case.}
	
	\noindent \underline{Case $(b_{ij}^{\bp}c_j^{\bp},b_{ji}^{\bp}c_i^{\bp})=(+,-)$}
	The values we are concerned with are as follows:
	$$
	(b_{ki}^{\bp}c_i^{\bp}, \ b_{kj}^{\bp[i]}c_j^{\bp[i]}, \ b_{ki}^{\bp[i,j]}c_i^{\bp[i,j]}, \ b_{kj}^{\bp[i,j,i]}c_j^{\bp[i,j,i]}, \ b_{ki}^{\bp[i,j,i,j]}c_i^{\bp[i,j,i,j]}) = (0,+,-,0,+).
	$$
	
	
	Hence
	$$r_k^{\bp[i,j,i,j,i]} = r_i^{\bp[i,j,i,j]}r_k^{\bp[i,j,i,j]}r_i^{\bp[i,j,i,j]} = r_i^{\bp[i,j,i,j]}r_k^{\bp[i,j,i]}r_i^{\bp[i,j,i,j]} = r_i^{\bp[i,j,i,j]}r_k^{\bp[i,j]}r_i^{\bp[i,j,i,j]}$$
	$$ = r_i^{\bp[i,j,i,j]}r_j^{\bp[i]}r_k^{\bp[i]}r_j^{\bp[i]}r_i^{\bp[i,j,i,j]}  = r_i^{\bp[i,j,i,j]}r_j^{\bp}r_k^{\bp}r_j^{\bp}r_i^{\bp[i,j,i,j]}.$$
	
	By Lemma \ref{swap-indices-i,j}, we have that  
	$$\pi(r_i^{\bp[i,j,i,j]}) = \pi(r_i^{\bp[i,j,i,j,i]}) = \pi(r_i^\bp r_j^\bp)^3\pi(r_j^\bp) = \pi(r_j^\bp)\pi(r_j^\bp r_i^\bp)^3.$$
	Thus  
	$$\pi(r_k^{\bp[i,j,i,j,i]} ) = \pi(r_i^{\bp[i,j,i,j]}r_j^{\bp}r_k^{\bp}r_j^{\bp}r_i^{\bp[i,j,i,j]}) = \pi(r_i^\bp r_j^\bp)^3\pi(r_j^\bp)\pi(r_j^{\bp}r_k^{\bp}r_j^{\bp}) \pi(r_j^\bp)\pi(r_j^\bp r_i^\bp)^3$$
	$$ =  \pi(r_i^\bp r_j^\bp)^3\pi(r_k^{\bp}) \pi(r_j^\bp r_i^\bp)^3.$$ 
	
	\noindent \underline{Case $(b_{ij}^{\bp}c_j^{\bp},b_{ji}^{\bp}c_i^{\bp})=(-,+)$} The values we are concerned with are as follows:
	$$
	(b_{ki}^{\bp}c_i^{\bp}, \ b_{kj}^{\bp[i]}c_j^{\bp[i]}, \ b_{ki}^{\bp[i,j]}c_i^{\bp[i,j]}, \ b_{kj}^{\bp[i,j,i]}c_j^{\bp[i,j,i]}, \ b_{ki}^{\bp[i,j,i,j]}c_i^{\bp[i,j,i,j]}) = (0,-,-,0,-).
	$$
	
	
	Hence
	$r_k^{\bp[i,j,i,j,i]} = r_k^{\bp}$ and $\pi(r_k^{\bp[i,j,i,j,i]} ) = \pi(r_k^{\bp}).$
	
	\noindent \underline{Case $(b_{ij}^{\bp}c_j^{\bp},b_{ji}^{\bp}c_i^{\bp})=(-,-)$} The values we are concerned with are as follows:
	$$
	(b_{ki}^{\bp}c_i^{\bp}, \ b_{kj}^{\bp[i]}c_j^{\bp[i]}, \ b_{ki}^{\bp[i,j]}c_i^{\bp[i,j]}, \ b_{kj}^{\bp[i,j,i]}c_j^{\bp[i,j,i]}, \ b_{ki}^{\bp[i,j,i,j]}c_i^{\bp[i,j,i,j]}) = (0,-,-,0,-).
	$$
	
	
	As in the previous scenario,
	$r_k^{\bp[i,j,i,j,i]} = r_k^{\bp},$
	and  
	$\pi(r_k^{\bp[i,j,i,j,i]} ) = \pi(r_k^{\bp}).$
	
	\noindent \underline{Case $(b_{ij}^{\bp}c_j^{\bp},b_{ji}^{\bp}c_i^{\bp},b_{ij}^{\bp[i]}c_j^{\bp[i]})=(+,+,-)$} The values we are concerned with are as follows:
	$$
	(b_{ki}^{\bp}c_i^{\bp}, \ b_{kj}^{\bp[i]}c_j^{\bp[i]}, \ b_{ki}^{\bp[i,j]}c_i^{\bp[i,j]}, \ b_{kj}^{\bp[i,j,i]}c_j^{\bp[i,j,i]}, \ b_{ki}^{\bp[i,j,i,j]}c_i^{\bp[i,j,i,j]}) = (0,+,+,0,+).
	$$
	
	
	Hence
	$$r_k^{\bp[i,j,i,j,i]} = r_i^{\bp[i,j,i,j]}r_k^{\bp[i,j,i,j]}r_i^{\bp[i,j,i,j]} = r_i^{\bp[i,j,i,j]}r_k^{\bp[i,j,i]}r_i^{\bp[i,j,i,j]} = r_i^{\bp[i,j,i,j]}r_i^{\bp[i,j]}r_k^{\bp[i,j]}r_i^{\bp[i,j]}r_i^{\bp[i,j,i,j]}$$
	$$ = r_i^{\bp[i,j,i,j]}r_i^{\bp[i,j]}r_j^{\bp[i]}r_k^{\bp[i]}r_j^{\bp[i]}r_i^{\bp[i,j]}r_i^{\bp[i,j,i,j]} = r_i^{\bp[i,j,i,j]}r_i^{\bp[i,j]}r_j^{\bp[i]}r_k^{\bp}r_j^{\bp[i]}r_i^{\bp[i,j]}r_i^{\bp[i,j,i,j]}.$$
	
	By Lemma \ref{swap-indices-i,j}, we have that  
	$$\pi(r_i^{\bp[i,j,i,j]}) = \pi(r_i^{\bp[i,j,i,j,i]}) = \pi(r_i^\bp r_j^\bp)^3\pi(r_j^\bp) = \pi(r_j^\bp)\pi(r_j^\bp r_i^\bp)^3.$$
	Additionally, from the proof of Lemma \ref{swap-indices-i,j}, we have that 
	$$r_i^{\bp[i,j]} = r_i^{\bp}$$
	and 
	$$r_j^{\bp[i]} = r_i^{\bp}r_j^{\bp}r_i^{\bp}.$$
	
	Thus
	$$\pi(r_k^{\bp[i,j,i,j,i]} ) = \pi(r_i^\bp r_j^\bp)^3\pi(r_j^\bp)\pi(r_j^{\bp}r_i^{\bp}r_k^{\bp}r_i^{\bp}r_j^{\bp}) \pi(r_j^\bp)\pi(r_j^\bp r_i^\bp)^3$$
	$$=\pi(r_i^\bp r_j^\bp)^3\pi(r_i^\bp)\pi(r_k^{\bp}) \pi(r_i^\bp)\pi(r_j^\bp r_i^\bp)^3$$
	$$=\pi(r_i^\bp r_j^\bp)^3\pi(r_i^\bp r_k^{\bp})^2 \pi(r_k^\bp)\pi(r_j^\bp r_i^\bp)^3.$$
	
	\noindent \underline{Case $(b_{ij}^{\bp}c_j^{\bp},b_{ji}^{\bp}c_i^{\bp},b_{ij}^{\bp[i,j,i,j]}c_j^{\bp[i,j,i,j]})=(+,+,-)$} The values we are concerned with are as follows:
	$$
	(b_{ki}^{\bp}c_i^{\bp}, \ b_{kj}^{\bp[i]}c_j^{\bp[i]}, \ b_{ki}^{\bp[i,j]}c_i^{\bp[i,j]}, \ b_{kj}^{\bp[i,j,i]}c_j^{\bp[i,j,i]}, \ b_{ki}^{\bp[i,j,i,j]}c_i^{\bp[i,j,i,j]}) = (0,-,+,0,+).
	$$
	
	
	Hence
	$$r_k^{\bp[i,j,i,j,i]} = r_i^{\bp[i,j,i,j]}r_k^{\bp[i,j,i,j]}r_i^{\bp[i,j,i,j]} = r_i^{\bp[i,j,i,j]}r_k^{\bp[i,j,i]}r_i^{\bp[i,j,i,j]} = r_i^{\bp[i,j,i,j]}r_i^{\bp[i,j]}r_k^{\bp[i,j]}r_i^{\bp[i,j]}r_i^{\bp[i,j,i,j]}$$
	$$ = r_i^{\bp[i,j,i,j]}r_i^{\bp[i,j]}r_k^{\bp[i]}r_i^{\bp[i,j]}r_i^{\bp[i,j,i,j]} = r_i^{\bp[i,j,i,j]}r_i^{\bp[i,j]}r_k^{\bp}r_i^{\bp[i,j]}r_i^{\bp[i,j,i,j]}.$$
	
	By Lemma \ref{swap-indices-i,j}, we have that  
	$$\pi(r_i^{\bp[i,j,i,j]}) = \pi(r_i^{\bp[i,j,i,j,i]}) = \pi(r_i^\bp r_j^\bp)^3\pi(r_j^\bp) = \pi(r_j^\bp)\pi(r_j^\bp r_i^\bp)^3.$$
	Additionally, from the proof of Lemma \ref{swap-indices-i,j}, we have that 
	$$r_i^{\bp[i,j]} = r_j^{\bp[i]}r_i^{\bp[i]}r_j^{\bp[i]} = r_i^{\bp}r_j^{\bp}r_i^{\bp}r_j^{\bp}r_i^{\bp} = (r_i^{\bp}r_j^{\bp})^3r_j^{\bp} = r_j^{\bp}(r_j^{\bp}r_i^{\bp})^3.$$
	
	Thus
	$$\pi(r_k^{\bp[i,j,i,j,i]}) = \pi(r_i^\bp r_j^\bp)^3\pi(r_j^\bp) \pi(r_j^{\bp}) \pi(r_j^{\bp}r_i^{\bp})^3 \pi(r_k^{\bp}) \pi(r_i^{\bp}r_j^{\bp})^3 \pi(r_j^{\bp}) \pi(r_j^\bp)\pi(r_j^\bp r_i^\bp)^3$$
	$$= \pi(r_k^{\bp}).$$
\end{proof}

\begin{proof}[\boxed{Proof\ of\ Lemma\ \ref{lem-case-8}}]
	We need to keep track of $b_{ki}^\bp, b_{kj}^\bp, b_{ij}^\bp, b_{ji}^\bp, c_i^\bp,$ and $c_j^\bp$ throughout the entire mutation $[i,j,i,j,i]$.
	Let us first list all the possible values of the first four variables.
	{\tiny $$
		(...,b_{ki}^{\bp[i,j,i,j]})=(+,-,-,+,0);\, (...,b_{kj}^{\bp[i,j,i,j]})=(0,0,0,-,+);\, (...,b_{ij}^{\bp[i,j,i,j]})=(-,+,-,+,-);\, (...,b_{ji}^{\bp[i,j,i,j]})=(+,-,+,-,+)
		$$}
	Note that $ b_{ki}^{\bp[i,j,i,j]} = 0$ follows from our restriction to the $A_n$ case.
	
	\noindent \underline{Case $(b_{ij}^{\bp}c_j^{\bp},b_{ji}^{\bp}c_i^{\bp})=(+,-)$} The values we are concerned with are as follows:
	$$
	(b_{ki}^{\bp}c_i^{\bp}, \ b_{kj}^{\bp[i]}c_j^{\bp[i]}, \ b_{ki}^{\bp[i,j]}c_i^{\bp[i,j]}, \ b_{kj}^{\bp[i,j,i]}c_j^{\bp[i,j,i]}, \ b_{ki}^{\bp[i,j,i,j]}c_i^{\bp[i,j,i,j]}) = (-,0,-,-,0).
	$$
	
	Hence 
	$r_k^{\bp[i,j,i,j,i]} = r_k^{\bp},$
	and 
	$\pi(r_k^{\bp[i,j,i,j,i]}) = \pi(r_k^{\bp}).$ 
	
	\noindent \underline{Case $(b_{ij}^{\bp}c_j^{\bp},b_{ji}^{\bp}c_i^{\bp})=(-,+)$} The values we are concerned with are as follows:
	$$
	(b_{ki}^{\bp}c_i^{\bp}, \ b_{kj}^{\bp[i]}c_j^{\bp[i]}, \ b_{ki}^{\bp[i,j]}c_i^{\bp[i,j]}, \ b_{kj}^{\bp[i,j,i]}c_j^{\bp[i,j,i]}, \ b_{ki}^{\bp[i,j,i,j]}c_i^{\bp[i,j,i,j]}) = (+,0,-,+,0).
	$$
	
	Hence
	$$r_k^{\bp[i,j,i,j,i]} = r_k^{\bp[i,j,i,j]} = r_j^{\bp[i,j,i]}r_k^{\bp[i,j,i]}r_j^{\bp[i,j,i]} = r_j^{\bp[i,j,i]}r_k^{\bp[i]}r_j^{\bp[i,j,i]}$$
	$$ = r_j^{\bp[i,j,i]}r_i^{\bp}r_k^{\bp}r_i^{\bp}r_j^{\bp[i,j,i]}.$$
	Additionally,
	$$r_j^{\bp[i,j,i]} = r_i^{\bp[i,j]}r_j^{\bp[i,j]}r_i^{\bp[i,j]} = r_j^{\bp[i]}r_i^{\bp[i]}r_j^{\bp[i]}r_i^{\bp[i]}r_j^{\bp[i]} = r_i^{\bp}r_j^{\bp}r_i^{\bp}r_j^{\bp}r_i^{\bp}r_j^{\bp}r_i^{\bp}$$
	$$ = (r_i^{\bp}r_j^{\bp})^3 r_i^{\bp} =  r_i^{\bp}(r_j^{\bp}r_i^{\bp})^3.$$
	
	Thus
	$\pi(r_k^{\bp[i,j,i,j,i]}) = \pi(r_i^{\bp}r_j^{\bp})^3\pi(r_k^{\bp}) \pi(r_j^{\bp}r_i^{\bp})^3.$
	
	\noindent \underline{Case $(b_{ij}^{\bp}c_j^{\bp},b_{ji}^{\bp}c_i^{\bp})=(-,-)$} The values we are concerned with are as follows:
	$$
	(b_{ki}^{\bp}c_i^{\bp}, \ b_{kj}^{\bp[i]}c_j^{\bp[i]}, \ b_{ki}^{\bp[i,j]}c_i^{\bp[i,j]}, \ b_{kj}^{\bp[i,j,i]}c_j^{\bp[i,j,i]}, \ b_{ki}^{\bp[i,j,i,j]}c_i^{\bp[i,j,i,j]}) = (-,0,-,-,0).
	$$
	
	Hence 
	$r_k^{\bp[i,j,i,j,i]} = r_k^{\bp},$
	and 
	$\pi(r_k^{\bp[i,j,i,j,i]}) = \pi(r_k^{\bp}).$
	
	\noindent \underline{Case $(b_{ij}^{\bp}c_j^{\bp},b_{ji}^{\bp}c_i^{\bp},b_{ij}^{\bp[i]}c_j^{\bp[i]})=(+,+,-)$} The values we are concerned with are as follows:
	$$
	(b_{ki}^{\bp}c_i^{\bp}, \ b_{kj}^{\bp[i]}c_j^{\bp[i]}, \ b_{ki}^{\bp[i,j]}c_i^{\bp[i,j]}, \ b_{kj}^{\bp[i,j,i]}c_j^{\bp[i,j,i]}, \ b_{ki}^{\bp[i,j,i,j]}c_i^{\bp[i,j,i,j]}) = (+,0,+,-,0).
	$$
	
	Hence
	$$r_k^{\bp[i,j,i,j,i]} = r_k^{\bp[i,j,i,j]}  = r_k^{\bp[i,j,i]} = r_i^{\bp[i,j]}r_k^{\bp[i]}r_i^{\bp[i,j]}$$
	$$ = r_i^{\bp[i,j]}r_i^{\bp}r_k^{\bp}r_i^{\bp}r_i^{\bp[i,j]} = r_k^{\bp}.$$
	
	Thus
	$\pi(r_k^{\bp[i,j,i,j,i]}) = \pi(r_k^{\bp}).$
	
	\noindent \underline{Case $(b_{ij}^{\bp}c_j^{\bp},b_{ji}^{\bp}c_i^{\bp},b_{ij}^{\bp[i,j,i,j]}c_j^{\bp[i,j,i,j]})=(+,+,-)$} The values we are concerned with are as follows:
	$$
	(b_{ki}^{\bp}c_i^{\bp}, \ b_{kj}^{\bp[i]}c_j^{\bp[i]}, \ b_{ki}^{\bp[i,j]}c_i^{\bp[i,j]}, \ b_{kj}^{\bp[i,j,i]}c_j^{\bp[i,j,i]}, \ b_{ki}^{\bp[i,j,i,j]}c_i^{\bp[i,j,i,j]}) = (+,0,+,+,0).
	$$
	
	Hence
	$$r_k^{\bp[i,j,i,j,i]} = r_k^{\bp[i,j,i,j]}  = r_j^{\bp[i,j,i]}r_k^{\bp[i,j,i]}r_j^{\bp[i,j,i]} = r_j^{\bp[i,j,i]}r_i^{\bp[i,j]}r_k^{\bp[i,j]}r_i^{\bp[i,j]}r_j^{\bp[i,j,i]}$$
	$$ = r_j^{\bp[i,j,i]}r_i^{\bp[i,j]}r_k^{\bp[i]}r_i^{\bp[i,j]}r_j^{\bp[i,j,i]} = r_j^{\bp[i,j,i]}r_i^{\bp[i,j]}r_i^{\bp}r_k^{\bp} r_i^{\bp}r_i^{\bp[i,j]}r_j^{\bp[i,j,i]}.$$
	
	Additionally, 
	$r_j^{\bp[i,j,i]} = r_j^{\bp[i]} = r_i^{\bp}r_j^{\bp}r_i^{\bp}$
	and
	$r_i^{\bp[i,j]} = r_j^{\bp[i]}r_i^{\bp[i]}r_j^{\bp[i]} = r_i^{\bp}r_j^{\bp}r_i^{\bp}r_j^{\bp}r_i^{\bp}.$
	
	Thus
	$$ r_j^{\bp[i,j,i]}r_i^{\bp[i,j]}r_i^{\bp} = r_j^{\bp} =  r_i^{\bp}r_i^{\bp[i,j]}r_j^{\bp[i,j,i]},$$
	forcing
	$$\pi(r_k^{\bp[i,j,i,j,i]}) = \pi(r_j^{\bp}r_k^{\bp}r_j^{\bp}) = \pi(r_j^{\bp}r_k^{\bp})^2\pi(r_k^{\bp}).$$
\end{proof}

\begin{proof}[\boxed{Proof\ of\ Lemma\ \ref{lem-case-6}}]
	We need to keep track of $b_{ki}^\bp, b_{kj}^\bp, b_{ij}^\bp, b_{ji}^\bp, c_i^\bp,$ and $c_j^\bp$ throughout the entire mutation $[i,j,i,j,i]$.
	Let us first list all the possible values of the first four variables.
	{\tiny $$
		(...,b_{ki}^{\bp[i,j,i,j]})=(0,0,-,+,+);\, (...,b_{kj}^{\bp[i,j,i,j]})=(-,-,+,0,0);\, (...,b_{ij}^{\bp[i,j,i,j]})=(-,+,-,+,-);\, (...,b_{ji}^{\bp[i,j,i,j]})=(+,-,+,-,+).
		$$}
	
	
	\noindent \underline{Case $(b_{ij}^{\bp}c_j^{\bp},b_{ji}^{\bp}c_i^{\bp})=(+,-)$} The values we are concerned with are as follows:
	$$
	(b_{ki}^{\bp}c_i^{\bp}, \ b_{kj}^{\bp[i]}c_j^{\bp[i]}, \ b_{ki}^{\bp[i,j]}c_i^{\bp[i,j]}, \ b_{kj}^{\bp[i,j,i]}c_j^{\bp[i,j,i]}, \ b_{ki}^{\bp[i,j,i,j]}c_i^{\bp[i,j,i,j]}) = (0,+,-,0,+).
	$$
	
	Hence 
	$$r_k^{\bp[i,j,i,j,i]} = r_i^{\bp[i,j,i,j]}r_k^{\bp[i,j,i,j]}r_i^{\bp[i,j,i,j]} = r_j^{\bp[i,j,i]}r_i^{\bp[i,j,i]}r_j^{\bp[i,j,i]}r_k^{\bp[i,j]}r_j^{\bp[i,j,i]}r_i^{\bp[i,j]}r_j^{\bp[i,j,i]}$$
	$$= r_i^{\bp[i,j]}r_j^{\bp[i]}r_i^{\bp[i,j]}r_j^{\bp[i]}r_i^{\bp[i,j]}r_k^{\bp[i,j]}r_i^{\bp[i,j]}r_j^{\bp[i]}r_i^{\bp[i,j]}r_j^{\bp[i]}r_i^{\bp[i,j]} = r_i^{\bp}r_j^{\bp}r_i^{\bp}r_j^{\bp}r_i^{\bp}r_j^{\bp}r_k^{\bp}r_j^{\bp}r_i^{\bp}r_j^{\bp}r_i^{\bp}r_j^{\bp}r_i^{\bp},$$
	and 
	$\pi(r_k^{\bp[i,j,i,j,i]}) = \pi(r_i^\bp r_j^\bp)^3\pi(r_k^{\bp})\pi(r_j^\bp r_i^\bp)^3.$ 
	
	\noindent \underline{Case $(b_{ij}^{\bp}c_j^{\bp},b_{ji}^{\bp}c_i^{\bp})=(-,+)$} The values we are concerned with are as follows:
	$$
	(b_{ki}^{\bp}c_i^{\bp}, \ b_{kj}^{\bp[i]}c_j^{\bp[i]}, \ b_{ki}^{\bp[i,j]}c_i^{\bp[i,j]}, \ b_{kj}^{\bp[i,j,i]}c_j^{\bp[i,j,i]}, \ b_{ki}^{\bp[i,j,i,j]}c_i^{\bp[i,j,i,j]}) = (0,-,-,0,-).
	$$
	
	Hence
	$r_k^{\bp[i,j,i,j,i]} = r_k^{\bp}.$
	Thus
	$\pi(r_k^{\bp[i,j,i,j,i]}) = \pi(r_k^{\bp}).$

	\noindent \underline{Case $(b_{ij}^{\bp}c_j^{\bp},b_{ji}^{\bp}c_i^{\bp})=(-,-)$} The values we are concerned with are as follows:
	$$
	(b_{ki}^{\bp}c_i^{\bp}, \ b_{kj}^{\bp[i]}c_j^{\bp[i]}, \ b_{ki}^{\bp[i,j]}c_i^{\bp[i,j]}, \ b_{kj}^{\bp[i,j,i]}c_j^{\bp[i,j,i]}, \ b_{ki}^{\bp[i,j,i,j]}c_i^{\bp[i,j,i,j]}) = (0,-,-,0,-).
	$$
	
	Hence 
	$r_k^{\bp[i,j,i,j,i]} = r_k^{\bp},$
	and 
	$\pi(r_k^{\bp[i,j,i,j,i]}) = \pi(r_k^\bp).$ 
	
	\noindent \underline{Case $(b_{ij}^{\bp}c_j^{\bp},b_{ji}^{\bp}c_i^{\bp},b_{ij}^{\bp[i]}c_j^{\bp[i]})=(+,+,-)$} The values we are concerned with are as follows:
	$$
	(b_{ki}^{\bp}c_i^{\bp}, \ b_{kj}^{\bp[i]}c_j^{\bp[i]}, \ b_{ki}^{\bp[i,j]}c_i^{\bp[i,j]}, \ b_{kj}^{\bp[i,j,i]}c_j^{\bp[i,j,i]}, \ b_{ki}^{\bp[i,j,i,j]}c_i^{\bp[i,j,i,j]}) = (0,+,+,0,+).
	$$
	
	Hence
	$$r_k^{\bp[i,j,i,j,i]} = r_i^{\bp[i,j,i,j]}r_k^{\bp[i,j,i]}r_i^{\bp[i,j,i,j]} = r_j^{\bp[i,j,i]}r_i^{\bp[i,j,i]}r_j^{\bp[i,j,i]}r_k^{\bp[i,j,i]}r_j^{\bp[i,j,i]}r_i^{\bp[i,j,i]}r_j^{\bp[i,j,i]}$$
	$$= r_j^{\bp[i]}r_i^{\bp[i,j]}r_j^{\bp[i]}r_i^{\bp[i,j]}r_k^{\bp[i,j]}r_i^{\bp[i,j]}r_j^{\bp[i]}r_i^{\bp[i,j]}r_j^{\bp[i]} = r_j^{\bp[i]}r_i^{\bp}r_j^{\bp[i]}r_i^{\bp}r_j^{\bp[i]}r_k^{\bp[i]}r_j^{\bp[i]}r_i^{\bp}r_j^{\bp[i]}r_i^{\bp}r_j^{\bp[i]}$$
	$$ = r_i^{\bp}r_j^{\bp}r_i^{\bp}r_j^{\bp}r_i^{\bp}r_j^{\bp}r_k^{\bp}r_j^{\bp}r_i^{\bp}r_j^{\bp}r_i^{\bp}r_j^{\bp}r_i^{\bp}.$$
	
	Thus
	$\pi(r_k^{\bp[i,j,i,j,i]}) = \pi(r_i^\bp r_j^\bp)^3\pi(r_k^{\bp})\pi(r_j^\bp r_i^\bp)^3.$

	\noindent \underline{Case $(b_{ij}^{\bp}c_j^{\bp},b_{ji}^{\bp}c_i^{\bp},b_{ij}^{\bp[i,j,i,j]}c_j^{\bp[i,j,i,j]})=(+,+,-)$} The values we are concerned with are as follows:
	$$
	(b_{ki}^{\bp}c_i^{\bp}, \ b_{kj}^{\bp[i]}c_j^{\bp[i]}, \ b_{ki}^{\bp[i,j]}c_i^{\bp[i,j]}, \ b_{kj}^{\bp[i,j,i]}c_j^{\bp[i,j,i]}, \ b_{ki}^{\bp[i,j,i,j]}c_i^{\bp[i,j,i,j]}) = (0,-,+,0,+).
	$$
	
	Hence
	$$r_k^{\bp[i,j,i,j,i]} = r_i^{\bp[i,j,i,j]}r_k^{\bp[i,j,i]}r_i^{\bp[i,j,i,j]} = r_i^{\bp[i,j]}r_k^{\bp[i,j,i]}r_i^{\bp[i,j]} = r_k^{\bp[i,j]} = r_k^\bp.$$
	
	Therefore
	$\pi(r_k^{\bp[i,j,i,j,i]}) = \pi(r_k^{\bp}).$
	
\end{proof}

\begin{proof}[\boxed{Proof\ of\ Lemma\ \ref{lem-case-9}}]
	We need to keep track of $b_{ki}^\bp, b_{kj}^\bp, b_{ij}^\bp, b_{ji}^\bp, c_i^\bp,$ and $c_j^\bp$ throughout the entire mutation $[i,j,i,j,i]$.
	Let us first list all the possible values of the first four variables.
	{\tiny $$
		(...,b_{ki}^{\bp[i,j,i,j]})=(-,+,+,-,0);\, (...,b_{kj}^{\bp[i,j,i,j]})=(0,0,0,+,-);\, (...,b_{ij}^{\bp[i,j,i,j]})=(+,-,+,-,+);\, (...,b_{ji}^{\bp[i,j,i,j]})=(-,+,-,+,-)
		$$}
	
	\noindent \underline{Case $(b_{ij}^{\bp}c_j^{\bp},b_{ji}^{\bp}c_i^{\bp})=(+,-)$} The values we are concerned with are as follows:
	$$
	(b_{ki}^{\bp}c_i^{\bp}, \ b_{kj}^{\bp[i]}c_j^{\bp[i]}, \ b_{ki}^{\bp[i,j]}c_i^{\bp[i,j]}, \ b_{kj}^{\bp[i,j,i]}c_j^{\bp[i,j,i]}, \ b_{ki}^{\bp[i,j,i,j]}c_i^{\bp[i,j,i,j]}) = (-,0,-,-,0).
	$$
	
	Hence 
	$r_k^{\bp[i,j,i,j,i]} = r_k^{\bp},$
	and 
	$\pi(r_k^{\bp[i,j,i,j,i]}) = \pi(r_k^{\bp}).$
	
	\noindent \underline{Case $(b_{ij}^{\bp}c_j^{\bp},b_{ji}^{\bp}c_i^{\bp})=(-,+)$} The values we are concerned with are as follows:
	$$
	(b_{ki}^{\bp}c_i^{\bp}, \ b_{kj}^{\bp[i]}c_j^{\bp[i]}, \ b_{ki}^{\bp[i,j]}c_i^{\bp[i,j]}, \ b_{kj}^{\bp[i,j,i]}c_j^{\bp[i,j,i]}, \ b_{ki}^{\bp[i,j,i,j]}c_i^{\bp[i,j,i,j]}) = (+,0,-,+,0).
	$$
	
	Hence
	$$r_k^{\bp[i,j,i,j,i]} = r_j^{\bp[i,j,i]}r_k^{\bp[i,j,i]}r_j^{\bp[i,j,i]} = r_i^{\bp[i,j]}r_j^{\bp[i]}r_i^{\bp[i,j]}r_k^{\bp[i]}r_i^{\bp[i,j]}r_j^{\bp[i]}r_i^{\bp[i,j]}$$
	$$ = r_j^{\bp[i]}r_i^{\bp[i]}r_j^{\bp[i]}r_i^{\bp[i]}r_j^{\bp[i]}r_k^{\bp[i]}r_j^{\bp[i]}r_i^{\bp[i]}r_j^{\bp[i]}r_i^{\bp[i]}r_j^{\bp[i]} = r_i^{\bp}r_j^{\bp}r_i^{\bp}r_j^{\bp}r_i^{\bp}r_j^{\bp}r_k^{\bp}r_j^{\bp}r_i^{\bp}r_j^{\bp}r_i^{\bp}r_j^{\bp}r_i^{\bp}.$$
	
	Thus
	$\pi(r_k^{\bp[i,j,i,j,i]}) = \pi(r_i^\bp r_j^\bp)^3\pi(r_k^{\bp})\pi(r_j^\bp r_i^\bp)^3.$

	\noindent \underline{Case $(b_{ij}^{\bp}c_j^{\bp},b_{ji}^{\bp}c_i^{\bp})=(-,-)$} The values we are concerned with are as follows:
	$$
	(b_{ki}^{\bp}c_i^{\bp}, \ b_{kj}^{\bp[i]}c_j^{\bp[i]}, \ b_{ki}^{\bp[i,j]}c_i^{\bp[i,j]}, \ b_{kj}^{\bp[i,j,i]}c_j^{\bp[i,j,i]}, \ b_{ki}^{\bp[i,j,i,j]}c_i^{\bp[i,j,i,j]}) = (-,0,-,-,0).
	$$
	
	Hence 
	$r_k^{\bp[i,j,i,j,i]} = r_k^{\bp},$
	and 
	$\pi(r_k^{\bp[i,j,i,j,i]}) = \pi(r_k^\bp).$
	
	\noindent \underline{Case $(b_{ij}^{\bp}c_j^{\bp},b_{ji}^{\bp}c_i^{\bp},b_{ij}^{\bp[i]}c_j^{\bp[i]})=(+,+,-)$} The values we are concerned with are as follows:
	$$
	(b_{ki}^{\bp}c_i^{\bp}, \ b_{kj}^{\bp[i]}c_j^{\bp[i]}, \ b_{ki}^{\bp[i,j]}c_i^{\bp[i,j]}, \ b_{kj}^{\bp[i,j,i]}c_j^{\bp[i,j,i]}, \ b_{ki}^{\bp[i,j,i,j]}c_i^{\bp[i,j,i,j]}) = (+,0,+,-,0).
	$$
	
	Hence
	$$r_k^{\bp[i,j,i,j,i]} = r_k^{\bp[i,j,i]} = r_i^{\bp[i,j]}r_k^{\bp[i,j]}r_i^{\bp[i,j]} = r_k^\bp.$$
	Thus
	$\pi(r_k^{\bp[i,j,i,j,i]}) = \pi(r_k^{\bp}).$

	\noindent \underline{Case $(b_{ij}^{\bp}c_j^{\bp},b_{ji}^{\bp}c_i^{\bp},b_{ij}^{\bp[i,j,i,j]}c_j^{\bp[i,j,i,j]})=(+,+,-)$} The values we are concerned with are as follows:
	$$
	(b_{ki}^{\bp}c_i^{\bp}, \ b_{kj}^{\bp[i]}c_j^{\bp[i]}, \ b_{ki}^{\bp[i,j]}c_i^{\bp[i,j]}, \ b_{kj}^{\bp[i,j,i]}c_j^{\bp[i,j,i]}, \ b_{ki}^{\bp[i,j,i,j]}c_i^{\bp[i,j,i,j]}) = (+,0,+,+,0).
	$$
	
	Hence
	$$r_k^{\bp[i,j,i,j,i]} = r_k^{\bp[i,j,i,j]} = r_j^{\bp[i,j,i]}r_k^{\bp[i,j,i]}r_j^{\bp[i,j,i]} = r_j^{\bp[i,j]}r_i^{\bp[i,j]}r_k^{\bp[i,j]}r_i^{\bp[i,j]}r_j^{\bp[i,j]} $$
	$$ = r_j^{\bp[i]}r_j^{\bp[i]}r_i^{\bp[i]}r_j^{\bp[i]}r_k^{\bp[i]}r_j^{\bp[i]}r_i^{\bp[i]}r_j^{\bp[i]}r_j^{\bp[i]} = r_i^{\bp[i]}r_j^{\bp[i]}r_k^{\bp[i]}r_j^{\bp[i]}r_i^{\bp[i]}$$
	$$ = r_i^{\bp}r_i^{\bp}r_j^{\bp}r_i^{\bp}r_i^{\bp}r_k^{\bp}r_i^{\bp}r_i^{\bp}r_j^{\bp}r_i^{\bp}r_i^{\bp} = r_j^{\bp}r_k^{\bp}r_j^{\bp}.$$	
	Therefore
	$\pi(r_k^{\bp[i,j,i,j,i]}) = \pi(r_j^\bp r_k^\bp)^2\pi(r_k^{\bp}).$	
\end{proof}

\begin{proof}[\boxed{Proof\ of\ Lemma\ \ref{lem-case-4}}]
	We need to keep track of $b_{ki}^\bp, b_{kj}^\bp, b_{ij}^\bp, b_{ji}^\bp, c_i^\bp,$ and $c_j^\bp$ throughout the entire mutation $[i,j,i,j,i]$.
	Let us first list all the possible values of the first four variables.
	{\tiny $$
		(...,b_{ki}^{\bp[i,j,i,j]})=(0,0,0,0,-);\, (...,b_{kj}^{\bp[i,j,i,j]})=(+,+,-,-,+);\, (...,b_{ij}^{\bp[i,j,i,j]})=(-,+,-,+,-);\, (...,b_{ji}^{\bp[i,j,i,j]})=(+,-,+,-,+)
		$$}
	
	
	\noindent \underline{Case $(b_{ij}^{\bp}c_j^{\bp},b_{ji}^{\bp}c_i^{\bp})=(+,-)$} The values we are concerned with are as follows:
	$$
	(b_{ki}^{\bp}c_i^{\bp}, \ b_{kj}^{\bp[i]}c_j^{\bp[i]}, \ b_{ki}^{\bp[i,j]}c_i^{\bp[i,j]}, \ b_{kj}^{\bp[i,j,i]}c_j^{\bp[i,j,i]}, \ b_{ki}^{\bp[i,j,i,j]}c_i^{\bp[i,j,i,j]}) = (0,-,0,-,-).
	$$
	
	Hence 
	$r_k^{\bp[i,j,i,j,i]} = r_k^{\bp},$
	and 
	$\pi(r_k^{\bp[i,j,i,j,i]}) = \pi(r_k^{\bp}).$
	
	\noindent \underline{Case $(b_{ij}^{\bp}c_j^{\bp},b_{ji}^{\bp}c_i^{\bp})=(-,+)$} The values we are concerned with are as follows:
	$$
	(b_{ki}^{\bp}c_i^{\bp}, \ b_{kj}^{\bp[i]}c_j^{\bp[i]}, \ b_{ki}^{\bp[i,j]}c_i^{\bp[i,j]}, \ b_{kj}^{\bp[i,j,i]}c_j^{\bp[i,j,i]}, \ b_{ki}^{\bp[i,j,i,j]}c_i^{\bp[i,j,i,j]}) = (0,+,0,+,+).
	$$
	
	Hence
	$$r_k^{\bp[i,j,i,j,i]} = r_i^{\bp[i,j,i,j]}r_k^{\bp[i,j,i,j]}r_i^{\bp[i,j,i,j]} = r_i^{\bp[i,j]}r_j^{\bp[i,j,i]}r_k^{\bp[i,j]}r_j^{\bp[i,j,i]}r_i^{\bp[i,j]}$$ 
	$$= r_j^{\bp[i]}r_i^{\bp[i,j]}r_k^{\bp[i,j]}r_i^{\bp[i,j]}r_j^{\bp[i]} = r_k^{\bp}.$$
	
	Thus
	$\pi(r_k^{\bp[i,j,i,j,i]}) = \pi(r_k^{\bp}).$

	\noindent \underline{Case $(b_{ij}^{\bp}c_j^{\bp},b_{ji}^{\bp}c_i^{\bp})=(-,-)$} The values we are concerned with are as follows:
	$$
	(b_{ki}^{\bp}c_i^{\bp}, \ b_{kj}^{\bp[i]}c_j^{\bp[i]}, \ b_{ki}^{\bp[i,j]}c_i^{\bp[i,j]}, \ b_{kj}^{\bp[i,j,i]}c_j^{\bp[i,j,i]}, \ b_{ki}^{\bp[i,j,i,j]}c_i^{\bp[i,j,i,j]}) = (0,+,0,-,+).
	$$
	
	Hence 
	$$r_k^{\bp[i,j,i,j,i]} = r_i^{\bp[i,j,i,j]}r_k^{\bp[i,j,i,j]}r_i^{\bp[i,j,i,j]} = r_j^{\bp[i,j,i]}r_i^{\bp[i,j,i]}r_j^{\bp[i,j,i]}r_k^{\bp[i,j,i]}r_j^{\bp[i,j,i]}r_i^{\bp[i,j,i]}r_j^{\bp[i,j,i]}$$
	$$= r_i^{\bp[i,j]}r_j^{\bp[i]}r_i^{\bp[i,j]}r_j^{\bp[i]}r_i^{\bp[i,j]}r_k^{\bp[i,j]}r_i^{\bp[i,j]}r_j^{\bp[i]}r_i^{\bp[i,j]}r_j^{\bp[i]}r_i^{\bp[i,j]} = r_j^{\bp[i]}r_i^{\bp}r_j^{\bp[i]}r_i^{\bp}r_j^{\bp[i]}r_i^{\bp}r_k^{\bp}r_i^{\bp}r_j^{\bp[i]}r_i^{\bp}r_j^{\bp[i]}r_i^{\bp}r_j^{\bp[i]}$$
	$$= r_j^{\bp}r_i^{\bp}r_j^{\bp}r_i^{\bp}r_j^{\bp}r_i^{\bp}r_k^{\bp}r_i^{\bp}r_j^{\bp}r_i^{\bp}r_j^{\bp}r_i^{\bp}r_j^{\bp},$$
	and 
	$$\pi(r_k^{\bp[i,j,i,j,i]}) = \pi(r_j^\bp r_i^\bp)^3\pi(r_k^\bp)\pi(r_i^\bp r_j^\bp)^3.$$ 
	
	\noindent \underline{Case $(b_{ij}^{\bp}c_j^{\bp},b_{ji}^{\bp}c_i^{\bp},b_{ij}^{\bp[i]}c_j^{\bp[i]})=(+,+,-)$} The values we are concerned with are as follows:
	$$
	(b_{ki}^{\bp}c_i^{\bp}, \ b_{kj}^{\bp[i]}c_j^{\bp[i]}, \ b_{ki}^{\bp[i,j]}c_i^{\bp[i,j]}, \ b_{kj}^{\bp[i,j,i]}c_j^{\bp[i,j,i]}, \ b_{ki}^{\bp[i,j,i,j]}c_i^{\bp[i,j,i,j]}) = (0,-,0,-,-).
	$$
	
	Hence
	$r_k^{\bp[i,j,i,j,i]} = r_k^{\bp}.$
	Thus
	$\pi(r_k^{\bp[i,j,i,j,i]}) = \pi(r_k^{\bp}).$

	\noindent \underline{Case $(b_{ij}^{\bp}c_j^{\bp},b_{ji}^{\bp}c_i^{\bp},b_{ij}^{\bp[i,j,i,j]}c_j^{\bp[i,j,i,j]})=(+,+,-)$} The values we are concerned with are as follows:
	$$
	(b_{ki}^{\bp}c_i^{\bp}, \ b_{kj}^{\bp[i]}c_j^{\bp[i]}, \ b_{ki}^{\bp[i,j]}c_i^{\bp[i,j]}, \ b_{kj}^{\bp[i,j,i]}c_j^{\bp[i,j,i]}, \ b_{ki}^{\bp[i,j,i,j]}c_i^{\bp[i,j,i,j]}) = (0,+,0,+,-).
	$$
	
	Hence
	$$r_k^{\bp[i,j,i,j,i]} = r_k^{\bp[i,j,i,j]} = r_j^{\bp[i,j,i]}r_k^{\bp[i,j,i]}r_j^{\bp[i,j,i]} = r_j^{\bp[i]}r_k^{\bp[i,j]}r_j^{\bp[i]} = r_k^\bp.$$
	
	Therefore
	$\pi(r_k^{\bp[i,j,i,j,i]}) = \pi(r_k^{\bp}).$
	
\end{proof}

\begin{proof}[\boxed{Proof\ of\ Lemma\ \ref{lem-case-7}}]
	We need to keep track of $b_{ki}^\bp, b_{kj}^\bp, b_{ij}^\bp, b_{ji}^\bp, c_i^\bp,$ and $c_j^\bp$ throughout the entire mutation $[i,j,i,j,i]$.
	Let us first list all the possible values of the first four variables.
	{\tiny $$
		(...,b_{ki}^{\bp[i,j,i,j]})=(+,-,0,0,0);\, (...,b_{kj}^{\bp[i,j,i,j]})=(0,+,-,-,+);\, (...,b_{ij}^{\bp[i,j,i,j]})=(+,-,+,-,+);\, (...,b_{ji}^{\bp[i,j,i,j]})=(-,+,-,+,-)
		$$}
	
	\noindent \underline{Case $(b_{ij}^{\bp}c_j^{\bp},b_{ji}^{\bp}c_i^{\bp})=(+,-)$} The values we are concerned with are as follows:
	$$
	(b_{ki}^{\bp}c_i^{\bp}, \ b_{kj}^{\bp[i]}c_j^{\bp[i]}, \ b_{ki}^{\bp[i,j]}c_i^{\bp[i,j]}, \ b_{kj}^{\bp[i,j,i]}c_j^{\bp[i,j,i]}, \ b_{ki}^{\bp[i,j,i,j]}c_i^{\bp[i,j,i,j]}) = (+,+,0,+,0).
	$$
	
	Hence 
	$$r_k^{\bp[i,j,i,j,i]} = r_k^{\bp[i,j,i,j]} = r_j^{\bp[i,j,i]}r_k^{\bp[i,j,i]}r_j^{\bp[i,j,i]} = r_j^{\bp[i,j,i]}r_k^{\bp[i,j]}r_j^{\bp[i,j,i]}$$
	$$= r_j^{\bp[i,j,i]}r_j^{\bp[i]}r_k^{\bp[i]}r_j^{\bp[i]}r_j^{\bp[i,j,i]} = r_j^{\bp[i,j,i]}r_j^{\bp}r_i^{\bp}r_k^{\bp}r_i^{\bp}r_j^{\bp}r_j^{\bp[i,j,i]}.$$
	
	Additionally, 
	$$r_j^{\bp[i,j,i]} = r_i^{\bp[i,j]}r_j^{\bp}r_i^{\bp[i,j]} = r_i^{\bp}r_j^{\bp}r_i^{\bp}.$$
	
	Thus
	$$\pi(r_k^{\bp[i,j,i,j,i]}) = \pi(r_i^{\bp}r_j^{\bp})^3\pi(r_j^{\bp}r_k^{\bp}r_j^{\bp})\pi(r_j^{\bp}r_i^{\bp})^3 = \pi(r_i^{\bp}r_j^{\bp})^3\pi(r_j^{\bp}r_k^{\bp})^2 \pi(r_k^{\bp})\pi(r_j^{\bp}r_i^{\bp})^3.$$ 
	
	\noindent \underline{Case $(b_{ij}^{\bp}c_j^{\bp},b_{ji}^{\bp}c_i^{\bp})=(-,+)$} The values we are concerned with are as follows:
	$$
	(b_{ki}^{\bp}c_i^{\bp}, \ b_{kj}^{\bp[i]}c_j^{\bp[i]}, \ b_{ki}^{\bp[i,j]}c_i^{\bp[i,j]}, \ b_{kj}^{\bp[i,j,i]}c_j^{\bp[i,j,i]}, \ b_{ki}^{\bp[i,j,i,j]}c_i^{\bp[i,j,i,j]}) = (-,-,0,-,0).
	$$
	
	Hence
	$r_k^{\bp[i,j,i,j,i]} = r_k^\bp.$
	Therefore
	$\pi(r_k^{\bp[i,j,i,j,i]}) = \pi(r_k^{\bp}).$
	
	\noindent \underline{Case $(b_{ij}^{\bp}c_j^{\bp},b_{ji}^{\bp}c_i^{\bp})=(-,-)$} The values we are concerned with are as follows:
	$$
	(b_{ki}^{\bp}c_i^{\bp}, \ b_{kj}^{\bp[i]}c_j^{\bp[i]}, \ b_{ki}^{\bp[i,j]}c_i^{\bp[i,j]}, \ b_{kj}^{\bp[i,j,i]}c_j^{\bp[i,j,i]}, \ b_{ki}^{\bp[i,j,i,j]}c_i^{\bp[i,j,i,j]}) = (+,-,0,+,0).
	$$
	
	Hence 
	$$r_k^{\bp[i,j,i,j,i]} = r_k^{\bp[i,j,i,j]} = r_j^{\bp[i,j,i]}r_k^{\bp[i,j,i]}r_j^{\bp[i,j,i]} = r_j^{\bp[i,j,i]}r_k^{\bp[i]}r_j^{\bp[i,j,i]} = r_j^{\bp[i,j,i]}r_i^{\bp}r_k^{\bp}r_i^{\bp}r_j^{\bp[i,j,i]}.$$
	
	Additionally,
	$$r_j^{\bp[i,j,i]} = r_i^{\bp[i,j]}r_j^{\bp[i]}r_i^{\bp[i,j]} = r_j^{\bp}r_i^{\bp}r_j^{\bp}r_i^{\bp}r_j^{\bp}.$$
	
	Thus
	$\pi(r_k^{\bp[i,j,i,j,i]}) = \pi(r_j^\bp r_i^\bp)^3\pi(r_k^\bp)\pi(r_i^\bp r_j^\bp)^3.$
	
	\noindent \underline{Case $(b_{ij}^{\bp}c_j^{\bp},b_{ji}^{\bp}c_i^{\bp},b_{ij}^{\bp[i]}c_j^{\bp[i]})=(+,+,-)$} The values we are concerned with are as follows:
	$$
	(b_{ki}^{\bp}c_i^{\bp}, \ b_{kj}^{\bp[i]}c_j^{\bp[i]}, \ b_{ki}^{\bp[i,j]}c_i^{\bp[i,j]}, \ b_{kj}^{\bp[i,j,i]}c_j^{\bp[i,j,i]}, \ b_{ki}^{\bp[i,j,i,j]}c_i^{\bp[i,j,i,j]}) = (-,+,0,+,0).
	$$
	
	Hence
	$$r_k^{\bp[i,j,i,j,i]} = r_k^{\bp[i,j,i,j]} = r_j^{\bp[i,j,i]}r_k^{\bp[i,j,i]}r_j^{\bp[i,j,i]} = r_j^{\bp[i,j,i]}r_k^{\bp[i,j]}r_j^{\bp[i,j,i]}$$
	$$= r_j^{\bp[i,j,i]}r_j^{\bp[i]}r_k^{\bp[i]}r_j^{\bp[i]}r_j^{\bp[i,j,i]} = r_j^{\bp[i,j,i]}r_j^{\bp[i]}r_k^{\bp}r_j^{\bp[i]}r_j^{\bp[i,j,i]}.$$
	
	Additionally,
	$$r_j^{\bp[i,j,i]} = r_j^{\bp[i,j]} = r_j^{\bp[i]}.$$ 
	
	Thus
	$\pi(r_k^{\bp[i,j,i,j,i]}) = \pi(r_j^{\bp[i]}r_j^{\bp[i]}r_k^{\bp}r_j^{\bp[i]}r_j^{\bp[i]}) = \pi(r_k^{\bp}).$

	\noindent \underline{Case $(b_{ij}^{\bp}c_j^{\bp},b_{ji}^{\bp}c_i^{\bp},b_{ij}^{\bp[i,j,i,j]}c_j^{\bp[i,j,i,j]})=(+,+,-)$} The values we are concerned with are as follows:
	$$
	(b_{ki}^{\bp}c_i^{\bp}, \ b_{kj}^{\bp[i]}c_j^{\bp[i]}, \ b_{ki}^{\bp[i,j]}c_i^{\bp[i,j]}, \ b_{kj}^{\bp[i,j,i]}c_j^{\bp[i,j,i]}, \ b_{ki}^{\bp[i,j,i,j]}c_i^{\bp[i,j,i,j]}) = (-,+,0,-,0).
	$$
	
	Hence
	$r_k^{\bp[i,j,i,j,i]} = r_k^\bp.$
	Therefore
	$\pi(r_k^{\bp[i,j,i,j,i]}) = \pi(r_k^{\bp}).$
\end{proof}

\begin{proof}[\boxed{Proof\ of\ Lemma\ \ref{lem-case-14}}]
	We need to keep track of $b_{ki}^\bp, b_{kj}^\bp, b_{ij}^\bp, b_{ji}^\bp, c_i^\bp,$ and $c_j^\bp$ throughout the entire mutation $[i,j,i,j,i]$.
	Let us first list all the possible values of the first four variables.
	{\tiny $$
		(...,b_{ki}^{\bp[i,j,i,j]})=(-,+,+,-,-);\, (...,b_{kj}^{\bp[i,j,i,j]})=(+,0,0,0,0);\, (...,b_{ij}^{\bp[i,j,i,j]})=(-,+,-,+,-);\, (...,b_{ji}^{\bp[i,j,i,j]})=(+,-,+,-,+)
		$$}
	
	\noindent \underline{Case $(b_{ij}^{\bp}c_j^{\bp},b_{ji}^{\bp}c_i^{\bp})=(+,-)$} The values we are concerned with are as follows:
	$$
	(b_{ki}^{\bp}c_i^{\bp}, \ b_{kj}^{\bp[i]}c_j^{\bp[i]}, \ b_{ki}^{\bp[i,j]}c_i^{\bp[i,j]}, \ b_{kj}^{\bp[i,j,i]}c_j^{\bp[i,j,i]}, \ b_{ki}^{\bp[i,j,i,j]}c_i^{\bp[i,j,i,j]}) = (+,0,+,0,-).
	$$
	
	Hence 
	$r_k^{\bp[i,j,i,j,i]} = r_i^{\bp[i,j]}r_k^{\bp[i,j]}r_i^{\bp[i,j]} = r_i^{\bp}r_k^{\bp[i]}r_i^{\bp} = r_k^{\bp},$
	and 
	$\pi(r_k^{\bp[i,j,i,j,i]}) = \pi(r_k^{\bp}).$
	
	\noindent \underline{Case $(b_{ij}^{\bp}c_j^{\bp},b_{ji}^{\bp}c_i^{\bp})=(-,+)$} The values we are concerned with are as follows:
	$$
	(b_{ki}^{\bp}c_i^{\bp}, \ b_{kj}^{\bp[i]}c_j^{\bp[i]}, \ b_{ki}^{\bp[i,j]}c_i^{\bp[i,j]}, \ b_{kj}^{\bp[i,j,i]}c_j^{\bp[i,j,i]}, \ b_{ki}^{\bp[i,j,i,j]}c_i^{\bp[i,j,i,j]}) = (-,0,+,0,+).
	$$
	
	Hence
	$$r_k^{\bp[i,j,i,j,i]} = r_i^{\bp[i,j,i,j]}r_k^{\bp[i,j,i,j]}r_i^{\bp[i,j,i,j]} = r_i^{\bp[i,j,i]}r_k^{\bp[i,j,i]}r_i^{\bp[i,j,i]} = r_k^{\bp[i,j]} = r_k^{\bp}.$$
	
	Thus
	$\pi(r_k^{\bp[i,j,i,j,i]}) = \pi(r_k^{\bp}).$

	\noindent \underline{Case $(b_{ij}^{\bp}c_j^{\bp},b_{ji}^{\bp}c_i^{\bp})=(-,-)$} The values we are concerned with are as follows:
	$$
	(b_{ki}^{\bp}c_i^{\bp}, \ b_{kj}^{\bp[i]}c_j^{\bp[i]}, \ b_{ki}^{\bp[i,j]}c_i^{\bp[i,j]}, \ b_{kj}^{\bp[i,j,i]}c_j^{\bp[i,j,i]}, \ b_{ki}^{\bp[i,j,i,j]}c_i^{\bp[i,j,i,j]}) = (+,0,+,0,+).
	$$
	
	Hence 
	$$r_k^{\bp[i,j,i,j,i]} = r_i^{\bp[i,j,i,j]}r_k^{\bp[i,j,i]}r_i^{\bp[i,j,i,j]} = r_j^{\bp[i,j,i]}r_i^{\bp[i,j,i]}r_j^{\bp[i,j,i]}r_k^{\bp[i,j,i]}r_j^{\bp[i,j,i]}r_i^{\bp[i,j,i]}r_j^{\bp[i,j,i]}$$
	$$ r_i^{\bp[i,j]}r_j^{\bp[i]}r_i^{\bp[i,j]}r_j^{\bp[i]}r_k^{\bp[i]}r_j^{\bp[i]}r_i^{\bp[i,j]}r_j^{\bp[i]}r_i^{\bp[i,j]} = r_j^{\bp[i]}r_i^{\bp[i]}r_j^{\bp[i]}r_i^{\bp[i]}r_k^{\bp[i]}r_i^{\bp[i]}r_j^{\bp[i]}r_i^{\bp[i]}r_j^{\bp[i]}$$
	$$= r_j^{\bp}r_i^{\bp}r_j^{\bp}r_k^{\bp}r_j^{\bp}r_i^{\bp}r_j^{\bp},$$
	and, by premultiplying by $1$,
	$$= r_k^{\bp}r_k^{\bp}r_j^{\bp}r_i^{\bp}r_j^{\bp}r_k^{\bp}r_j^{\bp}r_i^{\bp}r_j^{\bp}.$$
	Therefore,
	$$\pi(r_k^{\bp[i,j,i,j,i]}) = \pi(r_k^{\bp})\pi(r_k^{\bp}r_j^{\bp}r_i^{\bp}r_j^{\bp})^2.$$

	\noindent \underline{Case $(b_{ij}^{\bp}c_j^{\bp},b_{ji}^{\bp}c_i^{\bp},b_{ij}^{\bp[i]}c_j^{\bp[i]})=(+,+,-)$} The values we are concerned with are as follows:
	$$
	(b_{ki}^{\bp}c_i^{\bp}, \ b_{kj}^{\bp[i]}c_j^{\bp[i]}, \ b_{ki}^{\bp[i,j]}c_i^{\bp[i,j]}, \ b_{kj}^{\bp[i,j,i]}c_j^{\bp[i,j,i]}, \ b_{ki}^{\bp[i,j,i,j]}c_i^{\bp[i,j,i,j]}) = (-,0,-,0,-).
	$$
	
	Hence
	$r_k^{\bp[i,j,i,j,i]} = r_k^{\bp[i,j,i,j]} = r_k^{\bp}.$
	Thus
	$\pi(r_k^{\bp[i,j,i,j,i]}) = \pi(r_k^{\bp}).$

	\noindent \underline{Case $(b_{ij}^{\bp}c_j^{\bp},b_{ji}^{\bp}c_i^{\bp},b_{ij}^{\bp[i,j,i,j]}c_j^{\bp[i,j,i,j]})=(+,+,-)$} The values we are concerned with are as follows:
	$$
	(b_{ki}^{\bp}c_i^{\bp}, \ b_{kj}^{\bp[i]}c_j^{\bp[i]}, \ b_{ki}^{\bp[i,j]}c_i^{\bp[i,j]}, \ b_{kj}^{\bp[i,j,i]}c_j^{\bp[i,j,i]}, \ b_{ki}^{\bp[i,j,i,j]}c_i^{\bp[i,j,i,j]}) = (-,0,-,0,-).
	$$
	
	Hence
	$r_k^{\bp[i,j,i,j,i]} = r_k^{\bp}.$
	Therefore
	$\pi(r_k^{\bp[i,j,i,j,i]}) = \pi(r_k^{\bp}).$
\end{proof}

\begin{proof}[\boxed{Proof\ of\ Lemma\ \ref{lem-case-15}}]
	We need to keep track of $b_{ki}^\bp, b_{kj}^\bp, b_{ij}^\bp, b_{ji}^\bp, c_i^\bp,$ and $c_j^\bp$ throughout the entire mutation $[i,j,i,j,i]$.
	Let us first list all the possible values of the first four variables.
	{\tiny $$
		(...,b_{ki}^{\bp[i,j,i,j]})=(+,-,-,+,+);\, (...,b_{kj}^{\bp[i,j,i,j]})=(-,0,0,0,0);\, (...,b_{ij}^{\bp[i,j,i,j]})=(+,-,+,-,+);\, (...,b_{ji}^{\bp[i,j,i,j]})=(-,+,-,+,-)
		$$}
	
	\noindent \underline{Case $(b_{ij}^{\bp}c_j^{\bp},b_{ji}^{\bp}c_i^{\bp})=(+,-)$} The values we are concerned with are as follows:
	$$
	(b_{ki}^{\bp}c_i^{\bp}, \ b_{kj}^{\bp[i]}c_j^{\bp[i]}, \ b_{ki}^{\bp[i,j]}c_i^{\bp[i,j]}, \ b_{kj}^{\bp[i,j,i]}c_j^{\bp[i,j,i]}, \ b_{ki}^{\bp[i,j,i,j]}c_i^{\bp[i,j,i,j]}) = (+,0,+,0,-).
	$$
	
	Hence 
	$r_k^{\bp[i,j,i,j,i]} = r_i^{\bp[i,j]}r_k^{\bp[i,j]}r_i^{\bp[i,j]} = r_i^{\bp}r_k^{\bp[i]}r_i^{\bp} = r_k^{\bp},$
	and 
	$\pi(r_k^{\bp[i,j,i,j,i]}) = \pi(r_k^{\bp}).$
	
	\noindent \underline{Case $(b_{ij}^{\bp}c_j^{\bp},b_{ji}^{\bp}c_i^{\bp})=(-,+)$} The values we are concerned with are as follows:
	$$
	(b_{ki}^{\bp}c_i^{\bp}, \ b_{kj}^{\bp[i]}c_j^{\bp[i]}, \ b_{ki}^{\bp[i,j]}c_i^{\bp[i,j]}, \ b_{kj}^{\bp[i,j,i]}c_j^{\bp[i,j,i]}, \ b_{ki}^{\bp[i,j,i,j]}c_i^{\bp[i,j,i,j]}) = (-,0,+,0,+).
	$$
	
	Hence
	$$r_k^{\bp[i,j,i,j,i]} = r_i^{\bp[i,j,i,j]}r_k^{\bp[i,j,i]}r_i^{\bp[i,j,i,j]} = r_i^{\bp[i,j]}r_k^{\bp[i,j,i]}r_i^{\bp[i,j]} = r_k^{\bp[i,j]} = r_k^{\bp}.$$
	
	Thus
	$\pi(r_k^{\bp[i,j,i,j,i]}) = \pi(r_k^{\bp}).$
	
	\noindent \underline{Case $(b_{ij}^{\bp}c_j^{\bp},b_{ji}^{\bp}c_i^{\bp})=(-,-)$} The values we are concerned with are as follows:
	$$
	(b_{ki}^{\bp}c_i^{\bp}, \ b_{kj}^{\bp[i]}c_j^{\bp[i]}, \ b_{ki}^{\bp[i,j]}c_i^{\bp[i,j]}, \ b_{kj}^{\bp[i,j,i]}c_j^{\bp[i,j,i]}, \ b_{ki}^{\bp[i,j,i,j]}c_i^{\bp[i,j,i,j]}) = (+,0,+,0,+).
	$$
	
	Hence 
	$$r_k^{\bp[i,j,i,j,i]} = r_i^{\bp[i,j,i,j]}r_k^{\bp[i,j,i]}r_i^{\bp[i,j,i,j]} = r_j^{\bp[i,j,i]}r_i^{\bp[i,j,i]}r_j^{\bp[i,j,i]}r_k^{\bp[i,j,i]}r_j^{\bp[i,j,i]}r_i^{\bp[i,j,i]}r_j^{\bp[i,j,i]}$$
	$$ r_i^{\bp[i,j]}r_j^{\bp[i]}r_i^{\bp[i,j]}r_j^{\bp[i]}r_k^{\bp[i]}r_j^{\bp[i]}r_i^{\bp[i,j]}r_j^{\bp[i]}r_i^{\bp[i,j]} = r_j^{\bp[i]}r_i^{\bp[i]}r_j^{\bp[i]}r_i^{\bp[i]}r_k^{\bp[i]}r_i^{\bp[i]}r_j^{\bp[i]}r_i^{\bp[i]}r_j^{\bp[i]}$$
	$$= r_j^{\bp}r_i^{\bp}r_j^{\bp}r_k^{\bp}r_j^{\bp}r_i^{\bp}r_j^{\bp},$$
	and 
	$$\pi(r_k^{\bp[i,j,i,j,i]}) = \pi(r_k^{\bp})\pi(r_k^{\bp}r_j^{\bp}r_i^{\bp}r_j^{\bp})^2.$$

	\noindent \underline{Case $(b_{ij}^{\bp}c_j^{\bp},b_{ji}^{\bp}c_i^{\bp},b_{ij}^{\bp[i]}c_j^{\bp[i]})=(+,+,-)$} The values we are concerned with are as follows:
	$$
	(b_{ki}^{\bp}c_i^{\bp}, \ b_{kj}^{\bp[i]}c_j^{\bp[i]}, \ b_{ki}^{\bp[i,j]}c_i^{\bp[i,j]}, \ b_{kj}^{\bp[i,j,i]}c_j^{\bp[i,j,i]}, \ b_{ki}^{\bp[i,j,i,j]}c_i^{\bp[i,j,i,j]}) = (-,0,-,0,-).
	$$
	
	Hence
	$r_k^{\bp[i,j,i,j,i]} = r_k^{\bp[i,j,i,j]} = r_k^{\bp}.$
	Thus
	$\pi(r_k^{\bp[i,j,i,j,i]}) = \pi(r_k^{\bp}).$

	\noindent \underline{Case $(b_{ij}^{\bp}c_j^{\bp},b_{ji}^{\bp}c_i^{\bp},b_{ij}^{\bp[i,j,i,j]}c_j^{\bp[i,j,i,j]})=(+,+,-)$} The values we are concerned with are as follows:
	$$
	(b_{ki}^{\bp}c_i^{\bp}, \ b_{kj}^{\bp[i]}c_j^{\bp[i]}, \ b_{ki}^{\bp[i,j]}c_i^{\bp[i,j]}, \ b_{kj}^{\bp[i,j,i]}c_j^{\bp[i,j,i]}, \ b_{ki}^{\bp[i,j,i,j]}c_i^{\bp[i,j,i,j]}) = (-,0,-,0,-).
	$$
	
	Hence
	$r_k^{\bp[i,j,i,j,i]} = r_k^{\bp}.$
	Therefore
	$\pi(r_k^{\bp[i,j,i,j,i]}) = \pi(r_k^{\bp}).$
\end{proof}


\begin{thebibliography}{99}

\bibitem{BMa} M. Barot and R. J. Marsh, {\em Reflection group presentations arising from cluster algebras}, Trans. Amer. Math. Soc. {\bf 367} (2015), 1945--1967.


\bibitem{BDSW}
 B. Baumeister, M. Dyer, C. Stump and P. Wegener, {\em A note on the transitive Hurwitz action
on decompositions of parabolic Coxeter elements}, Proc. Amer. Math. Soc. Ser. B {\bf 1} (2014), 149--154.

\bibitem{BM} S. Berman and R. V. Moody,  {\em Lie algebras graded by finite root systems and the intersection matrix algebras of Slodowy},  Invent. Math. {\bf 108} (1992), no. 2, 323--347.

\bibitem{CK} P. Caldero and B. Keller, {\em From triangulated categories to cluster      algebras II}, Ann. Sci. {\' E}cole Norm. Sup. (4) {\bf 39} (2006), no. 6,  983--1009.

\bibitem{DWZ} H. Derksen, J. Weyman and A. Zelevinsky, {\em Quivers with potentials and their representations I: mutations}, Selecta Math. {\bf 14} (2008), no. 1, 59--119.

\bibitem{DWZ-1} \bysame,  {\em Quivers with potentials and their representations II: applications to cluster algebras},  J. Amer. Math. Soc. {\bf 23} (2010), no. 3, 749--790.

\bibitem{FT2} A. Felikson and P. Tumarkin, {\em Coxeter groups and their quotients arising from cluster algebras},  Int. Math. Res. Not. IMRN 2016, {\bf 17} (2016), 5135--5186.

\bibitem{FT} 
\bysame, {\em Acyclic cluster algebras, reflections groups, and curves on a punctured disc}, Adv. Math. {\bf 340} (2018), 855--882. 


\bibitem{FHHU}
B. Feng, A. Hanany, Y.-H. He, and A. M. Uranga,  {\em Toric duality as Seiberg duality and brane diamonds}, J. High Energy Phys. {\bf 12} (2001), Paper 35, 29 pp.

\bibitem{FST} S. Fomin, M. Shapiro, D. Thurston, {\em Cluster algebras and triangulated surfaces. Part I: Cluster complexes}, Acta Math. {\bf 201} (2008), no.1, 83--146. 

\bibitem{FZ1} S. Fomin and A. Zelevinsky, {\em Cluster algebras I: Foundations}, J. Amer. Math. Soc. {\bf 15} (2002), 497--529. 

\bibitem{FZ4}
\bysame, {\em Cluster algebras IV: Coefficients}, Compos. Math. {\bf 143} (2007), 112--164.

\bibitem{GHKK}
M. Gross, P. Hacking, S. Keel, and M.  Kontsevich, 
{\em Canonical bases for cluster algebras}, 
J. Amer. Math. Soc. {\bf 31} (2018), no. 2, 497--608. 

\bibitem{MG}
M. Gupta, {\em A formula for $F$-polynomials in terms of $C$-cectors and stabilization of $F$-polynomials}, preprint, arXiv:1812.01910.

\bibitem{HK}
A. Hubery and H. Krause, {\em A categorification of non-crossing partitions},  
J. Eur. Math. Soc. {\bf 18} (2016), no. 10, 2273--2313. 

\bibitem{IS} K. Igusa and R. Schiffler, {\em Exceptional sequences and clusters}, J. Algebra {\bf 323} (2010), no. 8, 2183--2202.

\bibitem{LL} K.-H. Lee and K. Lee, {\em A correspondence between rigid modules over path algebras and simple curves on Riemann surfaces}, to appear in Exp. Math., arXiv:1703.09113. 


\bibitem{LLM} K.-H. Lee, K. Lee and M. R. Mills, {\em Geometric description of $c$-vectors and real L\"osungen}, preprint, arXiv:1904.05752. 

\bibitem{LS} K. Lee and R. Schiffler,
{\em Positivity for cluster algebras}, Ann. Math.       \textbf{182} (1) (2015), 73--125. 


\bibitem{LMW} A. Lubiw, Z. Mas\'arov\'a and U. Wagner, {\em A proof of the orbit conjecture for flipping edge-labelled triangulations}, Discrete Comput. Geom. {\bf 61} (2019), 880--898.  

\bibitem{NS}
T. Nakanishi and S. Stella, {\em Diagrammatic description of $c$-vectors and $d$-vectors of cluster algebras of finite type}, Electron. J. Combin. {\bf 21} (1) (2014), P1.3. 

\bibitem{NZ}
T. Nakanishi and A. Zelevinsky, {\em On tropical dualities in cluster algebras}, Contemp. Math. {\bf 565} (2012), 217--226.

\bibitem{NC} A. N\'ajera Ch\'avez, {\em 
On the c-vectors of an acyclic cluster algebra}, 
Int. Math. Res. Not. IMRN 2015, no. 6, 1590--1600. 

\bibitem{Ngu}
S. D. Nguyen, {\em A proof of Lee--Lee's conjecture about geometry of rigid modules}, preprint, arXiv:2102.11827.

\bibitem{Pl}
P.-G. Plamondon, {\em Cluster algebras via cluster categories with infinite-dimensional morphism spaces},  Compos. Math. {\bf 147} (2011), 1921--1954.
 

\bibitem{S}
A. Schofield, {\em General representations of quivers},  Proc. London Math. Soc. (3) {\bf 65} (1992), no. 1, 46--64. 

\bibitem{Se}  A. Seven, {\em Cluster algebras and symmetric matrices}, Proc. Amer. Math. Soc. {\bf 143} (2015), 469--478.

\bibitem{Se3} \bysame, {\em Reflection group relations arising from cluster algebras}, Proc. Amer. Math. Soc. {\bf 144} (2016), 4641--4650.


\bibitem{ST} D. Speyer and H. Thomas, {\em Acyclic cluster algebras revisited}, Algebras, quivers and representations, 275--298, Abel Symp. {\bf 8}, Springer, Heidelberg, 2013.

\end{thebibliography}
\end{document}